\def\sqr#1#2{{\vcenter{\vbox{\hrule height.#2pt
              \hbox{\vrule width.#2pt height#1pt \kern#1pt \vrule width.#2pt}
              \hrule height.#2pt}}}}
\def\5n{\negthinspace \negthinspace \negthinspace \negthinspace \negthinspace }
\def\4n{\negthinspace \negthinspace \negthinspace \negthinspace }
\def\3n{\negthinspace \negthinspace \negthinspace }
\def\2n{\negthinspace \negthinspace }
\def\1n{\negthinspace }
\def\dbA{\mathbb{A}}
\def\dbD{\mathbb{D}}
\def\dbE{\mathbb{E}}
\def\dbF{\mathbb{F}}
\def\dbH{\mathbb{H}}
\def\dbL{\mathbb{L}}
\def\dbN{\mathbb{N}}
\def\dbP{\mathbb{P}}
\def\dbR{\mathbb{R}}
\def\dbS{\mathbb{S}}
\def\dbU{\mathbb{U}}
\def\dbV{\mathbb{V}}
\def\dbW{\mathbb{W}}
\def\dbX{\mathbb{X}}
\def\sP{\mathscr{P}}
\def\fH{\mathfrak{H}}
\def\fM{\mathfrak{M}}
\def\fN{\mathfrak{N}}
\def\={\buildrel \triangle \over =}
\def\ds{\displaystyle}
\def\nm{\noalign{\ms}}
\def\nb{\noalign{\bs}}
\def\ns{\noalign{\ss}}
\def\a{\alpha}
\def\b{\beta}
\def\g{\gamma}
\def\d{\delta}
\def\e{\varepsilon}
\def\l{\lambda}
\def\si{\sigma}
\def\t{\tau}
\def\f{\varphi}
\def\th{\theta}
\def\o{\omega}
\def\G{\Gamma}
\def\D{\Delta}
\def\O{\Omega}
\def\mf{\mathcal{F}}
\def\me{\mathbb{E}}
\def\rd{\,\mathrm d}  
\def\nb{\nabla}
\def\cA{{\cal A}}
\def\cC{{\cal C}}
\def\cF{{\cal F}}
\def\cG{{\cal G}}
\def\cH{{\cal H}}
\def\cJ{{\cal J}}
\def\cK{{\cal K}}
\def\cL{{\cal L}}
\def\cO{{\cal O}}
\def\cP{{\cal P}}
\def\cQ{{\cal Q}}
\def\cR{{\cal R}}
\def\cS{{\cal S}}
\def\cT{{\cal T}}
\def\no{\noindent}
\def\ss{\smallskip}
\def\ms{\medskip}
\def\bs{\bigskip}
\def\q{\quad}
\def\qq{\qquad}
\def\lt{\left}
\def\rt{\right}
\def\lan{\langle}
\def\ran{\rangle}
\def\rf{\eqref}
\def\h{\widehat}
\def\wt{\widetilde}
\def\cd{\cdot}
\def\cds{\cdots}
\def\dim{\hbox{\rm dim$\,$}}
\def\ae{\hbox{\rm a.e.}}
\def\as{\hbox{\rm a.s.}}
\def\deq{\triangleq}
\def\({\Big (}
\def\){\Big )}
\def\[{\Big[}
\def\]{\Big]}
\def\bde{\begin{definition}\label}
\def\ede{\end{definition}}
\def\be{\begin{equation}}
\def\bel{\begin{equation}\label}
\def\ee{\end{equation}}
\def\beq{\begin{equation*}\begin{aligned}}
\def\eeq{\end{aligned}\end{equation*}}
\def\bal{\begin{aligned}}
\def\eal{\end{aligned}}
\def\bay{\begin{array}{ll}}
\def\eay{\end{array}}
\def\beay{\begin{equation*}\begin{array}{ll}}
\def\eeay{\end{array}\end{equation*}}
\def\bt{\begin{theorem}\label}
\def\et{\end{theorem}}
\def\bc{\begin{corollary}\label}
\def\ec{\end{corollary}}
\def\bl{\begin{lemma}\label}
\def\el{\end{lemma}}
\def\bp{\begin{proposition}\label}
\def\ep{\end{proposition}}
\def\bas{\begin{assumption}\label}
\def\eas{\end{assumption}}
\def\br{\begin{remark}\label}
\def\er{\end{remark}}
\def\bex{\begin{example}\label}
\def\ex{\end{example}}
\def\ba{\begin{array}}
\def\ea{\end{array}}
\def\ed{\end{document}}
\def\square#1{\vbox{\hrule\hbox{\vrule height#1%
     \kern#1\vrule}\hrule}}
\def\rectangle#1#2{\vbox{\hrule\hbox{\vrule height#1%
     \kern#2\vrule}\hrule}}
\font\tenbb=msbm10 \font\sevenbb=msbm7 \font\fivebb=msbm5
\newtheorem{theorem}{\hskip 1.3em Theorem}[section]
\newtheorem{definition}[theorem]{\hskip 1.3em Definition}
\newtheorem{proposition}[theorem]{\hskip 1.3em Proposition}
\newtheorem{corollary}[theorem]{\hskip 1.3em Corollary}
\newtheorem{lemma}[theorem]{\hskip 1.3em Lemma}
\newtheorem{remark}[theorem]{\hskip 1.3em Remark}
\newtheorem{example}[theorem]{\hskip 1.3em Example}
\newtheorem{algorithm}[theorem]{\hskip 1.3em Algorithm}
\newtheorem{assumption}[theorem]{\hskip 1.3em Assumption}
\begin{document}

\title{ Numerical Methods for Optimal Control Problems with SPDEs
}

\author{
Andreas Prohl\thanks{
Mathematisches Institut, Universit\"at T\"ubingen, Auf der Morgenstelle 10,
D-72076 T\"ubingen, Germany.
 {\small\it
e-mail:} {\small\tt prohl@na.uni-tuebingen.de}.} \quad 
and \quad 
Yanqing Wang\thanks{
School of Mathematics and Statistics, Southwest University, Chongqing 400715, China.  {\small\it
e-mail:} {\small\tt yqwang@swu.edu.cn}. \ms}}

\date{Nov. 17, 2024}
\maketitle

\begin{abstract}

This paper investigates numerical methods for solving stochastic linear quadratic (SLQ) optimal control problems governed by stochastic partial differential equations (SPDEs). Two distinct approaches, the open-loop and closed-loop ones, are developed to ensure convergence rates in the fully discrete setting. The open-loop approach, utilizing the finite element method for spatial discretization and the Euler method for temporal discretization, addresses the complexities of coupled forward-backward SPDEs and employs a gradient descent framework suited for high-dimensional spaces. Separately, the closed-loop approach applies a feedback strategy, focusing on Riccati equation for spatio-temporal discretization. Both approaches are rigorously designed to handle the challenges of fully discrete SLQ problems, providing rigorous convergence rates and computational frameworks.

\end{abstract}

\ms

\no\bf Keywords: \rm Convergence rate, stochastic linear quadratic optimal control problem, open-loop strategy, closed-loop strategy,
 Riccati equation


\ms

\no\bf AMS 2020 subject classification: \rm  
49N10, 49J20, 93B52, 
65M60,
35Q93
 

\tableofcontents
%

\section{Introduction}\label{ch-1}

Let $D$\index{$D$}  be a bounded domain with smooth boundary or  
a convex polyhedral domain in $\dbR^d$, $d=1,2,3$, and $T>0$. 
The main goal in this work is to {\em numerically approximate}  the (${\mathbb F}$-adapted) optimal control process $U^*(\cd)$
 that minimizes the {\em quadratic} cost functional ($\alpha \geq 0$) 
%
\begin{equation}\label{intro-1a}
\setlength\abovedisplayskip{3pt}
{\mathcal J}\big(U(\cd)\big) = \frac{1}{2} {\mathbb E}\Big[ \int_0^T \big[ \| X(t) \|^2 + \| U(t)\|^2\big] \, {\rm d}t +
\alpha \| X(T) \|^2\Big]
\setlength\belowdisplayskip{3pt}
\end{equation}
subject to the {\em linear} stochastic partial differential equation (SPDE),\index{SPDE} with $\beta \in \dbR$,
\begin{equation}\label{spde}
\setlength\abovedisplayskip{3pt}
\left\{
\begin{array}{ll}
\ds {\rm d}X(t) = \big[ \Delta X(t) + U(t) \big] {\rm d}t +  \big[ \b  X(t)+\si(t)\big] {\rm d}W(t)\qq   t \in (0,T]\,,  \\
\ns \ds        X(0) = x \in {\mathbb H}^1_0 \cap {\mathbb H}^2\, ,
 \end{array}
 \right.  
 \setlength\belowdisplayskip{3pt}
\end{equation}
which is supplemented with a homogeneous Dirichlet boundary condition. The concise formulation of the problem includes a given complete filtered probability space $(\Omega, {\mathcal F}, \{ {\mathcal F}_t\}_{0 \leq t \leq T}, {\mathbb P})$ on which a $\dbR$-valued standard Brownian motion $W$ is defined, and will be referred to as\\
\no {\bf  Problem (SLQ)}:\index{Problem {\bf (SLQ)}}  `minimize (\ref{intro-1a}) subject to (\ref{spde})'. 

\ss

In this work, Problem {\bf (SLQ)} serves as a prototypic formulation, for which numerical methods and their error analysis will be developed. Most of the results
can easily be extended  to more
general stochastic optimal control problems subject to SPDEs with distributed controls.

While the existence and uniqueness of a strong solution --- which is the tuple $\big(X^*(\cdot), U^*(\cdot)\big)$ that 
minimizes Problem {\bf (SLQ)} --- is well-known by now, its explicit construction is usually not possible, and thus requires a 
numerical approximation: in fact,  to construct an ({\em implementable}) {\em spatio-temporal  discretization scheme } of a 
stochastic linear quadratic optimal control problem (SLQ problem, for short),
\index{SLQ problem} and to verify convergence of its solution is a very recent field in mathematical control theory and
the subject of this work, which 
combines tools from stochastic analysis, numerical optimization, and mathematical statistics. 
\ss

Let us motivate the main conceptional difference between the numerics of the deterministic ($\beta = 0, \si(\cd)=0$) vs.~the stochastic ($\beta \neq 0$ or $\si(\cd)\neq 0$) control problem (\ref{intro-1a})--(\ref{spde}),
which is the {\em adjoint equation} in the optimality conditions. The optimality conditions will be needed to formulate a gradient descent method, 
by which an implementable iterative algorithm can be proposed.
We start with the deterministic case, for which different optimally convergent discretization methods are available.
%
%
\begin{enumerate} [ (a)]
\item Let $\beta = 0$ and $\si(\cd)=0$. 
Then (\ref{spde}) is a {\em deterministic} PDE, and the overall problem is the {\em deterministic linear-quadratic} 
optimal control problem (LQ problem, for short).\index{LQ problem} The related optimality condition with unique solution $\big( X^*(\cdot), Y(\cdot), U^*(\cdot)\big)$ is:
\begin{subequations}\label{intro-1cd}
    \begin{empheq}[left={\empheqlbrace\,}]{align}
      &{\rm d}X^*(t) = \big[ \Delta X^*(t) + U^*(t) \big] {\rm d}t   & t \in (0,T]\,, \label{intro-1cda}\\
      &{\rm d}Y(t) = \big[ -\Delta Y(t)  + X^*(t) \big] {\rm d}t   &  t \in [0,T)\,,  \label{intro-1cdb} \\
       & X^*(0) = x \, , \qquad Y(T) = -\a X^*(T)\, ,  \label{intro-1cdc}
    \end{empheq}
\end{subequations}
together with the maximum principle 
 \begin{equation}\label{intro-1dd}
  \setlength\abovedisplayskip{5pt}
 U^*(t) - Y(t) = 0 \qquad \forall\, t \in [0,T]\, .
  \setlength\belowdisplayskip{5pt}
 \end{equation} 
 The deterministic  PDEs in (\ref{intro-1cd}) are here written in the integral form to facilitate the direct comparison with the stochastic setting in  (b) below.
 Clearly, (\ref{intro-1dd}) may be inserted into \rf{intro-1cdb}, and the  {\em adjoint equation} is a PDE running backward in time; hence, its complexity is of the same kind as the forward PDE \rf{intro-1cda}. Efficient, fast numerical methods that are based on the spatio-temporal discretization of this optimality system are available by now; in addition, 
convergence with rates in terms of time and space discretization parameters 
may be shown which base on (discrete) stability properties and space-time regularity of the solution $\big(X^*(\cdot), U^*(\cdot)\big)$; see {\em e.g.}~\cite{Hinze-Pinnau-Ulbrich-Ulbrich09}.

\item Let $\beta \neq 0$ or $\si(\cd)\neq 0$. 
The optimality condition for $\big(X^*(\cd),U^*(\cd)\big)$ constitutes a {\em coupled forward-backward SPDE system} (FBSPDE)\index{FBSPDE} with a unique solution $\big(X^*(\cd),Y(\cd),Z(\cd),U^*(\cd)\big)$ (see {\em e.g.}~\cite{Lv-Zhang21}),
\begin{subequations}\label{intro-1c}
    \begin{empheq}[left={\empheqlbrace\,}]{align}
      &{\rm d}X^*(t) = \big[ \Delta X^*(t) + U^*(t) \big] \rd t 
       + \big[ \b X^*(t) +\si (t)\big] \rd W(t)  &  t \in (0,T]\,, \label{intro-1ca}\\
      &{\rm d}Y(t) = \big[ -\Delta Y(t) - \b Z(t) + X^*(t) \big] \rd t + Z(t) \rd W(t)  &  t \in [0,T)\,,  \label{intro-1cb} \\
       & X^*(0) = x \, , \qquad Y(T) = -\a X^*(T)\, ,  \label{intro-1cc}
    \end{empheq}
\end{subequations}
together with the optimality condition 
 \begin{equation}\label{intro-1d}
  \setlength\abovedisplayskip{5pt}
 U^*(t) - Y(t) = 0 \qquad t \in [0,T]\, .
  \setlength\belowdisplayskip{5pt}
 \end{equation}  
 The adjoint equation \rf{intro-1cb}  is a {\em backward SPDE} (BSPDE, for short),\index{BSPDE} 
 whose solution now is a {\em tuple} $\big( Y(\cdot), Z(\cdot)\big)$, where the (additional) second component $Z(\cdot)$ 
 plays its role to make sure that the first component $Y(\cdot)$ is ${\mathbb F}$-adapted;  
 throughout the work, we assume that $\dbF=\{\mf_t\}_{0\leq t\leq T}$ is generated by the Brownian motion $W$. Its
 numerical approximation requires  {\em significantly} larger computational resources if compared to 
 PDE \rf{intro-1cdb}, or SPDE \rf{intro-1ca} --- as will be motivated in the following paragraph. 
 Next to a different algorithmic setup to approximate $\big(X^*(\cdot), U^*(\cdot)\big)$, it is also that the error analysis 
in Section \ref{ch-open} for a spatio-temporal discretization scheme of \rf{intro-1c}--\rf{intro-1d} differs profoundly from the deterministic case: a key step here is to bound temporal changes for (schemes of) $\big(X^*(\cd), Y(\cd), Z(\cd), U^*(\cd)\big)$ whose derivation requires tools from Malliavin calculus --- where complications again enter which are caused by the driving Brownian motion.

\end{enumerate}

\begin{table}[!htb]

\begin{center}   

\label{table:1} 
\begin{tabular}{|  c | m{5.5cm}| m{5.5cm}|}   
\hline\hline   

\parbox{3cm} {\vspace{0.25cm} 
\textbf{ $\q\,\,\,\,$Method} 
\vspace{0.15cm} 
}
& \parbox{5.5cm} {\vspace{0.2cm} Open-loop approach \vspace{0.2cm}}
& \parbox{5.5cm} {\vspace{0.2cm} Closed-loop approach \vspace{0.2cm}} \\   
\hline   

\textbf{Theory} 
& \parbox{5.5cm} {\vspace{0.2cm} \underline {Pontryagin's maximum principle}: 
FBSPDE \rf{intro-1c} with optimality  condition \rf{intro-1d} \vspace{0.2cm}}
& \parbox{5.5cm} {\vspace{0.2cm} \underline {Stochastic LQ theory}:\\
Riccati equation \rf{Riccati} and PDE \rf{varphi} \vspace{0.2cm}} \\   
\hline 

\parbox{2.8cm} {\vspace{0.2cm} 
 \textbf{Discretization\\
  strategies
 } }
& \parbox{5.5cm} {\vspace{0.2cm} 
{\bf a)} Spatial discretization by FEM for {\bf (SLQ)}: a coupled FBSDE in finite dimensional space \\
{\bf b)} Temporal discretization via the Euler method: 
             a coupled stochastic  difference equation \\
{\bf c)} Decoupling by a gradient descent method 
\vspace{0.2cm}}
& \parbox{5.5cm} {\vspace{0.2cm} 
{\bf a)} Spatial discretization by FEM for {\bf (SLQ)}: the Riccati equation in finite dimensional space \\
{\bf b)} Temporal discretization via the Euler method: 
           difference Riccati equation
 \vspace{0.2cm}} \\   
\hline  


\multirow{2}{*}{\textbf {Rates}}
&  \parbox{5.5cm} {\vspace{0.2cm} 
$\cO\big(h^2+\t^{1/2}\big)$ in Theorems  \ref{rate1}  \\ 
and \ref{rate2} 
\vspace{0.2cm} }
&  \parbox{5.5cm} {\vspace{0.2cm} 
$\cO\big(h^2\ln{\frac 1 h}+\t^{1/2}\big[|\b|+\t^{1/2}\big]\big)$ in \\
Remark \ref{w1023r1} and Theorem \ref{SLQ-pair-rate} 
\vspace{0.2cm} }  \\ 
\cline{2-3}
& \multicolumn{2}{c|}{\parbox{10cm}{\vspace{0.2cm} 
 Improvement to $\cO\big(h^2+\t\big)$ for additive noise 
by Theorem \ref{rate1} for
 space and Theorem \ref{SLQ-pair-rate} for time
\vspace{0.2cm} }}  \\  
\hline   
\end{tabular}   
\end{center}   
\caption{Construction of different numerical schemes for Problem {\bf (SLQ)}}  
\end{table}

If compared to the PDE \rf{intro-1cdb}, we already hinted at the profoundly more complex endeavor to numerically simulate the 
stochastic adjoint equation \rf{intro-1cb}, which in fact is a BSPDE: after a finite element discretization of (\ref{intro-1c})--(\ref{intro-1d}) in space (mesh width $h$), and  in time
(time step $\tau$ of a uniform mesh $I_\t=\{t_{n}\}_{n=0}^N \subset [0,T]$) for $\b=0$
we have
\begin{subequations}\label{w1212e33a}
    \begin{empheq}[left={\empheqlbrace\,}]{align}
      & [\mathds{1}_h - \tau \Delta_h]X^*_{h\t}(t_{n+1})= X^*_{h\t}(t_n)+ \tau U^*_{h\t}(t_n) +\Pi_h\si(t_n)  \D_{n+1}W\, , \label{w1212e33aa}\\
      &[\mathds{1}_h - \tau \Delta_h]Y_{h\t}(t_n) = \me^{t_n}\big[ Y_{h\t}(t_{n+1})- {\tau}  X^*_{h\t}(t_{n+1})\big]\, ,   \label{w1212e33ab} \\
       & X^*_{h\t}(0)=\Pi_h x\, ,\qquad Y_{h\t}(T)=-\a X^*_{h\t}(T)\, ,  \label{w1212e33ac}
    \end{empheq}
\end{subequations}
together with 
\bel{w1003e12a}
U^*_{h\t}(t_n) - Y_{h\t}(t_n)=0 \qquad  0 \leq n \leq N-1 \, ;
\ee
%
see Remark \ref{w829r1} in Section \ref{fbspde-num1z}.
Here, $\mathds{1}_h$ denotes the identity operator on the finite element space,\index{${\mathds 1}_h$} 
 and $\Delta_{n+1}W = W(t_{n+1})-W(t_{n})$;
the operator $\Delta_h$ denotes the discrete Laplacian, and $\Pi_h$ is a projection that is related to finite elements; we refer to Section \ref{ch-open} for a detailed discussion.

The main effort now
is to simulate the {\em conditional expectation} $\me^{t_n}[\cd]$ in \rf{w1212e33ab};
in fact, to accomplish this goal is a well-known task from regression analysis in {\em statistical learning}, where related estimators are available, including {\em e.g.}~`{\tt kNN}-', partitioning-, or kernel estimators; see {\em e.g.}~\cite[Chapter 4]{Gyorfi-Kohler-Krzyzak-Walk02}. These estimators perform well for problems where the dimension  
of the state space  where $X^*_{h\t}(\cd)$ takes its values is {\em low} --- such as stochastic control problems  with {\em small} SDE-systems.
%
%
In the present setting of Problem {\bf (SLQ)} however, it  
is as {\em large} as 
the dimension of the finite element space ${\mathbb V}_h \subset {\mathbb H}^1_0$ that contains admissible states. Therefore, to efficiently cope with the related `curse of dimensionality' we employ a `data-dependent' partitioning estimator in
Section \ref{stati-1} --- the construction of which was motivated by \cite{Dunst-Prohl16} 
--- to approximate the conditional expectation in BSPDE \rf{w1212e33ab}. 
However, the simulation is still costly,  and requires to store large data sets 
to approximate the conditional expectation; and so is an iterative numerical solver based on `gradient descent' for the whole system (\ref{w1212e33a}) which decouples simulations of the `forward' and the `backward part' per iteration. Note that this algorithmic complexity of (\ref{intro-1cd})--(\ref{intro-1dd}) does not appear in the deterministic setting (\ref{intro-1a})--(\ref{spde}) --- where to keep track of ${\mathbb F}$-adaptedness of iterates is {\em not} part of the problem. 
In Section \ref{impl-1a}, we therefore propose a conceptually different   method that avoids the simulation of conditional expectations and hence cuts simulation times to only a fraction of the one
where regression estimators are used --- but this method is only possible for {\em additive } noise ({\em i.e.}, $\b=0$).
As a consequence, to judge the effectiveness of the `open-loop' based numerics in the stochastic setting ($\beta \neq 0$ or
$\si(\cd)\neq 0$) requires to rethink established criteria from the deterministic case (such as efficiency, complexity,...), which {\em e.g.}~often favor
spatio-temporal discretization
based on
\rf{intro-1c}--\rf{intro-1d}.

\ss


An advantage of the `open-loop' based numerics in the stochastic setting is its applicability to even more general stochastic control problems than 
SLQ problems
only. Consequently, the new numerical tools to accurately approximate the solution of BSPDEs in general, and in the context of (\ref{w1212e33a})--(\ref{w1003e12a}) in particular, which are detailed in Section \ref{ch-open} are good to have. 
It is, however, that  specific problems such as Problem {\bf (SLQ)} additionally have a `closed-loop'  representation of  the optimal control with the help of a feedback operator, which may be used to set up numerical schemes 
whose complexity shrinks drastically. The key idea here is to avoid BSPDEs, and use
instead the (terminal backward) {\em stochastic Riccati equation}  
\begin{equation}\label{Riccati}
 \setlength\abovedisplayskip{4pt}
 \setlength\belowdisplayskip{4pt}
\left\{
\begin{array}{ll} 
\ds \cP'(t) + \Delta \cP(t) +  \cP(t)\Delta  +\b^2 \cP(t) + \mathds{1} - \cP^2(t)  = 0 \qquad  t \in [0,T]\,,  \\
 \ns\ds        \cP(T) = \a\mathds{1}\,.
 \end{array}
 \right.  
 \end{equation}
Here $\mathds{1} $ denotes the identity operator on $\dbL^2$.\index{${\mathds 1}$}
Note that (\ref{Riccati}) is a deterministic equation with {\em operator-valued} solution, and the adjective `stochastic' here only refers to its close relation to Problem {\bf (SLQ)}, as is by now common in the literature; see {\em e.g.}~\cite[Chapter 13]{Lv-Zhang21}. 
Once (an approximation of) $\cP(\cd)$ is available, another deterministic linear PDE needs be solved
 \begin{equation}\label{varphi}
  \setlength\abovedisplayskip{4pt}
 \lt\{
 \begin{array}{ll}
\ds \eta'(t)=-\D \eta(t)+\cP(t)\eta(t)- \b\cP(t)\si(t) &\qq t\in [0,T]\,,\\
\ns\ds \eta(T)=0\,,
 \end{array}
 \rt.
  \setlength\belowdisplayskip{4pt}
\end{equation} 
where $\cP(\cd)$ now enters. Then, we may represent the optimal control  
$U^*(\cd)$ of Problem {\bf (SLQ)} 
with the help of the feedback law
 \begin{equation}\label{feedback-0}
 \setlength\abovedisplayskip{4pt}
U^*(t) = - \cP(t) X^*(t)-\eta (t) \qquad  t \in [0,T]\,,
\setlength\belowdisplayskip{4pt}
\end{equation} 
which may be inserted into (\ref{spde}).
Hence, to solve (\ref{intro-1a})--(\ref{spde}) now only requires to solve spatio-temporal approximations of PDEs 
\rf{Riccati}, \rf{varphi}, and of the SPDE \rf{spde}. As a consequence, neither an iterative optimization algorithm (`gradient descent') is needed, nor the simulation of coupled FBSPDEs; it is, however, that a spatio-temporal discretization scheme of the stochastic Riccati equation (\ref{Riccati}) needs to be solved, which requires to solve for and store a finite sequence of large matrices $\{ \cP_n\}_{n=0}^N$.
It turns out, however, that this
is still cheaper (at least for $1d$ and $2d$ simulations) than to solve BSPDEs in the
`open-loop' approach --- as comparative computational studies will show.  

\ss

We conclude this introduction with our own appreciation of the young research field: `numerical control theory for SPDEs' --- to which we wish to contribute with this work. It is (still a small) neighbor of the established field of `numerical control theory for PDEs', whose continuing interest is nourished by practical applications. In this young area of numerical stochastic analysis, the last decade has seen only 
a few research papers, which may be due to the subject matter that combines tools from numerics, simulation, stochastic and deterministic PDEs. However,  the practical relevancy of this field is evident,
and we expect a more rapid construction of provably accurate, efficient algorithms in the future; we conjecture this
to happen also due to the related
developments in theory, {\em i.e.}: `mathematical control theory for SPDEs', which {\em e.g.}~lead to the recent publication \cite{Lv-Zhang21}. 


The work consists of two parts, each of which contains numerical schemes and their error analysis
to illustrate their complexity:
\begin{enumerate}[(a)]
\item  `open-loop' based numerics in Section \ref{ch-open}: We derive optimal rates of convergence for the  spatio-temporal discretization 
of the optimality system (\ref{intro-1c})--(\ref{intro-1d}), which is a coupled FBSPDE. 
In the second step, we decouple the computation of iterates in the course of a `gradient descent method',
which itself precedes an exact computation of conditional expectations ($\b=0$) or a regression analysis ($\b\neq 0$).
The mathematical tools needed to accomplish this goal are Malliavin calculus, (stochastic) Riccati equation analysis, (discrete) semigroup theory, and regression analysis.  

\item `closed-loop' based numerics in Section \ref{ch-closed}: the tools needed here include optimal control analysis, the (stochastic) Riccati equation, and (discrete) semigroup theory.

\end{enumerate} 

\section{Preliminaries}\label{pre}

Section \ref{not1} introduces notations that will be used throughout the work, and collects tools to discretize Problem {\bf (SLQ)}, in particular. 
Section \ref{spde-theory} recalls bounds in strong norms for the solutions to 
linear SPDEs and BSPDEs, which are needed
for the error analysis in Sections \ref{ch-open} and \ref{ch-closed}.
The numerical schemes in these two sections base on two different concepts: `open-loop approach' vs.~`closed-loop approach', which are recalled in Section \ref{se-openclosed}.

\subsection{Notations, assumptions,  methods and tools}\label{not1}

{\bf a.~Spaces and operators.} 
Let $D$\index{$D$}  be a bounded domain with smooth boundary or  
a convex polyhedral domain in $\dbR^d$, $d=1,2,3$.
By $\| \cdot \| $ resp.~$(\cdot, \cdot)_{\dbL^2} $ 
we denote the norm resp.~the scalar product in Lebesgue space ${\mathbb L}^2 \deq  L^2(D)$\index{${\mathbb L}^2$}.
By $\|\cd\|_{\dbH^{-1}}$,  $\|\cd\|_{\dbH_0^1}$, and $\|\cd\|_{\dbH^2}$, we denote norms in Sobolev spaces ${\mathbb H}^{-1}\deq H^{-1}(D)$\index{${\mathbb H}^{-1}$}, ${\mathbb H}^1_0\deq H_0^1(D)$\index{${\mathbb H}_0^1$}, and $\dbH^2\deq H^2(D)$\index{${\mathbb H}^2$} respectively. 
The duality pairing  between $\dbH^{-1}$ and $\dbH_0^1$ is denoted by $\lan \cd,\cd\ran$.

Fix $T>0$, and
let $(\Omega, {\mathcal F}, {\mathbb F}, {\mathbb P})$ be a complete filtered probability space, where 
${\mathbb F}=\{\mf_t\}_{t\in[0,T]}$ is the natural filtration generated by the ${\mathbb R}$-valued Brownian motion $W$, which is augmented by 
all the ${\mathbb P}$-null sets. 
Denote by $\me^t[\xi]$ the conditional expectation $\me\big[\xi\,\big|\,\mf_t\big]$\index{${\mathbb E}^t[\xi]$}.

For a given Hilbert space $\big(\dbH , ( \cdot, \cdot)_{\dbH }\big)$,
we set 
\begin{equation*}
\setlength\abovedisplayskip{3pt}
\dbS(\dbH)\deq \big\{\cP\in \cL(\dbH)\,|\,\cP \mbox{ is symmetric}\big\}\,,
\setlength\belowdisplayskip{3pt}
\end{equation*}
\index{${\mathbb S}(\dbH)$}
and
\begin{equation*}
\setlength\abovedisplayskip{3pt}
\setlength\belowdisplayskip{3pt}
\dbS_+(\dbH)\deq \big\{\cP\in \dbS(\dbH)\,|\, (\cP\eta,\eta)_{\dbH}\geq 0\q \forall\,\eta\in \dbH\big\}\,.
\end{equation*}
\index{${\mathbb S}_+(\dbH)$}
For any $t\in [0,T]$,  let
$L_{\mf_t}^2(\Omega;\dbH)\deq
L^2(\Omega,\mf_t,\dbP;\dbH)$\index{$L^2_{\mf_t}(\O;\dbH)$},  and
\begin{eqnarray*}
&& L^2_\dbF(0,T;\dbH) \deq \Big\{
\psi :(0,T) \!\times\! \Omega\!\to\dbH \,\Big| \,
\psi(\cd)\hbox{
is $\dbF$-adapted}\mbox{ and }\me\Big[\int_0^T\|\psi(t)\|_\dbH^2\rd t\Big] <\infty\Big\}\,,
\index{$L^2_{\dbF}(0,T;\dbH)$}\\
&& L^2_\dbF\big(\O;C([0,T];\dbH)\big) \deq \Big\{
\psi \in L^2_\dbF(0,T;\dbH) \,\Big| \,
\psi(\cd)\hbox{
admits a continuous path }\\ 
&&\qq\qq\qq\qq\qq  \mbox{ and } \sup_{t\in[0,T]}\me\big[\|\psi(t)\|_\dbH^2\big]<\infty\Big\}\,,
\index{$L^2_{\dbF}\big(\Omega;C([0,T];\dbH)\big)$}\\
&& C_\dbF\big([0,T];L^2(\O;\dbH)\big) \deq \Big\{
\psi \in L^2_\dbF(0,T;\dbH) \,\Big| \,
\psi(\cd)\hbox{
is $\dbF$-adapted }\\ 
&&\qq\qq\qq\qq\qq  \mbox{ and } \f(\cd): [0,T]\to L^2_{\mf_T}(\O;\dbH) \mbox{ is continuous}\Big\}\,,\index{$C_\dbF\big([0,T];L^2(\O;\dbH)\big)$}\\
&& D_\dbF\big([0,T];L^2(\O;\dbH)\big) \deq \Big\{
\psi \in L^2_\dbF(0,T;\dbH) \,\Big| \,
\psi(\cd)\hbox{
is $\dbF$-adapted }\\ 
&&\qq\qq\qq\qq\qq  \mbox{ and } \f(\cd): [0,T]\to L^2_{\mf_T}(\O;\dbH) \mbox{ is c\`adl\`ag}\Big\}\,.\index{$D_\dbF\big([0,T];L^2(\O;\dbH)\big)$}
\end{eqnarray*}
For any $a\,,b\in\dbR$, set
\bel{w1025e1}
\setlength\abovedisplayskip{4pt}
\setlength\belowdisplayskip{4pt}
a\vee b\deq \max\{a\,,b\}\,, \qq a\wedge b\deq \min\{a\,,b\}\,.\index{$a\vee b\,,a\wedge b$}
\ee

Below, let $\cC$ be a generic positive constant which may depend on data of the specific problems.

\ms

Let $\D: \dbH_0^1\cap\dbH^2\to \dbL^2$ be the Dirichlet realization of the  Laplace operator.  
Naturally, $\D$ generates the analytic semigroup $E(\cd) \equiv \{E(t)\,|\, t \geq 0\}$,
 with $E(t)=e^{ t\D }$ for $t\geq 0$\index{$E(t)$}. Moreover,
 there exist an increasing sequence of eigenvalues $\{\l_n\}_{n\geq 1}$ and an orthonormal basis of eigenvectors $\{\f_n\}_{n\geq 1}$ in $\dbL^2$
 such that $\D\f_n=-\l_n\f_n$ and
\begin{equation*}
\setlength\abovedisplayskip{4pt}
\setlength\belowdisplayskip{4pt}
0<\l_1\leq\l_2\leq\cds(\to\infty)\,.
\end{equation*}
By the spectral decomposition for $\D$, we can define fractional powers $(-\D)^\g$ for $\g\geq 0$ by
\begin{equation*}
\setlength\abovedisplayskip{4pt}
\setlength\belowdisplayskip{4pt}
(-\D)^\g x=\sum_{n=1}^\infty \l_n^\g(x,\f_n)_{\dbL^2}\f_n\,,
\end{equation*}
where $x\in D\big((-\D)^\g \big)\deq \big\{x\in \dbL^2\,\big|\, \sum_{n=1}^\infty \l_n^{2\g}(x,\f_n)_{\dbL^2}^2<\infty\big\}$.
Also, we introduce
$\dot {\mathbb H}^\g\deq D\big((-\D)^{\g/2} \big) $,\index{$\dot {\mathbb H}^\g$}
and have $\|x\|_{\dot\dbH^\g}=\|(-\D)^{\g/2}x\|$ for any $x\in \dot {\mathbb H}^\g$. Obviously, $\|\cd\|=\|\cd\|_{\dot\dbH^0}$.
Following \cite{Fujita-Suzuki91}, we know that
\begin{equation*}
\setlength\abovedisplayskip{4pt}
\setlength\belowdisplayskip{4pt}
\dot\dbH^1=\dbH_0^1\,,\qq \dot\dbH^2=\dbH_0^1\cap\dbH^2\,.
\end{equation*}

For $\g>0$, we consider the set
\begin{equation*}
\setlength\abovedisplayskip{4pt}
\setlength\belowdisplayskip{4pt}
\dot\dbH^{-\g}\deq\Big\{x=\sum_{n=1}^\infty x_n \f_n\,\Big|\, x_n\in\dbR, n=1,2,\cds, \mbox{ such that } 
 \|x\|_{\dot\dbH^{-\g}}^2=\sum_{n=1}^\infty \l_n^{-\g} x_n^2<\infty \Big\}\,.
\end{equation*}
For any $x=\sum_{n=1}^\infty x_n \f_n\in \dot\dbH^{-\g}$, we also can define the fractional power of $-\D$ for negative exponents by 
\begin{equation*}
\setlength\abovedisplayskip{4pt}
\setlength\belowdisplayskip{4pt}
(-\D)^{-\g/2}x=\sum_{n=1}^\infty \l_n^{-\g/2}x_n\f_n\,.
\end{equation*}
By \cite[Theorem B8]{Kruse14}, $\dot \dbH^{-\g}$ is a separable Hilbert space, which is isometrically isomorphic to $(\dot \dbH^\g)'$.

The following properties of the semigroup $E(\cd)$ 
will be utilized frequently throughout this work. 

\bl{w228l1}
For the analytic semigroup $E(\cd)$, there exists a constant $\cC$ such that
the following properties hold true:
\begin{subequations}
    \begin{empheq}[left={\empheqlbrace\,}]{align}
      &\big\|(-\D)^{\g} E(t) \big\|_{\cL(\dbL^2)}\leq \cC t^{-\g} &&\forall\,\g\geq 0\,, t> 0\,; \label{w228e2}\\
      &\big\|(-\D)^{-\g}\big(E(t)-\mathds{1}\big) \big\|_{\cL(\dbL^2)} \leq \cC t^{\g} &&\forall\,\g\in[0,1]\,, t\geq 0\,;  \label{w228e3} \\
       &\|E(t)-E(s)\|_{\cL(\dbL^2)}\leq \cC (t-s)^{\g}s^{-\g} && \forall\g\in[0,1]\,,\, s\,,t>0\,, s<t\,.  \label{w320e01}
    \end{empheq}
\end{subequations}
%
%
%
\el

The assertions \rf{w228e2} and \rf{w228e3} can be found in \cite{Pazy83}, and  then \rf{w320e01} can be 
derived directly.

\ms

In this work, we make  the following assumptions on further data that appear in problem (\ref{intro-1a})--(\ref{spde}). \\
{\bf Assumption (A)}: \label{assump} 
Let $x\in  \dbH_0^1\cap\dbH^2$, and $\si\in C( [0,T];\dbH_0^1)\cap L^2(0,T;\dbH_0^1\cap\dbH^2)$. There exists a positive constant $L_{\si,\a}$, 
for $\a=1/2\,,1$ such that
\begin{equation*}
\setlength\abovedisplayskip{4pt}
\setlength\belowdisplayskip{4pt}
\|\si(t)-\si(s)\|\leq L_{\si,\a} |t-s|^{\a} \qq \forall\, t,s \in[0,T]\,.
\end{equation*}

\ms

{\bf b.~Malliavin derivative.}  We use Malliavin derivatives of involved processes in the BSPDE \rf{fbspdeb} below to quantify 
temporal changes
of $\big(X^*(\cd), U^*(\cd)\big)$ in Problem {\bf (SLQ)}; see {\em e.g.}~Lemma \ref{w1008l2} in Section \ref{se-openclosed}.
Hence, we briefly recall here the definition and those results of the Malliavin derivative
of processes that will be needed below. For further details, we refer to \cite{Nualart06,ElKaroui-Peng-Quenez97}. 

We define
$\dbW(\cd):L^2(0,T;{\mathbb R})\to L^2_{\cF_{T}}(\Omega;{\mathbb R})$ by
\begin{equation*}
\setlength\abovedisplayskip{4pt}
\setlength\belowdisplayskip{4pt}
\dbW(g)=\int_0^{T}g(t) \, {\mathrm d}W(t)\, .
\end{equation*}
For $\ell \in {\mathbb N}$, we denote by $C_p^\infty(\dbR^\ell)$ the {space} of all smooth functions $f:\dbR^\ell \to \dbR$ such that $f(\cd)$ and all of its partial derivatives have polynomial growth.
Let  $\cQ$ be the {set} of ${\mathbb R}$-valued  random variables of the form
\begin{equation*}
\setlength\abovedisplayskip{4pt}
\setlength\belowdisplayskip{4pt}
F=f\big(\dbW(g_1),\dbW(g_2),\cdots,\dbW(g_\ell) \big) 
\end{equation*}
for some $f(\cd) \in C_p^{\infty}(\dbR^\ell)$, $\ell \in {\mathbb N}$, and   $g_1(\cd),\cds,g_\ell(\cd)\in L^2(0,T;{\mathbb R})$.
For any $F\in \cQ$ we define its  ${\mathbb R}$-valued Malliavin derivative process
$DF \equiv \{ D_\th F\,|\, 0 \leq \th \leq T\}$  via
\begin{equation*}
\setlength\abovedisplayskip{4pt}
\setlength\belowdisplayskip{4pt}
D_\th F=\sum\limits_{i=1}^\ell \frac{\partial f}{\partial x_i} \big(\dbW(g_1),\dbW(g_2),\cdots,W(g_\ell)\big)g_i(\th)\, .
\end{equation*}
In general, we define the $k$-th iterated derivative of $F$ by $D^kF=D(D^{k-1}F)$, for any
$k\in {\mathbb N}$. Note that for any $\theta \in[0,T]$, $D_\th F$ is $\mf_T$-measurable; and if $F$ is 
$\mf_t$-measurable, then $D_\th F=0$ for any $\th\in (t,T]$. 

Now we can extend the derivative operator to $\dbH$-valued random variables, 
where $\big(\dbH, (\cd,\cd)_{\dbH}\big)$ is a
real separable Hilbert space. For any $k\in \dbN$, and $u$ in the set of $\dbH$-valued variables:
\begin{equation*}
\setlength\abovedisplayskip{4pt}
\setlength\belowdisplayskip{4pt}
\cQ_\dbH= \Big\{ u=\sum_{j=1}^n F_j \phi_j \,\Big|\, F_j\in \cQ,\,\phi_j\in \dbH,\, n\in \dbN  \Big\}\, ,
\end{equation*}
we can define the $k$-th iterated derivative of $u$ by
$
D^k u=\sum_{j=1}^n D^kF_j\otimes \phi_j\, .
$
For $p \geq 1$, we define the norm $\|\cdot\|_{k,p}$  via
\begin{equation*}
\setlength\abovedisplayskip{4pt}
\setlength\belowdisplayskip{4pt}
\|u\|_{k,p} \deq \Big(\dbE \big[\|u\|_\dbH ^{p}\big] +\sum_{j=1}^k \me \big[\lt\|D^ju\rt\|_{\big({ L^2(0,T;{\mathbb R})}\big)^{\otimes j}\otimes \dbH}^p\big] \Big)^{1/ p}\, .
\end{equation*}
Then $\dbD^{k,p}(\dbH)$ is the completion of $\cQ_\dbH$ under the norm $\|\cdot\|_{k,p}$.\index{${\mathbb D}^{k,p}(\mathbb{H})$}

\ms

{\bf c.~Discretization in space -- the finite element method.} We partition the bounded domain $D \subset {\mathbb R}^d$ via a regular triangulation ${\mathcal T}_h$ into elements $K$ with maximum mesh size
$h \deq  \max \{ {\rm diam}(K)\,|\, K \in {\mathcal T}_h\}$, and consider the space
\begin{equation*}
\setlength\abovedisplayskip{4pt}
\setlength\belowdisplayskip{4pt}
{\mathbb V}_h \deq   \big\{\phi \in {\mathbb H}^1_0 \,\big|\, \phi |_K \in {\mathbb P}_1(K) \quad \forall\, K \in {\mathcal T}_h \big\}\,, \index{${\mathbb V}_h $}
\end{equation*}
where ${\mathbb P}_1(K)$ denotes the space of {polynomials} of degree $1$; see 
{\em e.g.}~\cite{Brenner-Scott08}.
We define the discrete Laplacian $\Delta_h: {\mathbb V}_h \rightarrow {\mathbb V}_h$ by $(-\Delta_h \f_h, \phi_h)_{{\mathbb L}^2} = (\nabla \f_h, \nabla \phi_h)_{{\mathbb L}^2}$\index{$\Delta_h$} for all $\f_h, \phi_h \in {\mathbb V}_h$, the generalized  $\dbL^2$-projection $\Pi_h: \dbH^{-1} \rightarrow {\mathbb V}_h$ by $(\Pi_h x, \phi_h)_{\dbL^2}= 
\lan x, \phi_h\ran$\index{$\Pi_h$} for all $x \in \dbH^{-1},\, \phi_h \in {\mathbb V}_h$, and the Ritz-projection $\cR_h:\dbH_0^1\to \dbV_h$
by $(\nb[\cR_h x-x], \nb\phi_h)_{\dbL^2}=0 $\index{${\mathcal R}_h$} for all $x \in\dbH_0^1,\,\phi_h\in \dbV_h$.  
It is evident when $x \in\dbL^2$, that $\Pi_h x$ is the standard $\dbL^2$-projection; see {\em e.g.}~\cite{Thomee06}.
Via definitions of $\Pi_h\,,\cR_h$, it is easy to get that
\bel{w1024e1}
\setlength\abovedisplayskip{4pt}
\setlength\belowdisplayskip{4pt}
\lt\{
\begin{array}{ll}
\ds \|\Pi_h x\|\leq \|x\| \qq & \forall\, x\in \dbL^2\,,\\
\ns\ds \|\nb \cR_h x\|\leq \|\nb x\|\qq\q & \forall \, x \in \dbH_0^1\,.
\end{array}
\rt.
\ee

Throughout this work, we fix a constant $h_0\in(0,1)$, such that $h\in(0,h_0]$, and use the notation
$\cC_{h_0}=\frac{\ln{T}}{\ln{\frac 1{h_0}}}$. 


\br{w1006r1}
Similar to $\D$, the finite rank operator $\D_h$ also generates a semigroup $E_h(\cd)\equiv\{E_h(t)\,|\,t\geq 0\}$, where $E_h(t)=e^{t\D_h }$\index{$E_h(t)$}.
Besides, $E_h(\cd)$ also enjoys the properties stated in Lemma \ref{w228l1}. See {\em e.g.}~\cite{Fujita-Suzuki91}.
Furthermore,
 there exist an increasing sequence of eigenvalues $\{\l_{h,n}\}_{n= 1}^{\dim(\dbV_h)}$, and an orthonormal basis
 of eigenvectors $\{\f_{h,n}\}_{n= 1}^{\dim(\dbV_h)}$ in $\dbL^2$
 such that $\D_h\f_{h,n}=-\l_{h,n}\f_{h,n}$, where
\begin{equation*}
\setlength\abovedisplayskip{4pt}
\setlength\belowdisplayskip{4pt}
0<\l_{h,1}\leq\l_{h,2}\leq\cds\leq \l_{h,\dim(\dbV_h)}\,.
\end{equation*}
By the spectral decomposition for $\D_h$, we can define fractional powers $(-\D_h)^\g$ for $\g\in\dbR$ by the following:
\begin{equation*}
\setlength\abovedisplayskip{4pt}
\setlength\belowdisplayskip{4pt}
(-\D_h)^\g x=\sum_{n=1}^{\dim(\dbV_h)} \l_{h,n}^\g(x,\f_{h,n})_{\dbL^2}\f_{h,n} \,,
\end{equation*}
where $x\in D\big((-\D_h)^\g \big)\deq \big\{x\in \dbV_h\,\big|\, \sum_{n=1}^{\dim(\dbV_h)} \l_{h,n}^{2\g}(x,\f_{h,n})_{\dbL^2}^2<\infty\big\}$.
Also, we introduce
$\dot \dbH^\g_h\deq D\big((-\D_h)^{\g/2} \big) $,\index{$\dot {\mathbb H}^\gamma_h$}
and define $\|x\|_{\dot \dbH^\g_h}=\|(-\D_h)^{\g/2}x\|$ for any $x\in \dot \dbH^\g_h$. 
Obviously for any $x\in \dbV_h$, $\|x\|_{\dot \dbH^0_h}=\|x\|$ and $\|x\|_{\dot \dbH^1_h}=\|x\|_{\dot \dbH^1}$.

\er

The following result lists some properties on $\Pi_h$ and $\cR_h$, which will be frequently applied.

\bl{w1017l1}
There exists a constant $\cC$ such that
\begin{subequations}\label{w1022e1}
    \begin{empheq}[left={\empheqlbrace\,}]{align}
      &\|x-\Pi_hx\| +h  \|x-\Pi_hx\|_{\dbH_0^1}\leq \cC h^\g \|x\|_{\dot\dbH^{\g}} & &\forall\,x\in\dot\dbH^\g\,, \g=1,2\,, \label{w1022e1a}\\
&  \|x-\cR_hx\| +h  \|x-\cR_hx\|_{\dbH_0^1}\leq \cC h^\g \|x\|_{\dot\dbH^{\g}} &&\forall\,x\in\dot\dbH^\g\,, \g=1,2\,, \label{w1022e1b}\\
 &\|x-\Pi_hx\|_{\dot \dbH^{-1}}\leq \cC h^2\|x\|_{\dbH_0^1}  && \forall\,x\in\dbH_0^1\,, \label{w1022e1c}\\
 &\|\D_h\Pi_h x\|\leq \cC \|x\|_{\dot\dbH^2 }  && \forall\,x\in \dot\dbH^2\,, \label{w1022e1d}\\
 & \|(\Pi_h\D-\D_h\Pi_h)x\|_{\dot\dbH^{-\g}_h} \leq \cC h^\g\|x\|_{\dot\dbH^2} && \forall\,x\in \dot\dbH^2\,, \g=1,2\,. \label{w1022e1e}
\end{empheq}
\end{subequations}

\el
\begin{proof}
{\bf (1)} The first two estimates are standard; see {\em e.g.}~\cite{Thomee06,Brenner-Scott08}. For the assertion \rf{w1022e1c}, we use
$\dot\dbH^{-1}=\dbH^{-1}\deq \big(\dbH_0^1\big)^*$, and conclude as follows:
\begin{equation*}
\setlength\abovedisplayskip{4pt}
\setlength\belowdisplayskip{4pt}
\bal
\|x-\Pi_hx\|_{\dot \dbH^{-1}}
\!=\! \sup_{\f\in\dbH_0^1} \frac{( x-\Pi_hx, \f-\Pi_h\f  )_{\dbL^2}}{\|\f\|_{\dbH_0^1}}
\!\leq\! \cC h^2 \|x\|_{\dbH_0^1}  \sup_{\f\in\dbH_0^1} \frac{\|\f\|_{\dbH_0^1}}{\|\f\|_{\dbH_0^1}}
\!\leq\! \cC h^2  \|x\|_{\dbH_0^1}\,.
\eal
\end{equation*}

{\bf (2)} For any  $x\in\dbH_0^1\cap \dbH^2$ and $\f\in \dbL^2$, by using
the identity
\bel{w1022e2}
\setlength\abovedisplayskip{4pt}
\setlength\belowdisplayskip{4pt}
(-\Delta_h {\mathcal R}_h x, \Pi_h \varphi)_{\dbL^2} = (\nabla {\mathcal R}_h x, \nabla \Pi_h \varphi)_{\dbL^2} =
(\nabla x, \nabla \Pi_h \varphi)_{\dbL^2} = (-\Delta x, \Pi_h \varphi)_{\dbL^2}\,,
\ee
which follows from the definition of ${\mathcal R}_h$  and integration by parts.
Then relying on 
an inverse estimate ({\em cf.}~\cite{Brenner-Scott08}), assertions \rf{w1022e1a}--\rf{w1022e1b} and
\rf{w1024e1} on ${\mathbb L}^2$-stability of $\Pi_h$, we conclude that
\begin{equation*}
\setlength\abovedisplayskip{4pt}
\setlength\belowdisplayskip{4pt}
\bal
(\D_h\Pi_h x, \Pi_h \f )_{\dbL^2}
&=(\D x, \Pi_h \f )_{\dbL^2}-(\nb  [\Pi_h-\mathds{1}]x, \nb \Pi_h \f )_{\dbL^2} +(\nb  [\cR_h-\mathds{1}]x, \nb \Pi_h \f )_{\dbL^2}\\
&\leq \|\D x\| \|\Pi_h\f \|+\cC h \|x\|_{\dot \dbH^2} \frac 1 h \|\Pi_h\f \|\\
&\leq \cC \|x\|_{\dot\dbH^2}  \|\f \|\,,
\eal
\end{equation*}
which yields \rf{w1022e1d}.

\ms
{\bf (3)} By \rf{w1022e2}, it follows that for any $x\in \dot\dbH^2$, $\D_h\cR_hx=\Pi_h\D x$. Then
\begin{equation*}
\setlength\abovedisplayskip{4pt}
\setlength\belowdisplayskip{4pt}
\bal
(-\D_h)^{-1}(\Pi_h\D-\D_h\Pi_h)x=(-\D_h)^{-1}(\D_h\cR_h-\D_h\Pi_h)x=(\cR_h-\mathds{1})x+(\mathds{1}-\Pi_h)x\,,
\eal
\end{equation*}
which, together with \rf{w1022e1a}--\rf{w1022e1b}, leads to the assertion \rf{w1022e1e} for $\g=2$.
The estimate for $\g=1$ can be derived similarly.
\end{proof}

For errors committed by 
the finite element method, we define the error operator
\bel{w1022e4}
\setlength\abovedisplayskip{4pt}
\setlength\belowdisplayskip{4pt}
G_h(\cd)\deq E(\cd)-E_h(\cd)\Pi_h\,.\index{$G_h(t)$}
\ee
The following result will be frequently used below; see {\em e.g.}~\cite[Lemma 3.1]{Tambue-Mukam19}.

\bl{error-G_h} 
There exists a constant $\cC$, such that
the following estimate for the error operator $G_h(\cd)$ holds: for any $h\in(0,1]$, $t\in(0,T]$,
\bel{w1115e1}
\setlength\abovedisplayskip{4pt}
\setlength\belowdisplayskip{4pt}
\|G_h(t)x\|\leq \cC h^\g t^{-(\g-\rho)/2}\|x\|_{\dot \dbH^\rho} \qq \forall\, x\in \dot \dbH^\rho\,, 0\leq \rho\leq \g\leq 2\,.
\ee

%
%
%
%
%

\el

\ms

{\bf d.~Discretization in time -- interpolation and discrete semigroup $E_{h,\tau}(\cdot)$.}
We denote by $I_\tau = \{ t_{n}\}_{n=0}^N \subset [0,T]$\index{$I_\tau$} a uniform time mesh with step size 
$\tau \deq T/N$\index{$\tau$} and $t_n=n\t$, and related Wiener increments by $\D_nW\deq W(t_n)-W(t_{n-1})$ for all $n=1,\cds,N$.
For a given time mesh $I_\t$, we can define piecewise constant functions $\mu(\cd),\,\nu(\cd),\,\pi(\cd)$\index{$\mu(\cd)$} \index{$\nu(\cd)$} \index{$\pi(\cd)$} by
\bel{w827e1}
\setlength\abovedisplayskip{4pt}
\setlength\belowdisplayskip{4pt}
\mu(t)= t_{n+1}  \, ,\qquad  \nu(t)= t_n\,,\qq \pi(t)= n+1\,, 
\ee
where $t \in [t_n, t_{n+1}),\,n=0,1,\cds,N-1$;
we will use the (piecewise constant) operator $\Pi_\t: C([0,T];\dbH)\to D([0,T];\dbH)$\index{$\Pi_\tau$}  defined by 
\bel{w828e1}
\setlength\abovedisplayskip{4pt}
\setlength\belowdisplayskip{4pt}
\Pi_\t f(t)=f(t_n)  \qquad \forall\, t \in [t_n,t_{n+1}) \qquad n=0,1,\cds, N-1\, .
\ee

Throughout this work, we fix some constant $\t_0\in (0,1)$ such that $\t\in (0,\t_0]$ and introduce a related
constant $\cC_{\t_0}=\frac{\ln{T}}{\ln{\frac 1 {\t_0}}}$. 

\ms

To deduce convergence rates for numerical schemes in later sections on
a uniform partition $I_\t$, we introduce
a discrete version $E_{h,\tau}(\cd) \equiv \{ E_{h,\tau}(t)\,|\, t \geq 0\}$ of the semigroup $E_h(\cd)$ via 
\begin{equation*}
\setlength\abovedisplayskip{4pt}
\setlength\belowdisplayskip{4pt}
E_{h,\t}(t)= A_0^n\qq \forall\,t\in [t_{n-1},t_n) \qq n=1,2,\cds,N\,,\index{$E_{h,\tau}(t)$}
\end{equation*}
where $A_0=(\mathds{1}_h-\t \D_h)^{-1}$,\index{$A_0$}
and set
\begin{equation*}
\setlength\abovedisplayskip{4pt}
\setlength\belowdisplayskip{4pt}
G_{\t}(\cd)\deq E_h(\cd)\Pi_h-E_{h,\t}(\cd)\Pi_h\,. \index{$G_\tau(t)$}
\end{equation*}
The following two lemmata bound the distance between $E_h(\cd)$ and $E_{h,\t}(\cd)$.

\bl{w207l1}
There exists a constant $\cC$, such that
for any $t\in (0,T]$, $h\,,\t \in (0,1]$,
\begin{subequations}\label{w1115e2}
   \begin{empheq}[left={\empheqlbrace\,}]{align}
 &\|G_{\t}(t)x\|\leq \cC \t \|\Pi_h x\|_{\dot\dbH^2_h} && \forall\,x\in \dot {\mathbb H}^2\,;\label{w1115e2a}\\
&\|G_{\t}(t)x\|\leq \cC \t^{\g/2} t^{-(\g-1)/2} \|x\|_{\dot\dbH^1} && \forall\,x\in \dot {\mathbb H}^1\,, \g\in[1,2]\,; \label{w1115e2b}\\
 &\|G_{\t}(t)x\|\leq \cC \t^{\g/2} t^{-\g/2} \|x\| && \forall\,x\in \dbL^2\,, \g\in[0,2]\,.  \label{w1115e2c}
\end{empheq}
\end{subequations}

%
%
%
%
\el

\begin{proof}

 For any $t\in [t_{n-1},t_n)$, consider the identity
\begin{equation}\label{w220e1}
\setlength\abovedisplayskip{4pt}
\setlength\belowdisplayskip{4pt}
G_\t (t)x=\big[E_h(t)\Pi_h-E_h(t_n)\Pi_h\big]x+ \big[E_h(t_n)\Pi_h-A_0^n\Pi_h\big]x\,.
\end{equation}
By applying Remark \ref{w1006r1} and \cite[Lemma 3.3 (iv)]{Tambue-Mukam19}, we have
\begin{equation*}
\setlength\abovedisplayskip{4pt}
\setlength\belowdisplayskip{4pt}
\bal
\|G_\t (t)x\|
\!\leq\! \big\|\big[E_h(t)\Pi_h-E_h(t_n)\Pi_h\big]x\big\|+ \big\|\big[E_h(t_n)\Pi_h-A_0^n\Pi_h\big]x\big\|
\!\leq\! \cC \t \|\Pi_h x\|_{\dot\dbH^2_h}\,,
\eal
\end{equation*}
which is the assertion \rf{w1115e2a}.
By Remark \ref{w1006r1} and \cite[Lemma 1]{Mukam-Tambue18}, it follows for the first term of \rf{w220e1} that
\begin{equation*}
\setlength\abovedisplayskip{4pt}
\setlength\belowdisplayskip{4pt}
\bal
\big\|\big[E_h(t)\Pi_h-E_h(t_n)\Pi_h\big]x\big\| 
&=\big \| (-\D_h)^{-\g/2} \big[\mathds{1}_h-E_h(t_n-t)\big] (-\D_h)^{(\g-1)/2}E_h(t) (-\D_h)^{1/2}\Pi_h x \big \|\\
&\leq \cC (t_n-t)^{\g/2}t^{-(\g-1)/2}\|\Pi_h x\|_{\dot \dbH^1_h}\\
&\leq \cC \t^{\g/2}t^{-(\g-1)/2}\|x\|_{\dot \dbH^1}\,.
\eal
\end{equation*}
Relying on \cite[Lemma 3.3 (iii), (iv)]{Tambue-Mukam19} and applying an interpolation argument, we have
\begin{equation*}
\setlength\abovedisplayskip{4pt}
\setlength\belowdisplayskip{4pt}
\bal
\big\| \big[E_h(t_n)\Pi_h-A_0^n\Pi_h\big]x \big\|
\leq  \cC \t^{\g/2}t_n^{-(\g-1)/2}\|x\|_{\dot \dbH^1}
\leq  \cC \t^{\g/2}t^{-(\g-1)/2}\|x\|_{\dot \dbH^1}.
\eal
\end{equation*}
A combination of these two estimates and \rf{w220e1} then leads to the assertion \rf{w1115e2b}. 

The third assertion \rf{w1115e2c} can be derived in the same way.
\end{proof}


%
%
%
%

\subsection{Bounds in strong norms for linear SPDEs and BSPDEs}\label{spde-theory} 

A relevant part of the analysis of Problem {\bf (SLQ)} is to study related 
 linear SPDEs and BSPDEs on a given filtered probability space $(\O,\mf,\dbF,\dbP)$. In the below, we present definitions of their strong and mild solutions, and show their stability properties in strong norms.
 
 \ms
 
{\bf a.~Linear SPDEs.}
We begin with the following
SPDE ($\b\in\dbR$)
\bel{spde1}
\setlength\abovedisplayskip{4pt}
\setlength\belowdisplayskip{4pt}
\lt\{
\begin{array}{ll}
\ds {\rm d}X(t)=\big[\D X(t)+f(t)\big]\rd t+\big[\b X(t)+g(t)\big] \rd W(t) \qq  t \in (0,T]\,,\\
\ns \ds X(0)=x\,,
\end{array}
\rt.
\ee
where 
$f(\cd)\,,g(\cd)$ are $\dbF$-adapted processes, and $x\in\dbL^2$.

\bde{spde-solution}
{\bf (a)} An $\dbF$-adapted, continuous stochastic process $X(\cd)$ is called a {\em strong solution} to \rf{spde1} if
\begin{enumerate}[{\rm(1)}]

\item $X(\cd)\in \dbH_0^1\cap\dbH^2$ for $\ae\, (t\,,\o)\in [0,T]\times \O$ and $\D X(\cd)\in L^1(0,T;\dbL^2)\,\,\,  \as$;

\item For all $t\in[0,T]$,
\begin{equation*}
\setlength\abovedisplayskip{4pt}
\setlength\belowdisplayskip{4pt}
X(t)=x+\int_0^t \big[\D X(s)+f(s)\big]\rd s+ \int_0^t \big[\b X(s)+g(s)\big]\rd W(s) \qq \as
\end{equation*}
\end{enumerate}

{\bf (b)} An $\dbF$-adapted, continuous stochastic process $X(\cd)$ is called a {\em mild solution} to \rf{spde1} if
 $ X(\cd)\in L^2_\dbF(0,T;\dbL^2)$, and
for all $t\in[0,T]$,
\begin{equation*}
\setlength\abovedisplayskip{4pt}
\setlength\belowdisplayskip{4pt}
X(t)\!=\!E(t)x+\int_0^t E(t-s)f(s)\rd s\!+\! \int_0^t E(t-s)\big[\b X(s)+g(s)\big]\rd W(s) \qq \as
\end{equation*}
\ede


If $X(\cd)$ is a strong solution to \rf{spde1}, then $X(\cd)$ is also a mild solution to \rf{spde1}; see {\em e.g.}~\cite{Prevot-Rockner07,Lv-Zhang21}.

\bl{reg-spde1}
Let $x\in \dbH_0^1\cap\dbH^2$,  $f(\cd)\in L^2_\dbF(0,T;\dbH_0^1)$ and $g(\cd)\in L^2_\dbF(0,T;\dbH_0^1\cap\dbH^2)$. Then \rf{spde1} admits a unique strong solution, and there exists a constant $\cC$ such that
\begin{equation}\label{reg-e1}
\setlength\abovedisplayskip{4pt}
\setlength\belowdisplayskip{4pt}
\bal
&\me\Big[\sup_{t\in[0,T]}\|X(t)\|_{\dot\dbH^\g}^2\Big]+\me\Big[\int_0^T\|X(t)\|_{\dot\dbH^{\g+1}}^2\rd t\Big]\\
&\q\leq \cC \bigg\{ \|x\|_{\dot \dbH^\g}^2+\me\Big[\int_0^T\|f(t)\|_{\dot \dbH^{\g-1}}^2+\|g(t)\|_{\dot \dbH^\g}^2\rd t\Big]\bigg\} \qq \g=-1,0,1,2\,.
\eal
\end{equation}
Furthermore, if $g(\cd)\in C_\dbF\big([0,T];L^2(\O;\dbL^2)\big)$, then for any $t\,,s\in [0,T]$ it holds that
\bel{reg-e2}
\setlength\abovedisplayskip{4pt}
\setlength\belowdisplayskip{4pt}
\bal
&\me\big[\|X(t)-X(s)\|^2\big]\\
&\q\leq\! \cC |t\!-\!s| \big[\|x\|_{\dbH_0^1}^2\!+\!\|f(\cd)\|_{L^2_\dbF(0,T;\dbH_0^1)}^2
      \!+\!\|g(\cd)\|_{L^2_\dbF(0,T;\dbH_0^1)\cap C_\dbF([0,T];L^2(\O;\dbL^2))}^2\big]  \,.
\eal
\ee
\el

\begin{proof} 
{\bf (1) Verification of \rf{reg-e1}.}  
Firstly, \cite[Theorem 3.14]{Lv-Zhang21} implies that \rf{spde1} admits a unique mild solution.
Then, we can adopt the spectral decomposition technique to derive 
the existence of a (unique) strong solution and 
validate
 assertion \rf{reg-e1}; see {\em e.g.}~\cite[Chapter 6]{Chow15}. 

\ms
{\bf (2) Verification of \rf{reg-e2}.} 
Starting with \rf{spde1},
for any $t,s\in[0,T]$ with $s\leq t$, as well as 
H\"older's inequality and the It\^o isometry (see {\em e.g.}~\cite{DaProto-Zabczyk92}),  we have
\begin{equation*}
\setlength\abovedisplayskip{4pt}
\setlength\belowdisplayskip{4pt}
\bal
\me\big[\|X(t)-X(s)\|^2\big]
&\leq \cC\Big\{ \big\| \big[E(t)-E(s)\big] x \big\|^2
  +\me\Big[\int_0^s \big\| E(s-\th) \big[E(t-s)-\mathds{1}\big]   f(\th)\big\|^2 \rd \th \Big]\\
&\qq +|t-s|\me\Big[\int_s^t \big\|E(t-\th) f(\th)\big\|^2 \rd \th \Big]\\  
&\qq +\me\Big[\int_0^s \big\| E(s-\th) \big[E(t-s)-\mathds{1}\big]   \big[\b X(\th)+g(\th)\big]\big\|^2 \rd \th \Big]\\
&\qq +\me\Big[\int_s^t \big\| E(t-\th) \big[\b X(\th)+g(\th)\big]\big\|^2 \rd \th \Big]\Big\}\\
&=:\cC \sum_{i=1}^5 I_i\,.
\eal
\end{equation*}
By \rf{w228e3} in Lemma \ref{w228l1}, 
it follows that
\begin{equation*}
\setlength\abovedisplayskip{4pt}
\setlength\belowdisplayskip{4pt}
\bal
I_1\leq \big\| E(s)(-\D)^{-1/2}\big[E(t-s)-\mathds{1}\big] (-\D)^{1/2}x \big\|^2\leq  \cC|t-s|\|x\|_{\dbH_0^1}^2\,.
\eal
\end{equation*}
By \rf{reg-e1} with $\g=0$, and the assumption on $g(\cd)$, 
we have
\begin{equation*}
\setlength\abovedisplayskip{4pt}
\setlength\belowdisplayskip{4pt}
\bal
I_3+I_5\leq \cC|t-s|\big[\|x\|^2+\|f(\cd)\|_{L^2_\dbF(0,T;\dbL^2)}^2+\|g(\cd)\|_{C_\dbF([0,T];L^2(\O;\dbL^2))}^2\big]\,.
\eal
\end{equation*}
Similar to the estimation of $I_1$, we may use
Lemma \ref{w228l1} and \rf{reg-e1} with $\g=0$ to conclude
\begin{equation*}
\setlength\abovedisplayskip{4pt}
\setlength\belowdisplayskip{4pt}
\bal
I_2+I_4
&\leq \cC|t-s| \big[\|f(\cd)\|_{L^2_\dbF(0,T;\dbH_0^1)}^2+\|X(\cd)\|_{L^2_\dbF(0,T;\dbH_0^1)}^2+\|g(\cd)\|_{L^2_\dbF(0,T;\dbH_0^1)}^2\big]\\
&\leq \cC|t-s|\big[\|x\|^2+\|f(\cd)\|_{L^2_\dbF(0,T;\dbH_0^1)}^2+\|g(\cd)\|_{L^2_\dbF(0,T;\dbH_0^1)}^2\big]\,.
\eal
\end{equation*}
That completes the proof of \rf{reg-e2}.
\end{proof}

\ms
{\bf b.~Linear BSPDEs.}
To handle the adjoint equation coming from Pontryagin's  maximum principle which is related to Problem {\bf (SLQ)}, we consider the following linear
BSPDE ($\b\in\dbR$)
\bel{bshe}
\setlength\abovedisplayskip{4pt}
\setlength\belowdisplayskip{4pt}
\lt\{
\begin{array}{ll}
{\rm d}Y(t)= \big[-\D Y(t)-\b Z(t)+f(t) \big] \rd t+Z(t) \rd W(t)  \qq \forall\, t \in [0,T)\, , \\
\ns\ds Y(T)=Y_T\,.
\end{array}
\rt.
\ee

\bde{bspde-solution}
{\bf (a)} An $\dbL^2\times\dbL^2$-valued  stochastic process tuple $\big(Y(\cd),Z(\cd)\big)$ is called a {\em strong solution} to \rf{bshe} if
\begin{enumerate}[{\rm(1)}]

\item $Y(\cd)$ is $\dbF$-adapted and continuous, and $Z(\cd)\in L^2_\dbF(0,T;\dbL^2)$;

\item $Y(\cd)\in \dbH_0^1\cap\dbH^2$ for $\ae (t\,,\o)\in [0,T]\times \O$ and $\D Y(\cd)\in L^1(0,T;\dbL^2)\,\, \as$;

\item For all $t\in[0,T]$,
\begin{equation*}
\setlength\abovedisplayskip{4pt}
\setlength\belowdisplayskip{4pt}
\bal
Y(t)=Y_T+\int_t^T \big[\D Y(s)+\b Z(s)-f(s)\big]\rd s- \int_t^T Z(s)\rd W(s) \qq \as
\eal
\end{equation*}
\end{enumerate}

{\bf (b)} An $\dbL^2\times\dbL^2$-valued  stochastic process tuple $\big(Y(\cd),Z(\cd)\big)$ is called a {\em mild solution} to \rf{bshe} if
\begin{enumerate}[{\rm(1)}]

\item $Y(\cd)$ is $\dbF$-adapted and continuous, and $Z(\cd)\in L^2_\dbF(0,T;\dbL^2)$;

\item For all $t\in[0,T]$,
\begin{equation*}
\setlength\abovedisplayskip{4pt}
\setlength\belowdisplayskip{4pt}
\bal
Y(t)&=E(T-t)Y_T+\int_t^T E(s-t)\big[\b Z(s)-f(s)\big]\rd s\\
&\q- \int_t^T E(s-t)Z(s)\rd W(s) \qq \as
\eal
\end{equation*}
\end{enumerate}
\ede

Similar to SPDEs,
if $\big(Y(\cd), Z(\cd)\big)$ is a strong solution to \rf{bshe}, then $\big(Y(\cd), Z(\cd)\big)$ is also a mild solution to \rf{bshe}; see {\em e.g.}~\cite{Al-Hussein05,Lv-Zhang21}.

%

The following lemma can be derived by a spectral decomposition argument as in the proof of Lemma \ref{reg-spde1}. 

\bl{reg-bshe}
Assume $Y_T\in \dbH_0^1\cap\dbH^2$, and $f(\cd)\in L^2_\dbF(0,T;\dbH_0^1)$. Then \rf{bshe} admits a unique strong solution $\big(Y(\cd),Z(\cd)\big) \in \big(L^2_{{\mathbb F}}\big(\Omega; C([0,T]; {\mathbb H}^1_0)\big) \cap L^2_\dbF(0,T; {\mathbb H}^1_0 \cap {\mathbb H}^2) \big)\times L^2_\dbF(0,T; {\mathbb H}^1_0)$. Furthermore, there exists a constant $\cC$ such that
\begin{equation}\label{vari-2}
\setlength\abovedisplayskip{4pt}
\setlength\belowdisplayskip{4pt}
\bal
&\me\Big[\sup_{t\in[0,T]}\|Y(t)\|_{\dot\dbH^\g}^2\Big]
+\me\Big[\int_0^T\|Y(t)\|_{\dot\dbH^{\g+1}}^2+\|Z(t)\|_{\dot\dbH^{\g}}^2\rd t\Big]\\
&\qq\qq\leq \cC\, \me\Big[\|Y_T\|_{\dot \dbH^\g}^2+\int_0^T\|f(t)\|_{\dot \dbH^{\g-1}}^2\rd t\Big] \qq \g=-1,0,1,2\,.
\eal
\end{equation}
\el

\subsection{Open and closed-loop approaches for Problem {\bf (SLQ)}}\label{se-openclosed}

{\bf a.~Pontryagin's maximum principle.}  At the center of the `open-loop approach' stands {\em Pontryagin's maximum principle} (see {\em e.g.}~\cite{Lv-Zhang21}), which involves the further adjoint processes $\big(Y(\cd),Z(\cd)\big)\in \big(L^2_\dbF\big(\O;C([0,T];\dbH_0^1)\big)\cap L^2_\dbF(0,T;\dbH_0^1\cap\dbH^2)\big)\times L^2_\dbF(0,T;\dbH_0^1)$ next to
the optimal pair $\big(X^*(\cd), U^*(\cd)\big)$. While $Y(\cd)$ is an adjoint process as in the context of necessary optimality conditions for deterministic problems, the additional $Z(\cd)$ makes sure that it is ${\mathbb F}$-adapted as well. Consequently, we look after the quadruple $\big(X^*(\cd), Y(\cd), Z(\cd), U^*(\cd)\big)$ that solves
\begin{subequations}\label{fbspde}
    \begin{empheq}[left={\empheqlbrace\,}]{align}
      &  {\rm d}X^*(t)=\big[\D X^*(t)+U^*(t)\big]\rd t+\big[\b X^*(t)+\si(t)\big] \rd W(t) &  t \in (0,T]\,, \label{fbspdea}\\
& {\rm d}Y(t)= \big[-\D Y(t)-\b Z(t)+ X^*(t)  \big]{\rm d}t+Z(t) {\rm d}W(t)  &  t \in [0,T)\, , \label{fbspdeb}\\
 & X^*(0)=x\,, \q Y(T)=- \a X^*(T)\,,  \label{fbspdec}
\end{empheq}
\end{subequations}
together with the optimality condition
\bel{op-condition1}
\setlength\abovedisplayskip{4pt}
\setlength\belowdisplayskip{4pt}
 U^*(t) - Y(t) =0  \qquad  t \in [0,T]\, .
\ee

\ms

{\bf b.~Stochastic Riccati equation and feedback control.}
By  LQ theory, Problem {\bf (SLQ)} admits a unique optimal pair $\big(X^*(\cd),U^*(\cd)\big)$, which is linked by 
an operator-valued
function $\cP(\cd)$, which solves the {\em stochastic Riccati equation}
\bel{Riccati1}
\setlength\abovedisplayskip{4pt}
\setlength\belowdisplayskip{4pt}
\lt\{
\begin{array}{ll} 
\ds\cP'(t)+\D \cP(t)+\cP(t)\D+\b^2\cP(t)+\mathds{1}-\cP^2(t)=0\qq t\in [0,T]\,,\\
\ns\ds\cP(T)=\a\mathds{1}\,.
\end{array}
\rt.
\ee
The following definition clarifies the notion of a solution to this deterministic terminal value problem.
\bde{Riccati-solution}
We call $\cP(\cd) \in C\big([0,T];\dbS(\dbL^2)\big)$
a {\em mild solution} to \rf{Riccati1} if for any $x \in\dbL^2$ and any $s\in [0,T]$,
\bel{Riccati-sol}
\setlength\abovedisplayskip{4pt}
\setlength\belowdisplayskip{4pt}
\cP(s)x =\a E\big(2(T-s)\big)x+\int_s^T E(\th-s) \big[ \b^2 \cP(\th)+\mathds{1}- \cP^2(\th)\big] E(\th-s) x \rd \th \,.
\ee
\ede
It is shown in \cite[Theorem 2.2]{Lv19} that \rf{Riccati1} admits a unique mild solution 
$\cP(\cd) \in C\big([0,T];\dbS(\dbL^2)\big)$;
furthermore, by 
\begin{equation*}
\setlength\abovedisplayskip{4pt}
\setlength\belowdisplayskip{4pt}
\bal
&\inf_{U(\cd)\in L^2_\dbF(t,T;\dbL^2)} {\mathbb E}\Big[ \int_t^T \big[ \| X\big(s; t,x,U(\cd)\big) \|^2 + \| U(s)\|^2\big] \rd t +
\alpha \| X\big(T; t,x,U(\cd)\big) \|^2\Big] 
= (\cP(t)x, x)_{\dbL^2}\,,
\eal
\end{equation*}
we actually deduce that $\cP(\cd) \in C\big([0,T];\dbS_+(\dbL^2)\big)$,
where $X\big(\cd; t,x,U(\cd)\big) $ solves
%
\begin{equation*}
\setlength\abovedisplayskip{4pt}
\setlength\belowdisplayskip{4pt}
\lt\{
\begin{array}{ll}
\ds {\rm d}X(s)=\big[\D X(s)+U(s)\big]\rd t+\b X(s) \rd W(s) \qq  s \in (t,T]\,,\\
\ns\ds X(t)=x\,.
\end{array}
\rt.
\end{equation*}
%
Relying on $\cP(\cd)$ and $\eta(\cd)$, where the latter solves
 the $\cP(\cd)$-dependent deterministic terminal value PDE \rf{varphi},
we may then characterize the solution $U^*(\cd)$ of Problem {\bf (SLQ)} by the following {\em feedback control law}\begin{equation}\label{feedback1}
\setlength\abovedisplayskip{4pt}
\setlength\belowdisplayskip{4pt}
\bal
U^*(t)=-\cP(t) X^*(t)- \eta(t) \qq t\in[0,T]\,.
\eal
\ee
This  feedback law \rf{feedback1}, together with the optimality condition \rf{op-condition1} and the results in Section 
\ref{spde-theory}, allows to verify improved regularity properties and bounds for 
the optimal pair $\big(X^*(\cd), U^*(\cd)\big)$; see Lemma \ref{w1008l2} in this section. 
Besides,
\rf{feedback1} is crucial to algorithmically settle the `closed-loop approach', which is proposed in Section \ref{ch-closed}.

\ss

With the help of the preceding two parts, we can now show bounds in stronger norms for the solution of FBSPDE (\ref{fbspde})--(\ref{op-condition1}) --- and hence of Problem {\bf (SLQ)}.
\bl{w1008l2}
Suppose that {\bf (A)} holds. 
Then $\big(X^*(\cd),U^*(\cd)\big)$, the optimal pair of Problem {\bf (SLQ)}, belongs to $L^2_\dbF\big(\O;C([0,T];\dbH_0^1)\big)\times L^2_\dbF\big(\O;C([0,T];\dbH_0^1)\big)$, and  there exists a constant $\cC$ such that
\begin{subequations}\label{w1008e5}
    \begin{empheq}[left={\empheqlbrace\,}]{align}
      & \sup_{t\in[0,T]}\Big(\me\big[\|U^*(t)\|^4\big]\Big)^{1/2}
\leq  \cC \big[\|x\|^2+\|\si(\cd)\|^2_{C([0,T];\dbL^2)}\big]\,, \label{w1008e5a}\\
& \me\Big[\!\sup_{t\in[0,T]}\!\|U^*(t)\|^2+\int_0^T\|U^*(t)\|_{\dbH_0^1}^2\rd t\Big]
 \leq \cC \big[\|x\|^2+\|\si(\cd)\|^2_{L^2(0,T;\dbL^2)}\big]\,, \label{w1008e5b}\\
& \me\Big[\sup_{t\in[0,T]}\|X^*(t)\|_{\dbH_0^1}^2+\int_0^T\|X^*(t)\|_{\dbH_0^1\cap\dbH^2}^2\rd t\Big] 
\leq \cC \big[\|x\|_{\dbH_0^1}^2+\|\si(\cd)\|^2_{L^2(0,T;\dbH_0^1)}\big]\,. \label{w1008e5c}
\end{empheq}
\end{subequations}
Furthermore, there exists a constant $\cC$ such that for any $s\,,t\in[0,T] $
\bel{w1008e7}
\setlength\abovedisplayskip{4pt}
\setlength\belowdisplayskip{4pt}
\me\big[\|U^*(t)-U^*(s)\|^2\big]\leq \cC|t-s| \big[\|x\|_{\dbH_0^1}^2+\|\si\|^2_{C([0,T];\dbH_0^1)}\big]\,.
\ee
\el

\begin{proof} {\bf (1) Verification of \rf{w1008e5a}.} 
Relying on the fact that $\cP(\cd) \in C\big([0,T];\dbS(\dbL^2)\big)$, we have
$\sup_{t\in[0,T]}\|P(t)\|_{\cL(\dbL^2)} \leq \cC$.
Then by equation \rf{varphi}, we may use Gronwall's inequality  to derive
\bel{w1016e1}
\setlength\abovedisplayskip{4pt}
\setlength\belowdisplayskip{4pt}
\!\sup_{t\in[0,T]}\!\|\eta(t)\|^2\!+\!\int_0^T\!\big[\|\eta(t)\|_{\dbH_0^1}^2\!+\!\big(\cP(t)\eta(t), \eta(t)\big)_{\dbL^2}\big]\rd t
\!\leq\! \cC \int_0^T\!\|\si(t)\|^2 \rd t\,.
\ee
Here we apply the fact that $\cP(\cd)\in C\big([0,T];\dbS_+(\dbL^2)\big)$.
Then, by inserting the feedback law \rf{feedback1} into SPDE \rf{fbspdea} and applying  Gronwall's 
inequality as well as the Burkholder-Davis-Gundy inequality (BDG inequality, for short),
we can see that
\bel{w1028e6}
\setlength\abovedisplayskip{4pt}
\setlength\belowdisplayskip{4pt}
\bal
&\me\Big[\sup_{t\in[0,T]}\|X^*(t)\|^2\Big]+\me\Big[\int_0^T\|X^*(t)\|_{\dbH_0^1}^2+\big(\cP(t)X^*(t), X^*(t)\big)_{\dbL^2}\rd t\Big]\\
&\q\leq \cC \Big[\|x\|^2+\int_0^T\|\eta(t)\|^2+\|\si(t)\|^2\rd t\Big]\\
&\q\leq \cC \big[\|x\|^2+\|\si(\cd)\|^2_{L^2(0,T;\dbL^2)}\big]\,.
\eal
\ee
On the other hand, we may apply It\^o's formula to $\|X^*(t)\|^4$, the feedback law \rf{feedback1}, and 
the estimate \rf{w1016e1} for $\eta(\cd)$ to get
\begin{equation*}
\setlength\abovedisplayskip{4pt}
\setlength\belowdisplayskip{4pt}
\bal
\sup_{t\in[0,T]}\me\big[\|X^*(t)\|^4\big]
&\leq \cC \Big[\|x\|^4+\int_0^T\|\eta(t)\|^4+\|\si(t)\|^4\rd t\Big]
\leq \cC  \big[\|x\|^2+\|\si(\cd)\|^2_{C([0,T];\dbL^2)}\big]^2\,.
\eal
\end{equation*}
Using \rf{feedback1} once more, we can derive the assertion \rf{w1008e5a}.

\ss
{\bf (2) Verification of \rf{w1008e5b}}
On one hand, we use the feedback law \rf{feedback1} and \rf{w1016e1}, \rf{w1028e6} to get
\begin{equation*}
\setlength\abovedisplayskip{4pt}
\setlength\belowdisplayskip{4pt}
\bal
\me\Big[\sup_{t\in[0,T]}\|U^*(t)\|^2\Big]
&\!\leq\! \cC\Big\{\sup_{t\in[0,T]}\|\cP(t)\|^2_{\cL(\dbL^2)}\me\Big[\sup_{t\in[0,T]}\|X^*(t)\|^2\Big]\!+\!\sup_{t\in[0,T]}\|\eta(t)\|^2\Big\}\\
&\!\leq \!\cC \big[\|x\|^2+\|\si(\cd)\|^2_{L^2(0,T;\dbL^2)}\big]\,.
\eal
\end{equation*}
On the other hand, the optimality condition \rf{op-condition1},  Lemma \ref{reg-bshe} and \rf{w1028e6}, 
yields
\begin{equation*}
\setlength\abovedisplayskip{4pt}
\setlength\belowdisplayskip{4pt}
\bal
\me\Big[\int_0^T\|U^*(t)\|_{\dbH_0^1}^2\rd t\Big]
\leq \cC\,\me\Big[\|X^*(T)\|^2\big]+\int_0^T\|X^*(t)\|^2\rd t\Big] 
\leq \cC \big[\|x\|^2+\|\si(\cd)\|^2_{L^2(0,T;\dbL^2)}\big]\,.
\eal
\end{equation*}
That settles assertion \rf{w1008e5b}.

\ss

{\bf (3) Verification of \rf{w1008e5c}.} 
It is straightforward by Lemma \ref{reg-spde1} with $f(\cd)=U^*(\cd)\,,g(\cd)=\si(\cd)$ and \rf{w1008e5b}.

\ss

{\bf (4) Verification of \rf{w1008e7}.} For any $t\in[0,T]$, we find that $X^*(t)\in \dbD^{1,2}(\dbL^2)$ (see, {\em e.g.}~\cite{Nualart06}), and $D_\th X^*(\cd)$ solves
\bel{d-spde}
\setlength\abovedisplayskip{4pt}
\setlength\belowdisplayskip{4pt}
\lt\{
\begin{array}{ll}
\ds {\rm d}D_\th X^*(t)=\big[\D-\cP(t)\big] D_\th X^*(t)\rd t+\b D_\th X^*(t) \rd W(t) \qq t \in (\th,T]\,,\\
\ns \ds D_\th X^*(\th)=\b X^*(\th) +\si(\th)\,.
\end{array}
\rt.
\ee
Then by applying \cite[Proposition 3.2]{Dou-Lv19}, we conclude that the Malliavin derivative $\big(D_\th Y(\cd), D_\th Z(\cd)\big)$ of 
the solution $\big(Y(\cd),Z(\cd)\big)$ to BPSDE \rf{fbspdeb} solves
\bel{d-bspde}
\setlength\abovedisplayskip{4pt}
\setlength\belowdisplayskip{4pt}
\lt\{
\begin{array}{ll}
\ns \ds {\rm d}D_\th Y(t)= \big[-\D D_\th Y(t)-\b D_\th Z(t)+D_\th X^*(t)  \big]{\rm d}t
+D_\th Z(t) {\rm d}W(t)  \qq t \in [\th,T)\, ,\\
\ns \ds D_\th Y(T)=- \a D_\th X^*(T)\,,\\
\ns \ds D_\th Y(t)=0\,, D_\th Z(t)=0 \qq t\in [0,\th)\,,
\end{array}
\rt.
\ee
and
\bel{w124e1}
\setlength\abovedisplayskip{4pt}
\setlength\belowdisplayskip{4pt}
Z(t)=D_t Y(t)\qq \ae\, t\in[0,T]\,.
\ee
Now Lemmata \ref{reg-bshe} and \ref{reg-spde1} imply that
\bel{w1008e6}
\setlength\abovedisplayskip{4pt}
\setlength\belowdisplayskip{4pt}
\bal
&\sup_{\th \in [0,T]}\bigg\{\sup_{t \in [\th,T]} \me\big[\| D_\th Y(t)\|^2\big] +
\me \Big[\int_\th^T  \| \nb D_\th Y(t)\|^2+
\|D_\th Z(t)\|^2\rd t \Big]\bigg\}\\
&\qquad \leq
\cC \sup_{\th \in [0,T]} \me \Big[ \a^2\|D_\th X^*(T)  \|^2+ \int_\th ^T \| D_\th X^*(t)\|^2\rd t\Big] \\
&\qquad \leq \cC  \sup_{\th \in [0,T]}  \me \big[ \| X^*(\th)  \|^2+  \| \si(\th)\|^2\big] \\
&\qquad \leq \cC  \big[\|x\|^2+\|\si(\cd)\|^2_{C([0,T];\dbL^2)}\big]\,.
\eal
\ee

By the optimality condition  \rf{op-condition1},  
\rf{vari-2}, \rf{w124e1} and \rf{w1008e6}, we can conclude that for $s\leq t$,
\begin{equation*}
\setlength\abovedisplayskip{4pt}
\setlength\belowdisplayskip{4pt}
\bal
&\me\big[\|U^*(t)-U^*(s)\|^2\big]\\
&\q \leq \cC (t-s)  \me\Big[\int_0^T \big[\|Y(t)\|_{\dbH_0^1\cap \dbH^2}^2 +\|Z(t)\|^2+\|X^*(t)\|^2 \big] \rd t \Big]
+\cC\,\me\Big[\int_s^t \|D_\th Y(\th)\|^2 \rd \th \Big]\\
&\q \leq \cC |t-s|  \bigg\{\me\Big[\|X^*(T)\|^2_{\dbH_0^1}+\int_0^T \|X^*(t)\|^2\rd t \Big]
+\sup_{\th\in[0,T]} \me\big[\|D_\th Y(\th)\|^2 \big]\bigg\}\\
&\q \leq \cC|t-s| \big[\|x\|_{\dbH_0^1}^2+\|\si(\cd)\|^2_{C([0,T];\dbL^2)\cap L^2(0,T;\dbH_0^1)}\big]\,.
\eal
\end{equation*}
That settles assertion \rf{w1008e7}.
\end{proof}

\section{Discretization based on the open-loop approach}\label{ch-open}

Different analytical tools from the `open-loop approach' ({\em i.e.},  
Pontryagin's maximum principle), and the `closed-loop approach' ({\em i.e.}, stochastic Riccati equation) were introduced
in Section \ref{se-openclosed} to conclude improved regularity properties of the unique optimal pair $\big(X^*(\cd), U^*(\cd)\big)$ to Problem {\bf (SLQ)}; in particular, they were used to verify Lemma \ref{w1008l2}, which contains bounds for $\big(X^*(\cd), U^*(\cd)\big)$ in strong norms.

This equivalent characterization of a minimizer of Problem {\bf (SLQ)} in discrete form via the related optimality conditions or the feedback-law formulae  
 still holds if related discretization schemes in space and time are considered: from a practical algorithmic view; however, we will see that their usability is very different. In this section,
we focus on the `open-loop approach', and the construction of a related numerical method.

For the numerical approach in this section, 
numerical schemes to approximate FBSPDE \rf{fbspde} with
the optimality condition \rf{op-condition1} are the  crucial strategy point;
once a spatio-temporal discrete formulation of \rf{fbspde}--\rf{op-condition1} has been established (see \rf{w1212e3}--\rf{w1003e12}) which also serves as the optimality condition for a proper discretization of Problem {\bf (SLQ)} (see Problem {\bf (SLQ)$_{h\t}$} related to \rf{w1003e8}--\rf{w1003e7}), a decoupling of the BSPDE from the SPDE (after discretization)  happens through a gradient decent method to simplify computations (see Algorithm \ref{alg1}). For its simulation, however, there remains to compute conditional expectations per iteration step. For the additive noise case ($\b=0$),
an exact formulation for conditional expectations can be derived in Theorem \ref{imple-gdm}, which leads to an algorithm that converges with optimal rate --- and is free from the calculation of conditional expectations.
For the multiplicative noise case, we use (data dependent) regression for its estimation in high-dimensional spaces; see Section \ref{stati-1}.

This part starts with two preparatory  sections: in Section \ref{nume-spde1}, we recall shortly the derivation of convergence rates for spatio-temporal discretization  schemes  of SPDE \rf{spde1} which are well-known; 
Section \ref{nume-bspde1} addresses corresponding goals for BSPDE \rf{bshe}, whose numerical analysis is only 
available in recent research  articles. 
The results in both sections will then be used in the later ones which address Problem {\bf (SLQ)}.

\subsection{Discretization of linear SPDEs and rates}\label{nume-spde1}
We split the derivation of convergence rates for the spatio-temporal discretization of SPDE (\ref{spde1}) into two steps: we begin with an SDE, which results from discretization in space only; related stability bounds for the solution to SDEs may then be used in Section \ref{spdeh-time1} to verify rates of convergence for the complete discretization in space and time.

\subsubsection{Spatial discretization and error estimates for linear SPDEs}\label{num-spde1a} 

We use the notations from part {\bf c.} of Section \ref{not1}.
A finite element method of \eqref{spde1}  
reads: Find
$X_h(\cd) \in L^2_{{\mathbb F}}\big(\Omega; C([0,T]; {\mathbb V}_h)\big)$ that  satisfies 
\begin{equation}\label{spde1-h}
\setlength\abovedisplayskip{3pt}
\lt\{
\begin{array}{ll}
\ds {\rm d}X_h(t)=\big[\Delta_hX_h(t)+  f_h(t)\big] \rd t+ \big[\b X_h(t)+ g_h (t)\big] \rd W(t)\qq  t\in (0,T]\, , \\
\ns\ds X_h(0)= \Pi_hx\, ,
\end{array}
\rt.
\setlength\belowdisplayskip{3pt}
\end{equation}
where $f_h(\cd)\,, g_h(\cd)\in L^2_\dbF(0,T;\dbV_h)$ are approximations of $f(\cd)\,,g(\cd)$, respectively.
Note that for any $t\in[0,T]$, $X_h(t)$ takes values in the finite dimensional space $\dbV_h$. Hence 
problem
\rf{spde1-h} is a system of linear SDEs, and its solvability is obvious.

\bl{w1002l4}
Suppose that $x\in \dbH_0^1\cap\dbH^2\,,f_h(\cd)\,,g_h(\cd)\in L^2_\dbF(0,T;\dbV_h)$, and $X_h(\cd)$ solves \rf{spde1-h}. 
There exists a constant $\cC$ such that for each $\g=-1,0,1,2$,
\begin{equation}\label{w1002e6}
\setlength\abovedisplayskip{3pt}
\setlength\belowdisplayskip{3pt}
\bal
&\me\Big[\sup_{t\in[0,T]}\|X_h(t)\|_{\dot \dbH^\g_h}^2+\int_0^T\|X_h(t)\|_{\dot \dbH^{\g+1}_h}^2\rd t\Big] 
\leq \cC \bigg\{ \|x\|_{\dot \dbH^\g_h}^2+\me\Big[\int_0^T\|f_h(t)\|_{\dot \dbH^{\g-1}_h}^2+\|g_h(t)\|_{\dot \dbH^\g_h}^2\rd t\Big]\bigg\} \,.
\eal
\end{equation}
Furthermore, for a uniform partition $I_\t=\{t_n\}_{n=0}^N\subset[0,T]$, there exists a constant $\cC$ independent of $\t$ such that
\bel{w1002e5}
\setlength\abovedisplayskip{3pt}
\setlength\belowdisplayskip{3pt}
\bal
&\sum_{n=0}^{N-1}\me\Big[\int_{t_n}^{t_{n+1}}\|X_h(t)-X_h(t_n)\|_{\dot\dbH^{\g}_h}^2+\|X_h(t)-X_h(t_{n+1})\|_{\dot\dbH^{\g}_h}^2\rd t \Big]\\
&\qq\leq \cC\t \bigg\{ \|x\|_{\dot\dbH^{\g+1}}^2+\me\Big[\int_0^T\|f_h(t)\|_{\dot\dbH^{\g}_h}^2+\|g_h(t)\|_{\dot\dbH^{\g+1}_h}^2\rd t\Big]\bigg\} \qq \g=0,1\,.
\eal
\ee
\el

\begin{proof}

The estimates \rf{w1002e6} can be proved in the same vein as  in the proof of Lemma \ref{reg-spde1}.
Here, we only bound $X_h(t)-X_h(t_n)$ on the left-hand side of \rf{w1002e5}; 
the remaining part can be derived by a similar argument.
Based on  the It\^o isometry and \rf{w1002e6}, we can obtain
\begin{equation*}
\setlength\abovedisplayskip{3pt}
\setlength\belowdisplayskip{3pt}
\bal
&\sum_{n=0}^{N-1}\me\Big[\int_{t_n}^{t_{n+1}}\|X_h(t)-X_h(t_n)\|_{\dot\dbH^{\g}_h}^2\rd t \Big]\\
&\q\leq \cC \sum_{n=0}^{N-1}\int_{t_n}^{t_{n+1}}\me\Big[\t\int_{t_n}^{t_{n+1}}\|\D_h X_h(s)\|_{\dot\dbH^{\g}_h}^2+\|f_h(s)\|_{\dot\dbH^{\g}_h}^2\rd s
+\int_{t_n}^{t_{n+1}}\|X_h(s)\|_{\dot\dbH^{\g}_h}^2+\|g_h(s)\|_{\dot\dbH^{\g}_h}^2\rd s\Big]\rd t \\
&\q\leq \cC\t \bigg\{ \|\Pi_h x\|_{\dot\dbH^{\g+1}_h}^2+\me\Big[\int_0^T\|f_h(t)\|_{\dot\dbH^{\g}_h}^2+\|g_h(t)\|_{\dot\dbH^{\g+1}_h}^2\rd t\Big]\bigg\}\,,
\eal
\end{equation*}
which, together with 
$\|\Pi_hx\|_{\dot\dbH^1_h}\leq \cC\|x\|_{\dot\dbH^1}$ and
$\|\Pi_hx\|_{\dot\dbH^2_h}\leq \cC\|x\|_{\dot\dbH^2}$
(see \rf{w1022e1a}, \rf{w1022e1d} in Lemma \ref{w1017l1}),
 settles the desired assertion.
\end{proof}

The following result bounds the difference between $X(\cd)$ and $X_h(\cd)$.

\bt{w1002t1}
Let $X(\cd)$ resp.~$X_h(\cd)$ be solutions to \rf{spde1} resp.~\rf{spde1-h}. Then there exists a constant $\cC$ 
such that
{\small
\begin{subequations}\label{w1002e2}
    \begin{empheq}[left={\empheqlbrace\,}]{align}
      &\me\Big[\sup_{t\in[0,T]}\|X(t)-X_h(t)\|_{\dot \dbH^\g}^2\Big]+\me\Big[\int_0^T\|X(t)-X_h(t)\|_{\dot \dbH^{\g+1}}^2\rd t\Big]\nonumber\\
&\q\leq \cC h^{2(1-\g)} \bigg\{ \|x\|_{\dbH_0^1}^2+\me\Big[\int_0^T\|f(t)\|^2+\|g(t)\|_{\dbH_0^1}^2\rd t\Big]\bigg\}\nonumber\\
&\qq+\cC\, \me\Big[\int_0^T\|\Pi_h f(t)-f_h(t)\|_{\dot \dbH^{\g-1}_h}^2+\|\Pi_h g(t)-g_h(t)\|_{\dot \dbH^{\g}_h}^2\rd t\Big] \q \g=-1,0\,,\label{w1002e2a}\\
& \me\Big[ \sup_{t\in[0,T]} \|X(t)-X_h(t)\|^2\Big]\nonumber\\
&\q\leq \cC h^4\bigg\{ \|x\|_{\dbH_0^1\cap\dbH^2}^2 +\me\Big[\int_0^T \|f(s)\|_{\dbH_0^1\cap\dbH^2}^2+\|g(s)\|_{\dbH_0^1\cap\dbH^2}^2\rd s \Big]\bigg\} \nonumber \\
&\qq+\cC\, \me\Big[\int_0^T  \big\|\Pi_h\big[f(s)-f_h(s)\big]\big\|^2+\big\|\Pi_h\big[g(s)-g_h(s)\big]\big\|^2\rd s\Big]\,. \label{w1002e2b}
\end{empheq}
\end{subequations}
}
\et

\begin{proof}
{\bf (1)  Verification of \rf{w1002e2a}.}
Define the $\dbV_h$-valued process $e_X(\cd)=\Pi_h X(\cd)-X_h(\cd)$.
We have
\begin{equation*}
\setlength\abovedisplayskip{3pt}
\lt\{
\begin{array}{ll}
\ds {\rm d} e_X(t)=\big[\D_h e_X(t)+(\Pi_h\D-\D_h\Pi_h)X(t)+\big(\Pi_h f(t)-f_h(t)\big)\big]\rd t\\
\ns\ds \qq\qq\qq+\big[\b e_X(t)+\big(\Pi_h g(t)-g_h(t)\big)\big]\rd W(t) \qq t\in (0,T]\,,\\
\ns\ds e_X(0)=0\,.
\end{array}
\rt.
\setlength\belowdisplayskip{3pt}
\end{equation*}
Then Lemmata \ref{w1002l4} and \ref{w1017l1} yield
\bel{w1002e1}
\setlength\abovedisplayskip{3pt}
\setlength\belowdisplayskip{3pt}
\bal
&\me\Big[\sup_{t\in[0,T]}\|e_X(t)\|_{\dbH^{-1}}^2\Big]+\me\Big[\int_0^T\|e_X(t)\|^2\rd t\Big]   \\
&\leq \cC\, \me\Big[\int_0^T\|(\Pi_h\D-\D_h\Pi_h)X(t)\|_{\dot\dbH^{-2}_h}^2 \Big]   
+\cC\, \me\Big[\int_0^T\|\Pi_h f(t)-f_h(t)\|_{\dot\dbH^{-2}_h}^2+\|\Pi_h g(t)-g_h(t)\|_{\dot\dbH^{-1}_h}^2\rd t\Big]   \\
&\leq \cC h^4 \bigg\{ \|x\|_{\dbH_0^1}^2+\me\Big[\int_0^T\|f(t)\|^2+\|g(t)\|_{\dbH_0^1}^2\rd t\Big]\bigg\}  \\
&\qq+\cC\, \me\Big[\int_0^T\|\Pi_h f(t)-f_h(t)\|_{\dot\dbH^{-2}_h}^2+\|\Pi_h g(t)-g_h(t)\|_{\dot\dbH^{-1}_h}^2\rd t\Big]\,. 
\eal
\ee
Now, by \rf{w1002e1} and Lemmata \ref{reg-spde1}, \ref{w1017l1}, we  conclude that
\begin{eqnarray*}
&&\me\Big[\sup_{t\in[0,T]}\|X(t)-X_h(t)\|_{\dbH^{-1}}^2\Big]+\me\Big[\int_0^T\|X(t)-X_h(t)\|^2\rd t\Big]\\
&&\qq\leq \cC \bigg\{\me\Big[\sup_{t\in[0,T]}\|X(t)-\Pi_h X(t)\|_{\dbH^{-1}}^2\Big]
+\me\Big[\sup_{t\in[0,T]}\|e_X(t)\|_{\dbH^{-1}}^2\Big]\\
&&\qq\q+\me\Big[\int_0^T\|X(t)-\Pi_h X(t)\|^2\rd t\Big]
+\me\Big[\int_0^T\|e_X(t)\|^2\rd t\Big]\bigg\}\\
&&\qq\leq  \cC h^4 \bigg\{ \|x\|_{\dbH_0^1}^2+\me\Big[\int_0^T\|f(t)\|^2+\|g(t)\|_{\dbH_0^1}^2\rd t\Big]\bigg\}\\
&&\qq\q+\cC\, \me\Big[\int_0^T\|\Pi_h f(t)-f_h(t)\|_{\dot\dbH^{-2}_h}^2+\|\Pi_h g(t)-g_h(t)\|_{\dot\dbH^{-1}_h}^2\rd t\Big]\,.
\end{eqnarray*}
That completes the assertion \rf{w1002e2a} for $\g=-1$.

\ss
A corresponding argument
yields
 the assertion \rf{w1002e2a} for $\g=0$.

\ss
{\bf (2) Verification of \rf{w1002e2b}.} We use tools from semigroup theory. By \rf{spde1} and \rf{spde1-h}, it follows that
\begin{equation*}
\setlength\abovedisplayskip{4pt}
\setlength\belowdisplayskip{4pt}
\bal
&X(t)=E(t)x+\int_0^t E(t-s)f(s)\rd s +\int_0^t E(t-s)\big[\b X(s)+g(s)\big]\rd W(s)\,,\\
&X_h(t)\!=\!E_h(t)\Pi_hx\!+\!\int_0^t\! E_h(t\!-\!s)f_h(s)\rd s 
\!+\!\int_0^t E_h(t\!-\!s)\big[\b X_h(s)\!+\!g_h(s)\big]\!\rd W(s)\,.
\eal
\end{equation*}
\no On setting $\d X(\cd)=X(\cd)-X_h(\cd)$ and using the notation $G_h(\cd)$ from \rf{w1022e4},
\begin{equation*}
\setlength\abovedisplayskip{3pt}
\bal
\d X(t)&=G_h(t)x+ \int_0^t G_h(t-s)f(s) +E_h(t-s)\Pi_h\big[f(s)-f_h(s)\big]\rd s \\
&\q+\int_0^t \b G_h(t-s)X(s) +\b E_h(t-s)\Pi_h\d X(s)\\
&\qq+G_h(t-s)g(s)+E_h(t-s)\Pi_h\big[g(s)-g_h(s)\big]\rd W(s)\,.
\eal
\setlength\belowdisplayskip{4pt}
\end{equation*}
After squaring both sides, taking expectations, applying Lemma \ref{error-G_h} 
with $\rho=\g=2$ and Lemma \ref{reg-spde1}, we can deduce that
\begin{equation*}
\setlength\abovedisplayskip{3pt}
\bal
\me\big[\|\d X(t)\|^2\big]
&\leq \cC\bigg\{ \|G_h(t)x\|^2 +\me\Big[\int_0^t \|G_h(t-s)f(s)\|^2+\|G_h(t-s)X(s)\|^2\\
&\qq +\|G_h(t-s)g(s)\|^2+\|\d X(s)\|^2+\big\|\Pi_h\big[f(s)-f_h(s)\big]\big\|^2\\
&\qq+\big\|\Pi_h\big[g(s)-g_h(s)\big]\big\|^2\rd s\Big]\bigg\}\\
&\leq \cC h^4\bigg\{ \|x\|_{\dbH_0^1\cap\dbH^2}^2 +\me\Big[\int_0^t \|f(s)\|_{\dbH_0^1\cap\dbH^2}^2+\|g(s)\|_{\dbH_0^1\cap\dbH^2}^2\rd s \Big] \bigg\} \\
&\q+\cC\, \me\Big[\int_0^t  \|\d X(s)\|^2+\big \|\Pi_h\big[f(s)-f_h(s)\big]\big\|^2+\big\|\Pi_h\big[g(s)-g_h(s)\big]\big\|^2\rd s\Big]\,,
\eal
\setlength\belowdisplayskip{3pt}
\end{equation*}
which, together with Gronwall's inequality, yields the assertion with the supremum outside $\me[\cd]$; using the BDG inequality then implies \rf{w1002e2b}.
\end{proof}


\subsubsection{Temporal discretization and error estimates for linear SDEs}\label{spdeh-time1}
In this part, we adopt the Euler method to temporally discretize \rf{spde1-h}, {\em i.e.},
\begin{equation}\label{dis-sde}
\setlength\abovedisplayskip{3pt}
\lt\{
\begin{array}{ll}
\ds X_{n+1}\!-\!X_n\!=\!\t \big[\D_h X_{n+1} \!+\!  \wt f_h(t_n)\big] \!+\! \big[\b X_n\!+\!\wt g_h(t_n)\big]\D_{n+1}W
 \q n=0,1,\cds,N,\\
\ns\ds x=\Pi_h x\,,
\end{array}
\rt.
\setlength\belowdisplayskip{3pt}
\end{equation}
where $\wt f_h(\cd)\,,\wt g_h(\cd)\in D_\dbF\big([0,T];L^2(\O;\dbV_h)\big)$ are approximations of $f_h(\cd)\,,g_h(\cd)$ respectively.
While the next lemma settles relevant stability properties for \rf{dis-sde}, Theorem \ref{w1002t3} provides rates for it.

\bl{w1002l2}
Let $X_\cd \equiv \{X_n\}_{n=0}^N$ be the solution of \rf{dis-sde}. There exists a constant $\cC$ such that
\begin{equation*}
\setlength\abovedisplayskip{3pt}
\setlength\belowdisplayskip{3pt}
\bal
&\max_{0\leq n\leq N}\me\big[\|X_n\|_{\dot \dbH^\g_h}^2\big] 
+\sum_{n=0}^{N-1} \me\big[\| X_{n+1}-X_n\|_{\dot\dbH^\g_h}^2\big]
+\t \sum_{n=1}^{N}\me\big[\| X_n\|_{\dot \dbH^{\g+1}_h}^2\big]\\
&\qq\q\leq  \cC \Big[\|x\|_{\dot \dbH^\g_h}^2+\max_{0\leq n\leq N} \me\big[\|\wt f_h(t_n)\|_{\dot \dbH^{\g-1}_h}^2+\|\wt g_h(t_n)\|_{\dot \dbH^\g_h}^2\big] \Big] \qq \g=0,1\,.
\eal
\end{equation*}
\el
 
 \begin{proof}
 Here we only prove the case $\g=0$; the case $\g=1$ can be derived accordingly.
By testing \rf{dis-sde} with $X_{n+1}$, using the relation $(x-y,x)=\frac 1 2 (\|x\|^2-\|y\|^2+\|x-y\|^2)$, 
and then taking expectations, we arrive at
\begin{eqnarray*}
&&\frac 1 2 \Big\{ \me\big[\|X_{n+1}\|^2\big]-\me\big[\|X_{n}\|^2\big]+\me\big[\|X_{n+1}-X_n\|^2\big] \Big\}
+\t \me\big[\| X_{n+1}\|_{\dot\dbH^1_h}^2\big]\\
&&\qq=\t \me\big[\big(\wt f_h(t_n), X_{n+1}\big)_{\dbL^2}\big] 
+\me\big[\big(\D_{n+1}W\big[\b X_n+\wt g_h(t_n)\big], X_{n+1}-X_n\big)_{\dbL^2}\big]\,,
\end{eqnarray*}
since $-\me\big[\big(\D_{n+1}W\big[\b X_n+\wt g_h(t_n)\big], X_n\big)_{\dbL^2}\big]=0$. 
By Young's inequality and the It\^o isometry, we resume that
\begin{equation*}
\setlength\abovedisplayskip{3pt}
\setlength\belowdisplayskip{3pt}
\bal
&\leq \frac \t 2 \me\big[\|X_{n+1}\|_{\dot\dbH^{1}_h}^2\big] +\frac \t 2 \me\big[\|\wt f_h(t_n)\|_{\dot\dbH^{-1}_h}^2\big]
+ \frac 1 4 \me\big[\|X_{n+1}-X_n\|^2\big] 
+2\b^2\t \me\big[\|X_{n}\|^2\big] +2\t \me\big[\|\wt g_h(t_n)\|^2\big]\,.
\eal
\end{equation*}
And then
\bel{w1002e3}
\setlength\abovedisplayskip{3pt}
\setlength\belowdisplayskip{3pt}
\bal
& \me\big[\|X_{n+1}\|^2\big]+\frac 1 2 \me\big[\|X_{n+1}-X_n\|^2\big] +\t \me\big[\| X_{n+1}\|_{\dot\dbH^1_h}^2\big]\\
&\qq\leq \big[1+4\b^2\t\big]  \me\big[\|X_n\|^2\big] +\t  \me\big[\|\wt f_h(t_n)\|_{\dot\dbH^{-1}_h}^2\big] +4\t  \me\big[\|\wt g_h(t_n)\|^2\big]\,.
\eal
\ee
By applying discrete Gronwall's inequality and then taking summation in \rf{w1002e3} over $n$ from $0$ to $N-1$, we get
\begin{equation*}
\setlength\abovedisplayskip{3pt}
\setlength\belowdisplayskip{3pt}
\bal
&\me\big[\|X_N\|^2\big] +\frac 1 2\sum_{n=0}^{N-1} \me\big[\| X_{n+1}-X_n\|^2\big]+\t\sum_{n=0}^{N-1} \me\big[\| X_{n+1}\|_{\dot\dbH^1_h}^2\big]\\
\setlength\abovedisplayskip{3pt}
\setlength\belowdisplayskip{3pt}
&\qq\leq \cC \Big[\|x\|^2+\max_{0\leq n\leq N} \me\big[\|\wt f_h(t_n)\|_{\dot\dbH^{-1}_h}^2\big]+\max_{0\leq n\leq N} \me\big[\|\wt g_h(t_n)\|^2\big] \Big]\,.
\eal
\end{equation*}
That completes the proof. 
 \end{proof}
 
\bt{w1002t3}
Let $X_h(\cd)$ be the solution to SDE \rf{spde1-h}, and $X_\cd$ solves \rf{dis-sde}. There exists a constant $\cC$ such 
that
\begin{subequations}\label{w1028e1}
    \begin{empheq}[left={\empheqlbrace\,}]{align}
      &\max_{0\leq n\leq N}\me\big[\|X_h(t_n)-X_n\|_{\dot\dbH^{-1}_h}^2\big]
      +\sum_{n=0}^{N-1}\me\Big[\int_{t_n}^{t_{n+1}}\|X_h(t)-X_n\|^2\rd t\Big] \nonumber \\
&\q\leq \cC\t \bigg\{ \|x\|_{\dbH_0^1}^2+\me\Big[\int_0^T\|f_h(t)\|^2+\|g_h(t)\|_{\dbH_0^1}^2\rd t\Big]\bigg\}\nonumber \\
&\qq+\cC\sum_{n=0}^{N-1} \me\Big[\int_{t_n}^{t_{n+1}} \|f_h(t)-\wt f_h(t_n)\|_{\dot \dbH^{-1}_h}^2+\|g_h(t)-\wt g_h(t_n)\|_{\dot \dbH^{-1}_h}^2 \rd t \Big]\,,  \label{w1028e1a}\\
& \max_{0\leq n\leq N-1}\!\sup_{t\in[t_n,t_{n+1})}\!\me\big[\|X_h(t)\!-\!X_n\|^2\big]
\!+\!\sum_{n=0}^{N-1}\me\Big[\!\int_{t_n}^{t_{n+1}}\!\|X_h(t)\!-\!X_n\|_{\dbH_0^1}^2\rd t\Big]\nonumber \\
 &\q\leq \cC\t \bigg\{\|x\|_{\dbH_0^1\cap\dbH^2}^2+\me\Big[\int_0^T  \|f_h(t)\|_{\dbH_0^1}^2+\|g_h(t)\|_{\dbH_0^1\cap\dbH^2}^2 \rd t \Big]\nonumber \\
&\qq+\cC \sum_{n=0}^{N-1} \me\Big[\int_{t_n}^{t_{n+1}} \|f_h(t)-\wt f_h(t_n)\|^2+\|g_h(t)-\wt g_h(t_n)\|^2 \rd t \Big]\,.  \label{w1028e1b}
\end{empheq}
\end{subequations}
\et

\begin{proof}
We only prove  \rf{w1028e1a}. The argumentation for \rf{w1028e1b} is very close to \rf{w1028e1a} and we again leave it
to the interested reader.

For any $n=0,1,\cds,N$, consider the $\dbV_h$-valued random variable $e_n=X_n-X_h(t_n)$.
Then $e_\cd$ satisfies
\bel{w1017e2}
\setlength\abovedisplayskip{3pt}
\setlength\belowdisplayskip{3pt}
\lt\{
\begin{array}{ll}
\ds e_{n+1}\!-\!e_n
 =\t\D_h e_{n+1}\!+\!\int_{t_n}^{t_{n+1}}\big[\D_h(X_h(t_{n+1})\!-\!X_h(t))\big]\!+\!\big[\wt f_h(t_n)\!-\!f_h(t)\big]\!\rd t\\
\ns\ds\qq\q+ \b e_n\D_{n+1}W\! +\!\int_{t_n}^{t_{n+1}}\!\b \big[X_h(t_n)\!-\!X_h(t)\big]\!+\!\big[\wt g_h(t_n)\!-\!g_h(t)\big]\!\rd W(t)\\
\ns\ds\qq\qq\qq\qq\qq \qq n=0,1,\cds,N-1\,,\\
\ns\ds e_0=0\,.
\end{array}
\rt.
\ee
By testing this equation with $(-\D_h)^{-1}e_{n+1}$ and 
using the same idea as in the proof of Lemma \ref{w1002l2}, 
we have
\begin{eqnarray}
&&\frac  1 2\Big\{ \me \big[\|(-\D_h)^{-1/2}e_{n+1}\|^2\big] - \me \big[\|(-\D_h)^{-1/2}e_n\|^2\big] \notag\\
&&\q + \me \big[\|(-\D_h)^{-1/2}(e_{n+1}-e_n)\|^2\big] \Big\}+\t \me \big[\|e_{n+1}\|^2\big] \notag \\
&&=\me\Big[\int_{t_n}^{t_{n+1}} \big( X_h(t)-X_h(t_{n+1}), e_{n+1}\big)_{\dbL^2}\rd t \Big] \notag \\
&&\q+\me\Big[\int_{t_n}^{t_{n+1}} \big( (-\D_h)^{-1/2}\big(\wt f_h(t_n)-f_h(t)\big),  (-\D_h)^{-1/2}(e_{n+1}-e_n)\big)_{\dbL^2}\rd t \Big] \notag \\
&&\q+\me\Big[\int_{t_n}^{t_{n+1}} \big( (-\D_h)^{-1/2}\big(\wt f_h(t_n)-f_h(t)\big),  (-\D_h)^{-1/2}e_n\big)_{\dbL^2}\rd t \Big]\notag \\
&&\q+\me\Big[\big(\D_{n+1}W\b (-\D_h)^{-1/2}e_n, (-\D_h)^{-1/2}(e_{n+1}-e_n)\big)_{\dbL^2}\Big] \notag \\
&&\q+\me\Big[\Big(\int_{t_n}^{t_{n+1}}  (-\D_h)^{-1/2} \b\big(X_h(t_n)-X_h(t)\big)\rd W(t),  (-\D_h)^{-1/2}(e_{n+1}-e_n)\Big)_{\dbL^2} \Big]\notag \\
&&\q+\me\Big[\Big(\!\int_{t_n}^{t_{n+1}} \! (-\D_h)^{-1/2}\big(\wt g_h(t_n)\!-\!g_h(t)\big)\!\rd W(t),  (-\D_h)^{-1/2}\!(e_{n+\!1}\!-\!e_n)\Big)_{\dbL^2} \Big]\,.\label{w1209e1}
\end{eqnarray}
A standard procedure, which uses the It\^o isometry, Young's inequality, discrete Gronwall's inequality 
and the fact that $e_0=0$, then leads to
\begin{equation*}
\setlength\abovedisplayskip{4pt}
\setlength\belowdisplayskip{3pt}
\bal
\max_{0\leq k\leq N}\me \big[\|e_k\|_{\dot \dbH^{-1}_h}^2\big] 
&\leq \cC\sum_{n=0}^{N-1}\bigg\{\me\Big[\int_{t_n}^{t_{n+1}}  \|X_h(t)-X_h(t_{n+1})\|^2+\|X_h(t)-X_h(t_n)\|_{\dot \dbH^{-1}_h}^2 \rd t \Big]\\
&\qq+ \me\Big[\int_{t_n}^{t_{n+1}} \|f_h(t)-\wt f_h(t_n)\|_{\dot \dbH^{-1}_h}^2+\|g_h(t)-\wt g_h(t_n)\|_{\dot \dbH^{-1}_h}^2 \rd t \Big]\bigg\}\,.
\eal
\end{equation*}
We may now use this result to verify assertion \rf{w1028e1a}. Therefore, we come back to 
 \rf{w1209e1} and do summation over $n$ from $0$ to $N-1$ to conclude
\begin{eqnarray*}
&& \sum_{n=0}^{N-1}\Big[\me \big[\|e_{n+1}-e_n\|_{\dot \dbH^{-1}_h}^2\big] +\t \me \big[\|e_{n+1}\|^2\big]\Big]\\
&&\qq\leq (1+4\b^2)\t  \sum_{n=0}^{N-1}\me \big[\|e_n\|_{\dot \dbH^{-1}_h}^2\big]\\
&&\qq\q+\sum_{n=0}^{N-1}\bigg\{\me\Big[\int_{t_n}^{t_{n+1}}  \|X_h(t)-X_h(t_{n+1})\|^2+8\b^2\|X_h(t)-X_h(t_n)\|_{\dot \dbH^{-1}_h}^2 \rd t \Big]\\
&&\qq\q+ \me\Big[\int_{t_n}^{t_{n+1}} (4\t+1)\|f_h(t)-\wt f_h(t_n)\|_{\dot \dbH^{-1}_h}^2+8\|g_h(t)-\wt g_h(t_n)\|_{\dot \dbH^{-1}_h}^2 \rd t \Big]\bigg\}\\
&&\qq\leq \cC\sum_{n=0}^{N-1}\bigg\{\me\Big[\int_{t_n}^{t_{n+1}}  \|X_h(t)-X_h(t_{n+1})\|^2+\|X_h(t)-X_h(t_n)\|_{\dot \dbH^{-1}_h}^2 \rd t \Big]\\
&&\qq\q+ \me\Big[\int_{t_n}^{t_{n+1}} \|f_h(t)-\wt f_h(t_n)\|_{\dot \dbH^{-1}_h}^2+\|g_h(t)-\wt g_h(t_n)\|_{\dot \dbH^{-1}_h}^2 \rd t \Big]\bigg\}\,.
\end{eqnarray*}
This bound, together with \rf{w1002e5} for $\g=0$ in Lemma \ref{w1002l4}, then leads to the estimate
\begin{equation*}
\setlength\abovedisplayskip{3pt}
\setlength\belowdisplayskip{3pt}
\bal
&\max_{0\leq n\leq N}\me\big[\|e_n\|_{\dot\dbH^{-1}_h}^2\big]
+\sum_{n=0}^{N-1}\me\Big[\int_{t_n}^{t_{n+1}}\|X_h(t)-X_n\|^2\rd t\Big]\\
&\q\leq \max_{0\leq n\leq N}\me\big[\|e_n\|_{\dot\dbH^{-1}_h}^2\big]
+\cC\sum_{n=0}^{N-1}\me\Big[\int_{t_n}^{t_{n+1}}\|X_h(t)-X_h(t_n)\|^2\rd t
+\t \|e_{n+1}\|^2\Big]\\
&\q\leq \cC\t \bigg\{ \|x\|_{\dbH_0^1}^2+\me\Big[\int_0^T\|f_h(t)\|^2+\|g_h(t)\|_{\dbH_0^1}^2\rd t\Big]\bigg\}\\
&\qq+\cC \sum_{n=0}^{N-1} \me\Big[\int_{t_n}^{t_{n+1}} \|f_h(t)-\wt f_h(t_n)\|_{\dot \dbH^{-1}_h}^2+\|g_h(t)-\wt g_h(t_n)\|_{\dot \dbH^{-1}_h}^2 \rd t \Big]\,.
\eal
\end{equation*}
That settles the assertion  \rf{w1028e1a}.
\end{proof}

\subsection{Discretization of linear BSPDEs and rates}\label{nume-bspde1}
We split the derivation of rates of convergence for the scheme \rf{bspde-h-t} to discretize BSPDE (\ref{bshe}) into two steps: in Section \ref{num-bspde1a} we begin with BSDE \rf{bsde}, which is a discretization scheme of \rf{bshe} in space only; related stability bounds for the solutions to BSDEs may then serve in Section \ref{dis-bspde-t} to verify rates of convergence for the complete discretization in space and time.

\subsubsection{Spatial discretization and error estimates for linear BSPDEs }\label{num-bspde1a}
 
We now consider a finite element method of the BSPDE \eqref{bshe}.
Let $Y_{T,h} \in L^2_{{\mathcal F}_T}(\Omega; {\mathbb V}_h)$, 
 $ f_h(\cd) \in L^2_{{\mathbb F}}(0,T; \dbV_h)$ be approximations of $Y_T\,,f(\cd)$, respectively.
Then the finite element discretization of \rf{bshe} is the following BSDE:
\bel{bsde}
\setlength\abovedisplayskip{3pt}
\setlength\belowdisplayskip{3pt}
\lt\{
\bal
&{\rm d}Y_h(t)=\big[-\Delta_hY_h(t)-\b Z_h(t)+ f_h(t)\big] \rd t+ Z_h(t)\rd W(t) \q  t\in [0,T)\, ,\\
&Y_h(T)=Y_{T,h}\, .
\eal
\rt.
\ee
The well-posedness  of a solution tuple $\big(Y_h(\cd), Z_h(\cd)\big)\in L^2_\dbF\big(\O;C([0,T];\dbV_h)\big)\times L^2_\dbF(0,T;\dbV_h)$ follows from \cite[Theorem 2.1]{ElKaroui-Peng-Quenez97}.
The following result is on the stability of the solution to \rf{bsde}.

\bl{w1008l1}
Suppose that $Y_{T,h}\in L^2_{\mf_T}(\O;\dbV_h)$ and $f_h(\cd)\in L^2_\dbF(0,T;\dbV_h)$.
There exists a constant $\cC$ independent of $h$ such that
\bel{w1008e1}
\setlength\abovedisplayskip{3pt}
\setlength\belowdisplayskip{3pt}
\bal
& \me\Big[\sup_{t \in [0,T]} \| Y_h(t)\|_{\dot \dbH^\g_h}^2\Big] +
\me \Big[\int_0^T  \| Y_h(t)\|_{\dot \dbH^{\g+1}_h}^2+
\|Z_h(t)\|_{\dot \dbH^\g_h}^2\rd t \Big]\\
&\qq\leq
\cC\, \me \Big[ \lt\|  Y_{T,h}\rt\|_{\dot \dbH^\g_h}^2+ \int_0^T \| f_h(t)\|_{\dot \dbH^{\g-1}_h}^2\rd t\Big] \qq \g=-1,0,1,2\,.\\
\eal
\ee
For a uniform partition $I_\t=\{t_n\}_{n=0}^N$ of size $\t>0$ covering $[0,T]$, it holds that
\bel{w1008e2}
\setlength\abovedisplayskip{3pt}
\setlength\belowdisplayskip{3pt}
\bal
\sum_{n=0}^{N-1}\me\Big[\int_{t_n}^{t_{n+1}} \|Y_h(t)-Y_h(t_n)\|_{\dot \dbH^{\g}_h}^2\rd t\Big]
\leq \cC\t \me\Big[\|Y_{T,h}\|_{\dot \dbH^{\g+1}_h}^2+\int_0^T \|f_h(t)\|_{\dot \dbH^{\g}_h}^2\rd t\Big]  \qq \g=0,1\,.
\eal
\ee
\el

\begin{proof}
The assertion \rf{w1008e1} can be derived by It\^o's formula, Gronwall's inequality and the BDG inequality;
 see {\em e.g.}~\cite[Lemma 3.1]{Dunst-Prohl16}. 
To prove \rf{w1008e2}, we follow a similar technique as in the proof of \rf{w1002e5} in  Lemma \ref{w1002l4}.
By the It\^o isometry and \rf{w1008e1}, BSDE \rf{bsde} easily gives
\begin{equation*}
\setlength\abovedisplayskip{3pt}
\setlength\belowdisplayskip{3pt}
\bal
\sum_{n=0}^{N-1}\me\Big[\int_{t_n}^{t_{n+1}}\|Y_h(t)-Y_h(t_n)\|_{\dot \dbH^{\g}_h}^2\rd t \Big]
&\leq \cC\t \me\Big[\int_0^T\|\D_h Y_h(t)\|_{\dot \dbH^{\g}_h}^2+\|Z_h(t)\|_{\dot \dbH^{\g}_h}^2+\|f_h(t)\|_{\dot \dbH^{\g}_h}^2\rd t\Big] \\
&\leq \cC\t  \me\Big[\|Y_{T,h}\|_{\dot \dbH^{\g+1}_h}^2+\int_0^T \|f_h(t)\|_{\dot \dbH^{\g}_h}^2\rd t\Big] \,.
\eal
\end{equation*}
That completes the proof.
\end{proof}
 
 The  following result settles rates of convergence for discretization \rf{bsde}, which also
improves the result in \cite[Theorem 3.2]{Dunst-Prohl16}.

\begin{theorem}\label{w0911t1}
Suppose that $Y_T \in L^2_{{\mathcal F}_T}(\Omega; {\mathbb H}^1_0\cap\dbH^2)$ and $f(\cd)\in L^2_\dbF(0,T;\dbH_0^1)$.
Let $\big(Y(\cd),Z(\cd)\big)$ resp.~$\big(Y_h(\cd), Z_h(\cd)\big)$  be solutions of \eqref{bshe} resp.~\eqref{bsde}. 
There exists
a constant $\cC$ independent of $h$
such that
\begin{equation}\label{w1028e2a}
\setlength\abovedisplayskip{3pt}
\setlength\belowdisplayskip{3pt}
\bal
&\me\Big[\sup_{t\in[0,T]}\|Y(t)-Y_h(t)\|_{\dot \dbH^{\g}}^2\Big]
+\me\Big[\int_0^T\|Y(t)-Y_h(t)\|_{\dot\dbH^{\g+1}}^2+\|Z(t)-Z_h(t)\|_{\dot \dbH^{\g}}^2\rd t\Big]  \\
&\qq\leq \cC h^{2(1-\g)} \me\Big[ \|Y_T\|_{\dbH_0^1}^2+\int_0^T\|f(t)\|^2\rd t\Big] \\
&\qq\q+\cC\,\me\Big[\|\Pi_h Y_T-Y_{T,h}\|^2_{\dot \dbH^{\g}_h}+\int_0^T\|\Pi_h f(t)-f_h(t)\|_{\dot \dbH^{\g-1}_h}^2\rd t\Big]
\qq\g=-1,0\,,  
\eal
\end{equation}
and
\begin{equation}\label{w1028e2b}
\setlength\abovedisplayskip{3pt}
\setlength\belowdisplayskip{3pt}
\bal
\sup_{t\in[0,T]}\me\big[\|Y(t)-Y_h(t)\|^2\big]  
&\leq \cC h^4  \me\Big[\|Y(T)\|_{\dbH_0^1\cap\dbH^2}^2 +\int_0^T \|f(s)\|_{\dbH_0^1\cap \dbH^2}^2 \rd s\Big] \\
&\qq+\cC  \me\Big[\|Y(T)-Y_h(T)\|^2+\int_0^T \|f(s)-f_h(s)\|^2 \rd s\Big]\,.  
\eal
\end{equation}

\end{theorem}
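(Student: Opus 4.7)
The theorem splits naturally into two parts, treated with increasing sophistication. For the variational bound \eqref{w1028e2a}, I would follow the same strategy as the SPDE version in Theorem~\ref{w1002t1}. Set $e_Y:=\Pi_h Y-Y_h$ and $e_Z:=\Pi_h Z-Z_h$; applying $\Pi_h$ to \eqref{bshe} and subtracting \eqref{bsde}, the pair $(e_Y,e_Z)$ satisfies the $\dbV_h$-valued BSDE
\begin{equation*}
\lt\{
\begin{array}{l}
{\rm d}e_Y(t)=\big[-\D_h e_Y(t)-\b e_Z(t)+R_h(t)\big]{\rm d}t+e_Z(t){\rm d}W(t),\\
e_Y(T)=\Pi_h Y_T-Y_{T,h},
\end{array}
\rt.
\end{equation*}
with consistency source $R_h:=(\D_h\Pi_h-\Pi_h\D)Y+(\Pi_h f-f_h)$. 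Invoking the stability estimate of Lemma~\ref{w1008l1} with $\g=-1$ (resp.\ $\g=0$) furnishes bounds on $(e_Y,e_Z)$ in the desired dotted Sobolev norms in terms of $\me\|e_Y(T)\|_{\dot\dbH^\g_h}^2$ and $\me\int_0^T\|R_h\|_{\dot\dbH^{\g-1}_h}^2{\rm d}s$. The commutator estimate \eqref{w1022e1e} provides $\|(\D_h\Pi_h-\Pi_h\D)Y\|_{\dot\dbH^{-\g'}_h}\leq \cC h^{\g'}\|Y\|_{\dot\dbH^2}$ for $\g'=1,2$, and the regularity bound \eqref{vari-2} controls $\me\int_0^T\|Y\|_{\dot\dbH^2}^2{\rm d}s$. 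Combining via the triangle decomposition $Y-Y_h=(Y-\Pi_h Y)+e_Y$ (and analogously for $Z$) together with the projection estimates \eqref{w1022e1a}--\eqref{w1022e1c} closes \eqref{w1028e2a}.

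For the sharper $\mathbb{L}^2$-estimate \eqref{w1028e2b}, I would switch to the mild-solution formulation. Subtracting the mild forms of $Y$ and $Y_h$ and introducing the error operator $G_h(\cd)=E(\cd)-E_h(\cd)\Pi_h$ from \eqref{w1022e4} yields a representation of $\d Y(t):=Y(t)-Y_h(t)$ consisting of $G_h$-terms, input-projection errors, and two It\^o integrals. Since $\d Y(t)$ is $\mf_t$-measurable, taking the conditional expectation $\me^t[\cd]$ annihilates the It\^o integrals and leaves
\begin{equation*}
\bal
\d Y(t) &= \me^t\!\big[G_h(T{-}t)Y_T + E_h(T{-}t)(\Pi_h Y_T - Y_{T,h})\big]\\
&\q + \b\,\me^t\!\[\int_t^T\!\! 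G_h(s{-}t)Z(s)\,{\rm d}s + \int_t^T\!\! E_h(s{-}t)(\Pi_h Z(s) - Z_h(s))\,{\rm d}s\]\\
&\q - \me^t\!\[\int_t^T\!\! G_h(s{-}t)f(s)\,{\rm d}s + \int_t^T\!\! E_h(s{-}t)(\Pi_h f(s) - f_h(s))\,{\rm d}s\].
\eal
\end{equation*}
Squaring, taking expectation, and applying Jensen together with Lemma~\ref{error-G_h} at $\rho=\g=2$ (so that $\|G_h(t)x\|\leq \cC h^2\|x\|_{\dot\dbH^2}$) yields the sought $O(h^4)$ contribution from the $G_h(T{-}t)Y_T$, $\int G_h(s-t)f$, and $\int G_h(s-t)Z$ terms; the regularity $\me\int_0^T\|Z\|_{\dot\dbH^2}^2{\rm d}s\leq \cC[\me\|Y_T\|_{\dot\dbH^2}^2+\me\int_0^T\|f\|_{\dot\dbH^1}^2{\rm d}s]$ from Lemma~\ref{reg-bshe} with $\g=2$ supplies the needed regularity of $Z$. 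The $\dbL^2$-contraction of $E_h$ converts the $E_h(T{-}t)(\Pi_h Y_T-Y_{T,h})$ and $E_h(s{-}t)(\Pi_h f - f_h)$ pieces into the terms $\me\|Y(T)-Y_h(T)\|^2$ and $\me\int_0^T\|f-f_h\|^2{\rm d}s$ on the right-hand side of \eqref{w1028e2b}, after invoking the projection bound \eqref{w1022e1a}.

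\textbf{The main obstacle} is the residual $\b\me\int_t^T\|\Pi_h Z(s)-Z_h(s)\|^2\,{\rm d}s$ produced by the multiplicative-noise contribution: the estimate derivable directly from \eqref{w1028e2a} with $\g=0$ combined with \eqref{w1022e1a} applied to $Z-\Pi_h Z$ gives only $O(h^2)$, which would degrade the final rate to $O(h^2)$ in squared norm instead of the claimed $O(h^4)$. To overcome this, I would set up a coupled backward-Gronwall system between $\mathcal{E}(t):=\me\|\d Y(t)\|^2$ and $\mathcal{F}(t):=\me\int_t^T\|\Pi_h Z-Z_h\|^2{\rm d}s$: the semigroup representation above delivers $\mathcal{E}(t)\leq \cC h^4+\cC\b^2\mathcal{F}(t)$, while the BSDE $L^2$-energy identity applied to $(e_Y,e_Z)$---after absorbing the cross term $2\b(e_Y,e_Z)$ by Young's inequality and using \eqref{w1022e1e}---gives $\mathcal{F}(t)\leq \cC h^2\cdot(\text{data})+\cC\int_t^T\mathcal{E}(s){\rm d}s$. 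Iterating these two inequalities carefully, and exploiting that the $h^2$ contribution to $\mathcal{F}$ is itself governed by the (already-$O(h^2)$ in squared norm) commutator $\|R_h\|_{\dot\dbH^{-1}_h}$, should allow the bootstrap to close at order $h^4$; making this two-level coupling quantitative is the delicate technical point of the argument.
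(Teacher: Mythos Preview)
Your treatment of \eqref{w1028e2a} is correct and coincides with the paper's argument.

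For \eqref{w1028e2b}, however, your bootstrap does not close. Once you take $\me^t[\cdot]$ and discard the It\^o integrals, the remaining $\beta$-term forces you to control $\mathcal F(t)=\me\int_t^T\|\Pi_hZ-Z_h\|^2\rd s$. Your proposed inequality $\mathcal F(t)\leq \cC h^2(\text{data})+\cC\int_t^T\mathcal E(s)\rd s$ is correct, but the $h^2$ coming from the commutator $\|(\D_h\Pi_h-\Pi_h\D)Y\|_{\dot\dbH_h^{-1}}\leq \cC h\|Y\|_{\dot\dbH^2}$ in \eqref{w1022e1e} is sharp in this norm and cannot be upgraded to $h^4$. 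Substituting into $\mathcal E(t)\leq \cC h^4+\cC\beta^2\mathcal F(t)$ and applying Gronwall yields only $\mathcal E(t)\leq \cC(h^4+\beta^2 h^2)$, i.e.\ $O(h^2)$ whenever $\beta\neq 0$.

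The paper avoids this entirely by \emph{not} taking conditional expectations. Instead it works on a short subinterval $[T_n,T_{n+1}]$ of length $\tau_0\leq 1/(3\beta^2)$ and writes the mild formulation as
\[
\d Y(t)+\int_t^{T_{n+1}}\!\!\big[E(s-t)Z(s)-E_h(s-t)Z_h(s)\big]\rd W(s)=\text{(deterministic RHS)}.
\]
Since $\d Y(t)$ is $\mf_t$-measurable and the It\^o integral is orthogonal to it, squaring and taking expectations puts $\me\int_t^{T_{n+1}}\|E(s-t)Z(s)-E_h(s-t)Z_h(s)\|^2\rd s$ on the \emph{left} with coefficient $1$, while the same quantity appears on the right with coefficient $3\beta^2\tau_0\leq 1$ (from Cauchy--Schwarz on the $\beta$-Lebesgue integral). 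It is therefore absorbed, and no independent bound on $Z-Z_h$ is ever needed. Iterating over the finite partition $\{T_n\}_{n=0}^{N_0}$ then propagates the $O(h^4)$ estimate backward from $T$ to $0$. The It\^o integral you threw away is exactly what makes the argument work.
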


\begin{proof}

{\bf (1) Verification of \rf{w1028e2a}.} By defining two $\dbV_h$-valued stochastic processes
$e_Y(\cd)=\Pi_h Y(\cd)-Y_h(\cd)$ and $ e_Z(\cd)=\Pi_h Z(\cd)-Z_h(\cd)$,
we find that
\begin{equation*}
\setlength\abovedisplayskip{3pt}
\setlength\belowdisplayskip{3pt}
\lt\{
\begin{array}{ll}
\ds {\rm d} e_Y(t)=\big[- \D_h e_Y(t)-\b e_Z(t)-(\Pi_h\D-\D_h\Pi_h)Y(t)+\big(\Pi_h f(t)-f_h(t)\big)\big]\rd t\\
\ns \ds \qq\qq\qq\qq+e_Z(t)\rd W(t) \qq t\in[0,T)\,,\\
\ns\ds e_Y(T)=\Pi_h Y_T-Y_{T,h}\,.
\end{array}
\rt.
\end{equation*}
Based on \rf{w1008e1} in Lemma \ref{w1008l1}, \rf{w1022e1e} in  Lemma \ref{w1017l1} and Lemma \ref{reg-bshe}, we have
\begin{equation*}
\setlength\abovedisplayskip{3pt}
\setlength\belowdisplayskip{3pt}
\bal
&\me\Big[\sup_{t\in[0,T]}\|e_Y(t)\|_{\dot \dbH_h^{-1}}^2\Big]+\me\Big[\int_0^T\|e_Y(t)\|^2+\|e_Z(t)\|_{\dot \dbH_h^{-1}}^2\rd t\Big]\\
&\q\leq \cC\, \me\Big[\|\Pi_h Y_T-Y_{T,h}\|^2_{\dot \dbH^{-1}_h}+\int_0^T\|(-\D_h)^{-1}(\D_h\cR_h-\D_h\Pi_h)Y(t)\|^2\\
&\qq+\|\Pi_h f(t)-f_h(t)\|^2_{\dot \dbH^{-2}_h}\rd t\Big]\\
&\q\leq \cC h^4 \me \Big[ \|Y_T\|_{\dbH_0^1}^2+\int_0^T\|f(t)\|^2\rd t\Big]
+\cC\, \me\Big[\|\Pi_h Y_T-Y_{T,h}\|^2_{\dot \dbH^{-1}_h}+\int_0^T\|\Pi_h f(t)-f_h(t)\|^2_{\dot \dbH^{-2}_h}\rd t\Big]\,.
\eal
\end{equation*}
On the other hand, Lemmata \ref{w1017l1} and \ref{reg-bshe} imply that
\begin{equation*}
\setlength\abovedisplayskip{3pt}
\setlength\belowdisplayskip{3pt}
\bal
&\me\Big[\sup_{t\in[0,T]}\|Y(t)-\Pi_hY(t)\|_{\dot \dbH_h^{-1}}^2\Big]
+\me\Big[\int_0^T\|Y(t)-\Pi_hY(t)\|^2+\|Z(t)-\Pi_hZ(t)\|_{\dot \dbH_h^{-1}}^2\rd t\Big]\\
&\qq\leq\cC h^4 \bigg\{\sup_{t\in[0,T]}\me\big[\|Y(t)\|_{\dbH_0^1}^2\big]+\me\Big[\int_0^T\|Y(t)\|_{\dbH_0^1\cap\dbH^2}^2+\|Z(t)\|_{\dbH_0^1}^2\rd t\Big]\bigg\}\\
&\qq\leq \cC h^4 \bigg\{ \me\big[\|Y_T\|_{\dbH_0^1}^2\big]+\me\Big[\int_0^T\|f(t)\|^2\rd t\Big]\bigg\}\,.
\eal
\end{equation*}
The assertion for $\g=-1$ then follows from the estimates above.

\ss

To derive assertion \rf{w1028e2a} for $\g=0$, we may proceed correspondingly.  Firstly by applying  \rf{w1008e1} in Lemma \ref{w1008l1} and \rf{w1022e1e} in Lemma \ref{w1017l1}, we conclude that
\begin{eqnarray*}
&&\me\Big[\sup_{t\in[0,T]}\|e_Y(t)\|^2\Big]+\me\Big[\int_0^T\|\nb e_Y(t)\|^2+\|e_Z(t)\|^2\rd t\Big]\\
&&\q\leq \cC\, \me\Big[\|\Pi_h Y_T-Y_{T,h}\|^2
+\int_0^T\|(\Pi_h\D-\D_h\Pi_h)Y(t)\|_{\dot\dbH^{-1}_h}^2+\|\Pi_h f(t)-f_h(t)\|_{\dot \dbH^{-1}_h}^2\rd t\Big]\\
&&\q\leq \cC h^2\, \me \Big[ \|Y_T\|_{\dbH_0^1}^2+\int_0^T\|f(t)\|^2\rd t\Big]
+\cC\, \me\Big[\|\Pi_h Y_T-Y_{T,h}\|^2+ \int_0^T  \|\Pi_h f(t)-f_h(t)\|_{\dot \dbH^{-1}_h}^2\rd t\Big]\,.
\end{eqnarray*}
It is now immediate to complete the argument, which settles the assertion for $\g=0$.

\ss
{\bf (2)  Verification of \rf{w1028e2b}.}
To derive the assertion,
we consider a time partition $I_{\t_0}=\{T_n\}_{n=0}^{N_0}$ with $\t_0\leq \frac{1}{3\b^2}$ to control the term led by $\b$;
if $\b=0$, we do not need any partition.
By setting $\d Y(\cd)=Y(\cd)-Y_h(\cd)\,,\d f(\cd)=f(\cd)-f_h(\cd)$ and applying \eqref{bshe} and \eqref{bsde}, for any $t\in [T_n,T_{n+1})$ we have
\begin{equation*}
\setlength\abovedisplayskip{3pt}
\setlength\belowdisplayskip{3pt}
\bal
&\d Y(t) +\int_t^{T_{n+1}} E(s-t)Z(s)-E_h(s-t)Z_h(s) \rd W(s)\\
&\qq=E(T_{n+1}-t)Y(T_{n+1})-E_h(T_{n+1}-t)Y_h(T_{n+1})
+\b \int_t^{T_{n+1}} E(s-t)Z(s)-E_h(s-t)Z_h(s) \rd s\\
&\qq\q- \int_t^{T_{n+1}} E(s-t)f(s)-E_h(s-t)f_h(s) \rd s\,.
\eal
\end{equation*}
Using mutual independence of $\d Y(t)$ and $\int_t^{T_{n+1}}E(s-t)Z(s)-E_h(s-t)Z_h(s)\rd W(s)$, squaring on both sides,
taking expectations, and applying \rf{w1022e4},  we  arrive at
\begin{equation*}
\setlength\abovedisplayskip{3pt}
\setlength\belowdisplayskip{3pt}
\bal
&\me\big[\|\d Y(t)\|^2\big] + \me\Big[\int_t^{T_{n+1}} \|E(s-t)Z(s)-E_h(s-t)Z_h(s)\|^2 \rd s\Big]\\
&\qq\leq 6 \me\big[\|G_h(T_{n+1}-t) Y(T_{n+1})\|^2\big] + 6 \me\big[\|E_h(T_{n+1}-t)\Pi_h \d Y(T_{n+1})\|^2\big]\\
&\qq\q+3\b^2 \t_0  \me\Big[\int_t^{T_{n+1}} \|E(s-t)Z(s)-E_h(s-t)Z_h(s)\|^2 \rd s\Big]\\
&\qq\q+6  \t_0  \me\Big[\int_t^{T_{n+1}} \|G_h(s-t)f(s)\|^2+\| E_h(s-t)\Pi_h\d f(s)\|^2 \rd s\Big]\,,
\eal
\end{equation*}
which, together with the fact that $3\b^2\t_0\leq 1$, yields
\begin{eqnarray}
&\me\big[\|\d Y(t)\|^2\big]&\leq 6 \me\big[\|G_h(T_{n+1}-t) Y(T_{n+1})\|^2\big] +6\me\big[ \|E_h(T_{n+1}-t)\Pi_h \d Y(T_{n+1})\|^2\big]  \notag \\
&&\q+6  \t_0  \me\Big[\int_t^{T_{n+1}} \|G_h(s-t)f(s)\|^2+\| E_h(s-t)\Pi_h\d f(s)\|^2 \rd s\Big] \notag \\
&&\leq \cC h^4  \me\Big[\|Y(T_{n+1})\|_{\dbH_0^1\cap \dbH^2}^2
+\t_0 \int_{T_n}^{T_{n+1}} \|f(t)\|_{\dbH_0^1\cap \dbH^2}^2 \rd t\Big] \notag \\
&&\q+\cC\,  \me\Big[\|\Pi_h\d Y(T_{n+1})\|^2+\t_0  \int_{T_n}^{T_{n+1}} \|\Pi_h\d f(t)\|^2 \rd t\Big]\,. \label{w1015e1}
\end{eqnarray}
By taking $t=T_n$,  $n=N_0-1,N_0-2,\cds,0$, repeating the above procedure $N_0$ times and applying \rf{vari-2}, we conclude that
\begin{equation*}
\setlength\abovedisplayskip{3pt}
\setlength\belowdisplayskip{3pt}
\bal
\max_{0\leq n\leq N_0}\me\big[\|\d Y(T_n)\|^2\big] 
&\leq \cC h^4   \me\Big[\|Y(T)\|_{\dbH_0^1\cap \dbH^2}^2
+\int_0^T \|f(t)\|_{\dbH_0^1\cap \dbH^2}^2 \rd t\Big]\\
&\q+\cC\,  \me\Big[\|\Pi_h \d Y(T)\|^2 + \int_0^T \|\Pi_h \d f(t)\|^2 \rd t\Big] \,,
\eal
\end{equation*}
which, together with \rf{w1015e1}, implies the assertion \rf{w1028e2b}.
\end{proof}


\subsubsection{Temporal discretization and error estimates for linear BSDEs}\label{dis-bspde-t}

In this part, we prove rates of convergence for the temporal discretization of BSDE \rf{bsde} by means of 
the Euler method:
\begin{equation}\label{bspde-h-t}
\setlength\abovedisplayskip{3pt}
\setlength\belowdisplayskip{3pt}
\lt\{\!\!\!
\begin{array}{ll}
\ds Y_{h\t}(t_n) = A_0 \me^{t_n}\Big[ \big(Y_{h\t}(t_{n+1})-\t f_h(t_{n+1})\big)\big(1+\b\D_{n+1}W\big)\Big]  \qq & n=0,1,\cds,N-1\,, \\
\ns\ds Z_{h\t}(t_n)= \frac 1 \t \me^{t_n}\Big[ \big(Y_{h\t}(t_{n+1})-\t f_h(t_{n+1})\big)\D_{n+1}W\Big] \qq&  n=0,1,\cds,N-1,\\
\ns\ds Y_{h\t}(T)=Y_{T,h}\,,
\end{array}
\rt.
\end{equation}
where $A_0=(\mathds{1}_h-\t\D_h)^{-1}$, and $\me^{t_n}[\cd]$ is a conditional expectation with respect to $\mf_{t_n}$;
see part {\bf a.} in Section \ref{not1}.
To initiate  the error analysis, we introduce the following  auxiliary equation which serves as a bridge between 
BSDE \rf{bsde} and difference equation \rf{bspde-h-t}, 
\begin{equation}\label{dis-bsde}
\setlength\abovedisplayskip{3pt}
\setlength\belowdisplayskip{3pt}
\lt\{\!\!\!
\begin{array}{ll}
\ds Y_n-Y_{n+1}=\t \D_h Y_n + \int_{t_n}^{t_{n+1}} \b  \bar Z_0(t)- \wt f_h(t)\rd t
-\int_{t_n}^{t_{n+1}} Z_0(t)\rd W(t)\q n=0,1,\cds,N,\\
\ns\ds Y_N=\wt Y_{T,h}\,,
\end{array}
\rt.
\end{equation}
where 
\begin{equation}
\setlength\abovedisplayskip{3pt}
\setlength\belowdisplayskip{3pt}
\label{w1026e4}
\bar Z_0(t)=\frac 1\t\me^{t_n}\Big[\int_{t_n}^{t_{n+1}}Z_0(t)\rd t\Big]\,,
\end{equation}
and $\wt Y_{T,h}\,,\wt f_h(\cd)$ are approximations of $ Y_{T,h}\,, f_h(\cd)$ respectively. 
Note that when $\b\neq 0$, equation \rf{dis-bsde} is {\em not implementable}, while \rf{bspde-h-t} is, 
since conditional expectations can be simulated; see Section \ref{stati-1} below.

\br{w1019r1}
{\bf (i)} For sufficiently small $\t$, we can follow {\em e.g.}~\cite{Pardoux-Peng90} to derive 
the well-posedness of \rf{dis-bsde}.

{\bf (ii)} The following observation connects the two equations \rf{bspde-h-t} and \rf{dis-bsde}.
If $\wt Y_{T,h}\,,\wt f_h(\cd)$ are chosen as follows,
\bel{w1117e1}
\setlength\abovedisplayskip{3pt}
\setlength\belowdisplayskip{3pt}
\wt Y_{T,h}= Y_{T,h}\qq
\wt f_h(t)=f_h(t_{n+1}) \qq t\in [t_n,t_{n+1})\,, n=0,1,\cds,N-1\,,
\ee
then it holds that
\begin{equation*}
\setlength\abovedisplayskip{3pt}
\setlength\belowdisplayskip{3pt}
Y_{h\t}(t_n)=Y_n\qq Z_{h\t}(t_n)=\bar Z_0(t_n) \qq n=0,1,\cds,N-1\,.
\end{equation*}
Hence, if an error estimate is obtained for $\big(Y_h(\cd)-Y_\cd$, $Z_h(\cd)-Z_0(\cd)\big)$ in a suitable norm,
where $\big(Y_h(\cd), Z_h(\cd)\big)$ is from \rf{bsde}, it transfers to
the Euler method \rf{bspde-h-t} as well.
\er

We are now ready to verify rates for scheme \rf{dis-bsde}. 

\bt{w1019t1} 
Let $\big(Y_h(\cd),Z_h(\cd)\big)$ resp.~$\big(Y_\cd, Z_0(\cd)\big)$ be solutions to \rf{bsde} resp.~\rf{dis-bsde}. Then there exists a
constant $\cC$ such that
\begin{equation*}
\setlength\abovedisplayskip{3pt}
\setlength\belowdisplayskip{3pt}
\bal
&\max_{0\leq n\leq N}\me\big[\|Y_h(t_n)\!-\!Y_n\|_{\dot \dbH^{-1}_h}^2\big]
\!+\!\sum_{n=0}^{N-1}\me\Big[\int_{t_n}^{t_{n+1}}\|Y_h(t)\!-\!Y_n\|^2\!+\! \|Z_h(t)\!-\!Z_0(t)\|_{\dot \dbH^{-1}_h}^2\rd t\Big]\\
&\qq\leq \cC\t \me\Big[\|Y_{T,h}\|_{\dot \dbH^2_h}^2+\int_0^T\|f_h(t)\|_{\dot \dbH^1_h}^2\rd t\Big]
+\cC\, \me\Big[\|Y_{T,h}-\wt Y_{T,h}\|_{\dot \dbH^{-1}_h}^2\\
&\qq\q+\sum_{n=0}^{N-1}\int_{t_n}^{t_{n+1}}\|Z_h(t)-\bar Z_h(t)\|_{\dot \dbH^{-1}_h}^2+\|f_h(t)-\wt f_h(t)\|_{\dot \dbH^{-1}_h}^2\rd t\Big]\,.
\eal
 \end{equation*}
\et

\begin{proof}
For any $n=0,1,\cds,N$,
we define the $\dbV_h$-valued random variable $e_n^Y=Y_n- Y_h(t_n)$.
It then follows that
\begin{equation}\label{w1}
\setlength\abovedisplayskip{3pt}
\setlength\belowdisplayskip{3pt}
\bal
&e_n^Y-e_{n+1}^Y+\int_{t_n}^{t_{n+1}}Z_0(t)-Z_h(t)\rd W(t)\\
&=\t\D_h e_n^Y+\int_{t_n}^{t_{n+1}}\D_h \big[  Y_h(t_n)- Y_h(t)\big] \\
&\qq+ \b \big[\bar Z_0(t)-\bar Z_h(t)\big]+ \b \big[\bar Z_h(t)- Z_h(t)\big]- \big[\wt f_h(t)-f_h(t)\big]\rd t\,.
\eal
\end{equation}
Testing this equation with $(-\D_h)^{-1}e_n^Y$, then applying the mutual independence of $(-\D_h)^{-1}e^Y_n$ and $\int_{t_n}^{t_{n+1}}Z_0(t)-Z_h(t)\rd W(t)$, as well as discrete Gronwall's inequality, and finally taking summation over $n$ we arrive at
\bel{w1026e2}
\setlength\abovedisplayskip{3pt}
\setlength\belowdisplayskip{3pt}
\bal
&\max_{0\leq k\leq N}\|e_k^Y\|_{\dot \dbH^{-1}_h}^2\!+\!\sum_{n=0}^{N-1}\t \|e_n^Y\|^2\\
&\q\leq \cC\Big[\|e_N^T\|_{\dot \dbH^{-1}_h}^2+\sum_{n=0}^{N-1} \int_{t_n}^{t_{n+1}}\| Y_h(t)- Y_h(t_n)\|^2
+\|Z_0(t)-Z_h(t)\|_{\dot \dbH^{-1}_h}^2\\
&\qq\q+\|Z_h(t)-\bar Z_h(t)\|_{\dot \dbH^{-1}_h}^2+\|\wt f_h(t)-f_h(t)\|_{\dot \dbH^{-1}_h}^2\rd t\Big]\,.
\eal
\ee
Together with estimate \rf{w1008e2} in Lemma \ref{w1008l1}, the triangle inequality then leads to
\bel{w1019e2}
\setlength\abovedisplayskip{3pt}
\setlength\belowdisplayskip{3pt}
\bal
&\max_{0\leq n\leq N}\me\big[\|Y_n- Y_h (t_n)\|_{\dot \dbH^{-1}_h}^2\big]
+\sum_{n=0}^{N-1}\me\Big[\int_{t_n}^{t_{n+1}}\|Y_h(t)-Y_n\|^2\rd t\Big]  \\
&\q\leq \max_{0\leq n\leq N}\me\big[\|e_n^Y\|_{\dot \dbH^{-1}_h}^2\big] 
+\cC\sum_{n=0}^{N-1}\me\Big[\int_{t_n}^{t_{n+1}}\|Y_h(t)-Y_h(t_n)\|^2+\|e_n^Y\|^2\rd t\Big]  \\
&\q\leq \cC\t \me \Big[ \|Y_{T,h}\|_{\dot \dbH^{1}_h}^2+\int_0^T\|f_h(t)\|^2 \rd t \Big]
+\cC\, \me\big[ \|Y_{T,h}-\wt Y_{T,h}\|_{\dot \dbH^{-1}_h}^2\big]  \\
&\qq+\cC\sum_{n=0}^{N-1}\me\Big[\int_{t_n}^{t_{n+1}}\|Z_0(t)-Z_h(t)\|_{\dot \dbH^{-1}_h}^2+\|Z_h(t)-\bar Z_h(t)\|_{\dot \dbH^{-1}_h}^2
+\|f_h(t)-\wt f_h(t)\|_{\dot \dbH^{-1}_h}^2\rd t\Big]\,.
\eal
\ee

Estimate \rf{w1019e2} contains a term for $Z_0(\cd)-Z_h(\cd)$ that we need to bound;
for this purpose,  we rewrite \rf{w1} and get
\begin{equation*}
\setlength\abovedisplayskip{3pt}
\setlength\belowdisplayskip{3pt}
\bal
&(\mathds{1}_h-\t\D_h) e^Y_n+\int_{t_n}^{t_{n+1}}Z_0(t)-Z_h(t)\rd W(t)\\
&=e^Y_{n+1}+\int_{t_n}^{t_{n+1}}\D_h\big[Y_h(t)-Y_h(t_n)\big]+\b\big[\bar Z_0(t)-\bar Z_h(t)\big]
+\b\big[\bar Z_h(t)-Z_h(t)\big]+\big[f_h(t)-\wt f_h(t)\big] \rd t\,.
\eal
 \end{equation*}
By squaring both sides, taking expectations and then applying Young's inequality as well as H\"older's inequality, we have  ($\e>0$)
\begin{equation*}
\setlength\abovedisplayskip{3pt}
\setlength\belowdisplayskip{3pt}
\bal
&\me\big[\|(\mathds{1}_h-\t\D_h) e^Y_n\|_{\dot \dbH^{-1}_h}^2\big]
+\me\Big[\int_{t_n}^{t_{n+1}}\|Z_0(t)-Z_h(t)\|_{\dot \dbH^{-1}_h}^2 \rd t\Big]\\
&\leq (1+4\e)\me\big[\|e^Y_{n+1}\|_{\dot \dbH^{-1}_h}^2\big]
+\Big[4+\frac 1\e\Big]\t \me\Big[\int_{t_n}^{t_{n+1}}\|\D_h\big[Y_h(t)-Y_h(t_n)\big]\|_{\dot \dbH^{-1}_h}^2
+\b^2\|Z_0(t)-Z_h(t)\|_{\dot \dbH^{-1}_h}^2\\
&\qq+\b^2\|Z_h(t)-\bar Z_h(t)\|_{\dot \dbH^{-1}_h}^2 
+\|f_h(t)-\wt f_h(t)\|_{\dot \dbH^{-1}_h}^2 \rd t\Big] \,.
\eal
 \end{equation*}
By choosing $\e$ such that $\big[4+\frac 1\e\big]\b^2\t\leq \frac 1 2$ (for example, we may choose $\t$ small enough satisfying $\t\leq \frac 1{16\b^2}$, and take $\e=\frac{2\b^2\t}{1-8\b^2\t}$), we may absorb the third term on the right-hand
side, such that
\begin{eqnarray*}
&&\me\big[\|e^Y_n\|_{\dot \dbH^{-1}_h}^2\big]
+\frac 1 2 \me\Big[\int_{t_n}^{t_{n+1}}\|Z_0(t)-Z_h(t)\|_{\dot \dbH^{-1}_h}^2 \rd t\Big]\\
&&\leq \Big[1+\frac{8\b^2}{1-8\b^2\t}\t\Big] \me\big[\|e^Y_{n+1}\|_{\dot \dbH^{-1}_h}^2\big]\\
&&\q+\frac 1{8\b^2} \me\Big[\int_{t_n}^{t_{n+1}}\|\D_h\big[Y_h(t)-Y_h(t_n)\big]\|_{\dot \dbH^{-1}_h}^2
+\b^2\|Z_h(t)-\bar Z_h(t)\|_{\dot \dbH^{-1}_h}^2 +\|f_h(t)-\wt f_h(t)\|_{\dot \dbH^{-1}_h}^2 \rd t\Big]\,.
\end{eqnarray*}
We now apply discrete Gronwall's inequality and take summation  over $n$
together with \rf{w1008e2} for $\g=1$ in Lemma \ref{w1008l1} to conclude
\bel{w1019e3}
\setlength\abovedisplayskip{3pt}
\setlength\belowdisplayskip{3pt}
\bal
&\max_{0\leq n\leq N}\me\big[\|e^Y_k\|_{\dot \dbH^{-1}_h}^2\big]
+\me\Big[\int_0^T\|Z_0(t)-Z_h(t)\|_{\dot \dbH^{-1}_h}^2 \rd t\Big]\\
&\qq\leq  \cC\t \me\Big[\|Y_{T,h}\|_{\dot \dbH^{2}_h}^2+\int_0^T \|f_h(t)\|_{\dot \dbH^{1}_h}^2\rd t\Big]
+\cC\bigg\{ \me\big[\|Y_h(T)-Y_N\|_{\dot \dbH^{-1}_h}^2\big]\\
&\qq\q
+ \sum_{n=0}^{N-1}\me\Big[\int_{t_n}^{t_{n+1}} \b^2\|Z_h(t)-\bar Z_h(t)\|_{\dot \dbH^{-1}_h}^2 
+\|f_h(t)-\wt f_h(t)\|_{\dot \dbH^{-1}_h}^2 \rd t\Big]\bigg\}\,.
\eal
\ee 

Finally, the desired assertion can be derived by combining with \rf{w1019e2} and \rf{w1019e3}.
\end{proof}

\subsection{Discretization of Problem {\bf(SLQ)} and rates}\label{nume-fbspde1}
We split the construction and convergence analysis of a numerical method to approximate Problem {\bf(SLQ)} from
Section \ref{ch-1} into two steps: in Section \ref{dis-slq-h} we begin with Problem {\bf (SLQ)}$_h$, which is 
a discretization scheme in space only; related stability bounds for its solution will be provided in Section \ref{fbspde-num1z} to verify rates of convergence for Problem 
{\bf (SLQ)$_{h\t}$} --- the spatio-temporal discretization of Problem {\bf (SLQ)}. 
Conceptually, we follow the approach to `first discretize, then optimize'; see also \cite{Hinze-Pinnau-Ulbrich-Ulbrich09}.

The numerical analysis of
Problem {\bf (SLQ)} was also considered in \cite{Prohl-Wang21,Li-Zhou21, Lv-Wang-Wang-Zhang22,Prohl-Wang22}.
Here we proceed as follows to obtain optimal convergence rates:
\begin{enumerate}[{\rm (a)}]

\item 
We use weaker assumptions on data $x$ 
and $\si(\cd)$ as were imposed in \cite{Prohl-Wang21, Prohl-Wang22}, and derive improved convergence rates in space. 

\item  To  construct  an implementable algorithm for Problem {\bf (SLQ)},
we adopt the Euler
method for temporal discretization.
Then by Malliavin calculus, we verify additional regularity of the optimal control $U^*_h(\cd)$ of Problem {\bf (SLQ)$_{h}$} to  prove an optimal  convergence rate in time for it; see Sections \ref{hidden-reg} and \ref{fbspde-num1z}. 

\end{enumerate}

\subsubsection{Spatial discretization of Problem {\bf(SLQ)}}\label{dis-slq-h}

In this part, we present a semi-discretized  method in space of Problem {\bf (SLQ)}, and prove a rate of convergence. 
Relying on the finite element method, the spatial discretization is stated as follows.\\
\no{\bf Problem (SLQ)$_h$.} Find the optimal control
$U^*_h(\cd) \in L^2_\dbF(0,T; \dbV_h) $ that minimizes the cost functional
\begin{equation} \label{w1003e2h}
\cJ_h \big(U_h(\cd)\big)=\frac 1 2 \me \Big[\int_0^T  \| X_h(t)\|^2+ \| U_h(t) \|^2 \rd t\Big]
+  \frac {\a}{2}\me \big[\|X_h(T)\|^2\big] \,,
\end{equation}
subject to the following SDE
\begin{equation}\label{spde-h}   
\lt\{
\begin{array}{ll}
{\rm d}X_h(t)=\big[\D_h X_h(t)\!+\! U_h(t)\big]\, {\rm d}t\!+\!  \big[\b X_h(t)\!+\!\Pi_h\si(t)\big] {\rm d}W(t) \q  t \in (0,T]\,,\\
\ns\ds X_h(0)= \Pi_h x\,.
\end{array}
\rt.
\end{equation}

The existence of a unique optimal control $ U^*_h(\cd)$ follows from \cite[Chapter 6]{Yong-Zhou99}. Moreover, 
thanks to Pontryagin's maximum principle, $U^*_h(\cd)$ enjoys an  open-loop representation:
\begin{equation}\label{pontr1a}  
 U^*_h(t) - Y_h(t) = 0 \qquad  t \in [0,T]\, ,
\end{equation}
where $\big(Y_h(\cd), Z_h(\cd)\big) \in L^2_{{\mathbb F}}\big( \Omega; C([0,T]; {\mathbb V}_h)\big) \times L^2_{{\mathbb F}}\big( 0,T; {\mathbb V}_h\big)$ solves the BSDE
\begin{equation}\label{bshe1a}
\lt\{
\begin{array}{ll}
\ds {\rm d}Y_h(t) \!=\! \big[\!-\!\D_h Y_h(t)\!-\! \b Z_h(t)\!+\! X^*_h(t)  \big]\rd t
 \!+\!Z_h(t) \rd W(t)  \q   t \in [0,T)\, ,\\
\ns\ds Y_h(T)= -\a X^*_h(T) \,.
\end{array}
\rt.
\end{equation}

The main result in this section is the following theorem which shows optimal rates of convergence for the optimal pair
of Problem {\bf (SLQ)$_h$}.

\bt{rate1}
Suppose that  {\rm \bf (A)} in part {\bf a.} of Section \ref{not1}
holds. Let $\big(X^*(\cd), U^*(\cd)\big)$ and $\big(X^*_h(\cd), U^*_h(\cd)\big)$ be the
optimal pairs to Problems {\bf (SLQ)} and {\bf (SLQ)$_h$}, respectively. 
There exists $\cC $ independent of $h$ such that
\begin{subequations}\label{w1028e4}
    \begin{empheq}[left={\empheqlbrace\,}]{align}
 & \me \Big[\int_0^T \|X^*(t)-X_h^*(t)\|^2+\|U^*(t)-U_h^*(t)\|^2 \rd t\Big]
 \leq \cC h^4 \big[\|x\|_{\dbH_0^1}^2+\|\si(\cd)\|^2_{L^2_\dbF(0,T;\dbH_0^1)} \big]  \, ,\label{w1028e4a}\\
&  \me \Big[\sup_{0 \leq t \leq T}\|X^*(t)-X_h^*(t)\|^2\Big]  
\leq  \cC h^4 \big[\|x\|_{\dbH_0^1\cap\dbH^2}^2+\|\si(\cd)\|^2_{L^2_\dbF(0,T;\dbH_0^1\cap\dbH^2)}\big] \, . \label{w1028e4b}
\end{empheq}
\end{subequations}
\et

\ss
To prove Theorem \ref{rate1}, we introduce several operators $\cS\,,\cT^1\,,\cT^2$ which are related to solution $\big(X(\cd), Y(\cd), Z(\cd)\big)$ of FBSPDE \rf{fbspde}. 
Specifically, the  `control-to-state' map ${\mathcal S}: L^2_\dbF (0,T; \dbL^2) \rightarrow L^2_{{\mathbb F}}\big(\Omega; C([0,T]; {\mathbb H}^1_0)\big) \cap
L^2_\dbF(0,T; \dbH_0^1\cap\dbH^2) \index{${\mathcal S}$}$
is defined by $\cS\big(U(\cd)\big)(\cd)=X\big(\cd;x,U(\cd)\big)$.
If $x = 0$ and $\si(\cd) = 0$, we denote this solution map by $\cS^0$\index{${\mathcal S}^0$}. 
We know that
the solution to equation \eqref{fbspdeb} depends on $X^*(\cd)$, which may be written as 
$\big(Y\big(\cd;X^*(\cd)\big),\,Z\big(\cd;X^*(\cd)\big)\big) = \big(\cT^1\big(X^*(\cd)\big)(\cd),\,\cT^2\big(X^*(\cd)\big)(\cd)\big)$, where 
$\cT^1: L^2_{{\mathbb F}}\big(\Omega; C([0,T]; \dbH_0^1)\big) \rightarrow L^2_\dbF\big(\Omega; C([0,T]; {\mathbb H}^1_0)\big) \cap L^2_\dbF(0,T; \dbH^1_0 \cap \dbH^2)\, , \index{${\mathcal T}^1$}$
$\cT^2: L^2_\dbF\big(\Omega; C([0,T]; \dbH_0^1)\big) \rightarrow  L^2_\dbF(0,T; {\mathbb H}^1_0)\, . \index{${\mathcal T}^2$}$

To shorten notations, we will write $\cS(\cd;U)\,,\cS^0(\cd;U)\,,\cT^i\big(\cd;\cS(U)\big)$ instead of $\cS\big(U(\cd)\big)(\cd)$, $\cS^0\big(U(\cd)\big)(\cd)$, 
$\cT^i\big(\cS\big(U(\cd)\big)\big)(\cd)$ for $i=1,2$.

%
\bl{w212l1}
For every $U(\cd) \in L^2_\dbF (0,T; {\dbL^2}) $, the Fr\'echet derivative
$D \cJ\big(U(\cd)\big)$  is a bounded operator on $L^2_{{\mathbb F}} (0,T; {\dbL^2})$
which takes the form
\begin{equation}\label{derivative-cont}
\setlength\abovedisplayskip{3pt}
\setlength\belowdisplayskip{3pt}
D \cJ\big(U(\cd)\big) = U(\cd) -\cT^1\big(\cd;\cS(U)\big) \, .
\end{equation}
\el

\begin{proof}
By Lemma \ref{reg-spde1} on the stability of SPDE \rf{fbspdea}, we have
\begin{equation*}
\setlength\abovedisplayskip{3pt}
\setlength\belowdisplayskip{3pt}
\me \big[\| \cS^0(t;V)\|^2\big]\leq \cC \|V(\cd)\|^2_{L^2_\dbF (0,T;\dbL^2)} \qq \forall\,t\in [0,T]\,,\, V(\cd)\in L^2_\dbF(0,T;\dbL^2)\,.
\end{equation*}
Hence, for any $U(\cd),\,V(\cd)\in L^2_\dbF(0,T;\dbL^2)$, since the linearity of SPDE \rf{fbspdea} yields $\cS(\cd; U+V)=\cS(\cd; U)+\cS^0(\cd; V)$, we can get for the quadratic  cost functional $\cJ(\cd)$ defined in \rf{intro-1a}
\begin{equation*}
\setlength\abovedisplayskip{3pt}
\setlength\belowdisplayskip{3pt}
\bal
&\cJ\big(U(\cd)+V(\cd)\big)-\cJ\big(U(\cd)\big)-\Big[\big(\cS(\cd; U),\cS^0(\cd; V)\big)_{L^2_\dbF(0,T;\dbL^2)}\\
 &\qq+\big(\a \cS(T;U),\cS^0(T;V)\big)_{L^2_{\mf_T}(\O;\dbL^2)}+\big(U(\cd),V(\cd)\big)_{L^2_\dbF(0,T;\dbL^2)}\Big]\\
&\q=\frac 1 2 \Big[ \big\| \cS^0(\cd; V)\big\|^2_{L^2_\dbF(0,T;\dbL^2)}+\|V(\cd)\|^2_{L^2_\dbF(0,T;\dbL^2)}
+ \a\big\| \cS^0(T; V)\big\|^2_{L^2_{\mf_T}(\O;\dbL^2)}\Big]\,.
\eal
\end{equation*}
On the other hand, It\^o's formula to $\big(\cS^0(t; V),\cT^1\big(t; \cS(U)\big)\big)_{\dbL^2}$ yields 
\begin{equation*}
\setlength\abovedisplayskip{3pt}
\setlength\belowdisplayskip{3pt}
\bal
\big( \cS(\cd; U),\cS^0(\cd; V) \big)_{L^2_\dbF(0,T;\dbL^2)}+ \big( \a\cS(T;U),\cS^0(T;V) \big)_{L^2_{\mf_T}(\O;\dbL^2)}
=-\big(\cT^1\big(\cd; \cS(U)\big), V(\cd)\big)_{L^2_\dbF(0,T;\dbL^2)}\,.
\eal
\end{equation*}
By the definition of Fr\'echet derivative, the desired result now follows.
\end{proof}

Similar to the definition of $\cS$, by the unique solvability property of \eqref{spde-h}, we associate to this equation the  bounded solution operator 
${\mathcal S}_h: L^2_{{\mathbb F}}(0,T; \dbV_h) \rightarrow L^2_{{\mathbb F}}\big(\Omega; C([0,T]; {\mathbb V}_h)\big)\,.
\index{${\mathcal S}_h$}$
When $x = 0$ and $\si(\cd) = 0$, we denote this solution operator by $\cS_h^0$.\index{${\mathcal S}^0_h$} 
Similarly, the solution pair to BSDE \eqref{bshe1a} may be written in the following form 
$\big(Y_h\big(\cd; X^*_h(\cd)\big)\,, Z_h\big(\cd; X^*_h(\cd)\big)\big) = \big(\cT^1_h\big(X^*_h(\cd)\big)(\cd)\,,\cT^2_h\big(X^*_h(\cd)\big)(\cd)\big)$,
where 
$\cT^1_h: L^2_{{\mathbb F}}\big(\Omega; C([0,T]; {\mathbb V}_h) \big) \rightarrow L^2_{{\mathbb F}}\big(\Omega; C([0,T]; {\mathbb V}_h) \big)\, , \index{${\mathcal T}^1_h$}$
$\cT^2_h: L^2_{{\mathbb F}}\big(\Omega; C([0,T]; {\mathbb V}_h) \big) \rightarrow L^2_{{\mathbb F}}(0,T; {\mathbb V}_h )\, . \index{${\mathcal T}^2_h$}$

Like the simplified notations $\cS(\cd;U)\,,\cS^0(\cd;U)\,,\cT^i\big(\cd;\cS(U)\big)$ for $i=1,2$, we here also introduce
$\cS_h(\cd;U_h)\,,\cS^0_h(\cd;U_h)\,,\cT^i_h\big(\cd;\cS_h(U_h)\big)$.

\ss
We are now in a position to prove Theorem \ref{rate1}.

\begin{proof}[\bf {Proof of Theorem \ref{rate1}}] 

{\bf (1) Verification of \rf{w1028e4a}. }
For any control variable  $U_h(\cd) \in L^2_\dbF(0,T; \dbV_h ) $, the Fr\'echet derivative 
$D \cJ_h\big(U_h(\cd)\big)$ is a bounded operator (uniformly in $h$) on $L^2_\dbF(0,T; \dbV_h) $, and has the form
\begin{equation}\label{derivative-semidisc}
\setlength\abovedisplayskip{3pt}
\setlength\belowdisplayskip{3pt}
D \cJ_h \big(U_h(\cd)\big)=U_h(\cd)-\cT_h^1\big(\cd; \cS_h (U_h)\big)\, ,
\end{equation}
which can be deduced by a similar procedure as  in the proof of Lemma \ref{w212l1}.
Let $U_h(\cd)\,,R_h(\cd) \in L^2_{{\mathbb F}}(0,T; {{\mathbb V}_h})$ be arbitrary; it is due to the quadratic structure of the 
cost functional $\cJ_h(\cd)$ given in \eqref{w1003e2h} that
\begin{equation*}
\setlength\abovedisplayskip{3pt}
\setlength\belowdisplayskip{3pt}
D^2  \cJ_h\big(U_h(\cd)\big)R_h(\cd)=R_h(\cd)-\cT^1_h\big(\cd; \cS_h^0(R_h)\big)\,.
\end{equation*}
Note that the right-hand side of the above equation is independent of $U_h(\cd)$.
Then by It\^o's formula to $\big( \cS^0_h(t; R_h), \cT^1_h\big(t; \cS^0_h(R_h)\big)\big)_{\dbL^2}$ we get
\begin{equation*}
\setlength\abovedisplayskip{3pt}
\setlength\belowdisplayskip{3pt}
\bal
&\Big(\cT^1_h\big(\cd; \cS_h^0(R_h)\big), R_h(\cd)\Big)_{L^2_\dbF(0,T;\dbL^2)}\\
&=-\a\big(\cS_h^0(T; R_h),\cS_h^0(T; R_h)\big)_{L^2_{\mf_T}(\O;\dbL^2)}
-\big(\cS_h^0(\cd; R_h),\cS_h^0(\cd; R_h)\big)_{L^2_\dbF(0,T;\dbL^2)}\,,
\eal
\end{equation*}
and therefore 
\begin{equation*}
\setlength\abovedisplayskip{3pt}
\setlength\belowdisplayskip{3pt}
\bal
&\Big( D^2 \cJ_h\big(U_h(\cd)\big) R_h(\cd),R_h(\cd) \Big)_{L^2_\dbF(0,T;\dbL^2)} \\
&\qq= \| R_h(\cd) \|^2_{L^2_\dbF(0,T;\dbL^2)}+  \|\cS^0_h(\cd; R_h) \|^2_{L^2_\dbF(0,T;\dbL^2)} 
   +\a \|\cS^0_h(T; R_h) \|^2_{L^2_{\mf_T}(\O;\dbL^2)}     \\
&\qq\geq \| R_h(\cd)\|^2_{L^2_\dbF(0,T; {\mathbb L}^2)}
\qq \forall\, R_h(\cd) \in 
L^2_\dbF(0,T; \dbV_h )\, .
\eal
\end{equation*}
As a consequence, on putting $R_h(\cd) = U^*_h(\cd) - \Pi_h U^*(\cd)$, we have
\begin{equation*}
\setlength\abovedisplayskip{3pt}
\setlength\belowdisplayskip{3pt}
\bal
 \| U^*_h(\cd) - \Pi_h U^*(\cd)\|_{L^2_\dbF(0,T; {\mathbb L}^2)}^2
&\leq  
\big( D\cJ_h\big(U^*_h(\cd)\big) ,U^*_h(\cd) - \Pi_h U^*(\cd)\big)_{L^2_\dbF(0,T;\dbL^2)} \\
&\q- \big( D\cJ_h\big(\Pi_h U^*(\cd)\big) ,U^*_h(\cd) - \Pi_h U^*(\cd)\big)_{L^2_\dbF(0,T;\dbL^2)}\, .
\eal
\end{equation*}
and then
\begin{equation*}
\setlength\abovedisplayskip{3pt}
\setlength\belowdisplayskip{3pt}
 \| U^*_h(\cd) - \Pi_h U^*(\cd)\|_{L^2_\dbF(0,T; {\mathbb L}^2)}
\leq   \big\| D\cJ_h\big(U^*_h(\cd)\big) - D\cJ_h\big(\Pi_h U^*(\cd)\big) \big\|_{L^2_\dbF(0,T;\dbL^2)}\, .
\end{equation*}
Noticing  that $D\cJ_h\big(U^*_h(\cd)\big) = 0$ by \eqref{pontr1a}, as well as $D \cJ\big(U^*(\cd)\big) = 0$ by \eqref{op-condition1}, 
we find that
\bel{w229e1}
\setlength\abovedisplayskip{3pt}
\setlength\belowdisplayskip{3pt}
\bal
 \| U^*_h (\cd)- \Pi_h U^*(\cd)\|_{L^2_\dbF(0,T; {\mathbb L}^2)}^2
&\leq2 \Big[  \big\| D\cJ\big(U^*(\cd)\big)-D\cJ\big(\Pi_hU^*(\cd)\big) \big\|^2_{L^2_\dbF(0,T;\dbL^2)} \\
&\q       + \big\|  D\cJ\big(\Pi_hU^*(\cd)\big)-D\cJ_h\big(\Pi_hU^*(\cd)\big) \big\|^2_{L^2_\dbF(0,T;\dbL^2)}  \Big]\\
& =:2(I_1+I_2)\,.       
\eal
\ee
In the following, we bound these two terms independently.

 We use \eqref{derivative-cont} to bound $I_1$ as follows,
\begin{equation*}
\setlength\abovedisplayskip{3pt}
\setlength\belowdisplayskip{3pt}
I_1\! \leq \! 2 \Big[\|U^*(\cd) \!-\! \Pi_h U^*(\cd)\|^2_{L^2_\dbF(0,T;\dbL^2)} 
      \! + \big\|\cT^1\big(\cd; \cS(\Pi_h U^*) \big) \!-\!\cT^1\big(\cd; \cS(U^*) \big)\big\|^2_{L^2_\dbF(0,T;\dbL^2)}\Big] \,.
\end{equation*}
By  stability property \eqref{vari-2} in Lemma \ref{reg-bshe}  for BSPDE \eqref{fbspdeb} with data 
$Y(T)=-\a \big[\cS(T; \Pi_hU^*)- \cS(T; U^*)\big]\,,f(\cd)=\cS(\cd; \Pi_hU^*)-\cS(\cd; U^*)$, as well as \rf{reg-e1} in Lemma \ref{reg-spde1} for SPDE \eqref{fbspdea} with $x=0\,, f(\cd)=U^*(\cd)-\Pi_hU^*(\cd)$, $g(\cd)=0$, 
the second term in the above inequality may be bounded by
\bel{estima}
\setlength\abovedisplayskip{3pt}
\setlength\belowdisplayskip{3pt}
\bal
&\leq \cC \Big[\| \big(\cS(T; U^*)-\cS(T; \Pi_h U^*)\big) \|^2_{L^2_{\mf_T}(\O;\dot\dbH^{-1})} 
           +  \| \cS(\cd; U^*)-\cS(\cd; \Pi_h U^*)\|^2_{L^2_\dbF(0,T; \dot\dbH^{-2})}\Big]\\ 
&\leq \cC \| U^*(\cd) - \Pi_h U^*(\cd)\|^2_{L^2_\dbF(0,T; \dot\dbH^{-1})}\, .
\eal
\ee
By optimality condition \eqref{op-condition1},  and the regularity properties of the solution
of FBSPDE \eqref{fbspde}, we know that 
$U^*(\cd)=Y(\cd) \in L^2_\dbF(0,T; \dbH_0^1\cap\dbH^2)$; as a consequence,  combining  Lemma \ref{w1017l1}
with \rf{w1008e5b} in Lemma \ref{w1008l2} leads to
\begin{equation*}
\setlength\abovedisplayskip{3pt}
\setlength\belowdisplayskip{3pt}
\bal
I_1&\leq \cC h^4\|Y(\cd)\|^2_{L^2_\dbF(0,T;\dbH_0^1\cap\dbH^2)}\\
&\leq \cC h^4\big[\|X^*(T)\|_{L^2_{\mf_T}(\O;\dbH_0^1)}^2+\|X^*(\cd)\|^2_{L^2_\dbF(0,T;\dbL^2)}\big]\\
&\leq \cC h^4 \big[\|x\|_{\dbH_0^1}^2+\|\si(\cd)\|^2_{L^2(0,T;\dbH_0^1)}\big]\,.
\eal
\end{equation*}

In the below, 
we apply  representations \rf{derivative-cont}, \rf{derivative-semidisc} to bound $I_2$ via
\begin{equation*}
\setlength\abovedisplayskip{3pt}
\setlength\belowdisplayskip{3pt}
\bal
I_2 
&\leq  2 \Big[\big\|\cT^1\big(\cd; \cS(\Pi_h U^*) \big) - \cT^1 \big( \cd; \cS_h(\Pi_h U^*)\big)\big\|^2_{L^2_\dbF(0,T;\dbL^2)}\\
 &\qq+\big\|\cT^1\big(\cd; \cS_h(\Pi_h U^*) \big) - \cT^1_h\big(\cd; \cS_h(\Pi_h U^*)\big)\big\|^2_{L^2_\dbF(0,T;\dbL^2)}\Big]\\
&=: 2 (I_{21}+I_{22})\,.
\eal
\end{equation*}
In order to bound $I_{21}$, we use the stability properties in Lemma \ref{reg-bshe} for  BSPDE \eqref{fbspdeb}, in combination with 
Theorem \ref{w1002t1} on the error estimate for SPDE \eqref{fbspdea} 
(note that $f(\cd)=f_h(\cd)=\Pi_hU^*(\cd)$, $g(\cd)=\si(\cd)\,,g_h(\cd)=\Pi_h\si(\cd)$), as well as
 \rf{w1008e5b} in Lemma \ref{w1008l2} to conclude that
 \begin{equation*}
\setlength\abovedisplayskip{3pt}
\setlength\belowdisplayskip{3pt}
\bal
\ds I_{21} &\leq \cC \Big[ \big\| \cS(\cd; \Pi_h U^*) - \cS_h(\cd; \Pi_h U^*) \big\|^2_{L^2_\dbF(0,T; \dot\dbH^{-1})}
+\big\| \cS(T; \Pi_h U^*)- \cS_h(T; \Pi_h U^*) \big\|^2_{L^2_{\mf_T}(\O;\dot\dbH^{-2})}\Big]\\
&\leq \cC h^4 \big[\|x\|_{\dbH_0^1}^2+\|\si(\cd)\|^2_{L^2(0,T;\dbH_0^1)}\big]\,.
\eal
\end{equation*}
For $I_{22}$, we use Theorem \ref{w0911t1}  with $Y_T=Y_{T,h}=-\a \cS_h(T; \Pi_h U^*)\,, f(\cd)=f_h(\cd)=  \cS_h(\cd; \Pi_h U^*)$ on the error estimate for BSPDE \eqref{fbspdeb}, and then Lemma \ref{w1002l4}  with $f_h(\cd)=\Pi_h U^*(\cd)$, $g_h(\cd)=\Pi_h\si(\cd)$ on the stability properties  for 
SDE \rf{spde-h} to find
 \begin{equation*}
\setlength\abovedisplayskip{3pt}
\setlength\belowdisplayskip{3pt}
\bal
I_{22} 
&\leq \cC h^4 \big[ \a^2\|\cS_h(T; \Pi_h U^*)\|_{L^2_{\mf_T}(\O;\dbH_0^1)}^2+\|\cS_h(\cd; \Pi_h U^*)\|^2_{L^2_\dbF(0,T;\dbL^2)}\big]\,.\\
&\leq \cC h^4 \big[\|x\|_{\dbH_0^1}^2+\|\si(\cd)\|^2_{L^2(0,T;\dbH_0^1)}\big]\,.
\eal
\end{equation*}
We now insert these estimates into \eqref{w229e1}, and utilize the optimality  condition \eqref{op-condition1} to obtain the bound
\begin{equation*}
\setlength\abovedisplayskip{3pt}
\setlength\belowdisplayskip{3pt}
\bal
\|U^*(\cd)-U^*_h(\cd)\|^2_{L^2_\dbF(0,T;\dbL^2)} 
&\leq 2\big[ \|U^*(\cd)-\Pi_hU^*(\cd)\|^2_{L^2_\dbF(0,T;\dbL^2)} +\| U^*_h(\cd) - \Pi_h U^*(\cd) \|_{L^2_\dbF(0,T; \dbL^2)}^2\big] \\
& \leq \cC h^4 \big[ \|x\|_{\dbH_0^1}^2+\|\si(\cd)\|^2_{L^2(0,T;\dbH_0^1)}\big] \, . 
\eal
\end{equation*}
This is the estimate for $U^*(\cd)-U^*_h(\cd)$ in assertion \rf{w1028e4a} in Theorem \ref{rate1}. 
Then applying Theorem \ref{w1002t1} by setting $f(\cd)=U^*(\cd)\,,f_h(\cd)=U^*_h(\cd)\,, g(\cd)=\si(\cd)\,,g_h(\cd)=\Pi_h\si(\cd)$, 
we have
\begin{equation*}
\setlength\abovedisplayskip{3pt}
\setlength\belowdisplayskip{3pt}
\|X^*(\cd)-X^*_h(\cd)\|^2_{L^2_\dbF(0,T;\dbL^2)} 
 \leq \cC h^4 \big[ \|x\|_{\dbH_0^1}^2+\|\si(\cd)\|^2_{L^2(0,T;\dbH_0^1)}\big] \, ,
\end{equation*}
which is  the estimate for $X^*(\cd)-X^*_h(\cd)$ in assertion \rf{w1028e4a}.

\ss
{\bf (2) Verification of \rf{w1028e4b}. }
By Theorem \ref{w1002t1} with $f(\cd)=U^*(\cd)\,, f_h(\cd)=U^*_h(\cd)\,, g(\cd)=\si(\cd)\,,g_h(\cd)=\Pi_h\si(\cd)$, and assertion \rf{w1028e4a}, 
it follows that
\begin{equation*}
\setlength\abovedisplayskip{3pt}
\setlength\belowdisplayskip{3pt}
\bal
&\me \Big[\sup_{t\in[0,T]}  \|X^*(t)-X_h^*(t)\|^2\Big]\\
&\leq \cC
h^4\bigg\{ \|x\|_{\dbH_0^1\cap\dbH^2}^2 +\me\Big[\int_0^T \|U^*(t)\|_{\dbH_0^1\cap\dbH^2}^2+\|\si(t)\|_{\dbH_0^1\cap\dbH^2}^2\rd t \Big]\bigg\}
+\cC\, \me\Big[\int_0^T  \|U^*(t)-U^*_h(t)\|^2\rd t\Big]\\
& \leq \cC h^4 \big[\|x\|_{\dbH_0^1\cap\dbH^2}^2+\|\si(\cd)\|^2_{L^2(0,T;\dbH_0^1\cap\dbH^2)}\big] \, .
\eal
\end{equation*}
That completes the proof of assertion \rf{w1028e4b}.
\end{proof}

\br{w1121r1}
We may use
Theorem \ref{rate1} to verify further error bounds,
\vspace{-2ex}
\begin{subequations}\label{w1028e45}
    \begin{empheq}[left={\empheqlbrace\,}]{align}
 &   {\mathbb E} \Big[ \sup_{t\in[0,T]} \|X^*(t)-X_h^*(t)\|^2\Big]
       + \me\Big[\int_0^T \|X^*(t)-X_h^*(t)\|_{\dbH_0^1}^2 \rd t\Big]  \nonumber  \\
&\qq\qq\qq\qq\leq \cC h^2 \big[\|x\|_{\dbH_0^1}^2+\|\si(\cd)\|^2_{L^2(0,T;\dbH_0^1)}\big]  \, ,  \label{w1028e4c}\\
 &   \me \Big[\!\sup_{t\in[0,T]} \|Y(t)\!-\!Y_h(t)\|^2\! \Big]\!+\! \me\Big[ \! \int_0^T\! \|Y(t)\!-\!Y_h(t)\|_{\dbH_0^1}^2\!+\!\|Z(t)\!-\!Z_h(t)\|^2\rd t \Big]   \nonumber  \\
  &\qq\qq\qq\qq\leq \cC h^2 \big[\|x\|_{\dbH_0^1}^2+\|\si(\cd)\|^2_{L^2(0,T;\dbH_0^1)}\big]  \, , \label{w1028e4d}\\
 &  \sup_{ t\in[0,T] }\me \big[ \|Y(t)-Y_h(t)\|^2 \big] 
  \leq \cC h^4 \big[\|x\|_{\dbH_0^1\cap\dbH^2}^2+\|\si(\cd)\|^2_{L^2(0,T;\dbH_0^1\cap\dbH^2)}\big]  \, .\label{w1028e4e}
\end{empheq}
\end{subequations}

Indeed, Theorem \ref{w1002t1} with $f(\cd)=U^*(\cd)\,, f_h(\cd)=U^*_h(\cd)\,, g(\cd)=\si(\cd)\,,g_h(\cd)=\Pi_h\si(\cd)$, and assertion \rf{w1028e4a} easily settle \rf{w1028e4c}.

\rf{w1028e2a} in Theorem \ref{w0911t1} with $Y(T)=-\a X^*(T)\,, Y_h(T)=-\a X^*_h(T)$,  $f(\cd)=X^*(\cd)\,, f_h(\cd)=X^*_h(\cd)$, \rf{w1022e1c} in Lemma \ref{w1017l1} and assertion \rf{w1028e4a} lead to  \rf{w1028e4d}.

Similarly, by 
 \rf{w1028e2b} in Theorem \ref{w0911t1},  Lemma \ref{reg-spde1} with $f(\cd)=U^*(\cd)\,, g(\cd)=\si(\cd)\,, \g=2$,  \rf{w1008e5b} in Lemma \ref{w1008l2}, and assertion \rf{w1028e4b}, we arrive at   \rf{w1028e4e}.
\er

\subsubsection{Bounds in strong norms for the optimal pair to Problem {\bf (SLQ)$_h$}}\label{hidden-reg}

In this part, we verify uniform (in $h$) bounds in stronger norms for
the optimal pair $\big(X^*_h(\cd),U^*_h(\cd)\big)$ to Problem {\bf (SLQ)$_h$}, which
will be applied in the error analysis of the temporal discretization of Problem {\bf (SLQ)$_h$}.

To begin with, we introduce a family of SLQ problems, which are parametrized by $t \in [0,T]$; for this purpose, we consider the controlled SDE
\bel{see}
\setlength\abovedisplayskip{5pt}
\setlength\belowdisplayskip{5pt}
\lt\{\!\!\!
\begin{array}{ll}
\ds{\rm d}X_h(s)\!=\!\big[\D_h X_h(s)\!+\! U_h(s)\big]\rd s\!+\!  \big[\b X_h(s)\!+\!\Pi_h\si(s)\big] \rd W(s) 
     \q  s \in (t,T]\,,\\
\ns \ds X_h(t)= \Pi_h x
\end{array}
\rt.
\ee
with  $x\in \dbH_0^1\cap\dbH^2$, and the (parametrized) cost functional
\bel{cost}
\setlength\abovedisplayskip{5pt}
\setlength\belowdisplayskip{5pt}
\cJ_h\big(t,x;U_h(\cd)\big)=\frac 1 2 \me \Big[\int_t^T \|X_h(s)\|^2+\|U_h(s)\|^2 \rd s \Big]
    +\frac \a 2 \me \big[\|X_h(T)\|^2\big]\,.
\ee
Now we define auxiliary SLQ problems as follows.\\
\no{\bf Problem (SLQ)$^{t;h}_{\tt aux}$.} For any  given $t\in[0,T)$ and $x\in \dbL^2$, search for 
$U^*_h(\cd)\in L^2_\dbF(t,T;\dbV_h)$ such that
\bel{SLQ-cost-h}
\setlength\abovedisplayskip{5pt}
\setlength\belowdisplayskip{5pt}
\cJ_h\big(t,x; U^*_h(\cd)\big)=\inf_{U_h(\cd) \in L^2_\dbF(t,T;\dbV_h)}\cJ_h\big(t,x; U_h(\cd)\big) \,.
\ee

Obviously   $\cJ_h\big(U_h(\cd)\big)=\cJ_h\big(0,x;U_h(\cd)\big)$.
The unique solvability of Problem {\bf (SLQ)$^{t;h}_{\tt aux}$} is shown in \cite{Yong-Zhou99}.
Moreover, the stochastic Riccati equation related to  Problem {\bf (SLQ)$^{t;h}_{\tt aux}$} reads:
\bel{riccati}
\setlength\abovedisplayskip{5pt}
\setlength\belowdisplayskip{5pt}
\lt\{\!\!\!
\begin{array}{ll}
\ds \cP_h'(t)+\cP_h(t) \D_h + \D_h \cP_h(t)+ \b^2 \cP_h(t)+\mathds{1}_h
  - \cP_h^2(t)=0\q  t\in [0,T]\,,\\
\ns \ds \cP_h(T)=\a\mathds{1}_h \,,
\end{array}
\rt.
\ee
and we consider a backward ODE 
\bel{ode}
\setlength\abovedisplayskip{5pt}
\setlength\belowdisplayskip{5pt}
\lt\{\!\!\!
\begin{array}{ll}
\ds \eta_h'(t)+\big[\D_h-\cP_h(t)\big]\eta_h(t)+  \cP_h(t)\Pi_h\si(t)=0 \qq t\in [0,T]\,,\\
\ns\ds \eta_h(T)=0 \,.
\end{array}
\rt.
\ee
By \cite[Chapter 6, Theorems 6.1, 7.2]{Yong-Zhou99}, 
the stochastic Riccati equation \eqref{riccati} 
admits a unique solution
$\cP_h(\cd)\in C\big([0,T];\dbS_+(\dbV_h)\big)$, subsequently \rf{ode} has a unique solution $\eta_h(\cd)\in C([0,T];\dbV_h)$, and that
\bel{w229e12}
\setlength\abovedisplayskip{5pt}
\setlength\belowdisplayskip{5pt}
\bal
\cJ_h\big(t,x; U^*_h(\cd)\big)=&\frac 1 2 \big(\cP_h(t)\Pi_hx, \Pi_hx \big)_{\dbL^2}+  \big(\eta_h(t),\Pi_hx \big)_{\dbL^2}\\
&+\frac 1 2 \int_t^T\big[ \big(\cP_h(s)\Pi_h\si(s),\Pi_h \si(s) \big)_{\dbL^2}+\|\eta_h(s)\|^2\big] \rd s\, ,
\eal
\ee
where  $U^*_h(\cd)$ is the optimal control owning the following state feedback form
\bel{w1011e2}
\setlength\abovedisplayskip{5pt}
\setlength\belowdisplayskip{5pt}
U^*_h(t)=-\cP_h(t)X^*_h(t)-\eta_h(t) \qq t\in[0,T]\,.
\ee

\bl{w229l4}
Let $\cP_h(\cd)$ be the solution to Riccati equation \eqref{riccati}, and $\eta_h(\cd)$ solves \rf{ode}. There exists a constant $\cC$ 
independent of $h$ such that
\begin{subequations}\label{w1029e1}
    \begin{empheq}[left={\empheqlbrace\,}]{align}
      &\sup_{t\in[0,T]}\|\cP_h(t)\|_{\cL(\dbL^2|_{\dbV_h})}\leq \cC\,,  \label{w1029e1a}\\
&  \sup_{t\in [0,T]}\|\eta_h(t)\|^2+ \int_0^T \big[ \|\nb \eta_h(t)\|^2+\big(\cP_h(t)\eta_h(t),\eta_h(t)\big)_{\dbL^2} \big] \rd t 
 \leq \cC\|\si(\cd)\|_{L^2(0,T;\dbL^2)}^2 \,. \label{w1029e1b}
\end{empheq}
\end{subequations}

%
\el

\begin{proof}
{\bf (1) Verification of \rf{w1029e1a}.} We consider Problem {\bf (SLQ)$^{t;h}_{\tt aux}$}
for \rf{see} with $\si(\cd)= 0$.
In this case, it follows that $\eta_h(\cd)=0$.
For the control variable 
$U_h(\cd)= 0$,  Lemma \ref{w1002l4} yields
\begin{equation*}
\setlength\abovedisplayskip{5pt}
\setlength\belowdisplayskip{5pt}
\sup_{s\in[t,T]}\me \big[\|X_h(s;\Pi_hx, 0)\|^2\big] \leq \cC\, \me\big[\|\Pi_hx\|^2\big].
\end{equation*}
Hence, \eqref{SLQ-cost-h} and \eqref{w229e12} lead to
\begin{equation*}
\setlength\abovedisplayskip{5pt}
\setlength\belowdisplayskip{5pt}
\big(\cP_h(t)\Pi_hx, \Pi_hx \big)_{\dbL^2}\leq 2\cJ_h(t,x; 0)\leq  \cC\, \me\big[\|\Pi_hx\|^2\big] \, ,
\end{equation*}
which, together with the facts that $\cP_h(\cd)$ is nonnegative and $\Pi_h$ is surjective, implies assertion  \rf{w1029e1a}.

\ss
{\bf (2) Verification of \rf{w1029e1b}.}  Since $\cP_h(\cd)$ is nonnegative, we infer from \rf{ode} that
\begin{equation*}
\setlength\abovedisplayskip{5pt}
\setlength\belowdisplayskip{5pt}
\bal
&\|\eta_h(t)\|^2+2\int_t^T\lt[ \|\nb \eta_h(s)\|^2+\big(\cP_h(s)\eta_h(s),\eta_h(s)\big)_{\dbL^2} \rt]\rd s\\
&\qquad \leq \int_t^T \|\eta_h(s)\|^2\rd s+\int_t^T \|\cP_h(s)\Pi_h\si(s)\|^2\rd s\,.
\eal
\end{equation*}
Then Gronwall's inequality and \rf{w1029e1a} settle the assertion. 
\end{proof}

The following lemma collects bounds for the solution $X^*_h(\cd)$ and its Malliavin derivatives 
$D_\th X^*_h(\cd)$, $D_\mu D_\th X^*_h(\cd)$
to SDE \eqref{see} with $U_h(\cd) = U^*_h(\cd)$, by exploiting the state
feedback representation \rf{w1011e2} of the optimal control $U^*_h(\cd)$, and the bounds for the stochastic 
Riccati operator in Lemma \ref{w229l4}, in particular.
These bounds will be applied to 
derive a convergence rate for an open-loop based scheme.

\bl{w229l2}
Let $\big(X^*_h(\cd),U^*_h(\cd)\big)$ be the optimal pair of Problem {\bf (SLQ)$_h$}.  Then it holds that $X^*_h(t) \in \dbD^{2,2}(\dbL^2)$ for any $t \in [0,T]$,
 and there exists a constant $\cC$ independent of $h$ such that
 {\small
 \begin{subequations}\label{w1029e2}
    \begin{empheq}[left={\empheqlbrace\,}]{align}
 &\me \Big[\sup_{t\in [0,T]}\|X^*_h(t)\|_{\dot\dbH^\g_h}^2\Big] +\me\Big[\int_0^T\|X^*_h(t)\|_{\dot \dbH^{\g+1}_h}^2\rd t\Big] \nonumber \\
&\qq\qq\leq \cC \big[\|x\|_{\dot \dbH^{\g}}^2+\|\si(\cd)\|_{L^2(0,T;\dot \dbH^{\g})}^2\big]\qq \g=0,1\,,  \label{w1029e2a}\\
& \me \Big[\sup_{t\in [0,T]}\|U^*_h(t)\|^2\Big] +\me\Big[\int_0^T\|U^*_h(t)\|_{\dot \dbH^{1}_h}^2\rd t\Big]  \notag\\
&\qq\qq \leq \cC \big[\|x\|^2+\|\si(\cd)\|_{L^2(0,T;\dbL^2)}^2\big] \,, \label{w1029e2b}\\
&\Big(\sup_{t\in[0,T]}\me\big[\|U^*_h(t)\|^4\big]\Big)^{1/2}
   \leq C \big[\|x\|^2+\|\si(\cd) \|_{C([0,T];\dbL^2)}^2\big]\, , \label{w1029e2c}
\end{empheq}
\end{subequations}
}
{\small
 \begin{subequations}\label{w1029e3}
    \begin{empheq}[left={\empheqlbrace\,}]{align}
 &\sup_{\th\in[0,T]}\me \Big[\sup_{t\in[\th,T]}\|D_\th X^*_h(t)\|^2\Big]
+\sup_{\th,\mu \in[0,T]}\me \Big[\sup_{t\in[\mu\vee\th,T]}\|D_\mu D_\th X^*_h(t)\|^2\Big] \nonumber \\
&\qq\qq\leq \cC \big[\|x\|^2+\|\si(\cd)\|_{C([0,T];\dbL^2)}^2\big]\,,  \label{w1029e3a}\\
&\sup_{\th\in[0,T]}\bigg\{ \me\Big[\sup_{t\in[\th,T]}\|D_\th X^*_h(t)\|_{\dot \dbH^1_h}^2\Big]+\me\Big[\int_\th^T\|D_\th X^*_h(t)\|_{\dot \dbH^{2}_h}^2\rd t\Big]\bigg\} \nonumber \\
&\qq\qq\leq \cC\big[\|x\|_{\dbH_0^1}^2+\|\si(\cd) \|_{C([0,T];\dbH_0^1)}^2\big] \,, \label{w1029e3b}
\end{empheq}
\end{subequations}
 }
 and
 {\small
 \begin{subequations}\label{w1029e33}
    \begin{empheq}[left={\empheqlbrace\,}]{align}
&\me \big[\|X^*_h(t)-X^*_h(s)\|^2\big] \notag\\
&\qq\qq \leq \cC |t-s| \big[ \|x\|_{\dbH_0^1}^2+\|\si(\cd)\|^2_{C([0,T];\dbL^2) \cap L^2(0,T;\dbH_0^1)} \big]\qq t,s\in [0,T]\, , \label{w1029e2d}\\
&\me\Big[\sup_{t\in[\th_1,T]}\|(D_{\th_1}-D_{\th_2}) X^*_h(t)\|^2\Big] \nonumber \\
&\qq\qq \leq \cC(\th_1-\th_2)\big[\|x\|_{\dbH_0^1}^2+\|\si(\cd) \|_{C([0,T];\dbH_0^1)}^2+L_{\si,1/2}^2\big]\qq \th_2\leq \th_1\, . \label{w1029e3c}
\end{empheq}
\end{subequations}
 }
\el

\begin{proof}
{\bf (1) Verification of \rf{w1029e2a}--\rf{w1029e2c}. } 
We may follow the same steps that lead to \rf{w1028e6}, \rf{w1008e5c}, \rf{w1008e5b} and \rf{w1008e5a}.

%
 

\ss
{\bf (2) Verification of \rf{w1029e2d}. }  Based on \rf{w1029e2a}--\rf{w1029e2b}, the It\^o isometry together with the triangle
inequality implies that for $s\leq t$, 
\begin{equation*}
\setlength\abovedisplayskip{3pt}
\bal
\me \big[\|X^*_h(t)-X^*_h(s)\|^2\big]&\leq \cC\bigg\{ |t-s| \me\Big[\int_0^T \|\D_h X^*_h(\th)\|^2 +  \|U^*_h(\th)\|^2 \rd \th \Big] \\
&\qq+\me\Big[\int_s^t \|\b X^*_h(\th)\|^2 +  \|\Pi_h\si (\th)\|^2 \rd \th \Big] \bigg\}\\
&\leq \cC\|t-s\| \big[ \|x\|_{\dbH_0^1}^2+\|\si(\cd)\|^2_{L^2(0,T;\dbH_0^1)\cap C([0,T];\dbL^2)}\big]\,.
\eal
\end{equation*}

{\bf (3) Verification of \rf{w1029e3a}. } 
To estimate the first term on the left-hand side, we use the facts that $ \cP_h(\cd)$ and $\eta_h(\cd)$ are deterministic,
 and \cite[Theorem 2.2.1]{Nualart06} to conclude that the first Malliavin derivative of $X_h^*(\cd)$ exists.  We take the Malliavin
derivative on both sides of \eqref{see}, together with the sate feedback control \rf{w1011e2} to get
\bel{w301e8}
\lt\{\!\!\!
\begin{array}{ll}
\ds \rd D_\th X^*_h(t)\!=\!\big[\D_h\!-\! \cP_h(t) \big] D_\th X^*_h(t)\rd t
   \!+\! \b D_\th X^*_h(t)\rd W(t) \q  t\in (\th,T]\,,\\
\ns\ds  D_\th X^*_h(\th)=\b  X^*_h(\th)+\Pi_h\si(\th)\, ,\\
\ns\ds D_\th X^*_h(t)=0 \qq  t\in [0,\th)\, .
\end{array}
\rt.
\ee
Lemma \ref{w1002l4} and the fact that $\cP_h(\cd)\in C\big([0,T];\dbS_+(\dbV_h)\big)$ lead to
\begin{equation*}
\bal
\me \big[\sup_{t\in[\th,T]}\|D_\th X^*_h(t)\|^2\big]
\leq \cC\, \me \big[\|X_h^*(\th)+\Pi_h\si(\th)\|^2 \big]
 \cC\big[\|x\|^2+\|\si(\cd)\|_{C([0,T];\dbL^2)}^2\big]\, ,
\eal
\end{equation*}
where $\cC$ is independent of $\th$.
Hence, $X^*_h(t)\in \dbD^{1,2}(\dbL^2)$. In a similar vein, by \cite[Theorem 2.2.2]{Nualart06}, 
it follows that
\bel{d2-spde}
\lt\{\!\!\!
\begin{array}{ll}
\ds {\rm d}D_\mu D_\th X_h^*(t)=\big[\D-\cP_h(t)\big] D_\mu D_\th X^*_h(t)\rd t
+\b D_\mu D_\th X^*_h(t) \rd W(t) \qq  t \in [\mu\vee\th,T]\,,\\
\ns \ds D_\mu D_\th X_h^*(\mu\vee\th)= \b D_\mu X^*_h(\th)\chi_{\{\mu<\th\}}+\b D_\th X^*_h(\mu)\chi_{\{\th\leq \mu\}}   \,.
\end{array}
\rt.
\ee
We may then deduce that
 $X^*_h(t)\in \dbD^{2,2}(\dbL^2)$, and
the remaining part of \rf{w1029e3a}.

\ss
{\bf (4) Verification of \rf{w1029e3b}. } Based on  the fact that $\cP_h(\cd)\in C\big([0,T];\dbS_+(\dbV_h)\big)$ and Lemma \ref{w1002l4}, it follows that
\begin{equation*}
\setlength\abovedisplayskip{3pt}
\setlength\belowdisplayskip{3pt}
\bal
&\me\Big[\sup_{t\in[\th,T]}\|D_\th X^*_h(t)\|_{\dot \dbH^1_h}^2\Big]+\me\Big[\int_\th^T\|D_\th X^*_h(t)\|_{\dot \dbH^{2}_h}^2\rd t\Big]\\
&\qq\leq \cC\, \me\big[ \|D_\th X^*_h(\th)\|_{\dot \dbH^1_h}^2\big]
\leq \cC\big[\|x\|_{\dbH_0^1}^2+\|\si(\cd) \|_{C([0,T];\dbH_0^1)}^2\big]\,.
\eal
\end{equation*}
Note that the constant $\cC$ is independent of $\th$, which settles the assertion \rf{w1029e3b}.

\ss
{\bf (5) Verification of \rf{w1029e3c}. } Using the linearity of SDE \eqref{w301e8}, the fact that $\cP_h(\cd)\in C\big([0,T];\dbS_+(\dbV_h)\big)$, Lemma \ref{w1002l4}, and
the triangle inequality,  we find that
\begin{equation*}
\setlength\abovedisplayskip{3pt}
\setlength\belowdisplayskip{3pt}
\bal
&\me\Big[\sup_{t\in[\th_1,T]}\|(D_{\th_1}-D_{\th_2}) X^*_h(t)\|^2\Big]
\leq \cC\, \me\big[\|(D_{\th_1}-D_{\th_2}) X^*_h(\th_1)\|^2\big]\\
&\q \leq \cC\Big[\me\big[\|D_{\th_1}X^*_h(\th_1)-D_{\th_2}X^*_h(\th_2)\|^2\big]
+ \me\big[\|D_{\th_2}X^*_h(\th_1)-D_{\th_2}X^*_h(\th_2)\|^2\big] \Big]\\
&\q=:\cC(I_1+I_2)\,.
\eal
\end{equation*}
For $I_1$, the result \rf{w1029e2d} implies that
\begin{equation*}
\setlength\abovedisplayskip{3pt}
\setlength\belowdisplayskip{3pt}
\bal
I_1&\leq \cC\Big[ \me \big[\|X^*_h(\th_1)-X^*_h(\th_2)\|^2\big]+\me\big[\|\Pi_h\si(\th_1)-\Pi_h\si(\th_2)\|^2\big] \Big]\\
&\leq \cC |\th_1-\th_2| \big[ \|x\|_{\dbH_0^1}^2+\|\si(\cd)\|^2_{L^2(0,T;\dbH_0^1)\cap C([0,T];\dbL^2)}+L_{\si,1/2}^2\big]\,.
\eal
\end{equation*}
For $I_2$, \rf{w1029e3a} and \rf{w1029e3b} lead to
\begin{equation*}
\setlength\abovedisplayskip{3pt}
\setlength\belowdisplayskip{3pt}
\bal
I_2&\leq (\th_1-\th_2)\me\Big[\int_0^T\|\D_h D_{\th_2}X^*_h(t)\|^2\rd t\Big]
  +\me\Big[\int_{\th_2}^{\th_1}\|D_{\th_2}X^*_h(t)\|^2\rd t\Big]\\
   &\leq \cC\big[\|x\|_{\dbH_0^1}^2+\|\si(\cd) \|_{C([0,T];\dbH_0^1)}^2\big]\,.
\eal
\end{equation*}
A combination of the above three estimates now shows the assertion \rf{w1029e3c}.
\end{proof}

%

The following result is on the regularity of $\big(Y_h(\cd),Z_h(\cd)\big)$ of BSDE \rf{bshe1a}.

\bl{w229l3}
Suppose that $\big(Y_h(\cd),Z_h(\cd)\big)$ solves BSDE \eqref{bshe1a}, and $I_\t=\{t_n\}_{n=0}^N$ is a uniform time mesh of $[0,T]$. Then
there exists a constant $\cC$ independent of $h$ such that 
 \begin{subequations}\label{w1029e4}
    \begin{empheq}[left={\empheqlbrace\,}]{align}
 &\me \Big[\sup_{t\in [0,T]}\|Y_h(t)\|_{\dot \dbH^\g_h}^2\Big]
+\me \Big[\int_0^T \|Y_h(t)\|_{\dot \dbH^{\g+1}_h}^2+\| Z_h(t)\|_{\dot \dbH^\g_h}^2 \rd t \Big] \nonumber \\
&\qq\qq\leq \cC \big[\|x\|_{\dot \dbH^\g}^2+\|\si(\cd)\|_{L^2(0,T;\dot \dbH^{\g})}^2\big] \qq\g=0,1\,,  \label{w1029e4a}\\
&\big\|Y_h(\cd)-Y_h\big(\nu(\cd)\big)\big\|_{L^2_\dbF(0,T;\dbL^2)}^2 
\leq \cC \t \big[\|x\|_{\dbH_0^1}^2+\|\si\|_{L^2(0,T;\dbH_0^1)}^2\big]\,, \label{w1029e4b}
\end{empheq}
\end{subequations}
and
 \begin{subequations}\label{w1029e5}
    \begin{empheq}[left={\empheqlbrace\,}]{align}
 &\sup_{\th\in[0,T]} \bigg\{\me\Big[ \sup_{t\in [\th, T]} \|D_\th Y_h(t)\|^2\Big]+\me\Big[\int_\th^T \|D_\th Z_h(t)\|^2\rd t\Big]\bigg\} \nonumber \\
&\qq     +\sup_{\th,\mu\in[0,T]}\me\Big[ \sup_{t\in [\mu\vee\th, T]} \|D_\mu D_\th Y_h(t)\|^2\Big]      
      \leq \cC \big[\|x\|^2+\|\si(\cd) \|_{C([0,T];\dbL^2)}^2\big]\,,  \label{w1029e5a}\\
 &\sup_{\th\in[0,T]}\bigg\{\me\Big[ \sup_{t\in [\th,T]} \| D_\th Y_h(t)\|_{\dot\dbH^1_h}^2\Big]
+\me\Big[\int_\th^T  \| D_\th Y_h(t)\|_{\dot\dbH^2_h}^2+\| D_\th Z_h(t)\|_{\dot\dbH^1_h}^2  \rd t\Big]\bigg\} \nonumber \\
&\qq \leq  \cC \big[\|x\|_{\dbH_0^1}^2+\|\si(\cd)\|_{C([0,T];\dbH_0^1)}^2\big]\, , \label{w1029e5b}\\
&\me\Big[ \sup_{t\in [\th_1,T]} \|(D_{\th_1}-D_{\th_2})Y_h(t)\|^2\Big] \nonumber \\
&\qq\leq \cC(\th_1-\th_2) \big[\|x\|_{\dbH_0^1}^2+\|\si(\cd)\|_{C([0,T];\dbH_0^1)}^2+L_{\si,1/2}^2\big]  \qq  \th_2\leq \th_1\,,  \label{w1029e5c}\\
&\me \big[\|Z_h(t)-Z_h(s)\|^2\big]  
 \leq \cC|t-s|  \big[\|x\|_{\dbH_0^1}^2+\|\si(\cd)\|_{C([0,T];\dbH_0^1)}^2+L_{\si,1/2}^2\big] \qq t,s \in [0,T]\,, \label{w1029e5d}
\end{empheq}
\end{subequations}
where 
$\nu(\cd)$ is defined in \rf{w827e1}.
\el

\begin{proof}
{\bf (1) Verification of \rf{w1029e4a}.}  This assertion follows from \rf{w1008e1} in Lemma \ref{w1008l1} and \rf{w1029e2a} in Lemma \ref{w229l2}.
%

\ss
{\bf (2) Verification of \rf{w1029e4b}. } This assertion follows from \rf{w1008e2} in Lemma \ref{w1008l1} and \rf{w1029e2a} 
in Lemma \ref{w229l2}.


\ss
{\bf (3) Verification of \rf{w1029e5a}, \rf{w1029e5b}. } On noting that $X^*_h(t)\in \dbD^{2,2}(\dbL^2)$ for any $t\in [0,T]$,  
 by \cite[Proposition 5.3]{ElKaroui-Peng-Quenez97} we can get
\bel{w301e5}
\setlength\abovedisplayskip{3pt}
\setlength\belowdisplayskip{3pt}
\lt\{\!\!\!
\begin{array}{ll}
\ds {\rm d} D_\th Y_h(t)=\big[-\D_h D_\th Y_h(t)- D_\th Z_h(t)+ D_\th X^*_h(t)\big]\rd t   
   +D_\th Z_h(t)\rd W(t)\q t\in [\th, T)\, ,\\
\ns\ds D_\th Y_h(T)=-\a D_\th X^*_h(T)\,,\\
\ns\ds D_\th Y_h(t) =0,\,\,D_\th Z_h(t)=0 \qq t\in [0,\th)\,,       
\end{array}
\rt.
\ee
\bel{w1029e6}
Z_h(t)=D_t Y_h(t)\qq \ae\, t\in[0,T]\,,
\ee
and
\bel{w301e6}
\setlength\abovedisplayskip{3pt}
\setlength\belowdisplayskip{3pt}
\lt\{\!\!\!
\begin{array}{ll}
\ds {\rm d} D_\mu D_\th Y_h(t)=\big[-\D_h D_\mu D_\th Y_h(t)- D_\mu D_\th Z_h(t)
 + D_\mu D_\th X^*_h(t)\big]\rd t\\
\ns\ds \qq\qq\qq\qq\qq        +D_\mu D_\th Z_h(t)\rd W(t)\qq t\in [\mu\vee\th, T),\\
\ns\ds D_\mu D_\th Y_h(T)=-\a  D_\mu D_\th X^*_h(T)\,.      
\end{array}
\rt.
\ee
Then, \rf{w1008e1} in Lemma \ref{w1008l1} and \eqref{w1029e3a}, \eqref{w1029e3b} in Lemma \ref{w229l2} lead to \rf{w1029e5a} and \rf{w1029e5b}.

\ss
{\bf (4) Verification of \rf{w1029e5c}.} Lemma \ref{w1008l1}, the linearity of BSDE \rf{bshe1a}, and \rf{w1029e3c} in 
Lemma \ref{w229l2} imply
\begin{equation*}
\setlength\abovedisplayskip{3pt}
\setlength\belowdisplayskip{3pt}
\bal
\me\Big[\! \sup_{t\in [\th_1,T]} \|(D_{\th_1}-D_{\th_2})Y_h(t)\|^2\Big]
&\leq \cC \bigg\{\me\big[\|(D_{\th_1}-D_{\th_2})X^*_h(T)\|^2\big]\!+\! \me\Big[\!\int_{\th_1}^T\!\|(D_{\th_1}-D_{\th_2})X^*_h(\th)\|^2\rd \th\Big]\bigg\}\\
&\leq \cC(\th_1-\th_2) \big[\|x\|_{\dbH_0^1}^2+\|\si(\cd)\|_{C([0,T];\dbH_0^1)}^2+L_{\si,1/2}^2\big]\,,
\eal
\end{equation*}
which settles the assertion \rf{w1029e5c}.

\ss
{\bf (5) Verification of \rf{w1029e5d}. } Suppose that $s\leq t$. By applying the fact that $Z_h(\cd)=D_\cd Y_h(\cd), \ae$ and the assertion \rf{w1029e5c}, we arrive at 
\bel{w301e10}
\setlength\abovedisplayskip{3pt}
\setlength\belowdisplayskip{3pt}
\bal
&\me \big[\|Z_h(t)-Z_h(s)\|^2\big] \\
&\q\leq 2\me\big[\|(D_t-D_s)Y_h(t)\|^2\big]+2\me\big[\big\|D_s\big(Y_h(t)-Y_h(s)\big)\big\|^2\big]\\
& \q\leq \cC|t-s|  \big[\|x\|_{\dbH_0^1}^2+\|\si(\cd)\|_{C([0,T];\dbH_0^1)}^2+L_{\si,1/2}^2\big]
+ 2\me\big[\big\|D_s\big(Y_h(t)-Y_h(s)\big)\big\|^2\big]\,.
\eal
\ee
By \rf{w301e5} and \rf{w1029e6}, we have
\begin{equation*}
\setlength\abovedisplayskip{3pt}
\setlength\belowdisplayskip{3pt}
\bal
&\me\big[\big\|D_s\big(Y_h(t)-Y_h(s)\big)\big\|^2\big] \\
&\leq 
\cC\bigg\{ \me\Big[\int_s^t\|D_sD_\th Y_h(\th)\|^2\rd \th\Big]
\!+\!(t-s)\me\Big[ \int_s^T \|\D_h D_sY_h(\th)\|^2+\| D_sZ_h(\th)\|^2
+\|D_sX^*_h(\th)\|^2 \rd \th \Big]\bigg\}\,,
\eal
\end{equation*}
which, together with \rf{w1029e5a}, \rf{w1029e5b} and \rf{w1029e3a}, leads to
\bel{w1023e2}
\setlength\abovedisplayskip{3pt}
\setlength\belowdisplayskip{3pt}
\bal
\me\big[\big\|D_s\big(Y_h(t)-Y_h(s)\big)\big\|^2\big]
 \leq  \cC|t-s| \big[\|x\|_{\dbH_0^1}^2+\|\si(\cd)\|_{C([0,T];\dbH_0^1)}^2\big]\,.
\eal
\ee
Now \rf{w1029e5d} can be derived by \rf{w301e10}--\rf{w1023e2}.
\end{proof}

Relying on Lemmata \ref{w1002l4} and \ref{w229l3}, we can derive the following strong bound for the optimal control $U^*_h(\cd)$ of Problem {\bf (SLQ)$_h$}.

\bl{w1019l3} 
Let $U^*_h(\cd)$ be the optimal control of Problem {\bf (SLQ)$_h$}. There exists a constant $\cC$
such that
\begin{equation*}
\me\Big[\sup_{t\in[0,T]}\|U_h^*(t)\|_{\dot \dbH^2_h}^2\Big]
 \leq \cC\big[ \|   x\|_{\dbH_0^1\cap \dbH^2}^2
+ \|\si(\cd)\|^2_{L^2(0,T;\dbH_0^1\cap\dbH^2)}\big]\,.
\end{equation*}
\el

\begin{proof}

By Lemma \ref{w1002l4} with $\g=2$, the optimality condition \rf{pontr1a} and \rf{w1029e4a} with $\g=0$ in Lemma \ref{w229l3}, we arrive at
\begin{equation*}
\setlength\abovedisplayskip{3pt}
\setlength\belowdisplayskip{3pt}
\bal
\me\Big[\sup_{t\in[0,T]}\|X_h^*(t)\|_{\dot \dbH^{2}_h}^2\Big]
&\leq \cC\, \me \Big[ \lt\|   X^*_{h}(0)\rt\|_{\dot \dbH^2_h}^2
+ \int_0^T \| U^*_h(t)\|_{\dot \dbH^1_h}^2+\| \Pi_h \si (t)\|_{\dot \dbH^2_h}^2\rd t\Big]\\
&\leq \cC\big[ \|   x\|_{\dbH_0^1\cap \dbH^2}^2
+ \|\si(\cd)\|^2_{L^2(0,T;\dbH_0^1\cap\dbH^2)}\big]\,.
\eal
\end{equation*}
Subsequently, by using the optimality condition \rf{pontr1a} and Lemma \ref{w1008l1} with $\g=2$, we derive 
\begin{equation*}
\setlength\abovedisplayskip{3pt}
\setlength\belowdisplayskip{3pt}
\bal
\me\Big[\sup_{t\in[0,T]}\|U_h^*(t)\|_{\dot \dbH^2_h}^2\Big]
&\leq \cC\, \me \Big[ \|  \a X^*_{h}(T)\|_{\dot \dbH^2_h}^2+ \int_0^T \| X^*_h(t)\|^2_{\dot\dbH^1_h}\rd t\Big]\\
&\leq \cC\big[ \|   x\|_{\dbH_0^1\cap \dbH^2}^2
+ \|\si(\cd)\|^2_{L^2(0,T;\dbH_0^1\cap\dbH^2)}\big]\,.
\eal
\end{equation*}
That completes the proof.
\end{proof}

\subsubsection{Temporal discretization of Problem {\bf (SLQ)$_h$}}\label{fbspde-num1z}

In this part, we propose a temporal discretization scheme  of Problem {\bf (SLQ)$_h$}, consider its well-posedness  and derive 
error estimates. 
For this purpose, we use
a mesh $I_\tau$ which covers $[0,T]$, and consider step processes $\big(X_{h\tau}(\cd), U_{h\tau}(\cd)\big) \in \dbX_{\dbF} \times \dbU_{\dbF}\subset L^2_\dbF\big(0,T;\dbV_h\big)\times L^2_\dbF\big(0,T;\dbV_h\big)$, where
\begin{equation*}
\setlength\abovedisplayskip{3pt}
\setlength\belowdisplayskip{3pt}
\bal \dbX_{\dbF} &\deq\! \big\{X(\cd) \in L^2_\dbF (0,T;\dbV_h)\,\big| \, X(t)\! =\! X(t_n), \,\,   \forall\,t\in [t_n, t_{n+1}),  \,\, n=0,1,\cds, \, N-1\big\}\, ,\\
\dbU_{\dbF} &\deq\!   \big\{U(\cd) \in L^2_\dbF (0,T;\dbV_h)\,\big|\, U(t)\! =\! U(t_n),  \,\,    \forall\,t\in [t_n, t_{n+1}),  \,\, n=0,1,\cds, \, N-1\big\}\, ;
\eal
\end{equation*}
for any $X(\cd)\in \dbX_{\dbF}$  and $U(\cd)\in \dbU_{\dbF}$, we define
\begin{equation*}
\setlength\abovedisplayskip{3pt}
\setlength\belowdisplayskip{3pt}
\|X(\cd)\|_{\dbX_{\dbF}}\deq \Big(\t \sum_{n=0}^{N-1}\me\big[\|X(t_n)\|^2\big]\Big)^{1/2}\,,\,
\mbox{ and } \,\, \|U(\cd)\|_{\dbU_{\dbF}} \deq \Big(\t \sum_{n=0}^{N-1}\me \big[\|U(t_n)\|^2\big]\Big)^{1/2}.
\end{equation*}

Now a temporal discretization scheme of Problem {\bf (SLQ)$_h$} reads as follows.\\
\no{\bf Problem  (SLQ)$_{h\t}$.} Find an optimal control
$U_{h\t}^*(\cd)\in \dbU_{\dbF}$ which minimizes the quadratic  cost functional 
\bel{w1003e8}
\setlength\abovedisplayskip{3pt}
\setlength\belowdisplayskip{3pt}
\bal
\cJ_{h,\t}\big(U_{h\t}(\cd)\big)
=\frac {1} 2 \big[\| X_{h\t}(\cd)\|^2_{\dbX_{\dbF}}
+ \| U_{h\t} (\cd)\|_{\dbU_{\dbF}}^2\big]
+\frac \a 2 \me \big[\|X_{h\t}(T)\|^2\big]\, ,
\eal
\ee
subject to the forward semi-implicit  difference equation
\begin{equation}\label{w1003e7}
\setlength\abovedisplayskip{3pt}
\setlength\belowdisplayskip{3pt}
\left\{\!\!\!
\begin{array}{ll}
\ds X_{h\t}(t_{n+1})-X_{h\t}(t_n)= \tau \big[\D_h X_{h\t}(t_{n+1})
 +U_{h\t}(t_n) \big]\\
\ns \ds \qq\qq+ \big[\b X_{h\t}(t_n)+\Pi_h \si(t_n)\big] \D_{n+1}W
 \qq n=0,1,\cds,N-1\, ,\\
\ns \ds   X_{h\t}(0)= \Pi_h x\,.
 \end{array}
 \right.  
\end{equation}
%
The following result states a {(discrete)} Pontryagin's maximum principle for the uniquely  solvable  Problem {\bf (SLQ)$_{h\tau}$}. 

\bt{MP}
Let $\ds A_0 =\lt(\mathds{1}_h-\tau \D_h\rt)^{-1}$.
The unique optimal pair $\big(X^*_{h\t}(\cd), U^*_{h\t}(\cd)\big)\in \dbX_{\dbF} \times \dbU_{\dbF}$ of
Problem {\bf (SLQ)$_{h\t}$}  solves the following coupled  equalities for $n=0,1,\cds,N-1$:
\begin{subequations}\label{w1212e3}
    \begin{empheq}[left={\empheqlbrace\,}]{align}
&X_{h\t}^*(t_{n+1})=A_0^{n+1}\prod_{j=1}^{n+1}\lt(1+\b \D_j W\rt)X_{h\t}({0}) \nonumber \\
&\qq+\sum_{j=0}^{n}A_0^{n+1-j}\prod_{k=j+2}^{n+1} (1+\b \D_kW ) \big[ \t U_{h\t}^*(t_j)+\Pi_h\si(t_j)\D_{j+1}W\big]\,,
 \label{w1212e3a}\\
&Y_{h\t}(t_n)=-\t \me^{t_n}\Big[ \sum_{j=n+1}^{N-1} A_0^{j-n} \prod_{k={n+2}}^j(1+\b \D_kW) X^*_{h\t}(t_j) \Big]  \nonumber \\
&\qq-\a \me^{t_n} \Big[ A_0^{N-n} \prod_{k=n+2}^N(1+\b \D_kW)  X^*_{h\t}(T) \Big]\,, \label{w1212e3b}\\
&X^*_{h\t}(0)=\Pi_h x\, , \label{w1212e3c}
\end{empheq}
\end{subequations}
 together with the discrete optimality condition
\bel{w1003e12}     
U^*_{h\t}(t_n)- Y_{h\t}(t_n)=0 \, .
\ee
\et
%
Below, we write $Y_{h\t}(\cd;X^*_{h\t})$ 
for $Y_{h\t}(\cd)$ to indicate the dependence on $X^*_{h\t}(\cd)$.

We mention the appearing
mapping $Y_{h\tau}(\cd)$ in \rf{w1212e3b}, which is used to give the discrete optimality condition \rf{w1003e12}; see also Remark \ref{w829r1}. 
We will use this characterization in Theorem \ref{MP} of $\big(X^*_{h\t}(\cd), U^*_{h\t}(\cd)\big)\in \dbX_{\dbF} \times \dbU_{\dbF}$ to deduce rates of convergence for the optimal pair of Problem {\bf (SLQ)$_{h\t}$} in the next Theorem 
\ref{rate2}; but to actually implement this numerical method is still too ambitious: this is the reason why
another method that is suitable for implementation will be proposed in the subsequent Sections \ref{numopt}
and \ref{impl-1}.

\bt{rate2}
Suppose that  {\rm \bf (A)} holds. Let $\big(X_h^*(\cd),U^*_h(\cd)\big)$ be the optimal pair to Problem {\bf (SLQ)$_h$}, and $\big(X^*_{h\t}(\cd), U^*_{h\t}(\cd)\big)$ solves Problem {\bf (SLQ)$_{h\t}$}. Then, there exists a constant
$\cC$ independent of $h, \tau$ such that
\begin{subequations}\label{w1030e1}
    \begin{empheq}[left={\empheqlbrace\,}]{align}
& \sum_{n=0}^{N-1}\me \Big[\int_{t_n}^{t_{n+1}}\|U^*_h(t)-U^*_{h\t}(t_n)\|^2
+\|X^*_h(t)-X^*_{h\t}(t_n)\|^2 \rd t \Big] \nonumber \\
&\qq     \leq \cC\t \big[ \|   x\|_{\dbH_0^1\cap \dbH^2}^2
+ \|\si(\cd)\|^2_{C([0,T];\dbH_0^1)\cap L^2(0,T;\dbH_0^1\cap\dbH^2)}+L_{\si,1/2}^2\big]\, ,   \label{w1030e1a}\\
& \max_{0 \leq n \leq N}  {\mathbb E} \big[\|X_h^*(t_n)-X_{h\t}^*(t_n)\|^2 \big]
+\tau \sum_{n=1}^N {\mathbb E}  \big[\|X_h^*(t_n)-X^*_{h\t}(t_n)\|_{\dbH_0^1}^2\big] \nonumber \\
&\qq \leq  \cC\t \big[ \|   x\|_{\dbH_0^1\cap \dbH^2}^2
+ \|\si(\cd)\|^2_{C([0,T];\dbH_0^1)\cap L^2(0,T;\dbH_0^1\cap\dbH^2)}+L_{\si,1/2}^2\big] \, . \label{w1030e1b}
\end{empheq}
\end{subequations}
\et

\br{w829r1} 
{\bf (i)}~Equation
\eqref{w1212e3a} is the time-implicit approximation of SDE \eqref{spde-h} with $U_{h\t}(\cd)=U^*_{h\t}(\cd)$, while 
$Y_{h\t} (\cd)$ is 
an approximation of $Y_h(\cd)$ to BSDE \eqref{bshe1a}; in fact, \eqref{w1212e3b} is different from the 
temporal discretization of \eqref{bshe1a} via the implicit Euler method 
--- which is why
our approach differs from the one known as
`first optimize, then discretize'.
See
also Lemma \ref{w229l1} for an (auxiliary, non-implementable) approximation of $Y_h(\cd)$ that is based on the implicit Euler method. 
In Lemma \ref{w301l1}, we estimate the difference between these two approximations.

{\bf (ii)}~If we endow the discrete state space with the norm
$\|X(\cd)\|_{\dbX_{\dbF}}\deq \big(\t \sum_{n=1}^{N}\me\big[\|X(t_n)\|^2\big]\big)^{1/2}$,
then \rf{w1212e3b} turns to
\bel{w1201e1}
\setlength\abovedisplayskip{3pt}
\setlength\belowdisplayskip{3pt}
\bal
Y_{h\t}(t_n)&=-\t \me^{t_n}\Big[ \sum_{j=n+1}^{N} A_0^{j-n} \prod_{k={n+2}}^j(1+\b \D_kW) X^*_{h\t}(t_j) \Big] \\
&\q-\a \me^{t_n} \Big[ A_0^{N-n} \prod_{k=n+2}^N(1+\b \D_kW)  X^*_{h\t}(T) \Big]\,;
\eal
\ee
see \cite[Theorem 3.2]{Prohl-Wang22}.
Hence, when SPDE \rf{spde} is driven by {\em additive} noise only ({\em i.e.}, $\b=0$), $Y_{h\t}(\cd)$ in 
\rf{w1201e1} satisfies
\begin{equation*}
\setlength\abovedisplayskip{3pt}
\setlength\belowdisplayskip{3pt}
\lt\{\!\!\!
\begin{array}{ll}
\ds [\mathds{1}_h - \tau \Delta_h]Y_{h\t}(t_n) = \me^{t_n}\big[ Y_{h\t}(t_{n+1})- {\tau} X_{h\t}^*(t_{n+1}) \big] \qq n=0,1,\cds,N-1\, , \\
\ns \ds Y_{h\t}(T)=-\a X_{h\t}^*(T)\, ,
\end{array}
\rt.
\end{equation*}
which is the same as the semi-implicit Euler method;
see Lemma \ref{w229l1} and \cite[Theorem 4.2]{Prohl-Wang21}.
In this case, to `first discretize, then optimize', and to `first optimize, then discretize' lead
to the same discrete optimality system.


\er

\begin{proof}[\bf {Proof of Theorem \ref{MP}}]
We divide the proof into three steps.

\ss

{\bf (1)}  Recall the notation $ A_0=(\mathds{1}_h-\tau \D_h)^{-1}$.
We define the bounded operators $\G : \dbV_h\rightarrow \dbX_{\dbF}$ and $L: \dbU_{\dbF}\rightarrow \dbX_{\dbF}$
as follows, for any $ n=0,1,\cds,N$,
\bel{rep1}
\setlength\abovedisplayskip{3pt}
\setlength\belowdisplayskip{3pt}
\bal
& \big(\G X_{h\t}({0})\big)(t_n)=A_0^n\prod_{j=1}^n\lt(1+\b \D_j W\rt)X_{h\t}({0})\,, \\
\mbox{and }\q &\big(LU_{h\t}(\cd) \big)(t_n)= \t \sum_{j=0}^{n-1}A_0^{n-j}\prod_{k=j+2}^n\lt(1+\b \D_kW\rt) U_{h\t}(t_j) \,,
\eal
\ee
respectively, where $\big(X_{h\tau}(\cd), U_{h\tau}(\cd)\big)$ is an admissible pair of Problem {\bf (SLQ)}$_{h\tau}$; see \rf{w1003e8}--\rf{w1003e7}. We also need $f(\cd)$, which we define as
\begin{equation*}
\setlength\abovedisplayskip{3pt}
\setlength\belowdisplayskip{3pt}
f(t_n)=\sum_{j=0}^{n-1} A_0^{n-j}\prod_{k=j+2}^{n}\lt(1+\b \D_kW \rt)\Pi_h\si(t_j)\D_{j+1}W \qquad \forall\, 
n=0, 1 ,\cds,N\,,
\end{equation*}
and use below the abbreviations
\begin{equation}\label{w1003e14}
\setlength\abovedisplayskip{5pt}
\setlength\belowdisplayskip{5pt}
\h \G {\Pi_h x} \deq \lt(\G\Pi_h x\rt)(T)\, , 
\qquad \h L{U_{h\tau}}(\cd) \deq \big(LU_{h\tau}(\cd)\big)(T)\,,
\qq \h f\deq f(T) \,.
\end{equation}

By \rf{w1003e7}, we find that
\bel{w1024e2}
\setlength\abovedisplayskip{5pt}
\setlength\belowdisplayskip{5pt}
X_{h\t}(t_n)=\big(\G X_{h\t}(0)\big)(t_n)+\big(LU_{h\t}(\cd)\big)(t_n)+f(t_n) \qq n=0,1,\cds,N-1\,.
\ee

It is easy to check that, for any $\xi(\cd)\in \dbX_{\dbF}$, and any $\eta\in L^2_{\mf_T}(\O;\dbV_h)$,
\begin{equation*}
\setlength\abovedisplayskip{3pt}
\setlength\belowdisplayskip{3pt}
\bal
\big(&L^*\xi(\cd)\big)(t_n)= \me^{t_n}\Big[ \t \sum_{j=n+1}^{N-1} A_0^{j-n} \prod_{k={n+2}}^j(1+\b \D_kW) \xi(t_j)  \Big]\,,\\
\mbox{and}\q \big(& \h L^*\eta\big)(t_n) = \me^{t_n} \Big[  A_0^{N-n} \prod_{k=n+2}^N (1+\b \D_kW) \eta \Big]\q n=0,1,\cds, N-1\, . 
\eal
\end{equation*}
{\bf (2)} By \rf{rep1}--\rf{w1024e2}, 
we can rewrite $\cJ_{h,\t}\big(U_{h\t}(\cd) \big)$ in \rf{w1003e8} as follows:
\begin{equation*}
\setlength\abovedisplayskip{3pt}
\setlength\belowdisplayskip{3pt}
\bal
\cJ_{h,\t}\big(U_{h\t}(\cd) \big)
&= \frac 1 2 \Big\{  \Big(\big(\big[\mathds{1}_h +L^* L+\a \h L^* \h L \big]U_{h\t}(\cd)\big)(\cd) ,U_{h\t}(\cd)  \Big)_{L^2_{{\mathbb F}}(0,T; \dbL^2)}  \\
 &\qq   +2 \Big(\big(\big[L^* \G+\a \h L^*\h\G \big] {\Pi_h x}\big)(\cd) +\big(L^*f(\cd)\big)(\cd)+\a \big(\h L^*\h f\,\big)(\cd) ,U_{h\t}(\cd) \Big)_{L^2_{{\mathbb F}}(0,T; \dbL^2)}\\
  &\qq  +\Big[ \big( (\G {\Pi_h x})(\cd)+f(\cd), (\G {\Pi_h x})(\cd)+f(\cd)\big)_{L^2_{{\mathbb F}}(0,T; \dbL^2)}\\
&\qq\q   + \a \big( \h\G {\Pi_h x}+\h f, \h\G \Pi_h x+\h f\, \big)_{L^2_{\mf_T}(\O;\dbL^2)} \Big]     \Big\}\\
&=: \frac 1 2\Big[ \Big(\big(\fN U_{h\t}(\cd)\big)(\cd) ,U_{h\t}(\cd) \Big)_{L^2_{{\mathbb F}}(0,T; \dbL^2)} +2 \big( \fH({\Pi_h x,f})(\cd),U_{h\t}(\cd)\big)_{L^2_{{\mathbb F}}(0,T; \dbL^2)}\\
&\qq    + \fM(\Pi_hx,f)  \Big]\, .
\eal
\end{equation*}
We use this re-writing of $\cJ_{h,\t}\big(U_{h\t}(\cd) \big)$  which involves
mappings $\fN$ and $\fH$, and note that the last term does not depend on $U_{h\t}(\cd)$,
to now involve the optimality condition for $U^*_{h\t}(\cd)$: since
$\fN  =\mathds{1}_h +L^* L+\a \h L^* \h L $  is positive definite, there exists a unique $U^*_{h\t}(\cd) \in \dbU_{\dbF}$ such that
\bel{w1117e2}
\setlength\abovedisplayskip{3pt}
\setlength\belowdisplayskip{3pt}
\big(\fN U^*_{h\t}(\cd)\big)(\cd) +\fH({\Pi_h x},f )(\cd) =0\, .
\ee
Therefore, for any $U_{h\t}(\cd) \in \dbU_{\dbF}$ such that $U_{h\t}(\cd) \neq U^*_{h\t}(\cd)$, 
\begin{equation*}
\setlength\abovedisplayskip{3pt}
\setlength\belowdisplayskip{3pt}
\bal
\cJ_{h,\t}\big(U_{h\t}(\cd)\big)-\cJ_{h,\t}\big(U^*_{h\t}(\cd)\big)
=\frac 1 2 \Big( \fN \big(U_{h\t}(\cd)-U^*_{h\t}(\cd)\big)(\cd), U_{h\t}(\cd)-U^*_{h\t}(\cd)\Big)_{L^2_{{\mathbb F}}(0,T; \dbL^2)}
>0\, ,
\eal
\end{equation*}
which means that $U^*_{h\t}(\cd)$ is the unique optimal control, and $\big(X^*_{h\t}(\cd),U^*_{h\t}(\cd)\big)$ is the unique optimal pair.

\ss
{\bf (3)}
By the definition of $\fN, \fH$,  and \rf{w1024e2},
 we deduce for \rf{w1117e2} that
\begin{equation*}
\setlength\abovedisplayskip{3pt}
\setlength\belowdisplayskip{3pt}
0=\big(\fN U^*_{h\t}(\cd)\big)(\cd) +\fH({\Pi_h x,f })(\cd)
=U^*_{h\t} (\cd)+\big(L^* X^*_{h\t}(\cd) +\a \h L^*  X^*_{h\t}(T) \big)(\cd)\, . \\
\end{equation*}
For $Y_{h\t}(\cd)$ in \rf{w1212e3b}, we compute
\bel{w1008e4}
\setlength\abovedisplayskip{3pt}
\setlength\belowdisplayskip{3pt}
\big(L^* X^*_{h\t}(\cd) +\a \h L^*  X^*_{h\t}(T) \big)(\cd)=-Y_{h\t}(\cd)\,.
\ee
Then \eqref{w1003e12} can be deduced by these two equalities. That completes the proof.
\end{proof}

Our next goal is to verify Theorem \ref{rate2} which needs preparatory results that are stated in Lemmata 
\ref{w229l1} through \ref{w1019l2}.

In Remark \ref{w829r1} we have already mentioned that $Y_{h\t}(\cd)$
in 
\eqref{w1212e3b} neither solves the temporal discretization of BSDE \eqref{bshe1a} by the explicit Euler, nor by the implicit Euler method in general. 
The following two lemmata study the difference between $Y_{h\t}(\cd)$ and the temporal discretization of BSDE \eqref{bshe1a} by
the implicit Euler method. 
\bl{w229l1}
Suppose that $\big(Y_\cd,\, Z_0(\cd)\big)$ solves the following backward equation:
\begin{equation}\label{w229e7}
\setlength\abovedisplayskip{3pt}
\setlength\belowdisplayskip{3pt}
\lt\{\!\!\!
\begin{array}{ll}
\ds Y_n\!-\!Y_{n+1}=\t \D_h Y_n \!+\! \int_{t_n}^{t_{n+1}} \b \bar Z_0(t)\!-\! X^*_{h\t}\big(\mu(t)\big) \rd t
\!-\!\int_{t_n}^{t_{n+1}} Z_0(t)\rd W(t)\,,\\
\ns\ds Y_N=-\a X^*_{h\t}(T)\,,
\end{array}
\rt.
\end{equation}
%
where $X^*_{h\t}$ is the optimal state of Problem {\bf (SLQ)$_{h\t}$}, $\mu(\cd)$ is defined in \rf{w827e1}.
and  $\bar Z_0(\cd)$ is a piecewise constant process
 which is defined by
\begin{equation*}
\setlength\abovedisplayskip{3pt}
\setlength\belowdisplayskip{3pt}
\bar Z_0(t)=\frac 1 \t \me^{t_n}\Big[\int_{t_n}^{t_{n+1}}  Z_0(s)\rd s\Big]\qquad \forall\, t\in [t_n,t_{n+1})\,, \qquad  n=0,1,\cds,N-1\, .
\end{equation*}
 Then, 
 \begin{equation*}
\setlength\abovedisplayskip{3pt}
\setlength\belowdisplayskip{3pt}
\bal
Y_n
=& - \t \me^{t_n} \Big[ \sum_{j=n+1}^NA_0^{j-n}\prod_{k=n+1}^{j}\lt(1+\b \D_kW\rt)  X^*_{h\t}(t_j) \Big] \\
 &  -\a \me^{t_n} \Big[ A_0^{N-n}\prod_{k={n+1}}^N \lt(1+\b \D_kW\rt)  X^*_{h\t}(T)  \Big]  \qquad n=0,1,\cds,N-1\, .
 \eal
\end{equation*}
%
\el
\begin{proof}
The well-posedness of \eqref{w229e7} can be derived in the same vein as that in \cite{Pardoux-Peng90}.
For any $n=0,1,\cds,N-1$,
it is easy to see that
 \begin{equation*}
\setlength\abovedisplayskip{3pt}
\setlength\belowdisplayskip{3pt}
\int_{t_n}^{t_{n+1}}Z_0(t)\rd t
=\me^{t_n}\Big[ \big( Y_{n+1}-\t X^*_{h\t}(t_{n+1}) \big) \D_{n+1}W\Big]\,,
\end{equation*}
which yields the desired result.
\end{proof}

The solution $\big(Y_\cd,\,Z_0(\cd)\big)$ in \rf{w229e7} depends on $X^*_{h\t}$. Hence, in what follows, we may write it
in the form $\big(Y_\cd(X^*_{h\t}),  \,Z_0(\cd;X^*_{h\t})\big)$.

Similar to $\cS$ and $\cS_h$, we can define the  solution operator for the difference equation \rf{w1003e7},
$\cS_{h\t}: \dbU_{\dbF}\to \dbX_{\dbF}\,.\index{${\mathcal S}_{h\tau}$}$
In the next lemma, we estimate the 
difference between $Y_{h\t}\big(\cd;\cS_{h\t}(\Pi_\t U^*_h)\big)$,
which was introduced in \rf{w1212e3b}, and $Y_\cd\big(\cS_{h\t}(\Pi_\t U^*_h)\big)$ from Lemma \ref{w229l1}, 
which is crucial in proving rates of
convergence for the temporal discretization of Problem {\bf (SLQ)$_h$}.

\bl{w301l1} 
Suppose that $\big(Y_\cd,Z_0(\cd)\big)$ solves \eqref{w229e7}. Then
it holds that
\begin{equation*}
\bal
\max_{0\leq n\leq N-1}\me\big[ \big\| Y_n \big(\cS_{h\t}(\Pi_\t U^*_h)\big )-Y_{h\t} \big(t_n;\cS_{h\t}(\Pi_\t U^*_h) \big)
\big\|^2\big]
\leq \cC\t \big[\|x\|^2+\|\si(\cd) \|_{C([0,T];\dbL^2)}^2\big] \,,
\eal
\end{equation*}
where $\Pi_\t$ is defined in \eqref{w828e1}, $Y_{h\t}(\cd)$ is given by \eqref{w1212e3b}, and the constant $\cC$ is independent of $h$ and $\t$.
\el

\begin{proof}
By Lemma \ref{w229l1} and Theorem \ref{MP}, we find that for any $n=0,1,\cds,N-1$
\begin{equation*}
\setlength\abovedisplayskip{3pt}
\setlength\belowdisplayskip{3pt}
\bal
&Y_n\big( \cS_{h\t}(\Pi_\t U^*_h)\big)-Y_{h\t}\big(t_n;\cS_{h\t}(\Pi_\t U^*_h)\big)\\
&\qq=\bigg\{-\a\b  \me^{t_n}\Big[ A_0^{N-n} \D_{n+1}W \prod_{k={n+2}}^N (1+\b \D_kW)\cS_{h\t}(T;\Pi_\t U^*_h)\Big]\\
&\qq\q-\t \me^{t_n}\Big[ A_0^{N-n}\prod_{i=n+2}^{N}\lt(1+\b \D_iW\rt)  \cS_{h\t}(T;\Pi_\t U^*_h)  \Big]\bigg\}\\
&\qq\q-\t \b  \sum_{k=n+1}^N \me^{t_n}\Big[ A_0^{k-n} \D_{n+1}W \prod_{i=n+2}^{k} \lt(1+\b \D_iW\rt)  \cS_{h\t}(t_k;\Pi_\t U^*_h)  \Big]\\
&\qq=: I_n^0-\t\b \sum_{k=n+1}^N I_n^k\,.
\eal
\end{equation*}
For $I_n^k$, by  \eqref{w1212e3a}, we have
\bel{w1117e3}
\setlength\abovedisplayskip{3pt}
\setlength\belowdisplayskip{3pt}
\bal
I_n^k&= \me^{t_n}\Big[ A_0^{k-n} \D_{n+1}W \prod_{i=n+2}^{k} \lt(1+\b \D_iW\rt) A_0^k \prod_{l=1}^k\lt(1+\b \D_l W\rt)X_{h\t}({0}) \Big]   \\
&\q+\! \me^{t_n}\! \Big[\!  \t A_0^{k-n} \! \D_{n+1}\! W \prod_{i=n+2}^{k} \! (1\! +\! \b \D_iW)\sum_{l=0}^{k-1} A_0^{k-l} \! \prod_{m=l+2}^k\! (1\! +\! \b \D_mW) U^*_{h}(t_l)  \Big]  \\
&\q+ \me^{t_n}\Big[  A_0^{k-n} \D_{n+1}W \prod_{i=n+2}^{k} \lt(1+\b \D_iW\rt)\sum_{l=0}^{k-1} A_0^{k-l}  \\
&\qq\times \prod_{m=l+2}^k\lt(1+\b \D_mW\rt) \D_{l+1}W \Pi_h\si(t_l)  \Big]  \\
&=:\sum_{i=1}^3 I_{ni}^k\,.
\eal
\ee
By the mutual independence of $\{\D_nW\}_{n=1}^N$, and the fact that $X_{h\t}(0)$  
is deterministic, we obtain
\bel{w304e1}
\setlength\abovedisplayskip{3pt}
\setlength\belowdisplayskip{3pt}
\bal
\me\big[\|I_{n1}^k\|^2\big]&\leq \me\Big[\Big\| \prod_{i=n+2}^{k}(1+\b\D_iW )\D_{n+1}W
\prod_{l=1}^k (1+\b\D_l W )X_{h\t}({0}) \Big\|^2\Big]  \\
&=\prod_{i=n+2}^k\me\big[(1+\b \D_iW)^4\big] \me\big\{\big[\D_{n+1}W+\b\big(\D_{n+1}W\big)^2\big]^2\big\} \\
&\qq\times  \prod_{l=1}^n\me\big[(1+\b\D_l W)^2\big]\lt\| X_{h\t}(0)\rt\|^2  \\
& \leq \cC \t \| X_{h\t}(0)\|\,.
\eal
\ee
For $I_{n2}^k$, firstly we have
\begin{equation*}
\setlength\abovedisplayskip{3pt}
\setlength\belowdisplayskip{3pt}
\me\big[\|I_{n2}^k\|^2\big]
\leq \cC\t\sum_{l=0}^{k-1} \me\Big[\Big\| \prod_{i=n+2}^{k}\lt(1+ \b \D_iW\rt)\D_{n+1}W
\prod_{m=l+2}^k\lt(1+\b \D_mW\rt) U^*_{h}(t_l) \Big\|^2\Big]\,.
\end{equation*}
Now we divide this estimate into two cases.
\ss

\no{\bf Case (i)}  $l\leq n$. In this case, by using the same trick as that to derive \rf{w304e1}, 
we can see that
\bel{w304e2}
\setlength\abovedisplayskip{3pt}
\setlength\belowdisplayskip{3pt}
\bal
&\me\Big[\Big\| \prod_{i=n+2}^{k}\lt(1+\b  \D_iW\rt)\D_{n+1}W
\prod_{m=l+2}^k\lt(1+\b  \D_mW\rt) U^*_{h}(t_l) \Big\|^2\Big] \\
& \leq \prod_{i=n+2}^k\me\big[(1+\b  \D_iW)^4\big]
\me\big[\big(\D_{n+1}W+\b  (\D_{n+1}W)^2\big)^2\big]  
  \prod_{m=l+2}^n\me\big[(1+\b  \D_mW)^2\big]\me\big[\|U^*_h(t_l)\|^2\big] \\
&\leq \cC\t \sup_{t\in[0,T]}\me\big[\|U^*_h(t)\|^2\big]\,. 
\eal
\ee

\no{\bf Case (ii)}  $l> n$. Still applying the mutual independence of $\{\D_nW\}_{n=1}^N$, we have
\bel{w304e3}
\setlength\abovedisplayskip{3pt}
\setlength\belowdisplayskip{3pt}
\bal
&\me\Big[\Big\| \prod_{i=n+2}^{k}\lt(1+\b  \D_iW\rt)\D_{n+1}W
\prod_{m=l+2}^k\lt(1+\b  \D_mW\rt) U^*_{h}(t_l) \Big\|^2\Big]  \\
&\q\leq \prod_{i=l+2}^k\me\big[(1+\b  \D_i W)^4\big]\me\big[(1+\b  \D_{l+1}W )^2\big]  \\
&\qq\qq\times \Big\{ \prod_{m=n+2}^l\me\big[(1+\b  \D_m W)^4\big]\me\big[(\D_{n+1}W)^4\big]\me\big[\|U^*_h(t_l)\|^4\big]\Big\}^{1/2} \\
&\q \leq \cC\t  \Big(\sup_{t\in[0,T]} \me\big[\|U^*_h(t)\|^4\big] \Big)^{1/2}\, .
\eal
\ee
We may now combine both estimates and use Lemma \ref{w229l2} to conclude that
\begin{equation*}
\setlength\abovedisplayskip{3pt}
\setlength\belowdisplayskip{3pt}
\bal
\me\big[\|I_{n2}^k\|^2\big]
&\leq \cC\t  \Big[\sup_{t\in[0,T]}\me\big[\|U^*_h(t)\|^2\big]+\Big(\sup_{t\in[0,T]} \me\big[\|U^*_h(t)\|^4\big] \Big)^{1/2}\Big]\\
&\leq \cC \t  \big[\|x\|^2+\|\si(\cd) \|_{C([0,T];\dbL^2)}^2\big]\,.
\eal
\end{equation*}

To estimate $\me\big[\|I_{n3}^k\|^2\big]$, we need a longer computation. By introducing a notation
\begin{equation*}
\setlength\abovedisplayskip{3pt}
\setlength\belowdisplayskip{3pt}
\bal
  a_l 
\deq  A_0^{2k-n-l} \D_{n+1}W \prod_{i=n+2}^{k} \lt(1+\b \D_iW\rt)
  \prod_{m=l+2}^k\lt(1+\b \D_mW\rt) \D_{l+1}W \Pi_h\si(t_l)  \,,
\eal
\end{equation*}
we find that $I_{n3}^k=\sum_{l=0}^{k-1}\me^{t_n}[a_l]$, and divide the summation into three cases.

\ss
\no{\bf Case (i)} $l\leq n-1$. 
We use the facts that $\{\D_mW\}_{m\leq n}$ is $\mf_{t_n}$-measurable and  the mutual independence of $\{\D_i W\}_{i=n+1}^k$ to conclude that 
\begin{equation*}
\setlength\abovedisplayskip{3pt}
\setlength\belowdisplayskip{3pt}
\bal
\me^{t_n}\big[a_l\big]&=A_0^{2k-n-l}\Pi_h\si(t_l)\D_{l+1}W\prod_{m=l+2}^n(1+\b \D_mW)\\
&\q\times \me^{t_n} \big[(1+\b \D_{n+1}W)\D_{n+1}W\big]\prod_{i=n+2}^k\me^{t_n}\big[(1+\b \D_iW)^2\big]\\
&=\b \t A_0^{2k-n-l}\Pi_h\si(t_l)\D_{l+1}W\prod_{m=l+2}^n(1+\b \D_mW)\prod_{i=n+2}^k(1+\b ^2\t)\,,
\eal
\end{equation*}
and by the fact that $\|A_0\|_{\cL(\dbL^2|_{\dbV_h})}\leq 1$ we arrive at
\begin{equation*}
\setlength\abovedisplayskip{3pt}
\setlength\belowdisplayskip{3pt}
\bal
\me\big[\big\|\me^{t_n}\big[a_l\big]\big\|^2\big]
\leq \cC\t^2 \|\si(t_l)\|^2 \me\big[\D_{l+1}^2W\big] \prod_{m=l+2}^n\me\big[(1+\b \D_mW)^2\big]\prod_{i=n+2}^k(1+\b ^2\t)^2
\leq \cC\t^3 \|\si(t_l)\|^2\,.
\eal
\end{equation*}

\no{\bf Case (ii)} $l=n$.
Similar to the above case,
it follows that
\begin{equation*}
\setlength\abovedisplayskip{3pt}
\setlength\belowdisplayskip{3pt}
\bal
\me^{t_n}\big[a_n\big]
&=A_0^{2k-2n}\Pi_h\si(t_n)\t \prod_{i=n+2}^k(1+\b ^2\t)\,,
\eal
\end{equation*}
and 
\begin{equation*}
\setlength\abovedisplayskip{3pt}
\setlength\belowdisplayskip{3pt}
\bal
\me\big[\big\|\me^{t_n}\big[a_n\big]\big\|^2\big]
\leq \cC\t^2 \|\si(t_n)\|^2 \prod_{i=n+2}^k(1+\b ^2\t)^2
\leq \cC\t^2 \|\si(t_l)\|^2\,.
\eal
\end{equation*}

\no{\bf Case (iii)} $l\geq n+1$.
Based on the mutual independence of $\{\D_i W\}_{i=n+1}^k$ and the fact that $\me\big[\D_{n+1}W\big]=0$, we have
\begin{equation*}
\setlength\abovedisplayskip{3pt}
\setlength\belowdisplayskip{3pt}
\bal
\me^{t_n}\big[a_l\big] 
&=A_0^{2k-n-l}\Pi_h\si(t_l) \me\big[\D_{n+1}W\big] \prod_{i=n+2}^l\me\big[(1+\b \D_iW)  \big] \\
&\qq\times \me\big[(1+\b \D_{l+1}W)\D_{l+1}W\big] \prod_{m=l+2}^k\me\big[(1+\b \D_mW)^2\big]=0\,.
\eal
\end{equation*}

Combining with the above estimates, we conclude that
\begin{equation*}
\setlength\abovedisplayskip{3pt}
\setlength\belowdisplayskip{3pt}
\bal
&\me\big[\|I_{n3}^k\|^2\big]
=\me\Big[\Big\|\sum_{l=0}^{n-1} \me^{t_n}\big[a_l\big]+ \me^{t_n}\big[a_n\big]+\sum_{l=n+1}^{k-1} \me^{t_n}\big[a_l\big]\Big\|^2\Big]\\
&\qq\leq \cC \bigg\{n \sum_{l=0}^{n-1} \me\Big[\big\| \me^{t_n}\big[a_l\big]\big\|^2\Big]
+\me\big[\big\| \me^{t_n}\big[a_n\big]\big\|^2\big] \bigg\}\\
&\qq\leq \cC\t \|\si(\cd) \|_{C([0,T];\dbL^2)}^2 \,.
\eal
\end{equation*}

Note that the constant $\cC$ in the above estimates  is independent of $k$ and $n$. Finally, by applying these estimates
in \rf{w1117e3},
we deduce that
\bel{w1005e1}
\setlength\abovedisplayskip{3pt}
\setlength\belowdisplayskip{3pt}
\bal
\me\Big[\Big\| \t\b \sum_{k=n+1}^N I_n^k\Big\|^2\Big]
&\leq \cC\t^2 (N-n)\sum_{k=n+1}^N \sum_{i=1}^3\me\big[\big\|I_{ni}^k\big\|^2\big]\\
&\leq \cC \max_{0\leq n\leq N-1}\max_{n+1\leq k\leq N}\sum_{i=1}^3\me\big[\big\|I_{ni}^k\big\|^2\big]\\
&\leq \cC\t \big[\|x\|^2+\|\si(\cd) \|_{C([0,T];\dbL^2)}^2\big]  \,.
\eal
\ee
For term $I_n^0$, in the same vein, we also can derive
\begin{equation*}
\setlength\abovedisplayskip{3pt}
\setlength\belowdisplayskip{3pt}
\bal
\me\big[\|I_n^0\|^2\big]\leq \cC\t   \big[\|x\|^2+\|\si(\cd) \|_{C([0,T];\dbL^2)}^2\big]  \,,
\eal
\end{equation*}
which, together with \rf{w1005e1}, yields the assertion.
\end{proof}

Below, our aim is to
estimate  the difference between $Y_h\big(\cd;\cS_{h\t}(\Pi_\t U^*_h)\big)$ and $Y_\cd\big(\cS_{h\t}(\Pi_\t U^*_h)\big)$.
To accomplish it, we need to estimate  $Z_h\big(\cd;\cS_{h\t}(\Pi_\t U^*_h)\big)$.

\bl{w1024l1}
Let
$\big(Y_h\big(\cd;\cS_{h\t}(\Pi_\t U^*_h)\big),Z_h\big(\cd;\cS_{h\t}(\Pi_\t U^*_h)\big)\big)$ solve
\eqref{bshe1a} with $X^*_h(\cd)=\cS_{h\t}(\cd;\Pi_\t U^*_h)$. There exists a constant $\cC$ independent of $h\,,\t$ such that  
\bel{w1024e9}
\bal
&\me\Big[\|Z_h\big(t; \cS_{h\t}(\Pi_\t U^*_h)\big)-Z_h\big(\nu(t); \cS_{h\t}(\Pi_\t U^*_h)\big)\|^2 \Big]\\
&\qq\leq   \cC|t-\nu(t)|  \big[\|x\|_{\dbH_0^1}^2+\|\si(\cd)\|_{C([0,T];\dbH_0^1)}^2+L_{\si,1/2}^2\big] \qq\forall\, t\in [0,T] \, ,
\eal
\ee
where $ \nu(\cd)$ is defined in \rf{w827e1}.
\el

\begin{proof} The proof consists of three steps.
\ss

{\bf (1)} 
We claim that $\cS_{h\t}(t_n;\Pi_\t U^*_h)\in \dbD^{2,2}(\dbL^2)$, for any $n=0,1,\cds,N$.

Indeed, by  \rf{w1024e2} we know that 
\begin{equation*}
\setlength\abovedisplayskip{3pt}
\setlength\belowdisplayskip{3pt}
\bal
\cS_{h\t}(t_n;\Pi_\t U^*_h)
=&A_0^n\prod_{j=1}^n\lt(1\!+\!\b\D_j W\rt)X_{h\t}({0})
\!+\!\t \sum_{j=0}^{n-1}A_0^{n-j}\prod_{k=j+2}^n\lt(1\!+\!\b\D_kW\rt) U_{h}^*(t_j)\\
&+\sum_{j=0}^{n-1} A_0^{n-j}\prod_{k=j+2}^{n}\lt(1+\b\D_kW \rt)\Pi_h\si(t_j)\D_{j+1}W\,.
\eal
\end{equation*}
In the following, we only prove that the second term on the right-hand side of the above representation is in $\dbD^{2,2}(\dbL^2)$.
The other two terms can be proved in a similar vein.

By the optimality condition \rf{pontr1a} and Lemma \ref{w229l3}, 
we know that $U^*_h(t_j)\in \dbD^{2,2}(\dbL^2)$, for any $j=0,1,\cds,N-1$. 
For any $\th,\mu \in [0,T]$,
without loss of generality, suppose that $\th\in [t_l,t_{l+1}),\, \mu\in [t_m,t_{m+1})$. Then, by the chain rule and the fact that 
$D_\th (1+\b\D_kW)=\b\d_{lk},\,D_\mu (1+\b\D_kW)=\b\d_{mk}$, where $\d_{lk},\,\d_{mk}$ are Kronecker delta functions, we have
\begin{equation*}
\setlength\abovedisplayskip{3pt}
\setlength\belowdisplayskip{3pt}
\bal
&D_\th \Big(\t \sum_{j=0}^{n-1}A_0^{n-j}\prod_{k=j+2}^n\lt(1+\b\D_kW\rt) U_{h}^*(t_j)\Big)\\
&=\t \sum_{j=0}^{n-1}\b \chi_{\{j+2\leq l\leq n\}} A_0^{n-j}
\prod_{\scriptstyle k=j+2 \atop \scriptstyle k\neq l}^n
 (1+\b\D_kW) U_{h}^*(t_j)
  +\t \sum_{j=0}^{n-1}A_0^{n-j}\prod_{k=j+2}^n(1+\b\D_kW) D_\th U_{h}^*(t_j)\,,
\eal
\end{equation*}
and
\begin{equation*}
\setlength\abovedisplayskip{3pt}
\setlength\belowdisplayskip{3pt}
\bal
&D_\mu D_\th \Big(\t \sum_{j=0}^{n-1}A_0^{n-j}\prod_{k=j+2}^n\lt(1+\b\D_kW\rt) U_{h}^*(t_j)\Big)\\
&\qq=\t \sum_{j=0}^{n-1}\b^2 \chi_{\{j+2\leq l\leq n\}}\chi_{\{j+2\leq m\leq n\}}(1-\d_{ml}) A_0^{n-j} \prod_{\scriptstyle k=j+2 \atop \scriptstyle k\neq l,k\neq m}^n \lt(1+\b\D_kW\rt) U_{h}^*(t_j)\\
&\qq\q +\t \sum_{j=0}^{n-1}\b \chi_{\{j+2\leq l\leq n\}}A_0^{n-j}\prod_{\scriptstyle k=j+2 \atop \scriptstyle k\neq l}^n \lt(1+\b\D_kW\rt) D_\mu U_{h}^*(t_j)\\
&\qq\q  +\t \sum_{j=0}^{n-1}\chi_{\{j+2\leq m\leq n\}}A_0^{n-j}\prod_{\scriptstyle k=j+2 \atop \scriptstyle k\neq m}^n\lt(1+\b\D_kW\rt) D_\th U_{h}^*(t_j)\\
&\qq\q +\t \sum_{j=0}^{n-1}A_0^{n-j}\prod_{k=j+2}^n\lt(1+\b\D_kW\rt) D_\mu D_\th U_{h}^*(t_j) \,,
\eal
\end{equation*}
and subsequently, by \rf{w1029e4a} and \rf{w1029e5a} in Lemma  \ref{w229l3}, 
\begin{equation*}
\setlength\abovedisplayskip{3pt}
\setlength\belowdisplayskip{3pt}
\bal
&\me\Big[\Big\|D_\mu D_\th \Big(\t \sum_{j=0}^{n-1}A_0^{n-j}\prod_{k=j+2}^n\lt(1+\D_kW\rt) U_{h}^*(t_j)\Big)\Big\|^2\Big]\\
&\qq\leq \cC\max_{0\leq j\leq N-1} \me\Big[\|U^*_h(t_j)\|^2+\|D_\th U^*_h(t_j)\|^2
  +\|D_\mu D_\th U^*_h(t_j)\|^2\Big]\\
&\qq\leq \cC \big[\|x\|^2+\|\si(\cd)\|_{C([0,T];\dbL^2)}^2\big]\,,
\eal
\end{equation*}
which leads to $\t \sum_{j=0}^{n-1}A_0^{n-j}\prod_{k=j+2}^n\lt(1+\D_kW\rt) U_{h}^*(t_j)\in \dbD^{2,2}(\dbL^2)$.
Hence, for any $t\in[0,T]$, $\cS_{h\t}\big(\nu(t);\Pi_\t U^*_h\big)\in \dbD^{2,2}(\dbL^2)$, and
\begin{equation}\label{w1024e8}
\setlength\abovedisplayskip{3pt}
\setlength\belowdisplayskip{3pt}
\bal
&\sup_{\th\in[0,T]}\sup_{\nu(t)\in[\th,T]} \me\big[\| D_\th \cS_{h\t}(\nu(t);\Pi_\t U^*_h)\|^2\big]  
+ \sup_{\mu,\th\in[0,T]}\sup_{\nu(t)\in[\mu\vee \th,T]} \me\big[\|D_\mu D_\th \cS_{h\t}(\nu(t);\Pi_\t U^*_h)\|^2\big]   \\
&\qq\qq\leq \cC \big[\|x\|^2+\|\si(\cd)\|_{C([0,T];\dbL^2)}^2\big]\,.
\eal
\end{equation}

Applying the same trick, the optimality condition \rf{pontr1a}, \rf{w1029e4a} and \rf{w1029e5b} with $\g=1$ in Lemma \ref{w229l3}
we can deduce that there exists a constant $\cC$ such that for any $\th\in [0,T]$,
\bel{w1024e6}
\setlength\abovedisplayskip{3pt}
\setlength\belowdisplayskip{3pt}
\bal
\me\big[ \|\nb D_\th \cS_{h\t}(T;\Pi_\t U^*_h)\|^2\big]
\leq \cC \big[\|x\|_{\dbH_0^1}^2+\|\si(\cd)\|_{C([0,T];\dbH_0^1)}^2\big]\,.
\eal
\ee

\ss
{\bf (2)} 
By the same procedure as  in the proof of \rf{w1029e5a}, \rf{w1029e5b} in Lemma \ref{w229l3}, 
thanks to \rf{w1024e8}, we can obtain
\bel{w1024e11}
\setlength\abovedisplayskip{3pt}
\setlength\belowdisplayskip{3pt}
\bal
&\sup_{\th\in[0,T]}\me\Big[\int_{\th}^T\big\| D_\th Z_h\big(t; \cS_{h\t}(\Pi_\t U^*_h)\big)\big\|^2\rd t\Big]\\
&\qq\leq \cC \sup_{\th\in[0,T]}\sup_{\nu(t)\in[\th,T]}\me\big[\big\|D_\th \cS_{h\t}\big(\nu(t);\Pi_\t U^*_h\big)\big\|^2\big]\\
&\qq \leq \cC \big[\|x\|^2+\|\si(\cd)\|_{C([0,T];\dbL^2)}^2\big]\,,
\eal
\ee
\bel{w1024e12}
\setlength\abovedisplayskip{3pt}
\setlength\belowdisplayskip{3pt}
\bal
&\sup_{\mu,\th\in[0,T]}\sup_{t\in[\mu\vee \th,T]}\me\big[\big\|D_\mu D_\th Y_h\big(t; \cS_{h\t}(\Pi_\t U^*_h)\big)\big\|^2\big]\\
&\qq\leq \cC \sup_{\mu,\th\in[0,T]}\sup_{\nu(t)\in[\mu\vee \th,T]}\me\big[\big\|D_\mu D_\th \cS_{h\t}\big(\nu(t);\Pi_\t U^*_h\big)\big\|^2\big]\\
&\qq \leq \cC \big[\|x\|^2+\|\si(\cd)\|_{C([0,T];\dbL^2)}^2\big]\,,
\eal
\ee
and
\bel{w1024e13}
\setlength\abovedisplayskip{3pt}
\setlength\belowdisplayskip{3pt}
\bal
&\sup_{\th\in[0,T]}\me\Big[\int_{\th}^T\| \D_h D_\th Y_h(t; \cS_{h\t}(\Pi_\t U^*_h))\|^2\rd t\Big]\\
&\qq\leq C \sup_{\th\in[0,T]}\sup_{\nu(t)\in[\th,T]}\me\big[\big\|\nb D_\th \cS_{h\t}\big(\nu(t);\Pi_\t U^*_h\big)\big\|^2\big]\\
&\qq\leq C \big[\|x\|_{\dbH_0^1}^2+\|\si(\cd)\|_{C([0,T];\dbH_0^1)}^2\big]\,.
\eal
\ee

{\bf (3)} 
Applying the fact that $Z_h\big(\cd;\cS_{h\t}(\Pi_\t U^*_h)\big)=D_\cd Y_h\big(\cd;\cS_{h\t}(\Pi_\t U^*_h)\big) \, \ae$, for any $t\in [t_n,t_{n+1})$, $n=0,1,\cds,N-1$, we arrive at
\bel{w1024e3}
\setlength\abovedisplayskip{3pt}
\setlength\belowdisplayskip{3pt}
\bal
&\me \big[ \big \|Z_h\big(t;\cS_{h\t}(\Pi_\t U^*_h)\big)-Z_h\big(\nu(t);\cS_{h\t}(\Pi_\t U^*_h)\big)\big\|^2\big] \\
&\q \leq 2\me\Big[\big\|(D_t-D_{t_n})Y_h\big(t;\cS_{h\t}(\Pi_\t U^*_h)\big)\big\|^2\\
&\qq\q +\Big\|D_{t_n}\Big(Y_h\big(t;\cS_{h\t}(\Pi_\t U^*_h)\big)-Y_h\big(t_n;\cS_{h\t}(\Pi_\t U^*_h)\big)\Big)\Big\|^2\Big]\\
&\q=: 2(I_1+I_2)\,.\\
\eal
\ee
Similar to \rf{w1029e5c} in Lemma \ref{w229l3}, 
\begin{equation*}
\setlength\abovedisplayskip{3pt}
\setlength\belowdisplayskip{3pt}
I_1\leq \cC \max_{n\leq i\leq N} \me\big[\|(D_t-D_{t_n})\cS_{h\t}(t_i; \Pi_\t U^*_h)\|^2\big]\,,
\end{equation*}
which, together with
\begin{equation*}
\setlength\abovedisplayskip{3pt}
\setlength\belowdisplayskip{3pt}
\bal
(D_t-D_{t_n})\cS_{h\t}(t_i; \Pi_\t U^*_h)
=\t \sum_{j=0}^{i-1}A_0^{i-j}\prod_{k=j+2}^i \lt(1+\b\D_kW\rt) (D_t-D_{t_n})U_{h}^*(t_j) \q \ae\,,
\eal
\end{equation*}
 the optimality condition \rf{pontr1a}, and  \rf{w1029e5c} in Lemma \ref{w229l3}, yields
\bel{w1024e4}
\setlength\abovedisplayskip{3pt}
\setlength\belowdisplayskip{3pt}
\bal
I_1&\leq  \cC\max_{n\leq j\leq N} \me\big[\|(D_t-D_{t_n})U^*_h(t_j)\|^2\big]\\
&\leq C |t-t_n| \big[\|x\|_{\dbH_0^1}^2+\|\si(\cd)\|_{C([0,T];\dbH_0^1)}^2+L_{\si,1/2}^2\big] \,.
\eal
\ee
By virtue of \eqref{w1024e8}, \eqref{w1024e11}--\eqref{w1024e13}, we can get
\begin{eqnarray}
&I_2&\leq \cC\bigg\{ \me\Big[\int_{t_n}^t\|D_{t_n}D_\th Y_h\big(\th;\cS_{h\t}(\Pi_\t U^*_h)\big)\|^2\rd \th\Big] \notag \\
&&\q+(t-{t_n})\me\Big[ \int_{t_n}^T \|\D_h D_{t_n}Y_h\big(\th;\cS_{h\t}(\Pi_\t U^*_h)\big)\|^2+\| D_{t_n}Z_h\big(\th;\cS_{h\t}(\Pi_\t U^*_h)\big)\|^2 \notag \\
&&\qq\qq\qq+\|D_{t_n}\cS_{h\t}\big(\nu(\th);\Pi_\t U^*_h\big)\|^2 \rd \th \Big]\bigg\} \notag \\
&& \leq  C|t-{t_n}|  \big[\|x\|_{\dbH_0^1}^2+\|\si(\cd)\|_{C([0,T];\dbH_0^1)}^2\big]\,.\label{w1024e5}
\end{eqnarray}
Now the desired result \rf{w1024e9} can be derived by \rf{w1024e3}--\rf{w1024e5}.
\end{proof}

\bl{w1019l2}
Suppose that 
$\big(Y_h\big(\cd;\cS_{h\t}(\Pi_\t U^*_h)\big),Z_h\big(\cd;\cS_{h\t}(\Pi_\t U^*_h)\big)\big)$ solves BSDE
\eqref{bshe1a} with $X^*_h(\cd)=\cS_{h\t}(\cd;\Pi_\t U^*_h)$ and $\big(Y_\cd\big(\cS_{h\t}(\cd;\Pi_\t U^*_h)\big),Z_0\big(\cd;\cS_{h\t}(\cd;\Pi_\t U^*_h)\big)\big)$ solves \eqref{w229e7} with
$X^*_{h\t}(\cd)=\cS_{h\t}(\cd;\Pi_\t U^*_h)$.
Then, there exist a constant $\cC$ independent of $h\,,\t$ such that
\begin{eqnarray*}
&&\sum_{n=0}^{N-1}\me\Big[\int_{t_n}^{t_{n+1}}\|Y_h\big(t;\cS_{h\t}(\Pi_\t U^*_h)\big)-Y_n\big(\cS_{h\t}(\Pi_\t U^*_h)\big)\|^2\rd t\Big]\\
&&\qq\leq  \cC\t \big[ \|   x\|_{\dbH_0^1\cap \dbH^2}^2
+ \|\si(\cd)\|^2_{C([0,T];\dbH_0^1)\cap L^2(0,T;\dbH_0^1\cap\dbH^2)}+L_{\si,1/2}^2\big]\,.
\end{eqnarray*}
\el

\begin{proof}
For simplicity, in the proof we write solution pairs $\big( Y_h\big(\cd;\cS_{h\t}(\Pi_\t U^*_h)\big), Z_h\big(\cd;\cS_{h\t}(\Pi_\t U^*_h)\big)\big)$
resp.~$\big(Y_\cd\big(\cS_{h\t}(\Pi_\t U^*_h)\big), Z_0\big(\cd;\cS_{h\t}(\Pi_\t U^*_h)\big)\big)$
as $\big(\wt Y_h(\cd), \wt Z_h(\cd)\big)$ resp.~$\big(\wt Y_n, \wt Z_0(\cd)\big)$.

Firstly we apply Theorem \ref{w1019t1} with $Y_{T,h}=\wt Y_{Y,h}=-\a \cS_{h\t}(T; \Pi_\t U^*_h)$ and 
$f_h(\cd)=\wt f_h(\cd)=\cS_{h\t}(\cd; \Pi_\t U^*_h)$, and arrive at
\begin{equation*}
\setlength\abovedisplayskip{3pt}
\setlength\belowdisplayskip{3pt}
\bal
&\sum_{n=0}^{N-1}\me\Big[\int_{t_n}^{t_{n+1}}\|\wt Y_h(t)-\wt Y_n\|^2\rd t\Big]\\
&\leq \cC\t \me\Big[\|\cS_{h\t}(T; \Pi_\t U^*_h)\|_{\dot \dbH^2_h}^2+\int_0^T\|\cS_{h\t}(t; \Pi_\t U^*_h)\|_{\dot \dbH^1_h}^2\rd t\Big]
+\cC\sum_{n=0}^{N-1}\me\Big[\int_{t_n}^{t_{n+1}}\|\wt Z_h(t)-\overline{\wt { Z}}_h(t)\|_{\dot \dbH^{-1}_h}^2 \rd t\Big]\\
&=:\cC \t \sum_{i=1}^2 I_i+\cC I_3\,.
\eal
\end{equation*}
For $I_1$, by Theorem \ref{MP}, Lemmata \ref{w1017l1} and \ref{w1019l3}, we find that
\begin{equation*}
\setlength\abovedisplayskip{5pt}
\setlength\belowdisplayskip{5pt}
\bal
I_1
  &\leq \cC\big[ \|   x\|_{\dbH_0^1\cap \dbH^2}^2
+ \|\si(\cd)\|^2_{L^2(0,T;\dbH_0^1\cap\dbH^2)}\big]\,.
\eal
\end{equation*}
Term $I_2$ can be estimated in the same vein and
\begin{equation*}
\setlength\abovedisplayskip{5pt}
\setlength\belowdisplayskip{5pt}
I_2\leq \cC\big[ \|   x\|_{\dbH_0^1}^2
+ \|\si(\cd)\|^2_{L^2(0,T;\dbH_0^1)}\big]\,.
\end{equation*}
For $I_3$, by 
Lemma \ref{w1024l1} we have
\begin{equation*}
\setlength\abovedisplayskip{5pt}
\setlength\belowdisplayskip{5pt}
\bal
I_3&\leq \sum_{n=0}^{N-1}\me\Big[\int_{t_n}^{t_{n+1}}\|\wt Z_h(t)-\wt  Z_h(t_n)\|_{\dot \dbH^{-1}_h}^2 \rd t\Big]\\
&\leq \cC \t \big[\|x\|_{\dbH_0^1}^2+\|\si(\cd)\|_{C([0,T];\dbH_0^1)}^2+L_{\si,1/2}^2\big]\,.
\eal
\end{equation*}

A combination of  estimates for $I_1$ through $I_3$ settles the desired assertion. 
\end{proof}

We are now ready to verify rates of convergence for the optimal pair of Problem {\bf (SLQ)$_{h\t}$}; it is as in 
Section \ref{dis-slq-h} that 
 $\cS_{h\t}: \dbU_{\dbF} \rightarrow \dbX_{\dbF}$\index{${\mathcal S}_{h\tau}$} is used, which is the solution operator to the forward equation \eqref{w1212e3a}.

\begin{proof} [\bf {Proof of Theorem \ref{rate2}}]

We divide the proof into two steps.
\ss

 {\bf (1)}
We  follow the argument
in the proof of Theorem \ref{rate1}.
Firstly, we have
\bel{w1206e4}
\setlength\abovedisplayskip{3pt}
\setlength\belowdisplayskip{3pt}
\bal
&\| U^*_{h\t}(\cd) - \Pi_\t U^*_h(\cd)\|_{L^2_\dbF(0,T; \dbL^2)}^2\\
&\q\leq \Big[ \Big(  D \cJ_h\big(U^*_h(\cd)\big)-  D \cJ_{h}\big(\Pi_\t U^*_h(\cd)\big),U^*_{h\t}(\cd) - \Pi_\t U^*_h(\cd)\Big)_{L^2_\dbF(0,T; \dbL^2)} \\ 
& \qq+ \Big(  D\! \cJ_{h}\big(\Pi_\t U^*_h(\cd)\big)\!-\! D\! \cJ_{h,\t}\big(\Pi_\t U^*_h(\cd)\big) ,U^*_{h\t}(\cd)\! - \!\Pi_\t U^*_h(\cd)
\Big)_{L^2_\dbF(0,T; \dbL^2)}\Big]\, .
\eal
\ee
Therefore,
\begin{eqnarray}
&\| U^*_{h\t} - {\Pi_\t U^*_h}\|_{L^2_\dbF(0,T; \dbL^2)}^2 
 &\leq  3 \Big[ \big \|  D\cJ_h\big(U^*_h(\cd)\big)-  D\cJ_{h}\big(\Pi_\t U^*_h(\cd)\big) \big \|_{L^2_\dbF(0,T;\dbL^2)}^2\nonumber \\
&&\q+ \big \| \cT_h^1(\cd; \cS_h(\Pi_\t U^*_h))-Y_\cd(\cS_{h\t}(\Pi_\t U^*_h))
\big \|_{L^2_\dbF(0,T;\dbL^2)}^2 \nonumber\\ 
&&\q+\big  \| Y_\cd(\cS_{h\t}(\Pi_\t U^*_h))-Y_{h\t}(\cd; \cS_{h\t}(\Pi_\t U^*_h))
\big \|_{L^2_\dbF(0,T;\dbL^2)}^2\Big]  \nonumber \\ 
 &&= 3\sum_{i=1}^3 I_i \, . \label{w1212e1}
\end{eqnarray}
We use \eqref{derivative-semidisc} and the optimality condition  \eqref{pontr1a}
 to bound $I_1$ as follows,
\bel{w1206e5}
\setlength\abovedisplayskip{3pt}
\setlength\belowdisplayskip{3pt}
\bal
I_1 
&\leq 2 \big[\| U^*_h(\cd) - \Pi_\t U^*_h(\cd) \|_{L^2_\dbF(0,T; \dbL^2)}^2
  +\big \|\cT_h^1 \big(\cd; {\mathcal S}_h(\Pi_\t U^*_h) \big) -\cT_h^1\big(\cd; {\mathcal S}_h(U^*_h) \big) \big \|_{L^2_\dbF(0,T; \dbL^2)}^2\big]\, .
\eal
\ee
 By Lemma \ref{w1008l1} on  stability properties of solutions to  BSDE \eqref{bshe1a} 
 with $X^*_h(\cd)=\cS_h(\cd; \Pi_\t U^*_h)$ and  Lemma \ref{w1002l4} for
 SDE \eqref{spde1-h} with $x=0$, $f_h(\cd)=U^*_h(\cd)-\Pi_\t U^*_h(\cd)$, $g_h(\cd)=0$, we obtain
\bel{w1206e6}
\setlength\abovedisplayskip{3pt}
\setlength\belowdisplayskip{3pt}
\bal
 & \lt\| \cT_h^1 \big(\cd; {\mathcal S}_h(\Pi_\t U^*_h) \big) -\cT_h^1\big(\cd; {\mathcal S}_h(U^*_h) \big) \rt\|_{L^2_\dbF(0,T; \dbL^2)}^2\\ 
 &\qq \leq  \cC  \Big[\| \cS_h(T; U^*_h)-\cS_h(T; \Pi_\t U^*_h) \|^2_{L^2_{\mf_T}(\O;\dot\dbH^{-1}_h)}
     + \| \cS_h(\cd; U^*_h)-\cS_h(\cd; \Pi_\t U^*_h) \|^2_{L^2_\dbF(0,T;\dot\dbH^{-2}_h)} \Big] \\
 &\qq  \leq   \cC \| U^*_h(\cd)-\Pi_\t U^*_h(\cd) \|^2_{L^2_\dbF(0,T;\dbL^2)} \, .
\eal
\ee 
By the optimality condition \eqref{pontr1a}, estimate \rf{w1008e2} of Lemma \ref{w1008l1} with 
$Y_{T,h}=-\a \cS_h(T; U^*_h)\,,f_h(\cd)=\cS_h(\cd; U^*_h)$, then Lemma \ref{w1002l4} with $f_h(\cd)=U_h^*(\cd)\,,g_h(\cd)=\Pi_h\si(\cd)$, and \rf{w1029e2b} in Lemma \ref{w229l2},
we have
\bel{w1206e7}
\setlength\abovedisplayskip{3pt}
\setlength\belowdisplayskip{3pt}
\bal
 &\| U^*_h(\cd)-\Pi_\t U^*_h(\cd) \|^2_{L^2_\dbF(0,T; \dbL^2)} 
 = \| Y_h(\cd)-\Pi_\t Y_h(\cd) \|^2_{L^2_\dbF((0,T; \dbL^2)} \\
&\qq  \leq \cC\t  \me\Big[\|\cS_h(T; U^*_h)\|_{\dbH_0^1}^2+\int_0^T \|\cS_h(t; U^*_h)\|^2\rd t\Big] \\
&\qq\leq \cC \t \big[\|x\|_{\dbH_0^1}^2+\|\si(\cd)\|_{L^2(0,T;\dbH_0^1)}^2 \big]\,.
\eal
\ee

Next, we turn to $I_2$. The triangle inequality leads to
\begin{equation*}
\setlength\abovedisplayskip{3pt}
\setlength\belowdisplayskip{3pt}
\bal
 I_2
 &\leq2  \Big[\big\| \cT_h^1 \big(\cd; \cS_{h}(\Pi_\t U^*_h) \big) -  \cT_h^1 \big(\cd; \cS_{h\t}(\Pi_\t U^*_h) \big) \big\|^2_{L^2_\dbF(0,T; \dbL^2)} \\
  &\q+\big\| \cT_h^1 \big(\cd; \cS_{h\t}(\Pi_\t U^*_h) \big) -Y_\cd \big(\cS_{h\t}(\Pi_\t U^*_h)\big)  \big\|^2_{L^2_\dbF(0,T; \dbL^2)}\Big]\\
 &=: 2 \big(I_{21}+I_{22}\big) \,. 
\eal
\end{equation*}
In order to bound $I_{21}$, we use Lemma \ref{w1008l1}, then \rf{w1028e1b} in Theorem \ref{w1002t3} with $f_h(\cd)=\wt f_h(\cd)=\Pi_\t U^*_h(\cd)$, 
$g_h(\cd)=\wt g_h(\cd)=\Pi_h\si(\cd)$, and finally \rf{w1029e4a} in Lemma \ref{w229l3}  to conclude that 
\begin{equation*}
\setlength\abovedisplayskip{3pt}
\setlength\belowdisplayskip{3pt}
\bal
I_{21}&  \leq \cC \, \me\Big[\| \cS_{h}(T; \Pi_\t U^*_h) -  \cS_{h\t}(T; \Pi_\t U^*_h) \|^2
+\int_0^T \|\cS_{h}(t; \Pi_\t U^*_h) -  \cS_{h\t}(t; \Pi_\t U^*_h) \|_{\dot\dbH^{-1}_h}^2\rd t\Big]\\
&\leq \cC \t  \big[\|x\|_{\dbH_0^1\cap\dbH^2}^2+\|\si(\cd) \|_{L^2(0,T;\dbH_0^1\cap\dbH^2)}^2\big]\,.
\eal
\end{equation*}
For $I_{22}$,  Lemma \ref{w1019l2} implies that
\begin{equation*}
\setlength\abovedisplayskip{3pt}
\setlength\belowdisplayskip{3pt}
\bal
I_{22}\leq   \cC\t \big[ \|   x\|_{\dbH_0^1\cap \dbH^2}^2
+ \|\si(\cd)\|^2_{C([0,T];\dbH_0^1)\cap L^2(0,T;\dbH_0^1\cap\dbH^2)}+L_{\si,1/2}^2\big]\,.
\eal
\end{equation*}
%
%

Finally,
Lemma \ref{w301l1} leads to 
\begin{equation*}
\setlength\abovedisplayskip{5pt}
\setlength\belowdisplayskip{5pt}
\bal
I_3 \leq \cC\t \big[\|x\|^2+\|\si(\cd) \|_{C([0,T];\dbL^2)}^2\big] \,.
\eal
\end{equation*}

Now we insert above estimates into \eqref{w1212e1} to obtain the $U^*_h(\cd)-U^*_{h\t}(\cd)$ part of the assertion \rf{w1030e1a}.

\ss

{\bf (2)}
By setting $f_h(\cd)=U^*_h(\cd)$, $\wt f_h(\cd)=U^*_{h\t}(\cd)$ and $g_h(\cd)=\wt g_h(\cd)=\Pi_h \si(\cd)$, and 
applying the
assertion \rf{w1030e1a} as well as
Theorem \ref{w1002t3},  \rf{w1029e2b} in Lemma \ref{w229l2}, we deduce that
\begin{equation*}
\setlength\abovedisplayskip{3pt}
\setlength\belowdisplayskip{3pt}
\bal
&\sum_{n=0}^{N-1}\me\Big[\int_{t_n}^{t_{n+1}}\|X^*_h(t)-X^*_{h\t}(t_n)\|^2\rd t\Big]\\
&\qq\leq \cC\t \bigg\{ \|x\|_{\dbH_0^1}^2+\me\Big[\int_0^T\|U^*_h(t)\|^2+\|\si(t)\|_{\dbH_0^1}^2\rd t\Big]\bigg\}\\
&\qq\q+\cC\sum_{n=0}^{N-1} \me\Big[\int_{t_n}^{t_{n+1}} \|U^*_h(t)-U^*_{h\t}(t_n)\|^2+\|\si(t)-\si(t_n)\|^2 \rd t \Big]\\
&\qq\leq  \cC\t \big[ \|   x\|_{\dbH_0^1\cap \dbH^2}^2
+ \|\si(\cd)\|^2_{C([0,T];\dbH_0^1)\cap L^2(0,T;\dbH_0^1\cap\dbH^2)}+L_{\si,1/2}^2\big]\,,
\eal
\end{equation*}
which is the $X^*_h(\cd)-X^*_{h\t}(\cd)$ part of the  assertion \rf{w1030e1a},
and
\begin{equation*}
\setlength\abovedisplayskip{3pt}
\setlength\belowdisplayskip{3pt}
\bal
&\max_{0\leq n\leq N-1}\sup_{t\in[t_n,t_{n+1})}\me\big[\|X_h(t)-X_n\|^2\big]
+\sum_{n=0}^{N-1}\me\Big[\int_{t_n}^{t_{n+1}}\|X_h(t)-X_n\|_{\dbH_0^1}^2\rd t\Big]\\
&\qq\leq \cC\t \bigg\{\| x\|_{\dbH_0^1\cap\dbH^2}^2+\me\Big[\int_0^T  \|U^*_h(t)\|_{\dbH_0^1}^2+\|\Pi_h \si(t)\|_{\dbH_0^1\cap\dbH^2}^2 \rd t \Big]\bigg\}\\
&\qq\q+\cC\sum_{n=0}^{N-1} \me\Big[\int_{t_n}^{t_{n+1}} \|U^*_h(t)-U^*_{h\t}(t_n)\|^2+\|\si(t)-\si(t_n)\|^2 \rd t \Big]\\
&\qq\leq \cC\t \big[ \|   x\|_{\dbH_0^1\cap \dbH^2}^2
+ \|\si(\cd)\|^2_{C([0,T];\dbH_0^1)\cap L^2(0,T;\dbH_0^1\cap\dbH^2)}+L_{\si,1/2}^2\big]\,.
\eal
\end{equation*}
That settles the  assertion \rf{w1030e1b} and completes the proof.
\end{proof}

\subsubsection{The gradient descent method for Problem {\bf (SLQ)}$_{h\tau}$}\label{numopt}

By Theorem \ref{MP}, solving Problem {\bf (SLQ)$_{h\t}$} is 
equivalent to solving the system of the coupled forward-backward difference equations 
\eqref{w1212e3} and \eqref{w1003e12}. We may exploit the variational
character of Problem {\bf (SLQ)$_{h\t}$} to construct a gradient descent method
where approximate iterates of the optimal control $U^*_{h\t}(\cd)$ in the 
Hilbert space $\dbU_{\dbF}$ are obtained; see also \cite{Nesterov04,Kabanikhin12} for more details.
%
%

\ss

\begin{algorithm}\label{alg1}
Let $U_{h\t}^{(0)}(\cd)\in \dbU_{\dbF}$, and fix $\kappa > 0$. For any $\ell \in {\mathbb N}_0$, update $U_{h\t}^{(\ell)}(\cd) \in \dbU_{\dbF}$ as follows:
\begin{enumerate}[{\rm 1.}]
\item Compute $X_{h\t}^{(\ell)}(\cd)\in \dbX_{\dbF}$ by 
\bel{w0115e4}
\setlength\abovedisplayskip{3pt}
\setlength\belowdisplayskip{3pt}
\lt\{\!\!\!
\begin{array}{ll} 
 \ds [\mathds{1} - \tau \Delta_h]X^{(\ell)}_{h\t}(t_{n+1})= X^{(\ell)}_{h\t}(t_n)+ \tau U^{(\ell)}_{h\t}(t_n) \\
\ns\ds \qq\qq+ \big[\b X^{(\ell)}_{h\t}(t_n)+\Pi_h\si(t_n)\big] \D_{n+1}W \q n=0,1,\cds,N-1\, ,\\
\ns\ds X_{h\t}^{(\ell)}(0)=\Pi_h x\, .
\end{array}
\rt.
\ee
\item Use $X_{h\t}^{(\ell)}(\cd)\in \dbX_{\dbF}$ to compute $Y_{h\t}^{(\ell)}(\cd)\in \dbX_{\dbF}$ via
\begin{equation*}
\setlength\abovedisplayskip{3pt}
\setlength\belowdisplayskip{3pt}
\bal
Y_{h\t}^{(\ell)} (t_n)=&-\t \me^{t_n}\Big[ \sum_{j=n+1}^{N-1} A_0^{j-n} \prod_{k={n+2}}^j(1+\b \D_kW) X^{(\ell)}_{h\t}(t_j) \Big]  \\
&-\a \me^{t_n} \Big[ A_0^{N-n} \prod_{k=n+2}^N(1+\b \D_kW) X^{(\ell)}_{h\t}(T) \Big] \q n=0,1,\cds,N-1\,.\\
\eal
\end{equation*}

\item Update $U_{h\t}^{(\ell+1)} (\cd)\in \dbU_{\dbF}$ via
\begin{equation*}
\setlength\abovedisplayskip{3pt}
\setlength\belowdisplayskip{3pt}
U^{(\ell+1)}_{h\t}(\cd)=U^{(\ell)}_{h\t}(\cd)-\frac 1 {\kappa} \big[U^{(\ell)}_{h\t}(\cd) -Y_{h\t}^{(\ell)} (\cd)\big] \, .
\end{equation*}
\end{enumerate}
\end{algorithm}

If compared with \eqref{w1212e3}--\eqref{w1003e12}, steps 1 and 2 are now decoupled: the first step requires to solve a  spatio-temporal discretized  equation of  SPDE \rf{fbspdea}, while the second requires to solve a  spatio-temporal discretized equation of the BSPDE \rf{fbspdeb} which is not the numerical solution by the implicit Euler method (see Lemmata 
 \ref{w229l1} and \ref{w301l1} for the difference).
A similar method to solve Problem {\bf (SLQ)$_{h\t}$} 
has been proposed in \cite{Dunst-Prohl16, Prohl-Wang21}.

Below,
we  present a lower bound for $\kappa$ and discuss the speed of convergence of Algorithm \ref{alg1}.
 For this purpose, we first show Lipschitz continuity of 
$D\cJ_{h,\tau}(\cd)$.
By borrowing notations  defined in \rf{rep1}--\rf{w1003e14} in the proof of Theorem \ref{MP}, and applying \rf{w1024e2}
and \rf{w1008e4},
we know that
\begin{equation*}
\setlength\abovedisplayskip{5pt}
\setlength\belowdisplayskip{5pt}
\bal
 D\cJ_{h,\t}\big(U_{h\t}(\cd)\big)
&= U_{h\t}(\cd) +L^*\big[\big(\G X_{h\t}(0)\big)(\cd)+\big(LU_{h\t}(\cd)\big)(\cd)+f(\cd)\big](\cd) \\
 &\q+\a \h L^* [\h\G X_{h\t}(0)+\h LU_{h\t}+\h f\,](\cd)\,.
\eal
\end{equation*}
Thus by the linearity of $L\,,\h L\,,\G\,,\h\G$,  for any $U^1_{h\t}(\cd)\,,U^2_{h\t}(\cd)\in\dbU_\dbF$, it follows that
$\big\|D\h\cJ_{h,\t}\big(U^1_{h\t}(\cd)\big)-D\h\cJ_{h,\t}\big(U^2_{h\t}(\cd)\big)\big\|_{\dbU_{\dbF}}
\leq \|\mathds{1}_h +L^* L+\a\h L^*\h L\|_{\cL(\dbU_{\dbF})}  \|U^1_{h\t}(\cd)-U^2_{h\t}(\cd)\|_{\dbU_{\dbF}}$.
Furthermore, we can show a upper bound of $ \|\mathds{1}_h +L^* L+\a\h L^*\h L\|_{\cL(\dbU_{\dbF})}$.
Indeed,
on noting that $\|A_0\|_{\cL(\dbV_h|_{\dbL^2})}=\|(\mathds{1}_h-\t\D_h)^{-1}\|_{\cL(\dbV_h|_{\dbL^2})}\leq 1$,
we find that
\begin{equation*}
\setlength\abovedisplayskip{5pt}
\setlength\belowdisplayskip{5pt}
\bal
 \big \|\big(LU_{h\t}(\cd)\big)(\cd)\big\|_{\dbX_{\dbF}}^2
=\sum_{n=1}^N \tau \me \Big[\Big\| \t \sum_{j=0}^{n-1} A_0^{n-j}\prod_{k=j+2}^n\lt(1+\D_kW \rt) U_{h\t}(t_j) \Big\|^2  \Big]
\leq  T^2e^{\b^2T}\|U_{h\t}(\cd)\|_{\dbU_{\dbF}}^2\, .
\eal
\end{equation*}
In a similar vein, we can prove that 
$\|\h LU_{h\t}(\cd)\|_{L^2_{\mf_T}(\O;\dbL^2)}^2
\leq   Te^{\b^2T}\|U_{h\t}\|_{\dbU_{\dbF}}^2$.
Hence
\begin{equation*}
\setlength\abovedisplayskip{5pt}
\setlength\belowdisplayskip{5pt}
\|\mathds{1}_h +L^* L+\a\h L^*\h L\|_{\cL(\dbU_{\dbF})}\leq 1+\a Te^{\b^2T}+T^2e^{\b^2T}\, .
\end{equation*}
Since Algorithm \ref{alg1} is the gradient descent method for Problem {\bf (SLQ)$_{h\t}$}, we have the following estimates.

\bt{gradient-rate} 
Suppose that $\kappa \geq \|\mathds{1}_h +L^* L+\a\h L^*\h L\|_{\cL(\dbU_{\dbF})}$.
Let $\ds \{U^{(\ell)}_{h\t}(\cd)\}_{\ell \in {\mathbb N}_0} \subset \dbU_{\dbF}$ be  generated by Algorithm \ref{alg1}, and $U^*_{h\t}(\cd)$ solves Problem {\bf (SLQ)$_{h\t}$}. Then,  for $\ell=1,2,\cds$,
\begin{subequations}\label{w1030e2}
    \begin{empheq}[left={\empheqlbrace\,}]{align}
&  \big \| U^{(\ell)}_{h\t}(\cd)-U^*_{h\t}(\cd)\big \|_{\dbU_{\dbF}}^2
\leq \Big(1-\frac 1 {\kappa}\Big)^{\ell} \big\| U^{(0)}_{h\t}(\cd)-U^*_{h\t}(\cd) \big\|_{\dbU_{\dbF}}^2  \, ,   \label{w1030e2a}\\
&\cJ_{h,\t}\big(U^{(\ell)}_{h\t}(\cd)\big)-\cJ_{h,\t}\big(U^*_{h\t}(\cd)\big)
\leq \frac{2 \kappa \big\| U^{(0)}_{h\t}(\cd)-U^*_{h\t}(\cd) \big\|_{\dbU_{\dbF}}^2}{\ell}\,  , \label{w1030e2b}\\
& \max_{0 \leq n \leq N}  {\mathbb E}\big[\big \|X_{h\t}^*(t_n)-X^{(\ell)}_{h\t}(t_n)\big \|^2\big]
+\tau \sum_{n=1}^N {\mathbb E}   \big[\big \|X^*_{h\t}(t_n)- X^{(\ell)}_{h\t}(t_n) \big \|_{\dbH_0^1}^2\big] \nonumber \\
& \qq\qq
\leq \cC\Big(1-\frac 1 {\kappa}\Big)^{\ell} \big\| U^{(0)}_{h\t}(\cd)-U^*_{h\t}(\cd) \big\|_{\dbU_{\dbF}}^2 \, , \label{w1030e2c}
\end{empheq}
\end{subequations}
where $\cC$ is independent of $h,\,\t,\,\ell$.
\et

\begin{proof}
The estimates \rf{w1030e2a} and \rf{w1030e2b} are standard for the gradient descent method (see {\em e.g.}~\cite[Theorem 1.2.4]{Nesterov04}); more details can also be found in \cite[Section 5]{Prohl-Wang21}. In the following, we restrict to assertion \rf{w1030e2c}.

For all $n=0,1,\cds,N$, define $\bar e_{X}^{n,\ell}=X^*_{h\t}(t_n)-X^{(\ell)}_{h\t}(t_n)$. Subtracting \rf{w0115e4} 
from \eqref{w1003e7} (where $U_{h\t}(\cd)=U^*_{h\t}(\cd)$) leads to
\begin{equation*}
\setlength\abovedisplayskip{5pt}
\setlength\belowdisplayskip{5pt}
\bar e_{X}^{n+1,\ell} - \bar e_X^{n,\ell} 
=  \t  \D_h \bar e_X^{n+1,\ell} + \b \bar e_X^{n,\ell }\D_{n+1}W+
\t \big[U_{h\tau}^*(t_n)-U^{(\ell)}_{h\t}(t_n)\big] \, .
\end{equation*}
Following the procedure as that in the proof of Lemma \ref{w1002l2}, we can derive \rf{w1030e2c}.
%
\end{proof}

\subsection{Implementable modifications of the gradient descent method}\label{impl-1}
Algorithm \ref{alg1} is an iterative method to approximate the 
optimal control $U^*_{h\t}(\cd)$  of Problem {\bf (SLQ)$_{h\t}$} --- and its benefit lies in the decoupled computation of iterates which approximate the optimal state, control, and (adjoint) states. However, it is still not practical, since step 2 of
Algorithm \ref{alg1} involves {\em conditional expectations} $\me^{t_n}[\cdot]$.

In this section, we present two modifications of  Algorithm \ref{alg1} which are {implementable}: the first uses an {\em exact} 
representation of the conditional expectation in its step 2; as a consequence, the rates obtained in 
Section \ref{nume-fbspde1} remain valid here.
As will be detailed in 
Theorem \ref{imple-gdm} of Section \ref{impl-1a}, this strategy is possible for $\beta = 0$. The second strategy is 
proposed in Section \ref{stati-1}: it  uses an approximation of each $\me^{t_n}[\cdot]$ by statistical regression, 
and therefore introduces a further (statistical) error, whose estimation will be left open. In comparative simulations, it turns out that the second method 
requires far more computational resources, but is applicable for general $\beta \in {\mathbb R}$.

\subsubsection{Exact representation of conditional expectations}\label{impl-1a}

Let $\beta = 0$ in Problem {\bf (SLQ)$_{h\t}$}. The following theorem asserts that 
the updated control iterate  $U_{h\tau}^{(\ell+1)}(\cd)$ in step 3 of Algorithm \ref{alg1} may be found {\em without} the
computation of conditional expectations $\me^{t_n}[\cdot]$ --- as is stated in step 2, where the new adjoint $Y_{h \tau}^{(\ell)}(\cd)$ is determined. 
This idea has first been exploited in the context of stochastic boundary control \cite{C-M-P-W24}.
In fact,  $Y_{h \tau}^{(\ell)}(\cd)$ now satisfies the identity \rf{y1024e3} below as well, which does not involve 
conditional expectations.


\bt{imple-gdm}
Suppose that $\b=0$.
In the Algorithm \ref{alg1},
if $U^{(0)}_{h\t}(\cd)$ is of the form
\begin{equation*}
\setlength\abovedisplayskip{3pt}
\setlength\belowdisplayskip{3pt}
U^{(0)}_{h\t}(t_n)=\sum_{m=1}^n\prod_{i=1}^m(1+\D_iW)f^{(0)}_{n,m}+\sum_{m=1}^n\D_mW\wt f^{(0)}_{n,m}+g^{(0)}_n\q n=0,1,\cds,N-1\,,
\end{equation*}
where $f^{(0)}_{n,m},\wt f^{(0)}_{n,m}, g^{(0)}_n$ are deterministic for $m=1,2,\cds,n$.
Then for any $\ell\in\dbN_0$, it holds that
\begin{equation}\label{y1024e3}
\setlength\abovedisplayskip{3pt}
\setlength\belowdisplayskip{3pt}
Y^{(\ell)}_{h\t}(t_n)=\sum_{m=1}^n\prod_{i=1}^m(1+\D_iW)F^{(\ell)}_{n,m}+\sum_{m=1}^n\D_mW\wt F^{(\ell)}_{n,m}+G^{(\ell)}_n\,, 
\end{equation}
where $F^{(\ell)}_{n,m}\,,\wt F^{(\ell)}_{n,m}\,\,,G^{(\ell)}_n$ are deterministic and of the forms
\bel{y1024e4}
\setlength\abovedisplayskip{3pt}
\setlength\belowdisplayskip{3pt}
\lt\{
\bal
 F^{(\ell)}_{n,m}&=-\t^2 \sum_{j=n+1}^{N-1}   \sum_{l=m}^{j-1} A_0^{2j-l-n}f^{(\ell)}_{l,m} 
-\a\t \sum_{l=m}^{N-1}A_0^{2N-l-n} f^{(\ell)}_{l,m} \\
&\qq\qq\qq\qq\qq\qq\qq\qq\qq m=1,2,\cds,n-1\,,  \\
 F^{(\ell)}_{n,n}&=
-\t^2 \sum_{j=n+1}^{N-1}  \Big[ \sum_{l=n}^{j-1} A_0^{2j-l-n}f^{(\ell)}_{l,n}+ \sum_{l=n+1}^{j-1} A_0^{2j-l-n}\sum_{m=n+1}^l f^{(\ell)}_{l,m}\Big]  \\
&\q-\a\t \Big[\sum_{l=n}^{N-1}A_0^{2N-l-n} f^{(\ell)}_{l,n}+\sum_{l=n+1}^{N-1}A_0^{2N-l-n} \sum_{m=n+1}^l f^{(\ell)}_{l,m}\Big]\,,  \\
 \wt F^{(\ell)}_{n,m}&=-\t^2\sum_{j=n+1}^{N-1}\sum_{l=m}^{j-1} A_0^{2j-l-n} \wt f^{(\ell)}_{l,m}
- \a\t \sum_{l=m}^{N-1}A_0^{2N-l-n} \wt f^{(\ell)}_{l,m}  \\
&\q- \t \sum_{j=n+1}^{N-1}   A_0^{2j-m-n+1} \Pi_h \si(t_{m-1})
- \a A_0^{2N-m-n+1} \Pi_h \si(t_{m-1}) \\
&\qq\qq\qq\qq\qq\qq\qq\qq\qq  m=1,2,\cds,n \,,  \\
 G^{(\ell)}_n&= -\t \sum_{j=n+1}^{N-1} A_0^{2j-n}\Pi_hx -\a  A_0^{2N-n}\Pi_hx
- \t^2 \sum_{j=n+1}^{N-1}   \sum_{l=0}^{j-1}A_0^{2j-l-n} g^{(\ell)}_l   \\
&\q-\a\t \sum_{l=0}^{N-1}A_0^{2N-l-n} g^{(\ell)}_l   \,.
\eal
\rt.
\ee
Moreover, $f^{(\ell+1)}_{n,m},\wt f^{(\ell+1)}_{n,m}, g^{(\ell+1)}_n$ in $U^{(\ell+1)}_{h\t}(t_n)$ for $\ell\in \dbN_0$ can be derived recursively by
\begin{equation*}
\setlength\abovedisplayskip{3pt}
\setlength\belowdisplayskip{3pt}
\lt\{
\bal
f^{(\ell+1)}_{n,m}&=\Big(1-\frac 1 \kappa\Big)f^{(\ell)}_{n,m}+\frac 1 \kappa F^{(\ell)}_{n,m}\,,\\
\wt f^{(\ell+1)}_{n,m}&=\Big(1-\frac 1 \kappa\Big)\wt f^{(\ell)}_{n,m}+\frac 1 \kappa \wt F^{(\ell)}_{n,m}\qq m=1,2,\cds,n\,,\\
g^{(\ell+1)}_{n}&=\Big(1-\frac 1 \kappa\Big)g^{(\ell)}_{n}+\frac 1 \kappa G^{(\ell)}_{n}\,.
\eal
\rt.
\end{equation*}
\et

\begin{proof}
For simplicity, in the proof we rewrite $X^{(\ell)}_{h\t}(t_n), Y^{(\ell)}_{h\t}(t_n), U^{(\ell)}_{h\t}(t_n), \Pi_h\si(t_n)$ as $X^{(\ell)}_n$, $Y^{(\ell)}_n$, $U^{(\ell)}_n$, $\si_n$. 
We argue by induction to derive the assertion. Suppose that 
\bel{w1119e5}
\setlength\abovedisplayskip{3pt}
\setlength\belowdisplayskip{3pt}
U^{(\ell)}_n=\sum_{m=1}^n\prod_{i=1}^m(1+\D_iW)f^{(\ell)}_{n,m}+\sum_{m=1}^n\D_mW\wt f^{(\ell)}_{n,m}+g^{(\ell)}_n \q n=0,1,\cds,N-1\,,
\ee
with deterministic $f^{(\ell+1)}_{n,m},\wt f^{(\ell+1)}_{n,m}, g^{(\ell+1)}_n$.

Based on \rf{w0115e4}, we can deduce that
\bel{y1024e1}
\setlength\abovedisplayskip{3pt}
\setlength\belowdisplayskip{3pt}
X_{n+1}^{(\ell)}=A_0^{n+1}\Pi_hx
+\t \sum_{j=0}^{n}A_0^{n+1-j}  U_j^{(\ell)}
+\sum_{j=0}^{n}A_0^{n+1-j}\si_j\D_{j+1}W\,.
\ee
Then inserting \rf{y1024e1} into $Y^{(\ell)}_n$ of step 2 in Algorithm \ref{alg1}, we have
\begin{eqnarray}
&Y_n^{(\ell)}
&=-\t \me^{t_n}\Big[ \sum_{j=n+1}^{N-1} A_0^{j-n}  X^{(\ell)}_j  \Big] 
-\a \me^{t_n}\big[A_0^{N-n}X^{(\ell)}_N\big] \nonumber \\
&&=-\t \sum_{j=n+1}^{N-1} A_0^{2j-n}\Pi_hx -\a  A_0^{2N-n}\Pi_hx \nonumber \\
&&\q-\t \sum_{j=n+1}^{N-1}  \t  \sum_{l=0}^{j-1} A_0^{2j-l-n}\me^{t_n}\big[ U_l^{(\ell)}  \big] 
-\a \t  \sum_{l=0}^{N-1} A_0^{2N-l-n}\me^{t_n}\big[ U_l^{(\ell)}  \big] \nonumber  \\
&&\q- \sum_{j=n+1}^{N-1}  \t  \sum_{l=0}^{j-1} A_0^{2j-l-n}\me^{t_n}\big[ \si_l \D_{l+1}W  \big] 
-\a  \sum_{l=0}^{N-1} A_0^{2N-l-n}\me^{t_n}\big[ \si_l \D_{l+1}W \big] \nonumber \\
&&=: -\t \sum_{j=n+1}^{N-1} A_0^{2j-n}\Pi_hx -\a  A_0^{2N-n}\Pi_hx -\sum_{i=1}^4I_i  \,. \label{w1119e4}
\end{eqnarray}
Now we compute $I_i$ for $1\leq i\leq 4$. 
By the assumption for $U_l^{(\ell)} $, if $l\leq n$ we have
\bel{w1119e1}
\setlength\abovedisplayskip{5pt}
\setlength\belowdisplayskip{5pt}
\me^{t_n}\big[ U_l^{(\ell)}  \big]
=U_l^{(\ell)} \,,
\ee
and if $l> n$, then by \rf{w1119e5}
\bel{w1119e2}
\setlength\abovedisplayskip{3pt}
\setlength\belowdisplayskip{3pt}
\bal
\me^{t_n}\big[ U_l^{(\ell)}  \big]
&= \sum_{m=1}^{n-1}\prod_{i=1}^m(1+\D_iW)f^{(\ell)}_{l,m}
+\prod_{i=1}^n(1+\D_iW)\Big[f^{(\ell)}_{l,n}+\sum_{m=n+1}^l f^{(\ell)}_{l,m}\Big]
+ \sum_{m=1}^{n}\D_mW\wt f^{(\ell)}_{l,m}+g^{(\ell)}_{l}\,.
\eal
\ee
Since $\si(\cd)$ is deterministic, we have
\begin{equation}\label{w1119e3}
\setlength\abovedisplayskip{3pt}
\setlength\belowdisplayskip{3pt}
\me^{t_n}\big[ \si_l \D_{l+1}W \big]
=
\lt\{\!\!\!
\begin{array}{ll}
\ds  \si_l \D_{l+1}W \qq\q& l\leq n-1\,,\\
\ns\ds 0 \qq& l\geq n\,.
\end{array}
\rt.
\end{equation}

For $I_1$, by \rf{w1119e1}--\rf{w1119e2}, we then arrive at
\begin{equation*}
\setlength\abovedisplayskip{3pt}
\setlength\belowdisplayskip{3pt}
\bal
I_1
&=\t^2 \sum_{j=n+1}^{N-1}   \sum_{l=0}^{n}A_0^{2j-l-n}\Big[\sum_{m=1}^l \prod_{i=1}^m(1+\D_iW)f^{(\ell)}_{l,m}+\sum_{m=1}^l \D_mW\wt f^{(\ell)}_{l,m}+g^{(\ell)}_l\Big]\\
&\q+\t^2 \sum_{j=n+1}^{N-1}   \sum_{l=n+1}^{j-1}A_0^{2j-l-n}\bigg\{\sum_{m=1}^{n-1}\prod_{i=1}^m(1+\D_iW)f^{(\ell)}_{l,m}
\\
&\qq\q +\prod_{i=1}^n(1+\D_iW)\Big[f^{(\ell)}_{l,n}+\sum_{m=n+1}^l f^{(\ell)}_{l,m}\Big]
+ \sum_{m=1}^{n}\D_mW\wt f^{(\ell)}_{l,m}+g^{(\ell)}_{l} \bigg\}\,.
\eal
\end{equation*}
By changing the order of the summation, we continue to get
\begin{equation*}
\setlength\abovedisplayskip{3pt}
\setlength\belowdisplayskip{3pt}
\bal
&=\sum_{m=1}^{n-1} \prod_{i=1}^m(1+\D_iW) \t^2 \sum_{j=n+1}^{N-1}   \sum_{l=m}^{j-1}A_0^{2j-l-n} f^{(\ell)}_{l,m}
\\
&\q+\prod_{i=1}^n(1+\D_iW)\t^2 \sum_{j=n+1}^{N-1}  \Big[ \sum_{l=n}^{j-1}A_0^{2j-l-n} f^{(\ell)}_{l,n}+ \sum_{l=n+1}^{j-1}A_0^{2j-l-n}\sum_{m=n+1}^l f^{(\ell)}_{l,m}\Big]\\
&\q+ \sum_{m=1}^n \D_mW \t^2 \sum_{j=n+1}^{N-1}   \sum_{l=m}^{j-1}A_0^{2j-l-n} \wt f^{(\ell)}_{l,m}
+ \t^2 \sum_{j=n+1}^{N-1}   \sum_{l=0}^{j-1}A_0^{2j-l-n} g^{(\ell)}_l \,.
\eal
\end{equation*}
Similarly, for $I_2$, we have
\begin{equation*}
\setlength\abovedisplayskip{3pt}
\setlength\belowdisplayskip{3pt}
\bal
I_2
&=\sum_{m=1}^{n-1} \prod_{i=1}^m(1+\D_iW) \a\t \sum_{l=m}^{N-1}A_0^{2N-l-n} f^{(\ell)}_{l,m}\\
&\q+\prod_{i=1}^n(1+\D_iW)\a\t \Big[\sum_{l=n}^{N-1}A_0^{2N-l-n} f^{(\ell)}_{l,n}+\sum_{l=n+1}^{N-1}A_0^{2N-l-n} \sum_{m=n+1}^l f^{(\ell)}_{l,m}\Big]\\
&\q+ \sum_{m=1}^n \D_mW \a\t \sum_{l=m}^{N-1}A_0^{2N-l-n} \wt f^{(\ell)}_{l,m}
+ \a\t \sum_{l=0}^{N-1}A_0^{2N-l-n} g^{(\ell)}_l \,.
\eal
\end{equation*}

For $I_3$ and $I_4$, based on \rf{w1119e3}, we can conclude that
\begin{equation*}
\setlength\abovedisplayskip{3pt}
\setlength\belowdisplayskip{3pt}
\bal
I_3=\sum_{j=n+1}^{N-1}  \t  \sum_{l=0}^{n-1} A_0^{2j-l-n} \si_l \D_{l+1}W
= \sum_{m=1}^n \D_mW \t \sum_{j=n+1}^{N-1}   A_0^{2j-m-n+1} \si_{m-1}\,,
\eal
\end{equation*}
and
\begin{equation*}
\setlength\abovedisplayskip{3pt}
\setlength\belowdisplayskip{3pt}
\bal
I_4= \sum_{l=0}^{n-1} \a A_0^{2N-l-n} \si_l \D_{l+1}W
= \sum_{m=1}^n \D_mW \a A_0^{2N-m-n+1} \si_{m-1}\,.
\eal
\end{equation*}

Finally,  a combination of \rf{w1119e4}, the above computation of $I_1$ through $I_4$ and \rf{y1024e4} 
then yields the assertion
\rf{y1024e3}. That completes the proof.
\end{proof}

\subsubsection{A regression estimator for  conditional expectations}\label{stati-1}

The strategy in this section is to simulate
Problem {\bf (SLQ)$_{h\t}$} for general $\beta \in {\mathbb R} $ via Algorithm \ref{alg1}. 
For fixed $\ell \in \dbN_0$ in Algorithm \ref{alg1}, step 2 has the form
$Y_{h\tau}^{(\ell)}(t_n) = {\mathbb E}^{t_n}\big[\Theta^{(\ell)}_{h\tau}(t_n)\big]$, which 
involves  a conditional 
expectation. 
We approximate it by a {\em `data dependent'} regression estimator (see {\em e.g.}~\cite[Chapter 13]{Gyorfi-Kohler-Krzyzak-Walk02}) which is able to cope with the  high dimensionality
 $1 \ll   {\mathfrak d} \deq \dim(\dbV_h)$ of the involved state space.

 As will be outlined below, related data dependent partitionings $\sP^{n, (\ell)}_{\tt M}$ of ${\mathbb R}^{{\mathfrak d}}$ will automatically be generated, with finer cells at places where the regression function (see below) changes fastly, and coarser ones elsewhere. This nonlinear approximation approach significantly reduces
complexities of simulations, in particular when ${\mathfrak d}$ is large --- and  where well-known {\em uniform} partitioning strategies 
for low-dimensional BSDEs {loose their competitivity}.

The regression estimator from \cite{Dunst-Prohl16} in this section may
 be considered as a modification of the least squares Monte Carlo method (see {\em e.g.}~\cite{Gobet-Turkedjiev16,Gyorfi-Kohler-Krzyzak-Walk02}) to make accessible
 the simulation of high-dimensional BSDEs as required in step 2 of Algorithm \ref{alg1}. Choose ${\tt M}, {\tt R} \in {\mathbb N}$, with ${\tt R} \ll {\tt M}$.
 We use the following tools to simulate ${\mathbb E}^{t_n}\big[ \Theta^{(\ell)}_{h\tau}(t_n)\big]$: 
\begin{enumerate} [(a)]
\item  {\bf (algebraic reformulation)} Make use of the finite element basis functions $\{ \phi_{h,j}\}_{j=1}^{\mathfrak{d}}$ of ${\mathbb V}_h$ to represent appearing ${\mathbb V}_h$-valued ${Y}^{\cdot,(\ell)}_{h\tau}$ resp.~${X}^{\cdot,(\ell)}_{h\tau}$
 by ${\mathbb R}^{\mathfrak d}$-valued $\mathfrak{Y}^{\cdot,(\ell)}$ resp.~$\mathfrak{X}^{\cdot,(\ell)}$, with related entries $\mathfrak{y}^{\cdot,(\ell)}_j$ resp.~$\mathfrak{x}^{\cdot,(\ell)}_j$ via 
\begin{equation*}
\setlength\abovedisplayskip{3pt}
\setlength\belowdisplayskip{3pt}
\bal
 & \mathfrak{X}^{\cdot,(\ell)}  = \sum_{j=1}^{{\mathfrak d}} \mathfrak{x}^{\cdot,(\ell)}_j
\phi_{h,j}  \qquad 
 \mbox{and}  \qquad
&\mathfrak{Y}^{\cdot,(\ell)}  = \sum_{j=1}^{{\mathfrak d}} \mathfrak{y}^{\cdot,(\ell)}_j
\phi_{h,j} \,.
\eal
\end{equation*}

\item {\bf ({\tt M}-sample ${\tt D}^{n,(\ell)}_{{\tt M},j}$)} Let  $1 \leq j\leq {\mathfrak d}$; the regression function  ${\mathcal Y}^{n, (\ell)}_{j}: {\mathbb R}^{\mathfrak{d}} \rightarrow {\mathbb R}$ satisfies
\begin{equation*}
\setlength\abovedisplayskip{3pt}
\setlength\belowdisplayskip{3pt}
\bal
\mathfrak{y}^{n,(\ell)}_j = {\mathcal Y}^{n, (\ell)}_{j}  \big(\mathfrak{X}^{n,(\ell)}\big)\, ,
\eal
\end{equation*}
which we approximate by the estimator
$\widehat{{\mathcal Y}}^{n, (\ell)}_{j}$ in (iv) below. Therefore, let
$${\tt D}^{n,(\ell)}_{{\tt M},j} \deq \bigl\{\big(\mathfrak{X}^{n,(\ell)}_{{\tt m}}, \mathfrak{\theta}^{n,(\ell)}_{{\tt m},j}\big)\bigr\}_{{\tt m}=1}^{ {\tt M}} \qquad (1 \leq n \leq N) $$
be an ${\tt M}$-sample, with $\bigl(\mathfrak{X}^{n,(\ell)}_{{\tt m}}, \mathfrak{\theta}^{n,(\ell)}_{{\tt m},j} \bigr) \sim \bigl(\mathfrak{X}^{n,(\ell)}, \mathfrak{\theta}^{n,(\ell)}_j \bigr)$, and $\mathfrak{\theta}^{n,(\ell)}_j$ the $j$-th entry of the ${\mathbb R}^{\mathfrak{d}}$-valued representation of $\Theta^{(\ell)}_{h\tau}(t_n)$ as in (i). 
\item  {\bf (data dependent partition $\sP^{n, (\ell)}_{\tt M}$)} 
Use  (the first entries of) 
${\tt D}^{n,(\ell)}_{{\tt M},j}$ to generate a partition (see also \cite[Chapter 13]{Gyorfi-Kohler-Krzyzak-Walk02})
$$\sP^{n, (\ell)}_{\tt M} = \bigl\{{\mathscr A}^{n, (\ell)}_{\tt r} \bigr\}_{{\tt r}=1}^{{\tt R}} 
$$
of  ${\mathbb R}^{\mathfrak d}$ into statistically equivalent cells ${\mathscr A}^{n, (\ell)}_{\tt r}$ via
the BTC strategy in \cite[p.~A2741]{Dunst-Prohl16}.
\item {\bf (estimator $\widehat{{\mathcal Y}}^{n, (\ell)}_{j}$)} 
%
Use ${\tt D}^{n,(\ell)}_{{\tt M},j}$ to compute $\widehat{\mathcal Y}^{n, (\ell)}_{j}: {\mathbb R}^{\mathfrak{d}} \rightarrow {\mathbb R}$ via
$$\widehat{{\mathcal Y}}^{n, (\ell)}_{j}(\cdot) =  \sum_{{\tt r} =1}^{{\tt R}}\frac{\sum_{{\tt m}=1}^{\tt M}\mathfrak{\theta}^{n,(\ell)}_{{\tt m},j} \cdot \mathds{1}_{\{ \mathfrak{X}^{n,(\ell)}_{{\tt m}} \in {\mathscr A}^{n, (\ell)}_{\tt r}\}}}{\sum_{{\tt m}=1}^{\tt M} \mathds{1}_{\{ \mathfrak{X}^{n,(\ell)}_{{\tt m}} \in {\mathscr A}^{n, (\ell)}_{\tt r}\}}} \mathds{1}_{{\mathscr A}_{\tt r}}(\cdot)\, .
$$
\end{enumerate}
We refer to \cite{Dunst-Prohl16} for more algorithmic details.  

\section{Discretization based on the closed-loop approach}\label{ch-closed}

As has been reviewed in Section \ref{se-openclosed}, the optimal control $U^*(\cd)$ of Problem {\bf (SLQ)} admits 
the feedback law \rf{feedback-0} with the help of $\cP(\cd)$ and $\eta(\cd)$ (see \rf{Riccati} and \rf{varphi}).
In comparison to 
Section \ref{ch-open},
both terminal problems on \rf{Riccati} and \rf{varphi} are deterministic ---  a relevant property that
we may now benefit from in a numerical discretization of Problem  {\bf (SLQ)}:
a canonical strategy therefore is now to approximate 
$U^*(\cd)$ by first discretizing \rf{Riccati} and \rf{varphi}
with the help of tools from deterministic numerics; and to finally insert it into SPDE \rf{spde} thanks to \rf{feedback-0}, and to
then solve it numerically to get an approximation of $X^*(\cd)$.

As a consequence, to properly discretize the stochastic Riccati equation
(\ref{Riccati}) is the key step within this program, and to derive related optimal error estimates is the main aim in this section. In fact, it is an open problem even in the context of deterministic control ({\em i.e.}, $\beta = 0$ and $\sigma(\cd) = 0$ in Problem {\bf (SLQ)}) whether the implicit Euler for the {\em deterministic Riccati equation} performs with optimal rates: the reason for it is that an applied
discretization can spoil the specific role that the continuous Riccati equation plays within the (limiting) control problem. 
To preserve this role for the constructed discretization is
of primary relevancy in this section.
By adopting an idea from \cite{Wang23}, we develop a proper discretization scheme 
of \rf{Riccati} --- which, in effect, differs from a standard time numerical integrator for (\ref{Riccati}); see \rf{dis-Riccati1-h-t} and \rf{dif-Riccati}.
In the second step, we apply a perturbation argument, where we leave the ground of optimization
to arrive at a modified scheme whose complexity is comparable to standard numerical integrators for \rf{Riccati}. 
The benefit of this construction strategy  for the related scheme \rf{feedback-5}--\rf{dis-state}
is that we
can show optimal rates of convergence for iterates to approximate $\big(X^*(\cd), U^*(\cd)\big)$.

Based on LQ theory, the error analysis for different spatio-temporal discretization schemes  of the Riccati equation is given in Section \ref{Riccati-dis};
see Theorems \ref{Riccati-rate-h} (for spatial discretization), \ref{Riccati-rate}
(for temporal discretization in $\|\cd\|_{\cL(\dbL^2)}$) under some assumption on $\b$ (see assumption {\bf (H)} in Section \ref{dis-lq}), and Theorems \ref{Riccati-rate2}
(for temporal discretization in $\|\cd\|_{\cL(\dbH_0^1;\dbL^2)}$)
{\em without} any smallness assumption  on $\b$.

In Section \ref{SLQ-rate}, a mixed strategy to numerically solve Problem {\bf (SLQ)} is proposed, which uses 
the `open-loop approach' for spatial discretization, and then the  `closed-loop approach' for temporal discretization. 
This mixed approach improves the results in \cite{Prohl-Wang21,Prohl-Wang22, Lv-Wang-Wang-Zhang22} which adopts the `open-loop approach', and 
in \cite{Prohl-Wang24} where methods were only taken from the `closed-loop approach' at the stage of the numerical analysis. 

This part starts with Section \ref{tools-1}, where we introduce a family of LQ problems (Problem {\bf (LQ)$^t_{\tt aux}$}) related to Problem {\bf (SLQ)}, which will play a prominent role in the numerical analysis. 
In fact, the {\em stochastic} Riccati equation for Problem 
{\bf (SLQ)} coincides with the {\em deterministic} Riccati equation for Problem {\bf (LQ)$^t_{\tt aux}$}, where
the equation is a (deterministic) PDE instead, containing a shifted operator on a scale $\beta$ if compared with the original diffusion operator in (\ref{spde}).
\subsection{Discretization of the  Riccati equation}\label{tools-1}

The {\em stochastic} Riccati equation \rf{Riccati}  is related to Problem {\bf (SLQ)}; but it can also be considered as a {\em deterministic} Riccati equation associated with Problem {\bf (LQ)$^\cd_{\tt aux}$} below. We will detail this observation in this section, which will play a relevant role in the construction of a proper discretization scheme for \rf{Riccati} --- if interpreted in the latter sense. 
In this section, we propose two different temporal discretization  schemes  for the Riccati equation \rf{Riccati} 
which are both consistent --- {\em i.e.,} its solutions take values in $\dbS_+(\dbL^2|_{\dbV_h})$ ---
and two further schemes which are not; their construction again follows the strategy to `first discretize, then optimize'. 
To state the main ideas, in this section we only detail the first consistent discretization; the others will be stated in part {\bf b.} of 
Section \ref{Riccati-dis-2}, and in Section \ref{Riccati-dis-3}.

\subsubsection{An auxiliary LQ problem for the Riccati equation}\label{deter-lq}

We now define a family of {\em deterministic} LQ problems --- where each of them is later referred to as Problem {\bf (LQ)$_{\tt aux}^t$} for any $t \in [0,T)$ ---, {whose corresponding Riccati equation is \rf{Riccati}} on $[t,T]$.
To do that, we define a family of cost functionals first, which for any $t\in [0,T)$ and  $z\in \dbL^2$ reads as 
\begin{equation} \label{cost-t}
\setlength\abovedisplayskip{3pt}
\setlength\belowdisplayskip{3pt}
\cG\big(t,z; u(\cd) \big)=\frac 1 2 \int_t^T\big[ \| x(s) \|^2+ \|u(s) \|^2 \big] \rd s +\frac \a 2 \|x(T)\|^2\,,
\end{equation}
subject to the (controlled forward) {\em deterministic} PDE
\bel{pde}
\setlength\abovedisplayskip{3pt}
\setlength\belowdisplayskip{3pt}
\lt\{\!\!\!
\begin{array}{ll} 
\ds x'(s)=\D x(s)+\frac {\b^2} 2 x(s)+u(s)  \qq  s \in (t,T]\,,\\
\ns\ds x(t)=z \,.
\end{array}
\rt.
\ee
Note that the involved $\big(x(\cd),u(\cd)\big)$ in (\ref{cost-t})--(\ref{pde}) are deterministic functions --- as opposed to the related 
stochastic processes in (\ref{intro-1a})--(\ref{spde}). 
\ss

\no {\bf Problem (LQ)$^t_{\tt aux}$.} Fix an initial pair $(t,z)\in [0,T)\times \dbL^2$. Find $u^*(\cd)\in L^2(t,T;\dbL^2)$ such that
\bel{LQ-cost}
\setlength\abovedisplayskip{3pt}
\setlength\belowdisplayskip{3pt}
\cG\big(t,z; u^*(\cd) \big)=\inf_{u(\cd)\in L^2(t,T;\dbL^2)}\cG\big(t,z; u(\cd)\big)=:V(t,z)\,.
\ee

Any $u^*(\cd) \in L^2(t,T;\dbL^2)$ that satisfies \rf{LQ-cost} is called an {\em optimal control} for the given initial pair $(t,z)$, 
with $x^*(\cd)$ the corresponding {\em optimal state}; the tuple $\big(x^*(\cd),u^*(\cd)\big)$ will be referred to as an {\em optimal pair} for $(t,z)$, and $V(\cd,\cd)$ is 
called the {\em value function} of Problem {\bf (LQ)}$_{\tt aux}^t$.

\bs

It is well-known {that Problem {\bf (LQ)}$_{\tt aux}^t$} admits a unique optimal pair $\big(x^*(\cd),u^*(\cd)\big)$, and the optimal control $u^*(\cd)$ admits the following state feedback representation:
\bel{feedback-1}
\setlength\abovedisplayskip{3pt}
\setlength\belowdisplayskip{3pt}
u^*(s) =-\cP(s) x^*(s) \qquad  s \in [t,T]\,,
\ee
where ${\mathcal P}(\cd)$ comes from the terminal problem \rf{Riccati}. Furthermore, the value function $V(\cd,\cd)$ is explicitly presented by
\bel{val-1}
\setlength\abovedisplayskip{3pt}
\setlength\belowdisplayskip{3pt}
V(t,z)=\frac{1}{2}\big(\cP(t)z,z\big)_{\dbL^2}\,.
\ee
We refer the reader to \cite[Chapter 17]{Zabczyk20} for more details.
Based on \rf{LQ-cost} and \rf{val-1}, for any admissible control $u(\cd)\in L^2(t,T;\dbL^2)$ we may  conclude that
\begin{equation*}
\setlength\abovedisplayskip{3pt}
\setlength\belowdisplayskip{3pt}
\big(\cP(t)z,z \big)_{\dbL^2}\leq  2\cG\big(t,z; u(\cd)\big) \qquad \forall \, t \in [0,T]\,,
\end{equation*}
which will be used frequently 
in Section \ref{Riccati-dis}.

\subsubsection{Discretization of Problem {\bf (LQ)}$_{\tt aux}^t$
and a related difference Riccati equation}\label{dis-lq}
%

We derive the difference Riccati equation (\ref{dis-Riccati1-h-t}) with the help of two discretization schemes  
of Problem  {\bf (LQ)}$_{\tt aux}^t$. 
For abbreviation, 
 we
denote $\cA=\D+\frac {\b^2}{2}\mathds{1}$\index{${\mathcal A}$} and shall make use of the following assumption: \\
\no{\bf Assumption (H):} There exists a positive constant $\cC_0$ such that
\bel{assumption2}
\setlength\abovedisplayskip{3pt}
\setlength\belowdisplayskip{3pt}
\|\nb\f\|^2-\frac{\b^2}{2}\|\f\|^2\geq \cC_0\|\f\|^2 \qq\forall\,\f\in \dbH_0^1\,.
\ee
This assumption constrains the strength of admissible noise, but is suitable for the first numerical analysis in this section
and Section \ref{Riccati-dis-2}; it will, however, be weakened in Section \ref{Riccati-dis-3}.
Under the coercivity assumption {\bf (H)}, $\cA $   generates an analytic semigroup $\h E(\cd)$ with $\h E(t)=e^{t\cA }$\index{$\hat E(t)$} 
for $t\geq 0$. Also, we can define  $\cA_h=\Delta_h+\frac{\b^2}{2}\mathds{1}_h:\dbV_h \to \dbV_h$\index{${\mathcal A}_h$}, and consider its corresponding semigroup
$\h E_h(\cd)$\index{$\hat E_h(t)$},
as well as $\cA_0=(\mathds{1}_h-\t \cA_h)^{-1}$\index{${\mathcal A}_0$} and the related $\h E_{h,\t}(\cd)$. \index{$\hat E_{h,\tau}(t)$} 
Similarly, we can define 
$
\h G_h(\cd)\,, \h G_\t(\cd)
\,;\index{$\hat G_h(t)$}\index{$\hat G_\tau(t)$}
$
for notations see Section \ref{not1}. Besides, Lemmata \ref{w228l1}--\ref{w207l1} still hold for $\cA$. 

\ss
{\bf a.~Spatial discretization of {\bf Problem (LQ)$^{t}_{\tt aux}$}.} By virtue of $\cA$, \rf{Riccati} can be rewritten as 
\bel{Riccati3}
\setlength\abovedisplayskip{3pt}
\setlength\belowdisplayskip{3pt}
\lt\{\!\!\!
\begin{array}{ll} 
\ds\cP'(t)+\cA \cP(t)+\cP(t)\cA+\mathds{1}-\cP^2(t)=0\qq t\in [0,T]\,,\\
\ns\ds\cP(T)=\a\mathds{1}\,,
\end{array}
\rt.
\ee
and its spatial discretization by the finite element method is
\bel{dis-Riccati1-h}
\setlength\abovedisplayskip{3pt}
\setlength\belowdisplayskip{3pt}
\lt\{\!\!\!
\begin{array}{ll} 
\ds\cP_h'(t)+\cA_h\cP_h(t)+\cP_h(t)\cA_h+\mathds{1}_h-\cP_h^2(t)=0\qq t\in [0,T]\,,\\
\ns\ds\cP_h(T)=\a\mathds{1}_h\,.
\end{array}
\rt.
\ee
By \cite[Theorem 7.2]{Yong-Zhou99}, \rf{dis-Riccati1-h} admits a unique solution $\cP_h(\cd)\in C\big([0,T];\dbS(\dbL^2|_{\dbV_h})\big)$.
To measure the difference $\cP(\cd)-\cP_h(\cd)\Pi_h$, we introduce a family of auxiliary LQ problems.\\
\no{\bf Problem (LQ)$^{t;h}_{\tt aux}$.} For any  given $t\in[0,T)$ and $z\in \dbL^2$, search for 
$u^*_h(\cd)\in L^2(t,T;\dbL^2)$ such that
\bel{LQ-cost-h}
\setlength\abovedisplayskip{3pt}
\setlength\belowdisplayskip{3pt}
\cG_{h}\big(t,z; u^*_h(\cd)\big)=\inf_{u_h(\cd) \in L^2(t,T;\dbL^2)}\cG_{h}\big(t,z; u_h(\cd)\big)=:V_{h}(t,z)\,,
\ee
where the cost functional is
\bel{cost-t-h}
\setlength\abovedisplayskip{3pt}
\setlength\belowdisplayskip{3pt}
\cG_{h}\big(t,z; u_h(\cd)\big)=\frac 1 2 \int_t^T\big[ \| x_h(s) \|^2+ \|u_h(s) \|^2 \big] \rd s +\frac \a 2 \|x_h(T)\|^2
\ee
and the state variable $x_h(\cd)\in C([t,T];\dbV_h)$ satisfies
\bel{pde-h}
\setlength\abovedisplayskip{3pt}
\setlength\belowdisplayskip{3pt}
\lt\{\!\!\!
\begin{array}{ll} 
\ds x_h'(s)=\cA_h x_h(s)+\Pi_h u_h(s)  \qq  s \in (t,T]\,,\\
\ns\ds x_h(t)=\Pi_hz \,.
\end{array}
\rt.
\ee

Similar to Problem {\bf (LQ)}$_{\tt aux}^t$, Problem {\bf (LQ)}$_{\tt aux}^{t;h}$ has a unique optimal control $u^*_h(\cd)\in L^2(0,T;\dbL^2)$,
satisfying  the following state feedback representation:
\bel{feedback-h-1}
\setlength\abovedisplayskip{3pt}
\setlength\belowdisplayskip{3pt}
u^*_h(s) =-\cP_h(s) x^*_h(s) \qquad  s \in [t,T]\,;
\ee
 and the value function $V_h(\cd,\cd)$ is explicitly presented by
\bel{val-h-1}
\setlength\abovedisplayskip{3pt}
\setlength\belowdisplayskip{3pt}
V_h(t,z)=\frac{1}{2}\big(\cP_h(t)\Pi_h z,z\big)_{\dbL^2}\,.
\ee
Based on \rf{LQ-cost-h} and \rf{val-h-1}, for any admissible control $u_h(\cd)\in L^2(t,T;\dbL^2)$ we find  that
\begin{equation*}
\setlength\abovedisplayskip{3pt}
\setlength\belowdisplayskip{3pt}
\big(\cP_h(t)\Pi_h z,z \big)_{\dbL^2}\leq  2\cG_h\big(t,z; u_h(\cd)\big) \qquad \forall \, t \in [0,T]\,,
\end{equation*}
which will be used 
in Section \ref{Riccati-dis}.

Moreover, we can deduce that the optimal control $u^*_h(\cd)\in L^2(t, T;\dbV_h)$. 
In fact, since 
$\Pi_h u^*_h(\cd)$ is an admissible control, we conclude by \rf{w1024e1} that
\bel{w1026e5}
\setlength\abovedisplayskip{3pt}
\setlength\belowdisplayskip{3pt}
0\!\leq\! \cG_h\big(t,z; \Pi_h u_h^*(\cd)\big)\!-\!\cG_h\big(t,z; u^*_h(\cd)\big)\!=\!\frac1 2 \int_t^T\big[ \|\Pi_hu^*_h(s)\|^2\!-\!\|u^*_h(s)\|^2\big]\rd s\!\leq\! 0\,,
\ee
which yields $u^*_h(\cd)=\Pi_hu^*_h(\cd)\in L^2(t,T;\dbV_h)$.

\ss

{\bf b.~Spatio-temporal discretization of Problem  {\bf(LQ)$^{t}_{\tt aux}$}.} 
For a given $t\in[0,T)$, we now discretize Problem {\bf (LQ)}$_{\tt aux}^{t;h}$ in time. 
Firstly, we need two spaces for state and control variables: for any $l=0,1,\cds,N-1$,
\bel{w1015e4}
\setlength\abovedisplayskip{3pt}
\setlength\belowdisplayskip{3pt}
\bal
 \dbX(t_l,T)&\deq  \Big\{x_\cd \equiv \big\{x_n\big\}_{n=l}^{N-1} \, \Big| \, x_{n}\in\dbV_h \,\,\,   \forall\, n=l,\cds, \, N-1\,, 
 \mbox{and } \t \sum_{n=l}^{N-1}\|x_n\|^2 <\infty\Big\}\, ,\index{$\dbX(t_l,T)$}\\
\dbU(t_l,T) &\deq \Big\{u_\cd \equiv \big\{u_n\big\}_{n=l}^{N-1} \, \Big| \, u_{n}\in\dbL^2 \,\,\,   \forall\, n=l\cds, \, N-1\,,   \mbox{and } \t \sum_{n=l}^{N-1}\|u_n\|^2 <\infty\Big\}\, .\index{$\dbU(t_l,T)$}
\eal
\ee
%
We endow the discrete state and control spaces with the following norms, 
\begin{equation*}
\setlength\abovedisplayskip{3pt}
\setlength\belowdisplayskip{3pt}
\|x_\cd\|_{\dbX(t_l,T)}\deq \Big(\t \sum_{n=l}^{N-1}\|x_n\|^2 \Big)^{1/2} \qquad
\mbox{and} \qquad 
\|u_\cd\|_{\dbU(t_l,T)}\deq \Big(\t \sum_{n=l}^{N-1}\|u_n\|^2\Big)^{1/2}.
\end{equation*}

Throughout this section, to deal with the difference equation, we use a subscript index for the time variable, such as $x_n, u_n$, and $x_\cd\,,u_\cd$.

By adopting  the (semi-)implicit Euler method in time, we approximate 
system \rf{pde-h} and the cost functional \rf{cost-t-h} as follows:
For any given $l= 0,1,\cds,N-1$, and $z\in \dbL^2$, the discrete state $x_\cd$ satisfies 
\bel{dde}
\setlength\abovedisplayskip{3pt}
\setlength\belowdisplayskip{3pt}
\lt\{\!\!\!
\begin{array}{ll} 
\ds x_{n+1}=\cA_0x_{n}+\t \cA_0{\Pi_h u_n}  \qq n=l, l+1,\cds,N-1\,,\\
\ns\ds x_l=  \Pi_hz\,,
\end{array}
\rt.
\ee
where $\cA_0=\big(\mathds{1}_h-\t\cA_h\big)^{-1}$, and
\bel{cost-ht1}
\setlength\abovedisplayskip{3pt}
\setlength\belowdisplayskip{3pt}
\cG_{h,\t}(t_l,z; u_\cd)
= \frac 1 2 \big[ \|x_\cd\|^2_{\dbX(t_l,T)}+\|u_\cd\|^2_{\dbU(t_l,T)}\big]+\frac \a 2\|x_N\|^2\,.
\ee
A discretized version in time of the family of Problems {\bf (LQ)}$_{\tt aux}^{t;h}$ is\\
\no{\bf Problem (LQ)$^{t_{l};h,\t}_{\tt aux}$.} For any  given $l=0,1,\cds,N-1$ and $z\in \dbL^2$, search for 
$u^*_\cd\in \dbU(t_l,T)$ such that
\begin{equation*}
\setlength\abovedisplayskip{3pt}
\setlength\belowdisplayskip{3pt}
\cG_{h,\t}(t_l,z; u^*_\cd)=\inf_{u_\cd \in \dbU(t_l,T)}\cG_{h,\t}(t_l,z; u_\cd)=:V_{h,\t}(t_l,z)\,.
\end{equation*}

Note that for Problems {\bf(LQ)$^{t;h}_{\tt aux}$} resp.~{\bf(LQ)$^{t_l;h,\t}_{\tt aux}$} the elements 
$x_h(\cd)\in L^2(0,T;\dbV_h)$ resp.~$x_\cd\in \dbX(t_l,T)$ take values in $\dbV_h$, while $u_h(\cd)\in L^2(0,T;\dbL^2)$ 
resp.~$u_\cd\in \dbU(t_l,T)$ take values in $\dbL^2$. This choice is used to prove convergence rates for the proposed 
discretization schemes  of the Riccati equation;
see Theorems \ref{Riccati-rate-h} and \ref{Riccati-rate}.
Actually, with the same argument as in \rf{w1026e5}, we can deduce that the optimal control $u^*_\cd$ to Problem {\bf(LQ)$^{t_l;h,\t}_{\tt aux}$} is $\dbV_h$-valued.

We call the solution of Problem {\bf (LQ)}$^{t_{l};h,\t}_{\tt aux}$ its {\em optimal control} 
$ u^*_\cd  \in \dbU(t_l,T) $, and the corresponding state $x^*_\cd \in \dbX(t_l,T)$ an {\em optimal state}. 
The following result show the solvability of Problem {\bf (LQ)}$^{t_{l};h,\t}_{\tt aux}$ and a {\em discrete
feedback law} for the optimal control.
We refer the reader to \cite[Chapter 5]{Heij-Ran-Schagen21} for deterministic systems, and to \cite{Zhou02} for stochastic systems.

\bl{w1021l1}
For any $l=0,1,\cds, N-1$ and $z\in\dbL^2$, Problem {\bf (LQ)}$^{t_{l};h,\t}_{\tt aux}$ admits a unique optimal control 
which enjoys the discrete state feedback law
\bel{feedback-2a}
\setlength\abovedisplayskip{3pt}
\setlength\belowdisplayskip{3pt}
u^*_n=- \cK_n^{-1}\cH_n x^*_n \qquad n=l,l+1,\cds,N-1\,.
\ee
Moreover, the value function satisfies
\bel{val-2a}
\setlength\abovedisplayskip{3pt}
\setlength\belowdisplayskip{3pt}
V_{h,\t}(t_l,z)=\frac 1 2 ( \cP_l\Pi_h z,z )_{\dbL^2} \qquad  \forall\, z \in {\mathbb L}^2\,,
\ee
where $\cP_\cd\equiv \{\cP_n\}_{n=0}^N\subset \dbS_+(\dbL^2|_{\dbV_h})$ solves the following difference Riccati equation 
\bel{dis-Riccati1-h-t}
\setlength\abovedisplayskip{3pt}
\setlength\belowdisplayskip{3pt}
\lt\{\!\!\!
\begin{array}{ll} 
\ds \cP_n=\cA_0\cP_{n+1}\cA_0+\t\mathds{1}_h -\t \cH_n \cK_n^{-1}\cH_n \qq n=0,1,\cds,N-1\,,\\
\ns\ds \cP_N=\a \mathds{1}_h\,,\\
\ns\ds \cH_n=\cA_0\cP_{n+1}\cA_0\,,\\
\ns\ds \cK_n=\mathds{1}_h+\t \cA_0\cP_{n+1}\cA_0\, .
\end{array}
\rt.
\ee 

\el

Here we emphasize  that
$\cP_l\in \dbS_+(\dbL^2|_{\dbV_h})$ for any $l=0,1,\cds,N$; for the used notations, we again refer to Section \ref{not1}. 
Also, \rf{val-2a}  leads to
%
\begin{equation*}
\setlength\abovedisplayskip{3pt}
\setlength\belowdisplayskip{3pt}
( \cP_l\Pi_h z,z )_{\dbL^2}\leq 2 \, {\mathcal G}_{h,\t}(t_l,z, u_\cd)\qquad  \forall\, z \in {\mathbb L}^2 \quad \forall\, u_\cd \in \dbU(t_l,T)\,.
\end{equation*}
These properties make difference Riccati equation (\ref{dis-Riccati1-h-t}) stand out if compared to standard discretization (see {\em e.g.}~\cite{Stillfjord18,Stillfjord18-2,Benner-Stillfjord-Trautwein22}),
and allow to verify optimal convergence behavior for (\ref{dis-Riccati1-h-t}) in Section \ref{Riccati-dis}. 


\br{w322r1}
An essential step in the above program --- which is detailed in Section \ref{Riccati-dis} --- is to study Problem ${\bf (LQ)}_{\tt aux}^{\cd;h,\t}$, {\em i.e.}, the temporal discretization of Problem ${\bf (LQ)}_{\tt aux}^{\cd;h}$.
%
%
%
\er

\subsection{Approximation of the Riccati equation}\label{Riccati-dis}

To obtain the convergence rates in different norms --- $\cL(\dbL^2)$ and $\cL(\dbH_0^1;\dbL^2)$ --- for the discretization of Riccati equation \rf{Riccati},
we divide this section into two parts: Sections \ref{Riccati-dis-h} and \ref{Riccati-dis-2} focus on the first consistent discretization
$\cP_\cd$ that is given by \rf{dis-Riccati1-h-t}.
The related error analysis requires  assumption {\bf (H)}, and is based on the close connection of $\cP(\cd)$ with Problem {\bf (LQ)}$_{\tt aux}^\cd$, and of $\cP_\cd$ with Problem {\bf (LQ)}$_{\tt aux}^{\cd;h,\tau}$. 
Section  \ref{Riccati-dis-3} presents  the second consistent discretization $\bar \cP_\cd$ provided in \rf{dif-Riccati}; 
 there results are derived without assumption {\bf (H)}.


In the following, we use the following notations for solutions $x\big(\cd;t, z,u(\cd)\big)$, $x_h\big(\cd; t,z,u_h(\cd)\big)$, and $x_\cd(t_l,z,u_\cd)$  of \rf{pde}, \rf{pde-h}, and \rf{dde} to emphasize their dependence on initial conditions and control variables;
when there is no danger of confusion, we also write $x(\cd;t,z)$, $x_h(\cd;t,z)$, $x_\cd(t_l,z)$ for the sake of simplicity.
Similarly, we denote by $u^*(\cd;t,z)$, $u^*_h(\cd; t,z)$, $u^*_\cd(t_l,z)$
the optimal control of Problems {\bf (LQ)}$^t_{\tt aux}$, {\bf (LQ)}$^{t;h}_{\tt aux}$, and {\bf (LQ)$^{t_l;h,\t}_{\tt aux}$}, respectively.
Let us recall that all state and control variables in this section are deterministic, which is different from 
related quantities used in Section
\ref{SLQ-rate};  
they are introduced here {\em only} to verify the main results ({\em i.e.}, Theorems \ref{Riccati-rate}
and \ref{Riccati-rate2}) 
which is a part of our scheme to approximate
Problem {\bf (SLQ)}.

\subsubsection{Estimates for the spatially semi-discrete Riccati equation}\label{Riccati-dis-h}

In the following result,  we deduce the properties that $\cP_h(\cd)$
is uniformly bounded, and that the optimal pair
$\big(x^*_h(\cd),u^*_h(\cd)\big)$ of Problem {\bf (LQ)$^{t;h}_{\tt aux}$} is bounded by the initial state.

\bl{w1012l1}
Let $\cP_h(\cd)$ be the solution of Riccati equation \rf{dis-Riccati1-h} and
for any $t\in [0,T)$, $z\in\dbL^2$, let $\big(x^*_h(\cd;t,z), u^*_h(\cd;t,z)\big)$ be the optimal pair of Problem {\bf (LQ)$^{t; h}_{\tt aux}$}.
Then, there exists a constant $\cC$ independent of $h$ such that
\vspace{-1ex}
\begin{subequations}\label{w1012e1}
    \begin{empheq}[left={\empheqlbrace\,}]{align}
      & \sup_{t\in[0,T]} \|\cP_h(t)\|_{\cL(\dbL^2|_{\dbV_h})}\leq \cC \,, \label{w1012e1a}\\
      &\sup_{t\in[0,T]} \Big[ \sup_{s\in[t,T]} \big[\| x^*_h(s; t,z)\|^2  +\|u^*_h(s; t,z)\|^2\big] 
       \notag \\
       &\qq\q+\int_t^T \| x^*_h(s; t,z)\|_{\dbH_0^1}^2 +\|u^*_h(s; t,z)\|_{\dbH_0^1}^2\rd s\Big] \leq \cC\|z\|^2\,. \label{w1012e1b}
    \end{empheq}
\end{subequations}
%
\el

\begin{proof}
{\bf (1) Verification of \rf{w1012e1a}.} 
This assertion has been derived in Lemma \ref{w229l4} by virtue of Problem {\bf (SLQ)$_{\tt aux}^{\cd;h}$}; and
with the same idea
it  can also be obtained by the explicit representation \rf{val-h-1} for the value function $V_{h}(\cd,\cd)$.
Indeed, for any $z \in \dbL^2$, by the fact that $\cP_h(t)\in \dbS_+(\dbL^2|_{\dbV_h})$ and \rf{val-h-1}, it follows that
\begin{equation*}
\setlength\abovedisplayskip{3pt}
\setlength\belowdisplayskip{3pt}
\bal
0&\leq \big( \cP_h(t)\Pi_h z,\Pi_h z \big)_{\dbL^2}= \big( \cP_h(t)\Pi_h z,z\big)_{\dbL^2}
\leq 2\cG_{h}(t,z; 0)\\
&=\frac 1 2 \int_t^T\|x_h(s;t,z,0)\|^2\rd s +\frac \a 2 \|x_h(T;t,z,0)\|^2\\
&\leq \cC \|\Pi_h z\|^2\,,
\eal
\end{equation*}
where $\cC$ is independent of $t$, which settles the assertion \rf{w1012e1a}.

\ss

{\bf (2) Verification of \rf{w1012e1b}.} The feedback law \rf{feedback-h-1} and PDE \rf{pde-h} imply that
\begin{equation*}
\setlength\abovedisplayskip{3pt}
\setlength\belowdisplayskip{3pt}
\bal
& \|x^*_h(s;t,z)\|^2+2\int_t^s\|\nb x^*_h(\th;t,z)\|^2\rd \th\\
&\qquad \leq  \|\Pi_h z\|^2+2\sup_{t\in[0,T]}\|\cP_h(t)\|_{\cL(\dbL^2|_{\dbV_h})}\int_t^s\|x^*_h(\th;t,z)\|^2\rd \th\,,
\eal
\end{equation*}
which, together with assertion \rf{w1012e1a} and Gronwall's inequality, leads to
\bel{w1012e2}
\setlength\abovedisplayskip{3pt}
\setlength\belowdisplayskip{3pt}
\sup_{t\in [0,T]}\Big[\sup_{s\in[t,T]} \|x^*_h(s;t,z)\|^2+\int_t^T\|\nb x^*_h(\th;t,z)\|^2\rd \th\Big]
\leq \cC\|\Pi_h z\|^2\,.
\ee
Then we adopt Pontryagin's maximum principle to verify the assertion on $u_h^*(\cd;t,z)$. By the deterministic version of \rf{intro-1c}--\rf{intro-1d} or 
\cite[Chapter 12]{Zabczyk20}, we know that
\bel{w1013e14}
\setlength\abovedisplayskip{3pt}
\setlength\belowdisplayskip{3pt}
u^*_h(s;t,z)=y_h(s)\qq t\in[0,T)\,, s\in[t,T]\,,
\ee
where $y_h(\cd)$ solves
\bel{bpde-h}
\setlength\abovedisplayskip{3pt}
\setlength\belowdisplayskip{3pt}
\lt\{\!\!\!
\begin{array}{ll} 
\ds y_h'(s)=-\cA_h y_h(s)+x^*_h(s;t,z)  \qq  s \in [t,T)\,,\\
\ns\ds y_h(T)=-\a x^*_h(T;t,z)\,.
\end{array}
\rt.
\ee
Relying on the stability property of $y_h(\cd)$, \rf{w1012e2} and \rf{w1013e14}, we have
\begin{equation*}
\setlength\abovedisplayskip{3pt}
\setlength\belowdisplayskip{3pt}
\bal
&\sup_{s\in[t,T]}\|u^*_h(s;t,z)\|^2+\int_t^T\|\nb u^*_h(s;t,z)\|^2\rd s \\
&\qquad \leq \cC\Big[ \|x^*_h(T;t,z)\|^2+\int_t^T\|x^*_h(s;t,z)\|^2\rd s\Big]\\
&\qquad \leq \cC\|z\|^2\,,
\eal
\end{equation*}
where $\cC$ is independent of $t$.
That completes the proof.
\end{proof}

In the same vein, we can prove the following result for the solution $\cP(\cd)$ to Riccati equation \rf{Riccati}.
\bl{w1012l2} 
Fix $t\in[0,T]$ and $z\in \dbL^2$. Let $\big(x^*(\cd;t,z), u^*(\cd;t,z)\big)$ be the optimal pair of Problem {\bf (LQ)}$^t_{\tt aux}$.
Let $\cP(\cd)$ be the solution to Riccati equation \rf{Riccati}.
There exists $\cC$ such that 
\begin{subequations}\label{w1012e3}
    \begin{empheq}[left={\empheqlbrace\,}]{align}
      & \sup_{t\in[0,T]}\|\cP(t)\|_{\cL(\dbL^2)}\leq \cC\,, \label{w1012e3a}\\
      &\sup_{t\in[0,T]} \Big[ \sup_{s\in[t,T]} \big[\| x^*(s; t,z)\|^2  +\|u^*(s; t,z)\|^2\big] 
       \notag \\
       &\qq\q+\int_t^T \| x^*(s; t,z)\|_{\dbH_0^1}^2 +\|u^*(s; t,z)\|_{\dbH_0^1}^2\rd s\Big] \leq \cC\|z\|^2\,.  \label{w1012e3b}
    \end{empheq}
\end{subequations}
\el

\ss

For any given $t\in[0,T)$, to estimate $\|\cP(t)-\cP_h(t)\Pi_h\|_{\cL(\dbL^2)}$ we follow a related
strategy as in \cite[Section 3]{Kroller-Kunisch91}:
For $z\in\dbL^2$, we shall compare $\big(\cP(t)z,z\big)_{\dbL^2}$ 
and $\big(\cP_h(t)\Pi_hz,z\big)_{\dbL^2}$. There are two possibilities.

\ss

\no{\bf Case (i)}  $\big(\cP_h(t)\Pi_h z,z\big)_{\dbL^2}\leq \big(\cP(t)z,z\big)_{\dbL^2}.$

In this case, based on the explicit representations of the value functions $V(\cd,\cd)$ resp.~$V_{h}(\cd,\cd)$ in \rf{val-1} resp.~\rf{val-h-1}, 
we arrive at
\bel{w1012e4}
\setlength\abovedisplayskip{3pt}
\setlength\belowdisplayskip{3pt}
\bal
0&\leq \big( \cP(t)z,z \big)_{\dbL^2}- \big( \cP_h(t)\Pi_hz,z \big)_{\dbL^2}
=2V(t,z)-2V_{h}(t,z)\\
&=2 \cG\big(t,z; u^*(\cd;t,z) \big) -2 \cG_{h}\big(t,z; u^*_h(\cd;t,z)\big)\,.
\eal
\ee
To estimate the difference of the right-hand side of \rf{w1012e4}, we replace $u^*(\cd;t,z)$ by the (possibly) non-optimal control 
$u^*_h(\cd;t,z)\in L^2(t,T;\dbL^2)$. 
By the optimality of 
$u^*(\cd;t,z)$ in Problem {\bf (LQ)$^{t}_{\tt aux}$}, we then conclude that
\bel{w1012e5}
\setlength\abovedisplayskip{3pt}
\setlength\belowdisplayskip{3pt}
0 \leq \big( \cP(t)z,z \big)_{\dbL^2}- \big( \cP_h(t)\Pi_hz,z \big)_{\dbL^2}
\leq 2\cG\big(t,z; u^*_h(\cd;t,z)\big)-2\cG_{h}\big(t,z; u^*_h(\cd; t,z)\big)\,.
\ee
Hence, to bound the difference of the quadratic  cost functionals, we have to estimate the term
$\big\|x\big(\cd;t,z,u^*_h(\cd;t,z)\big)- x^*_h\big(\cd; t,z,u^*_h(\cd; t,z)\big)\big\|$,
which will be done in Lemma \ref{w1012l3} below.

\ss

\no{\bf Case (ii)}  $\big(\cP_h(t)\Pi_hz,z\big)_{\dbL^2}> \big(\cP(t)z,z\big)_{\dbL^2}.$

In this case, similar to Case (i), we can deduce that 
\bel{w1012e6}
\setlength\abovedisplayskip{3pt}
\setlength\belowdisplayskip{3pt}
\bal
0 \leq \big( \cP_h(t)\Pi_hz,z \big)_{\dbL^2}- \big( \cP(t)z,z \big)_{\dbL^2}
\leq 2\cG_{h}\big(t,z;  u^*(\cd; t,z)\big)-2\cG\big(t,z; u^*(\cd;t,z)\big)\,.
\eal
\ee
Note that $u^*(\cd; t,z)\in L^2(t,T;\dbL^2)$ may be a non-optimal control in Problem {\bf (LQ)$^{t;h}_{\tt aux}$}.
Then, the difference between $x_h\big(\cd; t,z, u^*(\cd; t,z)\big)$ and $x^*\big(\cd;t,z,u^*(\cd;t,z)\big)$ need be estimated, 
which will be 
dealt with in Lemma \ref{w1012l4}.

\bl{w1012l3}
For any $t\in[0,T)$ and $z\in\dbL^2$,
suppose that $\big(x^*_h(\cd; t,z), u^*_h(\cd; t,z)\big)$ is the optimal pair of Problem {\bf (LQ)$^{t;h}_{\tt aux}$}.
Let
$\bar x(\cd;t,z) \deq x\big(\cd;t,z, u^*_h(\cd;t,z)\big)$. Then there exists a constant $\cC$ independent of $h$ 
such that
\begin{subequations}\label{w1012e7}
    \begin{empheq}[left={\empheqlbrace\,}]{align}
      & \sup_{t\in[0,T]} \sup_{s\in[t,T]}\|\bar x(s;t,z)\|\leq \cC\|z\|\,,\label{w1012e7a}\\
&\|\bar x(s;t,z)- x^*_h(s;t,z)\| \leq \cC h^2\Big[\frac{1}{s-t}+\ln{\frac 1 h}\Big]\|z\| \qq \forall\, s\in(t,T]\,, \label{w1012e7b}\\
 &\int_t^T \|\bar x(s;t,z)-x^*_h(s;t,z)\| \rd s \leq \cC h^2 \ln{\frac 1 h}\|z\| \,.\label{w1012e7c}
\end{empheq}
\end{subequations}
%
\el

\begin{proof}
Throughout the proof, we write $\bar x(\cd)$ for $\bar x(\cd;t,z)$. Recall that $\h E(\cd)$ resp.~$\h E_h(\cd)$ denote 
semigroups generated by $\cA$ resp.~$\cA_h$, and that $\h G(\cd)=\h E(\cd)-\h E_h(\cd)\Pi_h$; see Section \ref{dis-lq}.

\ss

{\bf (1) Verification of \rf{w1012e7a}.}
By \rf{pde}, Lemmata \ref{w228l1} and \ref{w1012l1}, it follows that
\begin{equation*}
\setlength\abovedisplayskip{3pt}
\setlength\belowdisplayskip{3pt}
\bal
\|\bar x(s)\|&=\Big\|\h E(s-t)z+\int_{t}^{s}\h E(s-\th)  u^*_h(\th;t,z)\rd \th \Big\|\\
&\leq \cC \Big[\|z\|+\sup_{s\in[t,T]}\|u^*_h(s; t,z)\|\Big]\\
&\leq \cC \|z\|\,,
\eal
\end{equation*}
where $\cC$ is independent of $h$ and $t$. That settles \rf{w1012e7a}.

\ss

{\bf (2) Verification of  \rf{w1012e7b}.} By \rf{pde}, \rf{pde-h} and Lemma \ref{w1012l1},   
Lemma \ref{error-G_h} for $\h G_h(\cd)$ with $(\rho,\g)=(0,0)$ and $(\rho,\g)=(0,2)$, we find that
\begin{equation*}
\setlength\abovedisplayskip{3pt}
\setlength\belowdisplayskip{3pt}
\bal
\|\bar x(s)- x^*_h(s;t,z)\|&\leq  \big\| \h G_h(s-t)z \big\|+\Big\| \int_{(s-h^2)\vee t}^s \h G_h(s-\th)u^*_h(\th; t,z)\rd \th \Big\|\\
&\q+\Big\| \int_t^{(s-h^2)\vee t} \h G_h(s-\th)u^*_h(\th; t,z)\rd \th \Big\|\\
&\leq \cC \frac {h^2}{s-t}\|z\|+\cC h^2\sup_{s\in[t,T]}\|u^*_h(s;t,z)\|
+\cC \int_t^{(s-h^2)\vee t}  \frac{h^2}{s-\th}\rd \th \sup_{s\in[t,T]}\|u^*_h(s;t,z)\|\\
&\leq \cC h^2\Big[\frac{1}{s-t}+1+\ln{(s-t)}-\ln{h}\Big] \|z\|\,,
\eal
\end{equation*}
which yields the assertion \rf{w1012e7b} since
\begin{equation*}
\setlength\abovedisplayskip{3pt}
\setlength\belowdisplayskip{3pt}
\bal
\ln{(s-t)}-\ln{h}\leq \ln{T}+\ln{\frac 1 h}=\ln{\frac 1 h}\Big[\frac{\ln{T}}{\ln{\frac 1 {h_0}}}+1\Big]=\big[\cC_{h_0}+1\big]\ln{\frac 1 h}\,.
\eal
\end{equation*}



{\bf (3) Verification of \rf{w1012e7c}.}  
Based on systems \rf{pde}, \rf{pde-h} and assertions \rf{w1012e1b}, \rf{w1012e7a}, \rf{w1012e7b}, we arrive at
\begin{equation*}
\setlength\abovedisplayskip{3pt}
\setlength\belowdisplayskip{3pt}
\bal
\int_t^T& \|\bar x(s;t,z)-x^*_h(s;t,z)\| \rd s\\
& \q \leq  \int_t^{(t+h^2)\wedge T} \sup_{s\in[t,T]}  \big[\|\bar x(s;t,z)\|+\|x^*_h(s;t,z)\|\big]\rd s
+\cC h^2 \int_{(t+h^2)\wedge T}^T\Big[ \frac{1}{s-t}+\ln{\frac 1 h}\Big]\|z\|\rd s\\
&\quad  \leq \cC h^2 \ln{\frac 1 h}\|z\| \,.
\eal
\end{equation*}
That completes the proof.
\end{proof}

In the same vein as  in Lemma \ref{w1012l3}, we derive the following
\bl{w1012l4}
For any $t\in [0,T)$ and $z\in\dbL^2$,
let $\big(x^*(\cd;t_l,z), u^*(\cd; t_l,z)\big)$ be the optimal pair of Problem {\bf (LQ)$^{t_l}_{\tt aux}$}.
Denote by $\h x_h\big(\cd;t,z)\deq x_h(\cd; t,z, u^*(\cd; t,z)\big)$. Then
\begin{subequations}\label{w1012e9}
    \begin{empheq}[left={\empheqlbrace\,}]{align}
      & \sup_{t\in[0,T]} \sup_{s\in[t,T]}\|\h x_h(s;t,z)\| \leq \cC\|z\| \,, \label{w1012e9a}\\
      &\|\h x_h(s;t,z)- x^*(s;t,z)\| \leq \cC h^2\Big[\frac{1}{s-t}+\ln{\frac 1 h}\Big]\|z\|  \qq \forall\, s\in(t,T]\,,  \label{w1012e9b} \\
&\int_t^T \|\h x_h(s;t,z)-x^*(s;t,z)\| \rd s \leq \cC h^2 \ln{\frac 1 h}\|z\| \,.  \label{w1012e9c}
\end{empheq}
\end{subequations}
\el

So far, we have obtained the following results in this section:
\begin{enumerate}[(a)]

\item
By \rf{w1012e3a} in Lemma \ref{w1012l2}, the $\cL(\dbL^2)$-valued solution $\cP(\cd)$ to the Riccati equation 
\rf{Riccati} is bounded; by its close connection to Problem {\bf (LQ)$^t_{\tt aux}$} --- see formulae \rf{feedback-1} and 
\rf{val-1} --- the stability bound \rf{w1012e3a} for $\cP(\cd)$ leads to the bound \rf{w1012e3b} for the optimal pair
$\big(x^*(\cd; t,z), u^*(\cd; t,z) \big)$ of Problem {\bf (LQ)$^t_{\tt aux}$}.

\item
According to Lemma \ref{w1012l1}, these stability bounds are inherited by $\cP_h(\cd)$, which solves the spatial 
discretization \rf{dis-Riccati1-h}. This family of $\cL(\dbL^2|_{\dbV_h})$-valued operators is also linked to 
Problem {\bf (LQ)$^{t;h}_{\tt aux}$}, whose optimal pairs $\big(x^*_h(\cd; t,z), u^*_h(\cd; t,z) \big)$ are bounded (uniformly in $h$) in terms of $z$; see \rf{w1012e1b}.

\item
In Lemma \ref{w1012l3}, we compare the solution $\bar x(\cd; t,z)$ to PDE \rf{pde} 
(with control $u(\cd)=u^*_h(\cd;t,z)$) 
with the optimal state $x^*_h(\cd; t,z) $ of Problem {\bf (LQ)$^{t;h}_{\tt aux}$}.

\item
The `reverse case' is addressed in Lemma \ref{w1012l4}, where we compare the solution $\h x(\cd; t,z)$ to 
semi-discretization \rf{pde-h} (with the given control $u_h(\cd)=u^*(\cd;t,z)$)
with the optimal state $x^*(\cd;t,z)$ of Problem  {\bf (LQ)$^{t}_{\tt aux}$}.

\end{enumerate}

The following theorem is  the main result in this section, which bounds the error between $\cP(\cd)$ 
and $\cP_h(\cd)$.

\bt{Riccati-rate-h}
Suppose that $\cP(\cd)$ and $\cP_h(\cd)$ are solutions to Riccati equations \rf{Riccati3} and \rf{dis-Riccati1-h} respectively.
Then there exists a constant $\cC$ independent of $h$ such that
\bel{w1012e10}
\setlength\abovedisplayskip{3pt}
\setlength\belowdisplayskip{3pt}
\|\cP(t)-\cP_h(t)\Pi_h\|_{\cL(\dbL^2)}\leq \cC h^2 \Big[\frac {\a}{T-t}+\ln{\frac 1 h}\Big] \qq \forall\, t\in[0,T)\,.
\ee
\et

\begin{proof}
For given $t\in[0,T)$, and $z\in\dbL^2$, based on \rf{w1012e5} and \rf{w1012e6} we split the proof into the following two cases.

\ss

\no {\bf Case (i)}. $(\cP_h(t)\Pi_hz,z)_{\dbL^2}\leq (\cP(t)z,z)_{\dbL^2}\,.$

In this case,  by \rf{w1012e5} and the definition of the cost functionals $\cG(\cd,\cd; \cd)$ and $\cG_{h}(\cd,\cd; \cd)$,
as well as $\bar x(\cd;t,z)$ that is defined in Lemma \ref{w1012l3}, we may use
binomial formula and estimate 
\begin{eqnarray*}
&0  &\leq  \big( \cP(t)z,z \big)_{\dbL^2}-\big( \cP_h(t)\Pi_hz,z \big)_{\dbL^2}
\leq 2\cG\big(t,z; u^*_h(\cd;t,z)\big)-2\cG_{h}\big(t,z; u^*_h(\cd;t,z)\big)\\
&&=\int_t^T\big[\|\bar x(s;t,z)\|^2-\|x^*_h(s;t,z)\|^2 \big] \rd s 
 +\a \big[ \|\bar x(T;t,z)\|^2-\|x^*_h(T;t,z)\|^2 \big]\\
&&\leq \cC \sup_{s\in[t,T]}\big[ \|\bar x(s;t,z)\|+\|x^*_h(s;t,z)\| \big]\\
&&\qq\times \Big[\int_t^T \|\bar x(s;t,z)-x^*_h(s;t,z)\| \rd s +\a \|\bar x(T;t,z)-x^*_h(T;t,z)\|\Big]\\
&&\leq \cC h^2\Big[\frac \a{T-t}+\ln{\frac 1 h}\Big]\|z\|^2\,.
\end{eqnarray*}

\ss

\no {\bf Case (ii)}. $\big(\cP_h(t)\Pi_hz,z\big)_{\dbL^2}> \big(\cP(t)z,z\big)_{\dbL^2}.$

As in Case (i), 
by \rf{w1012e6} and Lemmata \ref{w1012l2} and \ref{w1012l4}, we can deduce that
\begin{equation*}
\setlength\abovedisplayskip{3pt}
\setlength\belowdisplayskip{3pt}
\bal
0 &\leq \big( \cP_h(t)\Pi_hz,z \big)_{\dbL^2}-\big( \cP(t)z,z \big)_{\dbL^2}
\leq 2\cG_{h}(t,z;  u^*(\cd; t,z))-2\cG(t,z; u^*(\cd;t,z))\\
&\leq \cC \sup_{s\in[t,T]}\big[ \|\h x_h(s;t,z)\|+\|x^*(s;t,z)\| \big]\\
&\qq\times \Big[\int_t^T \|\h x_h(s;t,z)-x^*(s;t,z)\| \rd s +\a \|\h x_h(T;t,z)-x^*(T;t,z)\|\Big]\\
&\leq \cC h^2\Big[\frac \a{T-t}+\ln{\frac 1 h}\Big]\|z\|^2\,.
\eal
\end{equation*}

A combination of these two estimates leads to
\begin{equation*}
\setlength\abovedisplayskip{3pt}
\setlength\belowdisplayskip{3pt}
\bal
\big| \big( \cP_h(t)\Pi_hz,z \big)_{\dbL^2}-\big( \cP(t)z,z \big)_{\dbL^2}\big |
\leq \cC h^2\Big[\frac \a{T-t}+\ln{\frac 1 h}\Big]\|z\|^2\,,
\eal
\end{equation*}
which implies the assertion \rf{w1012e10}.
\end{proof}

%

\subsubsection{The first consistent  difference Riccati equation}\label{Riccati-dis-2}

The main result in this section is Theorem \ref{Riccati-rate}, which provides an error estimate for 
\rf{dis-Riccati1-h-t} ---  the temporal discretization of \rf{dis-Riccati1-h}. For its verification,
we parallel the steps in Section \ref{Riccati-dis-h}, and correspondingly  introduce related 
quantities $\bar x_h(\cd; t_l,z)$ in Lemma \ref{w207l5}, and $\h x_\cd(t_l,z)$ in Lemma \ref{w207l6}
to quantify the temporal error inherent to \rf{dis-Riccati1-h-t}
step by step.

 The iterates of the  {\em difference Riccati equation} \rf{dis-Riccati1-h-t} are known to
take values in ${\mathbb S}_+(\dbL^2|_{\dbV_h})$; see Lemma \ref{w1021l1}. 
In this respect, this discretization is {\em consistent}.

\ss

The following result characterizes the solution of \rf{dde}; since the proof is obvious, we skip it here.
\bl{w207l2}
For any $l=0,1,\cds,N-1$, $z\in\dbL^2$, $u_\cd\in\dbU(t_l,T)$, let $x_\cd(t_l,z,u_\cd)$ solve \rf{dde}. Then for any $n=l,l+1,\cds,N$, it holds that
\bel{w305e2}
\setlength\abovedisplayskip{3pt}
\setlength\belowdisplayskip{3pt}
x_n(t_l,z,u_\cd)=\cA_0^{n-l}\Pi_hz+\t\sum_{k=l}^{n-1} \cA_0^{n-k}\Pi_h u_k\,.
\ee
\el

%

The following lemma establishes that $\cP_\cd$ from \rf{dis-Riccati1-h-t}
is uniformly bounded, and also provides a uniform bound for  {$x^*_\cd(t_l,z)$} from Problem  {\bf (LQ)$^{t_l; h,\t}_{\tt aux}$}, whose derivation
is based on \rf{w305e2}. It will turn out later that \rf{w305e2} is useful at other
places as well, including the estimation of the difference between $x\big(\cd;t_l,z,u(\cd)\big)$ and $x_\cd(t_l,z,u_\cd)$; see 
Lemmata \ref{w207l5} and \ref{w207l6}.
%

\bl{w207l4}
Let $\cP_\cd$ be the solution to Riccati equation \rf{dis-Riccati1-h-t}, and
for any $l=0,1,\cds,N-1$ and $z\in\dbL^2$, let $\big(x^*_\cd(t_l,z), u^*_\cd(t_l,z)\big)$ be the optimal pair of Problem {\bf (LQ)$^{t_l; h,\t}_{\tt aux}$}.
There exists a constant $\cC$ independent of $h$ and $\t$ such that
\begin{subequations}\label{w225e1}
    \begin{empheq}[left={\empheqlbrace\,}]{align}
      & \max_{0\leq l\leq N-1}\|\cP_l\|_{\cL(\dbL^2|_{\dbV_h})}\leq \cC \,, \label{w225e1a}\\
      &  \max_{0\leq l\leq N-1}\max_{l\leq n\leq N-1} \big[\| x^*_n(t_l,z)\|  +\|u^*_n(t_l,z)\|\big] \leq \cC\|z\|\,. \label{w225e1b}
    \end{empheq}
\end{subequations}
%
\el

\begin{proof}
{\bf (1) Verification of \rf{w225e1a}.}
This assertion can be obtained by the same vein as in the proof of Lemma \ref{w1012l1}, along with the explicit representation \rf{val-2a}
for the value function $V_{h,\t}(\cd,\cd)$,
and the fact that $\|\cA_0\|_{\cL(\dbL^2|_{\dbV_h})}\leq 1$.

\ss

{\bf (2) Verification of \rf{w225e1b}.} By the feedback law \rf{feedback-2a}
and the difference equation \rf{dde},
for any $n\geq l$, we have
\begin{equation*}
\setlength\abovedisplayskip{3pt}
\setlength\belowdisplayskip{3pt}
x^*_{n+1}(t_l,z)=\cA_0x^*_{n}(t_l,z)-\t \cA_0\cK_n^{-1}\cH_nx^*_n(t_l,z)\,.
\end{equation*}
By the definition of $\cK_\cd$ and $\cH_\cd$ in (\ref{dis-Riccati1-h-t}), 
assertion \rf{w225e1a} and the fact that $\{\cP_n\}_{n=0}^N \subset \dbS_+(\dbL^2|_{\dbV_h})$, we  arrive at
\begin{equation*}
\setlength\abovedisplayskip{3pt}
\setlength\belowdisplayskip{3pt}
\bal
\|\cA_0\cK_n^{-1}\cH_n\|_{\cL(\dbL^2|_{\dbV_h})}
\leq \|\cP_{n+1}\|_{\cL(\dbL^2|_{\dbV_h})}
\leq \cC\,,
\eal
\end{equation*}
which leads to
\begin{equation*}
\setlength\abovedisplayskip{5pt}
\setlength\belowdisplayskip{5pt}
\|x^*_{n}(t_l,z)\|
\leq (1+\cC\t)\|x^*_{n-1}(t_l,z)\|
\leq \cds 
\leq (1+\cC\t)^{n-l}\|\Pi_h z\|
\leq \cC\|\Pi_hz\|\,.
\end{equation*}
Finally, relying on the discrete feedback form \rf{feedback-2a} again, we have
\begin{equation*}
\setlength\abovedisplayskip{5pt}
\setlength\belowdisplayskip{5pt}
\|u^*_{n}(t_l,z)\|=\|-\cK_n^{-1}\cH_nx^*_n(t_l,z)\|\leq  \cC\|x^*_n(t_l,z)\|
\leq  \cC \|\Pi_hz\|\,.
\end{equation*}
That completes the proof.
\end{proof}

In the next step, we follow similar arguments as in Section \ref{Riccati-dis-h} to estimate $\|\cP_h(t_l)\Pi_h-\cP_l\Pi_h\|_{\cL^2(\dbL^2)}$. 
The following two results (Lemmata \ref{w207l5} and \ref{w207l6}) show some estimates in time, where similar estimates in space are presented in Lemmata \ref{w1012l3} and \ref{w1012l4}.

\bl{w207l5}
For any $l=0,1,\cds,N-1$ and $z\in\dbL^2$,
let $\big(x^*_\cd(t_l,z), u^*_\cd(t_l,z)\big)$ be the optimal pair of Problem {\bf (LQ)$^{t_l;h,\t}_{\tt aux}$}.
Suppose that 
\bel{w207e5}
\setlength\abovedisplayskip{5pt}
\setlength\belowdisplayskip{5pt}
\bar u_h(t;t_l,z)\deq u^*_n(t_l,z) \qq \forall\, t\in [t_n,t_{n+1})\,, \quad n=l,l+1,\cds,N-1\,,
\ee
and
$\bar x_h(\cd;t_l,z) \deq x_h\big(\cd;t_l,z, \bar u(\cd;t_l,z)\big)$. There exists a constant $\cC$ independent of $h\,,\t$ 
such that
\begin{subequations}\label{w225e11}
    \begin{empheq}[left={\empheqlbrace\,}]{align}
      & \max_{0\leq l\leq N-1}\sup_{t\in[t_l,T]}\|\bar x_h(t;t_l,z)\|\leq \cC\|z\|\,,\label{w225e11a}\\
      &\sum_{n=l}^{N-1}\!\int_{t_n}^{t_{n+1}}\!\big[\|\bar x_h(t;t_l,z)\!-\!\bar x_h(t_n;t_l,z)\|+\|\bar x_h(t;t_l,z)\!-\!\bar x_h(t_{n+1};t_l,z)\|\big]\rd t 
\leq \cC \t\ln{\frac 1 \t}\|z\| \,,\label{w225e11b}\\
&\|\bar x_h (t_{n};t_l,z)\!-\!x^*_{n}(t_l,z)\|\leq \cC\t\Big[\frac 1 {t_n-t_l}+\ln{\frac 1 \t}\Big]\|z\| \q n=l+1,\cds,N\,. \label{w225e11c}
\end{empheq}
\end{subequations}
%
\el

\begin{proof}
For simplicity, we write $\bar x_h(\cd)$ instead of $\bar x_h(\cd;t_l,z)$
in the proof.
\ss

{\bf (1) Verification \rf{w225e11a}.}
By \rf{pde-h} and  \rf{w225e1b} in Lemma \ref{w207l4}, it follows that
\begin{equation*}
\setlength\abovedisplayskip{3pt}
\setlength\belowdisplayskip{3pt}
\bal
\|\bar x_h(t)\|&=\Big\|\h E_h(t-t_l)\Pi_hz+\int_{t_l}^{t}\h E_h(t-\th) \bar u_h(\th;t_l,z) \rd \th \Big\|\\
&\leq \cC\big[\|\Pi_h z\|+\max_{l\leq k\leq N-1}\|u^*_k(t_l,z)\|\big]
\leq \cC\|\Pi_h z\|\,.
\eal
\end{equation*}

\ss
{\bf (2) Verification of \rf{w225e11b}.}  We only prove
\begin{equation*}
\setlength\abovedisplayskip{3pt}
\setlength\belowdisplayskip{3pt}
I:=\sum_{n=l}^{N-1}\int_{t_n}^{t_{n+1}}\|\bar x_h(t)-\bar x_h(t_n)\|\rd t\leq 
 \cC \t\ln{\frac 1 \t} \|z\|\,,
\end{equation*}
since the remaining part can be shown similarly.
By \rf{pde-h}, we find that
\begin{eqnarray}
&I &\leq \sum_{n=l}^{N-1}\int_{t_n}^{t_{n+1}}\big\|\big[\h E_h(t-t_l)-\h E_h(t_n-t_l) \big]\Pi_hz\big\|\rd t \nonumber\\
&&\q +\sum_{n=l}^{N-1}\int_{t_n}^{t_{n+1}}\Big\|\int_{t_l}^{t_n} \big[\h E_h(t-s)-\h E_h(t_n-s) \big] \bar u_h(s; t_l,z)\rd s \Big\|\rd t\nonumber\\
&&\q +\sum_{n=l}^{N-1}\int_{t_n}^{t_{n+1}}\Big\|\int_{t_n}^{t} \h E_h(t-s) u^*_n(t_l,z)\rd s\Big\|\rd t \nonumber\\
&&=: I_1+I_2+I_3\,. \label{w225e7}
\end{eqnarray}

Now we estimate the summands on the right-hand side of \rf{w225e7} independently. For $I_1$, by virtue of $\nu(\cd)$ which is defined in \rf{w827e1}, 
as well as \rf{w228e2} for $\h E_h(\cd)$ with $\g=0$, and \rf{w320e01} for $\h E_h(\cd)$  with $\g=1$ in Lemma \ref{w228l1}, it follows that
\bel{w225e8}
\setlength\abovedisplayskip{3pt}
\setlength\belowdisplayskip{3pt}
\bal
I_1 &\leq \int_{t_{l}}^{(t_{l}+2\t)\wedge T}\cC\|z\|\rd t
 +\int_{(t_{l}+2\t)\wedge T}^T\cC \frac{t-\nu(t)}{\nu(t)-t_l}\|z\|\rd t \\
&\leq  \cC \|z\|\Big[\t+\t \int_{(t_{l}+2\t )\wedge T}^T \frac{1}{t-\t-t_l}\rd t \Big] 
\leq \cC\|z\|  \t\ln{\frac 1 \t}\,.
\eal
\ee
Here, the constant $\cC$ comes from Lemma \ref{w228l1} and also depends on $\cC_{\t_0}=\frac{\ln{T}}{\ln{\frac{1}{\t_0}}}$; in the second inequality, to bound the integral (after computation) we apply the fact that $\frac{1}{\nu(t)-t_l}\leq \frac{1}{t-\t-t_l}$.

For $I_{2}$, we proceed accordingly to \rf{w225e8},  and use \rf{w320e01} in Lemma \ref{w228l1} to get
\bel{w225e08}
\setlength\abovedisplayskip{3pt}
\setlength\belowdisplayskip{3pt}
\bal
I_{2}&\leq  \cC\sum_{n=l+1}^{N-1}\int_{t_n}^{t_{n+1}}\int_{t_{n}-\t }^{t_n} \|\bar u_h(s;t_l,z)\|\rd s \rd t  \\
&\q+\cC\sum_{n=l+1}^{N-1}\int_{t_n}^{t_{n+1}}\int_{t_l}^{t_{n}-\t} \frac{t-t_n}{t_n-s} \|\bar u_h(s;t_l,z)\|\rd s \rd t  \\
&\leq \cC \|z\|\Big[ \t
+\t \sum_{n=l}^{N-1}\int_{t_n}^{t_{n+1}}\int_{t_l}^{t_{n}-\t} \frac{1}{t_n-s}\rd s \rd t\Big] 
\leq \cC\|z\|  \t\ln{\frac 1 \t}\,. 
\eal
\ee

For $I_{3}$, by \rf{w225e1b}, it is easy to see that
\bel{w225e10}
\setlength\abovedisplayskip{5pt}
\setlength\belowdisplayskip{5pt}
I_{3}\leq \cC\t \max_{l\leq n\leq N-1}\| u^*_n(t_l,z)\| \leq \cC\t \|z\|\,.
\ee
A combination of \rf{w225e7}--\rf{w225e10} now yields the assertion.

\ss

{\bf (3) Verification of \rf{w225e11c}.} By \rf{pde-h} and \rf{w305e2}, we find that
\begin{eqnarray*}
&\|\bar x_h(t_n)- x^*_{n}(t_l,z)\|  
&\leq \big\| \h E_h(t_n-t_l)\Pi_h z-\cA_0^{n-l}\Pi_hz \big\| \nonumber\\
&&\q+\Big\| \sum_{j=l}^{n-1}\int_{(t_j, t_{j+1}]}\big[ \h E_h(t_{n}-t)\Pi_h-\cA_0^{n-j}\Pi_h\big]u^*_j(t_l,z)\rd t \Big\|\nonumber\\
&&=:J_1+J_2\,.
\end{eqnarray*}
We estimate these two terms independently.
By \rf{w1115e2c} in Lemma \ref{w207l1} with $\g=2$ and \rf{w320e01} in Lemma \ref{w228l1} with $\g=1$, for any $\e\in(0,\t)$  we have
\begin{equation*}
\setlength\abovedisplayskip{3pt}
\setlength\belowdisplayskip{3pt}
\bal
J_1
&\leq \big\| \big[\h E_h(\e)-\mathds{1}_h \big] \h E_h(t_n-t_l-\e)\Pi_h z \big\|+\big\|\h G_{\t}(t_n-t_l-\e)z  \big\|\\
&\leq \cC\frac{\e}{t_n-t_l-\e}\|z\|+\cC \t \frac{1}{t_n-t_l-\e}\|z\|\,,
\eal
\end{equation*}
where $\cC> 0$ is independent of $\e$. For the definition and properties of $\h G_\t(\cd)$, recall Section \ref{dis-lq} and Lemma \ref{w207l1}. 
Note that $\h E_h(t_n-t_l)z-\cA_0^{n-l}\Pi_hz\neq \h G_{\t}(t_n-t_l)z$.
Hence we divide it into two terms
\begin{equation*}
\setlength\abovedisplayskip{3pt}
\setlength\belowdisplayskip{3pt}
\bal
\big[\h E_h(t_n-t_l)-\h E_h(t_n-t_l-\e)\big]\Pi_h z\,, \qq \h E_h(t_n-t_l-\e)\Pi_hz-\cA_0^{n-l}\Pi_hz\,,
\eal
\end{equation*}
and the latter one is just $\h G_{\t}(t_n-t_l-\e)z$.
Since $\e>0$ is arbitrary, we conclude that
\begin{equation*}
\setlength\abovedisplayskip{3pt}
\setlength\belowdisplayskip{3pt}
\bal
J_1 \leq \cC \t \frac{1}{t_n-t_l}\|z\|\,.
\eal
\end{equation*}
Also,  by \rf{w225e1b} in Lemma \ref{w207l4} and \rf{w1115e2c} in Lemma \ref{w207l1},
\begin{equation*}
\setlength\abovedisplayskip{3pt}
\setlength\belowdisplayskip{3pt}
\bal
J_2 
&\leq  \sum_{j=l}^{n-1}\int_{(t_j, t_{j+1}]}\|\h G_\t (t_n-t)\|_{\cL(\dbL^2)}\max_{l\leq j\leq N-1} \|u^*_j(t_l,z)\|\rd t \\
&\leq \cC\|z\| \Big[\int_{t_l}^{t_n-\t}\frac \t {t_n-t}\rd t
+\int_{t_n-\t }^{t_n}\cC\rd t\Big]
\leq  \cC\|z\| \t \ln{\frac 1 \t}\,.
\eal
\end{equation*}

A combination of these two estimates now settles the assertion.
\end{proof}

\bl{w207l6}
For any $l=0,1,\cds,N-1$ and $z\in\dbL^2$,
let $\big(x^*_h(\cd;t_l,z), u^*_h(\cd; t_l,z)\big)$ be the optimal pair of Problem {\bf (LQ)$^{t_l;h}_{\tt aux}$}.
Let
\bel{w225e6}
\setlength\abovedisplayskip{3pt}
\setlength\belowdisplayskip{3pt}
\h u_n(t_l,z)\deq \frac 1 \t \int_{t_n}^{t_{n+1}}u^*_h(t;t_l,z)\rd t \qq \forall\,n=l,l+1,\cds,N-1\,,
\ee
and $\h x_\cd(t_l,z)\deq x_\cd\big(t_l,z,\h u_\cd(t_l,z)\big)$. Then there exists $\cC$ independent of $h$ and $\t$ such that
\begin{subequations}\label{w225e13}
    \begin{empheq}[left={\empheqlbrace\,}]{align}
      &\max_{0\leq l\leq N-1} \max_{l\leq n\leq N}\|\h x_n(t_l,z)\|\leq \cC\|z\| \,, \label{w225e13a}\\
      &\sum_{n=l}^{N-1}\!\int_{t_n}^{t_{n+1}}\!\big[\| x^*_h(t;t_l,z)\!-\! x^*_h(t_n;t_l,z)\|+\| x^*_h(t;t_l,z)\!-\! x^*_h(t_{n+1};t_l,z)\|\big]\rd t \notag\\
&\qq\qq\qq\qq\qq\leq \cC \t\ln{\frac 1 \t}\|z\| \,, \label{w225e13b}\\
& \|\h x_{n}(t_l,z)- x^*_h(t_{n};t_l,z)\|  
\leq \cC \t\Big[\frac 1 {t_n-t_l}+\ln{\frac 1 \t}\Big]\|z\| \q n=l+1,\cds,N \,. \label{w225e13c}
\end{empheq}
\end{subequations}
\el

\begin{proof}
{\bf (1) Verification of \rf{w225e13a}.}  
By Lemmata \ref{w207l2} and \ref{w1012l1}, for any $n=l,l+1,\cds,N$, we have 
\begin{equation*}
\setlength\abovedisplayskip{3pt}
\setlength\belowdisplayskip{3pt}
\bal
\|\h x_n(t_l,z)\|&=\Big\|\cA_0^{n-l}\Pi_hz+\sum_{k=l}^{n-1}\cA_0^{n-k}\Pi_h\int_{t_k}^{t_{k+1}}u^*_h(t;t_l,z)\rd t\Big\|\\
&\leq \cC\|\Pi_h z\|+\cC\sup_{t\in[t_l,T]}\|u^*_h(t;t_l,z)\|
\leq \cC\|z\|\,.
\eal
\end{equation*}

{\bf (2) Verification of \rf{w225e13b} and \rf{w225e13c}.}  As done for \rf{w225e11b} and \rf{w225e11c}.
\end{proof}

The following lemma gives a regularity result for the optimal control to Problem {\bf (LQ)$^{t_l}_{\tt aux}$}, which will be 
needed below. Its derivation uses a tool from the `open-loop approach' --- but now for {\em deterministic} control problems.

\bl{w226l1}
For a given uniform partition $I_\t$ of size $\t\in(0,\t_0]\subset(0,1)$,
for any $l=0,1,\cds,N-1$ and $z\in \dbL^2$,
suppose that $u_h^*(\cd;t_l,z)$ is the optimal control of Problem {\bf (LQ)$^{t_l;h}_{\tt aux}$}. Then, for $s,s_0\in [t_n,t_{n+1}), n=l,l+1,\cds,N-1$, 
there exists  a constant $\cC$ independent of $\t$ such that
\bel{w226e4}
\setlength\abovedisplayskip{3pt}
\setlength\belowdisplayskip{3pt}
\|u^*_h(s;t_l,z)-u^*_h(s_0;t_l,z)\|\leq \cC \t\Big[\frac{\a}{T-s\vee s_0}+\ln{\frac 1 \t}\Big] \|z\| \,,
\ee
where $s\vee s_0=\max\{s,s_0\}$ defined in \rf{w1025e1}.
\el

\begin{proof}
We adopt Pontryagin's maximum principle to verify the assertion. By the adjoint equation \rf{bpde-h},
it is easy to write its mild solution as
\begin{equation*}
\setlength\abovedisplayskip{3pt}
\setlength\belowdisplayskip{3pt}
y_h(s;t_l,z)
=-\a \h E_h(T-s) x^*_h(T; t_l,z)+
\int_s^T \h E_h(\th-s)   x_h^*(\th;t_l,z)\rd \th \q s\in[t_l,T]\,.
\end{equation*}
Then, with the help of \rf{w1013e14},
we arrive at (without loss of generality, we take $s_0\leq s$)
\begin{equation*}
\setlength\abovedisplayskip{3pt}
\setlength\belowdisplayskip{3pt}
\bal
&\|u^*_h(s;t_l,z)-u^*_h(s_0;t_l,z)\|
=\|y_h(s;t_l,z)-y_h(s_0;t_l,z)\|\\
&\quad \leq \a \big\|\big[\h E_h(T-s)-\h E_h(T-s_0)\big] x^*_h(T;t_l,z) \big\|\\
&\qq+\Big\|\int_s^T \big[\h E_h(\th-s_0)-\h E_h(\th-s) \big] x^*_h(\th;t_l,z)  \rd \th \Big\|\\
&\qquad +\Big\|\int_{s_0}^s  \h E_h(\th-s_0)  x^*_h(\th;t_l,z) \rd \th  \Big\|
=:\sum_{i=1}^3 I_i\,.
\eal
\end{equation*}
Lemmata \ref{w1012l1} and \ref{w207l1} lead to
\begin{equation*}
\setlength\abovedisplayskip{5pt}
\setlength\belowdisplayskip{5pt}
I_1\leq \cC \t\frac{\a}{T-s\vee s_0} \|z\|\,.
\end{equation*}
For $I_2\,,I_3$, we can proceed similarly to deduce that
\begin{equation*}
\setlength\abovedisplayskip{3pt}
\setlength\belowdisplayskip{3pt}
\bal
I_2
&\leq \cC \int_s^{(s+\t)\wedge T} \sup_{t\in[t_l,T]}\|x^*_h(t;t_l,z)\| \rd \th
+\cC \int_{(s+\t)\wedge T}^{T}\frac{s-s_0}{\th-s} \sup_{t\in[t_l,T]}\|x^*_h(t;t_l,z)\|  \rd \th  \\
&\leq \cC \t\ln{\frac 1 \t}\|z\| \,,
\eal
\end{equation*}
and
\begin{equation*}
\setlength\abovedisplayskip{3pt}
\setlength\belowdisplayskip{3pt}
\bal
I_3
\leq \cC (s-s_0)\sup_{t\in[t_l,T]}\|x^*_h(t;t_l,z)\| 
\leq \cC\t \|z\|\,.
\eal
\end{equation*}
Combining these estimates settles the assertion \rf{w226e4}.
\end{proof}

The following result is comparable to Theorem \ref{Riccati-rate-h}, which was for $\cP_h(\cd)$ from the semi-discrete Riccati equation \rf{dis-Riccati1-h}:
now we address $\cP_\cd$ from the {\em difference Riccati equation}  \rf{dis-Riccati1-h-t}. Its proof is based on the results that we obtain so far in this section.
\bt{Riccati-rate}
Let $\cP_h(\cd)$ resp.~$\cP_\cd$ be solutions to Riccati equations \rf{dis-Riccati1-h} resp.~\rf{dis-Riccati1-h-t}.
Then there exists a constant $\cC$ independent of $h$ and $\t$ such that
\bel{w207e9}
\setlength\abovedisplayskip{3pt}
\setlength\belowdisplayskip{3pt}
\|\cP_h(t_l)\Pi_h-\cP_l\Pi_h\|_{\cL(\dbL^2)}\leq \cC \t \Big[\frac{\a}{T-t_l}+\ln{\frac 1 \t}\Big] \q \forall\, l=0,1,\cds,N-1\,.
\ee
\et

\begin{proof}
For given $l=0,1,\cds,N-1$, and $z\in\dbL^2$, we divide the proof into the following two cases. 

\ss

\no {\bf Case (i)}. $(\cP_l\Pi_hz,z)_{\dbL^2}\leq (\cP_h(t_l)\Pi_hz,z)_{\dbL^2}\,.$

In this case,  by \rf{val-h-1}, \rf{val-2a}, and the definition of the cost functionals $\cG_h(\cd,\cd; \cd)$ and $\cG_{h,\t}(\cd,\cd; \cd)$,
as well as of $\bar u_h(\cd;t_l,z)$ and $\bar x_h(\cd;t_l,z)$ that are defined in Lemma \ref{w207l5}, we can deduce that
\begin{equation*}
\setlength\abovedisplayskip{3pt}
\setlength\belowdisplayskip{3pt}
\bal
0  &\leq  \big( \cP_h(t_l)\Pi_hz,z \big)_{\dbL^2}-( \cP_l\Pi_hz,z )_{\dbL^2}\\
&\leq 2\cG_h\big(t_l,z; \bar u_h(\cd;t_l,z)\big)-2\cG_{h,\t}\big(t_l,z; u^*_\cd(t_l,z)\big)\\
&=\sum_{n=l}^{N-1}\int_{t_n}^{t_{n+1}}\big[\|\bar x_h(t;t_l,z)\|^2-\|\bar x_h(t_{n};t_l,z)\|^2 \big] \rd t \\
&\q +\sum_{n=l}^{N-1}\int_{t_n}^{t_{n+1}}\big[\|\bar x_h(t_{n};t_l,z)\|^2- \| x^*_{n}(t_l,z)\|^2 \big]\rd t  \\
&\q+\a\big[\|\bar x_h(T;t_l,z)\|^2-\|x^*_N(t_l,z)\|^2\big]
=: \sum_{i=1}^3I_i\,.
\eal
\end{equation*}
Here we apply the fact that $\|\bar u_h(\cd;t_l,z)\|_{L^2(t_l,T;\dbL^2)}=\|u^*_\cd(t_l,z)\|_{\dbU(t_l,T)}$ (see \rf{w207e5}).
In what follows, we estimate the above three terms one by one.

For $I_1$, by \rf{w225e11a} and \rf{w225e11b} in Lemma \ref{w207l5}, we have
\begin{equation*}
\setlength\abovedisplayskip{3pt}
\setlength\belowdisplayskip{3pt}
\bal
I_1 &\leq \sum_{n=l}^{N-1}\int_{t_n}^{t_{n+1}}\big[\|\bar x_h(t;t_l,z)\|+\|\bar x_h(t_{n};t_l,z)\| \big] \big[\|\bar x_h(t;t_l,z)-\bar x_h(t_{n};t_l,z)\|\big]\rd t \\
&\leq \cC\|z\| \sum_{n=l}^{N-1}\int_{t_n}^{t_{n+1}}\big[\|\bar x_h(t;t_l,z)-\bar x_h(t_{n};t_l,z)\|\big]\rd t 
\leq  \cC\|z\|^2  \t \ln{\frac 1 \t}\,.
\eal
\end{equation*}
For $I_2$, by virtue of $\nu(\cd),\pi(\cd)$ which are defined in \rf{w827e1}, as well as Lemmata \ref{w207l4} and \ref{w207l5} it follows that
\begin{eqnarray*}
&I_2&\leq \int_{t_l}^T\big[\big\|\bar x_h\big(\nu(t);t_l,z\big)\big\|+ \big\| x^*_{\pi(t)-1}(t_l,z)\big\| \big]
 \big[\big\|\bar x_h\big(\nu(t);t_l,z\big)- x^*_{\pi(t)-1}(t_l,z)\big\|\big]\rd t \\
&&\leq \cC\|z\| \int_{(t_{l+2})\wedge T}^T  \big[\big\|\bar x_h\big(\nu(t);t_l,z\big)- x^*_{\pi(t)-1}(t_l,z)\big\|\big]\rd t
     +\cC\|z\|^2\int_{t_l}^{(t_{l+2})\wedge T}1\rd t\\
&&\leq \cC \t \|z\|^2 +\cC \|z\|^2\int_{(t_{l+2})\wedge T}^T\Big[\frac{\a}{\nu(t)-t_l}+\ln{\frac 1 \t}\Big]\rd t
\leq \cC \t\ln{\frac 1 \t} \|z\|^2 \,.
\end{eqnarray*}
For $I_3$, by Lemma \ref{w207l5}, it is obvious that
\begin{equation*}
\setlength\abovedisplayskip{3pt}
\setlength\belowdisplayskip{3pt}
\bal
I_3\leq \cC\t\Big[\frac{\a}{T-t_l}+\ln{\frac 1 \t}\Big]\|z\|^2\,.
\eal
\end{equation*}

A combination of the above estimates now leads to
\bel{w207e6}
\setlength\abovedisplayskip{3pt}
\setlength\belowdisplayskip{3pt}
0\leq  \big( \cP(t_l)z,z \big)_{\dbL^2}-\big( \cP_l\Pi_hz,z \big)_{\dbL^2}
\leq \cC\t\Big[\frac{\a}{T-t_l}+\ln{\frac 1 \t}\Big]\|z\|^2\,.
\ee


\no {\bf Case (ii)}. $(\cP_l\Pi_hz,z)_{\dbL^2}> (\cP_h(t_l)\Pi_hz,z)_{\dbL^2}.$

In a similar way as in Case (i), 
by Lemmata \ref{w1012l1} and \ref{w207l6}  we infer that
\begin{eqnarray*}
&0 &\leq \big( \cP_l\Pi_hz,z \big)_{\dbL^2}-\big( \cP_h(t_l)\Pi_h z,z \big)_{\dbL^2}\\
&&\leq \cC\|z \|\sum_{n=l}^{N-1}\int_{t_n}^{t_{n+1}} \|\h x_{n}(t_l,z)- x^*_h(t;t_l,z)\|  \rd t \\
&&\q + \cC\|z\|  \sum_{n=l}^{N-1}\int_{t_n}^{t_{n+1}}\Big\| \frac 1 \t\int_{t_n}^{t_{n+1}}u^*_h(\th;t_l,z)\rd \th -u^*_h(t;t_l,z)   \Big\|\rd t  \\
&&\q +\cC\|z\| \|\h x_N(t_l,z)-x^*_h(T;t_l,z)\|
=:\cC\|z\| \sum_{i=1}^3J_i\,.
\end{eqnarray*}

For $J_1$, the triangle inequality yields
\begin{equation*}
\setlength\abovedisplayskip{3pt}
\setlength\belowdisplayskip{3pt}
\bal
J_1&\leq \sum_{n=l}^{N-1}\int_{t_n}^{t_{n+1}} \|\h x_{n}(t_l,z)- x^*_h(t_{n};t_l,z)\|+\| x^*_h(t_{n};t_l,z)- x^*_h(t;t_l,z)\|  \rd t \,.\\
\eal
\end{equation*}
In the same vein as that to estimate $I_2$, and thanks to  \rf{w225e13c} and \rf{w225e13b},  we get
\begin{equation*}
\setlength\abovedisplayskip{3pt}
\setlength\belowdisplayskip{3pt}
\bal
J_{1}\leq \cC\t \ln{\frac 1\t}\|z\|\,.
\eal
\end{equation*}

For $J_2$, by 
utilizing  Lemmata \ref{w1012l1} and \ref{w226l1},
we have
\begin{equation}\label{w208e1}
\setlength\abovedisplayskip{3pt}
\setlength\belowdisplayskip{3pt}
\bal
J_2
&\leq
\int_{(T-2\t)\vee t_l}^{T}\cC\|z\|\rd t
+\int_{t_l}^{(T-2\t)\vee t_l }\| \h u_{\pi(t)-1}(t_l,z) - u^*_h(t;t_l,z) \|\rd t   \\
&\leq  \cC \t \|z\| 
+ \cC\t \|z\| \int_{t_l}^{(T-2\t)\vee t_l} \Big[\frac {\a}{T-\mu(t)}+\ln{\frac 1 \t } \Big]\rd t 
 \leq \cC\t \ln{\frac 1\t}\|z\|\,.
\eal
\end{equation}
Thanks to \rf{w225e13c} in Lemma \ref{w207l6}, we get
\begin{equation*}
\setlength\abovedisplayskip{3pt}
\setlength\belowdisplayskip{3pt}
I_3\leq \cC \t\Big[\frac \a {T-t_l}+\ln{\frac 1 \t}\Big]\|z\| \,.
\end{equation*}

A combination of these estimates then leads to
\bel{w207e7}
\setlength\abovedisplayskip{3pt}
\setlength\belowdisplayskip{3pt}
0\leq ( \cP_l\Pi_hz,z )_{\dbL^2}- \big( \cP_h(t_l)\Pi_hz,z \big)_{\dbL^2}
\leq \cC\|z\|^2 \t\Big[\frac{\a}{T-t_l}+\ln{\frac{1}{\t}}\Big]\,,
\ee
and estimates \rf{w207e6} and \rf{w207e7} together imply the the assertion \rf{w207e9}.
\end{proof}

\subsubsection{The second consistent difference Riccati equation}\label{Riccati-dis-3}

The error analysis in Sections \ref{Riccati-dis-h} and \ref{Riccati-dis-2} for Problem  {\bf (LQ)$^{t}_{\tt aux}$} assume
{\bf (H)} --- which is a non-checkable assumption in applications. Another limitation of Theorem \ref{Riccati-rate} is
that {\em no} convergence way be expected for the `initial' time step $t_l=t_{N-1}$ unless $\a=0$, since the estimate
 \rf{w207e9} at this time is of the form 
 \begin{equation*}
\setlength\abovedisplayskip{3pt}
\setlength\belowdisplayskip{3pt}
\bal
\|\cP_h(t_{N-1})\Pi_h-\cP_{N-1}\Pi_h\|_{\cL(\dbL^2)}\leq \cC\Big[\a+\t \ln{\frac 1 \t}\Big]\,,
\eal
\end{equation*}
such that the right-hand side does {\em not} tend to zero for $\t\to 0$. However, the estimate  \rf{w207e9}
is sufficient to derive the convergence rate for the spatio-temporal discretization of Problem {\bf (SLQ)}; see also 
\cite[Theorem 4.5]{Prohl-Wang24}.

In this section, we present different  spatio-temporal discretization schemes that are exempted  from both deficiencies of the 
{\em difference Riccati equation} \rf{dis-Riccati1-h-t} above:
\begin{enumerate}[ (a) ]

\item General values for $\b\in\dbR$ are allowed to scale noise. Hence assumption {\bf (H)} is {\em not} needed anymore.

\item Theorem \ref{Riccati-rate2} 
will be shown, which even ensures convergence rate for iterates of the new {\em difference Riccati equations} \rf{dif-Riccati}. 
\end{enumerate}

In the following, we present another discretization for differential Riccati equation \rf{dis-Riccati1-h}. Since we do {\em not}
assume  {\bf (H)} any more, we rewrite Riccati equation \rf{dis-Riccati1-h} and system \rf{pde-h} by using $\D_h$ instead of $\cA_h$ as follows
\bel{dis-Riccati1-h2}
\setlength\abovedisplayskip{3pt}
\setlength\belowdisplayskip{3pt}
\lt\{\!\!\!
\begin{array}{ll} 
\ds\cP_h'(t)\!+\!\D_h\cP_h(t)\!+\!\cP_h(t)\D_h\!+\!\b^2\cP_h(t)\!+\!\mathds{1}_h\!-\!\cP_h^2(t)=0\qq t\in [0,T]\,,\\
\ns\ds\cP_h(T)=\a\mathds{1}_h\,,
\end{array}
\rt.
\ee
and
\bel{pde-h2}
\setlength\abovedisplayskip{3pt}
\setlength\belowdisplayskip{3pt}
\lt\{\!\!\!
\begin{array}{ll} 
\ds x_h'(s)=\D_h x_h(s)+\frac{\b^2}{2}x_h(s)+\Pi_h u_h(s)  \qq  s \in (t,T]\,,\\
\ns\ds x_h(t)=\Pi_hz \,.
\end{array}
\rt.
\ee

In this section, to simplify notations, we still apply some that are used in former parts.

Now we approximate 
system \rf{pde-h2} and the cost functional \rf{cost-t-h} as follows:
For any given $l= 0,1,\cds,N-1$, and $z\in \dbL^2$
\bel{pde-h-t}
\setlength\abovedisplayskip{3pt}
\setlength\belowdisplayskip{3pt}
\lt\{\!\!\!
\begin{array}{ll} 
\ds x_{n+1}=\big[1+\frac {\b^2\t} 2\big]A_0x_{n}+\t A_0{\Pi_h u_n}  \qq n=l, l+1,\cds,N-1\,,\\
\ns\ds x_l=  \Pi_hz\,,
\end{array}
\rt.
\ee
where $A_0=\big(\mathds{1}_h-\t\D_h\big)^{-1}$, and the cost functional $\cG_{h,\t}(t_l,z; \cd)$ is defined in \rf{cost-ht1}.
Also, we can propose a discretized version of the family of Problem {\bf (LQ)}$_{\tt aux}^{t;h}$ as that proposed in Section
\ref{dis-lq}.\\
\no{\bf Problem (LQ)$^{t_{l};h,\t}_{\tt aux}$.} For any  given $l=0,1,\cds,N-1$ and $z\in \dbL^2$, search for 
$u^*_\cd\in \dbU(t_l,T)$ such that
\begin{equation*}
\setlength\abovedisplayskip{3pt}
\setlength\belowdisplayskip{3pt}
\cG_{h,\t}(t_l,z; u^*_\cd)=\inf_{u_\cd \in \dbU(t_l,T)}\cG_{h,\t}(t_l,z; u_\cd)=:V_{h,\t}(t_l,z)\,.
\end{equation*}

Different from difference Riccati equation \rf{dis-Riccati1-h-t}, we introduce the following one:
\bel{dif-Riccati}
\setlength\abovedisplayskip{3pt}
\setlength\belowdisplayskip{3pt}
\lt\{\!\!\!
\begin{array}{ll} 
\ds \bar \cP_n\!=\!\big[1\!+\!\frac {\b^2\t} 2\big]^2A_0\bar \cP_{n+1}A_0\!+\!\t\mathds{1}_h \!-\!\t \bar \cH_n \bar \cK_n^{-1}\bar \cH_n 
\qq n=0,1,\cds,N-1\,,\\
\ns\ds \bar \cP_N=\a\mathds{1}_h\,,\\
\ns\ds \bar \cH_n=\big[1+\frac {\b^2\t} 2\big]A_0\bar \cP_{n+1}A_0\,,\\
\ns\ds \bar \cK_n=\mathds{1}_h+\t A_0\bar \cP_{n+1}A_0\,, \quad \mbox{with} \quad A_0=\big(\mathds{1}_h-\t \D_h\big)^{-1}\, .
\end{array}
\rt.
\ee 
As  in Lemma \ref{w1021l1}, we can deduce that
Problem {\bf (LQ)}$^{t_{l};h,\t}_{\tt aux}$
has a unique optimal control which enjoys a {\em discrete feedback form}
\bel{feedback-2}
\setlength\abovedisplayskip{3pt}
\setlength\belowdisplayskip{3pt}
u^*_n=- \bar \cK_n^{-1}\bar \cH_n x^*_n \qquad n=l,l+1,\cds,N-1\,,
\ee
and the {\em discrete value function} is  given by
\bel{val-2}
\setlength\abovedisplayskip{3pt}
\setlength\belowdisplayskip{3pt}
V_{h,\t}(t_l,z)=\frac 1 2 (\bar \cP_l\Pi_h z,z )_{\dbL^2} \qquad  \forall\, z \in {\mathbb L}^2\,.
\ee
By \rf{feedback-2}, or with the same argument as in \rf{w1026e5}, we deduce that the optimal control $u^*_\cd$ to Problem {\bf(LQ)$^{t_l;h,\t}_{\tt aux}$} is $\dbV_h$-valued.

We may
follow  the procedure in Section \ref{Riccati-dis-2} to derive the following results. 
We only provide proofs where the argumentation differs.

\bl{w1013l1}
For any $l=0,1,\cds,N-1$, $z\in\dbL^2$, $u_\cd\in\dbU(t_l,T)$, and any $n=l,l+1,\cds,N$, it holds that
\bel{w1013e1}
\setlength\abovedisplayskip{3pt}
\setlength\belowdisplayskip{3pt}
x_n(t_l,z,u_\cd)=\big[1+\frac {\b^2\t} 2\big]^{n-l}A_0^{n-l}\Pi_hz+\t\sum_{k=l}^{n-1}\big[1+\frac {\b^2\t} 2\big]^{n-k-1}A_0^{n-k}\Pi_h u_k\,,
\ee
or equivalently
\bel{w1013e2}
\setlength\abovedisplayskip{3pt}
\setlength\belowdisplayskip{3pt}
x_n(t_l,z,u_\cd)=A_0^{n-l}\Pi_hz+\t\sum_{k=l}^{n-1} A_0^{n-k}\big[\frac {\b^2} 2 x_k(t_l,z,u_\cd)+\Pi_h u_k\big] \,.
\ee
\el

\bl{w1013l2}
Let $\bar \cP_\cd\equiv \{\bar \cP_n\}_{n=0}^N\subset \dbS_+(\dbL^2|_{\dbV_h})$ be the solution to difference Riccati equation \rf{dif-Riccati}, and
for any $l=0,1,\cds,N-1$,  $z\in\dbL^2$ let $\big(x^*_\cd(t_l,z), u^*_\cd(t_l,z)\big)$ be the optimal pair of Problem {\bf (LQ)$^{t_l; h,\t}_{\tt aux}$}.
Then, there exists a constant $\cC$ independent of $h$ and $\t$ such that
\begin{subequations}\label{w1013e3}
    \begin{empheq}[left={\empheqlbrace\,}]{align}
      & \max_{0\leq l\leq N-1}\|\bar \cP_l\|_{\cL(\dbL^2|_{\dbV_h})}\leq \cC \,, \label{w1013e3a}\\
      & \max_{0\leq l\leq N-1}\max_{l\leq n\leq N-1} \big[\| x^*_n(t_l,z)\|  +\|u^*_n(t_l,z)\|\big] \leq \cC\|z\|\,. \label{w1013e3b}
    \end{empheq}
\end{subequations}
\el

The following result states the discrete version of Pontryagin's maximum  principle for Problem {\bf (LQ)$^{t_l; h,\t}_{\tt aux}$}; the proof is 
similar to that of Theorem \ref{MP}.

\bt{MP2}
For any $l=0,1,\cds,N-1$, $z\in\dbL^2$,
the unique optimal pair $\big(x^*_\cd(t_l,z), u^*_\cd(t_l,z)\big)\in \dbX(t_l,T) \times \dbU(t_l,T)$ of
Problem {\bf (LQ)$^{t_l; h,\t}_{\tt aux}$}  solves the following coupled  equations  for $n=l,l+1,\cds,N-1$:
\beq
\lt\{
\bal
&x^*_{n+1}(t_l,z)=\big[1+\frac {\b^2\t} 2\big]^{n+1-l}A_0^{n+1-l}\Pi_hz+\t\sum_{k=l}^{n}\big[1+\frac {\b^2\t} 2\big]^{n-k}A_0^{n+1-k}\Pi_h u^*_k(t_l,z)\,,\\
&y_n=-\t \sum_{k=n+1}^{N-1} \big[1+\frac {\b^2\t} 2\big]^{k-1-n} A_0^{k-n} x^*_k (t_l,z) 
- \big[1+\frac {\b^2\t} 2\big]^{N-1-n}  A_0^{N-n} \a x^*_N(t_l,z)\,, \\
&x^*_l(t_l,z)=\Pi_h z\, ,
\eal
\rt.
\eeq
 together with the discrete optimality condition
\bel{w1026e6}     
u^*_n(t_l,z)-y_n=0 \, .
\ee
\et

\br{w1106r1}
By Theorem \ref{MP2}, we can deduce that $y_\cd$ solves the following difference equation
\bel{bpde-h-t2}
\setlength\abovedisplayskip{3pt}
\setlength\belowdisplayskip{3pt}
\lt\{
\bal
&y_{n+1}-y_n=-\t\D_h y_n- \frac{\b^2\t}{2}y_{n+1}+\t x^*_{n+1}(t_l,z)\,,\\
&y_N=-\frac{\a-\t}{1+\frac{\b^2}{2}\t}x^*_N(t_l,z)\,.
\eal
\rt.
\ee
Note that since the terminal condition $y_N\neq y_h(T)$, $y_\cd$ is different from the semi-implicit Euler method for \rf{bpde-h}.
\er

\bl{w1013l4}
For any $l=0,1,\cds,N-1$, $z\in\dbL^2$, suppose that $u^*_\cd(t_l,z)$ is the optimal control of Problem
{\bf (LQ)$^{t_l; h,\t}_{\tt aux}$}. Then there exists a constant $\cC$ independent of $h\,,\t$ such that
\begin{equation*}
\setlength\abovedisplayskip{3pt}
\setlength\belowdisplayskip{3pt}
\max_{0\leq l\leq N-1}\Big[\max_{l\leq n\leq N}\|\nb u^*_n(t_l,z)\|^2 +\t\sum_{n=l}^{N-1}\|\D_h u^*_n(t_l,z)\|^2\Big]
\leq \cC \|z\|_{\dbH_0^1}^2\,.
\end{equation*}
\el

\begin{proof}
Testing \rf{bpde-h-t2} by $\D_h y_n$, applying discrete Gronwall's inequality and then taking summation, we deduce that
\begin{equation*}
\setlength\abovedisplayskip{3pt}
\setlength\belowdisplayskip{3pt}
\bal
 \max_{l\leq n\leq N}\|\nb y_n\|^2+\t\sum_{n=l}^{N-1}\|\D_h y_n\|^2
&\leq \cC\Big[ \|\nb y_N\|^2+\t\sum_{n=l+1}^N\|x^*_k(t_l,z)\|^2\Big]\\
&\leq \cC\big[ \|\nb x^*_N(t_l,z)\|^2+\max_{l\leq n\leq N}\|x^*_n(t_l,z)\|^2\big]\,.
\eal
\end{equation*}
Similarly if we test \rf{pde-h-t} with $\D_h x^*_{n+1}(t_l,z)$, then by \rf{w1013e3b} in Lemma \ref{w1013l2}
\begin{equation*}
\setlength\abovedisplayskip{3pt}
\setlength\belowdisplayskip{3pt}
\bal
&\max_{l\leq n\leq N}\|\nb x^*_n(t_l,z)\|^2+\t\sum_{n=l}^{N-1}\|\D_h x_n^*(t_l,z)\|^2\\
&\qq\leq \cC\Big[ \|\nb x^*_l(t_l,z)\|^2+\t\sum_{k=l+1}^N\|u^*_k(t_l,z)\|^2\Big]\\
&\qq\leq \cC \|z\|_{\dbH_0^1}^2\,.
\eal
\end{equation*}
By combining with these two estimates and the optimality condition \rf{w1026e6}, we conclude that
\begin{equation*}
\setlength\abovedisplayskip{3pt}
\setlength\belowdisplayskip{3pt}
\bal
&\max_{l\leq n\leq N}\|\nb u^*_n(t_l,z)\|^2+\t\sum_{n=l}^{N-1}\|\D_h u^*_n(t_l,z)\|^2
\leq \cC \|z\|_{\dbH_0^1}^2\,,
\eal
\end{equation*}
where $\cC$ is independent of $l$.
That settles the assertion.
\end{proof}

\bl{w1013l3}
For any $l=0,1,\cds,N-1$ and $z\in\dbH_0^1$,
let $\big(x^*_\cd(t_l,z), u^*_\cd(t_l,z)\big)$ be the optimal pair of Problem {\bf (LQ)$^{t_l;h,\t}_{\tt aux}$}.
Suppose that 
\bel{w1013e4}
\setlength\abovedisplayskip{3pt}
\setlength\belowdisplayskip{3pt}
\bar u_h(t;t_l,z)=u^*_n(t_l,z) \qq \forall\, t\in [t_n,t_{n+1})\,,n=l,l+1,\cds,N-1\,,
\ee
and
$\bar x_h(\cd;t_l,z) \deq x_h\big(\cd;t_l,z, \bar u_h(\cd;t_l,z)\big)$. There exists a constant $\cC$ independent of $h\,,\t$ 
such that
\begin{subequations}\label{w1013e5}
    \begin{empheq}[left={\empheqlbrace\,}]{align}
    & \max_{0\leq l\leq N-1}\Big[\sup_{t\in[t_l,T]}\|\bar x_h(t;t_l,z)\|^2_{\dbH_0^1}+\!\int_{t_l}^T\!\|\D_h \bar x_h(t;t_l,z)\|^2\rd t\Big] 
    \leq \cC\|z\|_{\dbH_0^1}\,,\label{w1013e5a}\\
      &\sum_{n=l}^{N-1}\!\int_{t_n}^{t_{n+1}}\!\big[\|\bar x_h(t;t_l,z)\!-\!\bar x_h(t_n;t_l,z)\|+\|\bar x_h(t;t_l,z)\!-\!\bar x_h(t_{n+1};t_l,z)\|\big]\rd t \notag\\
&\qq\qq\qq\qq\qq\leq \cC \t\|z\|_{\dbH_0^1} \,,\label{w1013e5b}\\
&\|\bar x_h (t_{n};t_l,z)-x^*_{n}(t_l,z)\|\leq \cC\frac{\t}{\sqrt{t_n-t_l}}\|z\|_{\dbH_0^1} \q n=l+1,l+2,\cds,N\,. \label{w1013e5c}
\end{empheq}
\end{subequations}
\el

\begin{proof}
The proof is similar to that of Lemma \ref{w207l5}.
For simplicity, we write $\bar x_h(\cd)$ for $\bar x_h(\cd;t_l,z)$ throughout the proof.

\ss

{\bf (1) Verification of \rf{w1013e5a}.} A standard stability estimate for the solution to equation \rf{pde-h2}, and 
Lemma \ref{w1013l2} lead to
\begin{equation*}
\setlength\abovedisplayskip{3pt}
\setlength\belowdisplayskip{3pt}
\bal
\sup_{t\in[t_l,T]}\|\bar x_h(t;t_l,z)\|^2_{\dbH_0^1}+\int_{t_l}^T\|\D_h \bar x_h(t;t_l,z)\|^2\rd t 
\leq \cC \Big[ \|\Pi_h z\|_{\dbH_0^1}^2+\int_{t_l}^T\|\bar u_h(t;t_l,z)\|^2\rd t\Big]
\leq \cC\|z\|_{\dbH_0^1}^2\,,
\eal
\end{equation*}
where $\cC$ is independent of $l$.
That settles the assertion.

\ss
{\bf (2) Verification of \rf{w1013e5b}.}  We only prove 
\begin{equation*}
\setlength\abovedisplayskip{3pt}
\setlength\belowdisplayskip{3pt}
\bal
I:=\sum_{n=l}^{N-1}\int_{t_n}^{t_{n+1}}\|\bar x_h(t)-\bar x_h(t_n)\|\rd t\leq \cC \t \|z\|_{\dbH_0^1}\,,
\eal
\end{equation*}
and the remaining part can be obtained similarly.
By \rf{pde-h2}, we find that
\begin{eqnarray}
&I &\leq \sum_{n=l}^{N-1}\int_{t_n}^{t_{n+1}}\big\|\big[E_h(t-t_l)-E_h(t_n-t_l) \big]\Pi_h z\big\|\rd t \nonumber\\
&&\q +\sum_{n=l}^{N-1}\int_{t_n}^{t_{n+1}}\Big\|\int_{t_l}^{t_n} \big[E_h(t-\th)-E_h(t_n-\th) \big] \big[\frac {\b^2} 2 \bar x_h(\th)+\bar u_h(\th; t_l,z)\big]\rd \th \Big\|\rd t\nonumber\\
&&\q +\sum_{n=l}^{N-1}\int_{t_n}^{t_{n+1}}\Big\|\int_{t_n}^{t} E_h(t-\th)\big[\frac {\b^2} 2 \bar x_h(\th)+u^*_n(t_l,z)\big]\rd \th\Big\|\rd t \nonumber\\
&&=: I_1+I_2+I_3\,. \label{w1013e6}
\end{eqnarray}

Now we estimate summands on the right-hand side of \rf{w1013e6}. For $I_1$, by virtue of $\nu(\cd)$ defined in \rf{w827e1}, Remark \ref{w1006r1}, and \rf{w320e01} with $\g=0$ 
as well as \rf{w228e2} with $\g=1/2$, \rf{w228e3} with $\g=1$ in Lemma \ref{w228l1}, it follows that
\bel{w1013e7}
\setlength\abovedisplayskip{3pt}
\setlength\belowdisplayskip{3pt}
I_{1} 
\leq \int_{t_{l}}^{t_{l+1}}\cC\|z\|\rd t
 +\int_{t_{l+1}}^T\cC \frac{t-\nu(t)}{\sqrt{t-\t-t_l}}\|z\|_{\dbH_0^1}\rd t
\leq \cC\t \|z\|_{\dbH_0^1}\,.
\ee
For $I_{2}$,   by Lemma \ref{w228l1}, Remark \ref{w1006r1}, assertion \rf{w1013e5a} and Lemma \ref{w1013l4}, it follows that
\bel{w1013e8}
\setlength\abovedisplayskip{3pt}
\setlength\belowdisplayskip{3pt}
I_2
\leq \cC\sum_{n=l}^{N-1}\int_{t_n}^{t_{n+1}}\int_{ t_l}^{t_n} \t \big[\|\D_h \bar x_h(\th)\|+\|\D_h \bar u_h(\th;t_l,z)\|\big]\rd \th \rd t
\leq \cC\t \|z\|_{\dbH_0^1}\,.
\ee
For $I_{3}$, by \rf{w1013e3b} and \rf{w1013e5a}, it is easy to see that
\bel{w1013e9}
\setlength\abovedisplayskip{3pt}
\setlength\belowdisplayskip{3pt}
I_{3}\leq \cC\t \Big[\sup_{t\in[t_l,T]}\|\bar x_h(t)\| +\max_{l\leq n\leq N-1}\| u^*_n(t_l,z)\| \Big]
\leq \cC\t \|z\|_{\dbH_0^1}\,.
\ee
Combining with \rf{w1013e6}--\rf{w1013e9}, we prove the second assertion.

\ss

{\bf (3) Verification of \rf{w1013e5c}.} By \rf{pde-h2} and \rf{pde-h-t}, we find that
\begin{equation*}
\setlength\abovedisplayskip{3pt}
\setlength\belowdisplayskip{3pt}
\bal
\|\bar x_h(t_{n+1})- x^*_{n+1}(t_l,z)\|  
&\leq \big\| E_h(t_{n+1}-t_l)\Pi_h z-A_0^{{n+1}-l}\Pi_hz \big\| \nonumber\\
&\q+\Big\| \sum_{j=l}^{n}\int_{t_j}^{t_{j+1}}\big[ E_h(t_{n+1}-t)-A_0^{n+1-j}\Pi_h\big]u^*_j(t_l,z)\rd t \Big\|\nonumber\\
&\q +\frac {\b^2} 2\Big\| \sum_{j=l}^{n} \int_{t_j}^{t_{j+1}} \big[ E_h(t_{n+1}-t)-A_0^{n+1-j}\Pi_h\big] \bar x_h(t) \rd t \Big\| \nonumber\\
&\q+\frac {\b^2} 2 \Big\| \sum_{j=l}^{n}\int_{t_j}^{t_{j+1}} A_0^{n+1-j}\Pi_h \big[\bar x_h(t)-\bar x_h (t_j)\big]  \rd t \Big\| \nonumber\\
&\q +\frac {\b^2} 2 \Big\| \sum_{j=l}^{n}\int_{t_j}^{t_{j+1}}A_0^{n+1-j}\Pi_h  \big[\bar x_h(t_j)- x^*_j(t_l,z)\big]\rd t \Big\| \nonumber\\
&=:\sum_{i=1}^5 J_i\,.
\eal
\end{equation*}

Now, we estimate these five terms one by one.
By \rf{w1115e2b} with $\g=2$ in Lemma \ref{w207l1} and \rf{w320e01} in Lemma \ref{w228l1} with $\g=1$,
following the vein to estimate $J_1$ in the proof of Lemma \ref{w207l5}, 
%
we conclude that
\begin{equation*}
\setlength\abovedisplayskip{3pt}
\setlength\belowdisplayskip{3pt}
J_1 \leq \cC\frac{\t}{\sqrt{t_{n+1}-t_l}}\|z\|_{\dbH_0^1}\,.
\end{equation*}
Also, by \rf{w1115e2b} with $\g=2$ in Lemma \ref{w207l1}  and Lemma \ref{w1013l4},
\begin{equation*}
\setlength\abovedisplayskip{3pt}
\setlength\belowdisplayskip{3pt}
J_2 
\leq \int_{t_l}^{t_{n+1}}\|G_\t(t_{n+1}-t)\bar u_h(t;t_l,z) \| \rd t
\leq  \cC\t \|z\|_{\dbH_0^1} \,.
\end{equation*}
By the same trick,  applying \rf{w1013e5a}, we can deduce that
\begin{equation*}
\setlength\abovedisplayskip{3pt}
\setlength\belowdisplayskip{3pt}
J_3\leq \cC\t \|z\|_{\dbH_0^1} \,.
\end{equation*}
By \rf{w1013e5b}, it is evident that 
\begin{equation*}
\setlength\abovedisplayskip{3pt}
\setlength\belowdisplayskip{3pt}
J_4 \leq \cC \sum_{j=l}^{n}\int_{t_j}^{t_{j+1}}\|\bar x_h(t)-\bar x_h(t_j)\|\rd t\leq \cC \t\|z\|_{\dbH_0^1}  \,.
\end{equation*}
For $J_5$, we can easily see that
\begin{equation*}
\setlength\abovedisplayskip{3pt}
\setlength\belowdisplayskip{3pt}
 J_5  
\leq \cC \t  \sum_{j=l}^{n} \|\bar x_h(t_j)- x^*_j(t_l,z)\|
=\cC \t  \sum_{j=l+1}^{n} \|\bar x_h(t_j)- x^*_j(t_l,z)\| \,.
\end{equation*}

By setting $e_n= \|\bar x_h(t_n)- x^*_n(t_l,z)\| $ and combining with above estimates, we have
\begin{equation*}
\setlength\abovedisplayskip{3pt}
\setlength\belowdisplayskip{3pt}
e_{n+1}\leq \cC\frac {\t}{\sqrt{t_{n+1}-t_l}}\|z\|_{\dbH_0^1}+ \cC \t  \sum_{j=l+1}^{n} e_j\,,
\end{equation*}
which, together with discrete Gronwall's inequality, yields 
\begin{equation*}
\setlength\abovedisplayskip{3pt}
\setlength\belowdisplayskip{3pt}
e_{n+1}
\leq \cC\frac {\t}{\sqrt{t_{n+1}-t_l}}\|z\|_{\dbH_0^1} +\cC\t \sum_{j=l+1}^{n}\frac{\t}{\sqrt{t_j-t_l}} \|z\|_{\dbH_0^1} 
\leq \cC\frac {\t}{\sqrt{t_{n+1}-t_l}}\|z\|_{\dbH_0^1} \,.
\end{equation*}
That completes the proof.
\end{proof}

The proof of the following regularity result on $u^*_h(\cd;t,z)$ can be derived in the same vein as that of Lemma \ref{w1013l4}, and we omit it. 
\bl{w1014l1}
For any $t\in[0,T)$, $z\in\dbL^2$, suppose that $u^*_h(\cd;t,z)$ is the optimal control of Problem
{\bf (LQ)$^{t; h}_{\tt aux}$}. Then there exists a constant $\cC$ independent of $h$ such that
\begin{equation*}
\setlength\abovedisplayskip{3pt}
\setlength\belowdisplayskip{3pt}
\sup_{s\in[t,T]}\|\nb u^*_h(s;t,z)\|^2 +\int_{t}^T\|\D_h u^*_h(s;t,z)\|^2\rd s
\leq \cC \|z\|_{\dbH_0^1}^2\,.
\end{equation*}
\el

The following result is similar to Lemma \ref{w207l6} and can be proved in a similar way as Lemma \ref{w1013l3}. 

\bl{w1014l2}
For any $l=0,1,\cds,N-1$ and $z\in\dbL^2$,
let $\big(x^*_h(\cd;t_l,z), u^*_h(\cd; t_l,z)\big)$ be the optimal pair of Problem {\bf (LQ)$^{t_l;h}_{\tt aux}$}.
Suppose that 
\bel{w1014e1}
\setlength\abovedisplayskip{3pt}
\setlength\belowdisplayskip{3pt}
\h u_n(t_l,z)=\frac 1 \t \int_{t_n}^{t_{n+1}}u^*_h(t;t_l,z)\rd t \qq \forall\,n=l,l+1,\cds,N-1\,,
\ee
and $\h x_\cd(t_l,z)\deq x_\cd\big(t_l,z,\h u_\cd(t_l,z)\big)$. Then
\begin{subequations}\label{w1014e2}
    \begin{empheq}[left={\empheqlbrace\,}]{align}
      &\max_{0\leq l\leq N-1}\Big[\max_{l\leq n\leq N}\|\nb \h x_n(t_l,z)\|^2 +\t\sum_{n=l}^{N-1}\|\D_h \h x_n(t_l,z)\|^2\Big]
\leq \cC \|z\|_{\dbH_0^1}^2\,, \label{w1014e2a}\\
     &\int_{t_l}^{T}\big[\big\| x^*_h(t;t_l,z)- x^*_h\big(\nu(t);t_l,z\big)\big\|+\big\| x^*_h(t;t_l,z)- x^*_h\big(\mu(t);t_l,z\big)\big\|\big]\rd t \notag\\
&\qq\qq\qq     \leq \cC \t \|z\|_{\dbH_0^1} \,, \label{w1014e2b}\\
& \|\h x_{n}(t_l,z)\!-\! x^*_h(t_{n};t_l,z)\|  
\!\leq \!\cC \frac \tau {\sqrt{t_n-t_l}}\|z\|_{\dbH_0^1} \qq n=l+1,l+2,\cds,N \,. \label{w1014e2c}
\end{empheq}
\end{subequations}
\el

\begin{proof}
{\bf (1) Verification of \rf{w1014e2a}.}  
Derived by Lemma \ref{w1014l1} and the same vein as in the proof of Lemma \ref{w1013l4}.

\ss

{\bf (2) Verification of \rf{w1014e2b} and \rf{w1014e2c}.}  
As done for \rf{w1013e5b} and \rf{w1013e5c}.
\end{proof}

The following result can be proved in the same way as that in the proof of Lemma \ref{w226l1}.
\bl{w1014l3}
For a given uniform partition $I_\t$ of size $\t\in(0,\t_0]\subset(0,1)$,
for any $l=0,1,\cds,N-1$ and $z\in \dbL^2$,
suppose that $u^*_h(\cd;t_l,z)$ is the optimal control of Problem {\bf (LQ)$^{t_l;h}_{\tt aux}$}. Then, for $s,s_0\in [t_n,t_{n+1}), n=l,l+1,\cds,N-1$, 
there exists  a constant $\cC$ independent of $\t$ such that
\bel{w1014e3}
\setlength\abovedisplayskip{3pt}
\setlength\belowdisplayskip{3pt}
\|u^*_h(s;t_l,z)-u^*_h(s_0;t_l,z)\|\leq \cC \t \frac{1}{\sqrt{T-s\vee s_0}} \|z\|_{\dbH_0^1} \,.
\ee
\el

Now we are in the position to bound the error of $\bar \cP_l-\cP_h(t_l)$.

\bt{Riccati-rate2}
Suppose that $\cP_h(\cd)$ and $\bar \cP_\cd$ are solutions to Riccati equations \rf{dis-Riccati1-h2} and \rf{dif-Riccati} respectively.
Then there exists a constant $\cC$ independent of $h$ and $\t$ such that
\bel{w1014e6}
\setlength\abovedisplayskip{3pt}
\setlength\belowdisplayskip{3pt}
\|\cP_h(t_l)\Pi_h- \bar \cP_l\Pi_h\|_{\cL(\dbH_0^1;\dbL^2)}
\leq \cC \t \Big[\frac{\a}{\sqrt{T-t_l}}+1\Big] \qq l=0,1,\cds,N-1\,.
\ee
\et

\begin{proof}
The proof is long and we divide it into two steps.

{\bf (1)} 
In this step, we show that there exists a constant $\cC$ such that for any $z\in\dbH_0^1$,  $l=0,1,\cds,N-1$,
\bel{w1014e4}
\setlength\abovedisplayskip{3pt}
\setlength\belowdisplayskip{3pt}
\big|\big(\bar \cP_l\Pi_hz,z\big)_{\dbL^2}- \big(\cP_h(t_l)\Pi_hz,z\big)_{\dbL^2}\big|
\leq \cC \t \Big[\frac{\a}{\sqrt{T-t_l}}+1\Big]\|z\| \|z\|_{\dbH_0^1} \,.
\ee
To do that, we proceed as in the proof of Theorem \ref{Riccati-rate}.
For given $l = 0, 1,\cds,N-1$, and $z\in\dbH_0^1$, we divide the proof into the following two cases.

\no {\bf Case (i)}. $\big(\bar \cP_l\Pi_hz,z\big)_{\dbL^2}\leq \big(\cP_h(t_l)\Pi_hz,z\big)_{\dbL^2}\,.$

By relying on Lemmata \ref{w1013l2} and \ref{w1013l3}, we can conclude that
\begin{equation*}
\setlength\abovedisplayskip{3pt}
\setlength\belowdisplayskip{3pt}
0\leq  \big( \cP_h(t_l)\Pi_hz,z \big)_{\dbL^2}-\big( \bar \cP_l\Pi_hz,z \big)_{\dbL^2}
\leq \cC\t\Big[\frac{\a}{\sqrt{T-t_l}}+1\Big]\|z\| \|z\|_{\dbH_0^1}\,.
\end{equation*}

\no {\bf Case (ii)}. $\big(\bar \cP_l\Pi_hz,z\big)_{\dbL^2}> \big(\cP_h(t_l)\Pi_hz,z\big)_{\dbL^2}.$

In the same vein as  in Case (ii) of Theorem \ref{Riccati-rate}, 
by Lemmata \ref{w1014l2} and \ref{w1014l3}, we can deduce that
\begin{equation*}
\setlength\abovedisplayskip{3pt}
\setlength\belowdisplayskip{3pt}
\bal
0 &\leq \big( \bar \cP_l\Pi_hz,z \big)_{\dbL^2}-\big( \cP_h(t_l)\Pi_h z,z \big)_{\dbL^2}\\
&\leq \cC\|z \|\bigg\{\sum_{n=l}^{N-1}\Big[\int_{t_n}^{t_{n+1}} \|\h x_{n}(t_l,z)- x^*_h(t_n;t_l,z)\| 
+ \| x^*_h(t_n; t_l,z)- x^*_h(t;t_l,z)\|  
  \\
&\q +\Big\| \frac 1 \t\int_{t_n}^{t_{n+1}}u^*_h(s;t_l,z)\rd s -u^*_h(t;t_l,z)   \Big\|\Big]\rd t +\a \|\h x_N(t_l,z)-x^*_h(T;t_l,z)\|\bigg\}\\
&=:\cC\|z\| \sum_{i=1}^4J_i\,.
\eal
\end{equation*}

For $J_1$, \rf{w1014e2c} in Lemma \ref{w1014l2} and the triangle inequality yield
\begin{equation*}
\setlength\abovedisplayskip{3pt}
\setlength\belowdisplayskip{3pt}
\bal
J_1\leq \cC \t\|z\|+ \cC\sum_{n=l+2}^{N-1}\int_{t_n}^{t_{n+1}} \frac {1}{\sqrt{t-\t-t_l}}\|z\|_{\dbH_0^1}  \rd t
\leq   \cC \t\|z\|_{\dbH_0^1} \,.
\eal
\end{equation*}
\rf{w1014e2b} and \rf{w1014e2c} in Lemma \ref{w1014l2} lead to
\begin{equation*}
\setlength\abovedisplayskip{3pt}
\setlength\belowdisplayskip{3pt}
\bal
J_2\leq  \cC \t\|z\|_{\dbH_0^1}\,, \qq J_4\leq \cC\t \frac{\a}{\sqrt{T-t_l}}\|z\|_{\dbH_0^1}\,.
\eal
\end{equation*}
Lemma \ref{w1014l3} implies that 
\begin{equation*}
\setlength\abovedisplayskip{3pt}
\setlength\belowdisplayskip{3pt}
\bal
J_3\leq  \frac 2 \t \sum_{n=l}^{N-1} \int_{t_n}^{t_{n+1}}\int_{t_n}^{s}\|u^*_h(s;t_l,z) -u^*_h(t;t_l,z) \| \rd t\rd  s
\leq \cC \t\|z\|_{\dbH_0^1}\,.
\eal
\end{equation*}
A combination of these estimates leads to
\begin{equation*}
\setlength\abovedisplayskip{3pt}
\setlength\belowdisplayskip{3pt}
\bal
0\leq \big( \bar \cP_l\Pi_hz,z \big)_{\dbL^2}-\big( \cP_h(t_l)\Pi_hz,z \big)_{\dbL^2}
\leq \cC\t\Big[\frac{\a}{\sqrt{T-t_l}}+1\Big]\|z\| \|z\|_{\dbH_0^1}\,.
\eal
\end{equation*}

Combining with these two cases, we get the desired assertion.

\ss

{\bf  (2)} 
In this step, we adopt a spectral decomposition technique to derive sharp estimates.
Based on the eigenvalues $\{\l_{h,i}\}_{i=1}^{\dim(\dbV_h)}$ resp.~($\dbL^2$-orthonormal) eigenfunctions $\{\f_{h,i}\}_{i=1}^{\dim(\dbV_h)}$,
of the symmetric operator $-\D_h$, for any $i=1,2,\cds,\dim(\dbV_h)$, we define a family of LQ problems.\\
\no {\bf Problem (LQ)$^{t;h,i}_{\tt aux}$.} For any  given $t\in[0,T)$ and $x\in \dbR$, search for 
$u^*_{h,i}(\cd)\in L^2(t,T)$ such that
\begin{equation*}
\setlength\abovedisplayskip{3pt}
\setlength\belowdisplayskip{3pt}
\bal
\cG_{h}^i\big(t,x; u^*_{h,i}(\cd)\big)=\inf_{u_h(\cd) \in L^2(t,T)}\cG_{h}^i \big(t,x; u_h(\cd)\big)=:V_{h}^i(t,x)\,,
\eal
\end{equation*}
where the cost functional is
\begin{equation*}
\setlength\abovedisplayskip{3pt}
\setlength\belowdisplayskip{3pt}
\bal
\cG_{h}^i\big(t,x; u_h(\cd)\big)=\frac 1 2 \int_t^T\big[ | x_{h,i}(s) |^2+ |u_h(s) |^2 \big] \rd s +\frac \a 2 |x_{h,i}(T)|^2\,,
\eal
\end{equation*}
and the state variable $x_{h,i}(\cd)$ satisfies
\begin{equation*}
\setlength\abovedisplayskip{3pt}
\setlength\belowdisplayskip{3pt}
\lt\{\!\!\!
\begin{array}{ll} 
\ds x'_{h,i}(s)=-\l_{h,i}x_{h,i}(s)+\frac{\b^2}{2} x_{h,i}(s)+u_h(s)  \qq  s \in (t,T]\,,\\
\ns\ds x_{h,i}(t)=x\in\dbR \,.
\end{array}
\rt.
\end{equation*}

By LQ theory, Problem {\bf (LQ)}$_{\tt aux}^{t;h,i}$ has a unique optimal pair $\big(x^*_{h,i}(\cd), u^*_{h,i}(\cd)\big)$,
satisfying  the following state feedback representation:
\begin{equation*}
\setlength\abovedisplayskip{5pt}
\setlength\belowdisplayskip{5pt}
u^*_{h,i}(s) =-p_{h,i}(s) x^*_{h,i}(s) \qquad  s \in [t,T]\,,
\end{equation*}
where $p_{h,i}(\cd)$ solves the following scalar Riccati equation
\begin{equation*}
\setlength\abovedisplayskip{3pt}
\setlength\belowdisplayskip{3pt}
\lt\{\!\!\!
\begin{array}{ll} 
\ds p'_{h,i}(t)-2\l_{h,i}  p_{h,i}(t) +\b^2 p_{h,i}(t)+1- p_{h,i}^2(t)=0\qq t\in [0,T]\,,\\
\ns\ds p_{h,i}(T)=\a\,.
\end{array}
\rt.
\end{equation*}

By the uniqueness of the optimal controls to Problem {\bf (LQ)$_{\tt aux}^{t;h}$} and Problem {\bf (LQ)$_{\tt aux}^{t;h,i}$}
we conclude for any $z\in\dbL^2$ with $\Pi_h z=\sum_{i=1}^{\dim(\dbV_h)}(\Pi_h z, \f_{h,i})_{\dbL^2}\f_{h,i}=: \sum_{i=1}^{\dim(\dbV_h)} a_i \f_{h,i}$ that
\begin{equation*}
\setlength\abovedisplayskip{3pt}
\setlength\belowdisplayskip{3pt}
x^*_h(\cd;t,z)=\sum_{i=1}^{\dim(\dbV_h)} x^*_{h,i}\big(\cd;t,a_i\big)\f_{h,i}\,, \qq
u^*_h(\cd;t,z)=\sum_{i=1}^{\dim(\dbV_h)} u^*_{h,i}\big(\cd;t,a_i\big)\f_{h,i} \,,
\end{equation*}
and
\begin{equation*}
\setlength\abovedisplayskip{3pt}
\setlength\belowdisplayskip{3pt}
\cP_h(\cd)\f_{h,i}=p_{h,i}(\cd)\f_{h,i} \qq i=1,2,\cds,\dim(\dbV_h)\,.
\end{equation*}

Similarly, we can define Problem {\bf (LQ)$_{\tt aux}^{t_l;h,\t,i}$}, derive $p_{\cd,i}$, and for the optimal pair 
$(x^*_{\cd,i}, u^*_{\cd,i})$
\begin{equation*}
\setlength\abovedisplayskip{3pt}
\setlength\belowdisplayskip{3pt}
x^*_\cd(t_l,z)=\sum_{i=1}^{\dim(\dbV_h)} x^*_{\cd,i}\big(t_l,a_i\big)\f_{h,i} \,,\qq
u^*_\cd(t_l,z)=\sum_{i=1}^{\dim(\dbV_h)} u^*_{\cd,i}\big(t_l,a_i\big)\f_{h,i} \,,
\end{equation*}
and
\begin{equation*}
\setlength\abovedisplayskip{3pt}
\setlength\belowdisplayskip{3pt}
\bar \cP_\cd \f_{h,i}=p_{\cd,i}\f_{h,i}\qq i=1,2,\cds,\dim(\dbV_h)\,.
\end{equation*}
as well as 
\bel{w1014e7}
\setlength\abovedisplayskip{3pt}
\setlength\belowdisplayskip{3pt}
\max_{0\leq l\leq N-1}\max_{l\leq n\leq N}|p_{n,i}|\leq \cC\,.
\ee

By taking $z=\f_{h,i}$ in \rf{w1014e4}, we have
\bel{w1014e5}
\setlength\abovedisplayskip{3pt}
\setlength\belowdisplayskip{3pt}
\bal
&|p_{l,i}-p_{h,i}(t_l)|=\big|(p_{l,i}\f_{h,i},\f_{h,i})_{\dbL^2}- \big(p_{h,i}(t_l)\f_{h,i},\f_{h,i}\big)_{\dbL^2}\big|\\
&\qq=\big|\big(\bar \cP_l\Pi_h\f_{h,i},\f_{h,i}\big)_{\dbL^2}- \big(\cP_h(t_l)\Pi_h\f_{h,i},\f_{h,i}\big)_{\dbL^2}\big|\\
&\qq\leq \cC \l_{h,i}^{1/2} \t \Big[\frac{\a}{\sqrt{T-t_l}}+1\Big]\,.
\eal
\ee
Thus for any $z\in\dbH_0^1$,
we can deduce that 
\begin{equation*}
\setlength\abovedisplayskip{3pt}
\setlength\belowdisplayskip{3pt}
\bal
&\big\|\big[\bar \cP_l\Pi_h-\cP_h(t_l)\Pi_h\big]z\big\|^2
=\Big\| \sum_{i=1}^{\dim(\dbV_h)} a_i \big[\bar \cP_l\Pi_h-\cP_h(t_l)\Pi_h\big]\f_{h,i} \Big\|^2\\
&\q\leq \cC \sum_{i=1}^{\dim(\dbV_h)} a_i^2 \l_{h,i} \t^2 \Big[\frac{\a}{\sqrt{T-t_l}}+1\Big]^2
\leq \cC\|z\|_{\dbH_0^1}^2 \t^2 \Big[\frac{\a}{\sqrt{T-t_l}}+1\Big]^2\,,
\eal
\end{equation*}
which settles the assertion \rf{w1014e6}.
\end{proof}

\br{w1015r1}
With the procedure in this Section, we can also deduce that, for any $l=0,1,\cds,N-1$,
\begin{equation*}
\setlength\abovedisplayskip{3pt}
\setlength\belowdisplayskip{3pt}
\bal
\|\bar \cP_l\Pi_h- \cP_h(t_l)\Pi_h\|_{\cL(\dot\dbH^2_h;\dbL^2)}
+\| \h \cP_l\Pi_h- \cP_h(t_l)\Pi_h\|_{\cL(\dot\dbH^2_h;\dbL^2)}
\leq \cC \t\,, 
\eal
\end{equation*}
which will be applied in Section \ref{SLQ-rate}.
We leave the proof to the interested reader. 
\er

\subsection{Riccati-based discretization of Problem {\bf (SLQ)} and rates }\label{SLQ-rate}

From Section \ref{se-openclosed}, we know that the optimal control $U^*(\cd)$ of Problem {\bf (SLQ)} 
has a state feedback representation
\bel{feedback-4}
\setlength\abovedisplayskip{3pt}
\setlength\belowdisplayskip{3pt}
U^*(t)=-\cP(t)X^*(t)-\eta(t) \qq t\in[0,T]\,,
\ee
where $\cP(\cd)$ solves Riccati equation \rf{Riccati}, and $\eta(\cd)$ solves a $\cP(\cd)$-dependent PDE \rf{varphi}.

In Section \ref{Riccati-dis}, we propose the spatial discretization $\cP_h$ solving \rf{dis-Riccati1-h2}, and different 
spatio-temporal
discretization schemes $\cP_\cd$ and $\bar \cP_\cd$. Inspired by \rf{feedback-4} and different 
 schemes  of the Riccati equation \rf{dis-Riccati1-h2},
we consider the following {\em discrete feedback law}
\bel{feedback-5}
\setlength\abovedisplayskip{3pt}
\setlength\belowdisplayskip{3pt}
U_n=-P_{n+1}X_n-\eta_n\qq n=0,1,\cds,N-1
\ee
to discretize Problem {\bf (SLQ)} as follows:
\bel{dis-state}
\setlength\abovedisplayskip{3pt}
\setlength\belowdisplayskip{3pt}
\lt\{\!\!\!
\begin{array}{ll}
\ds X_{n+1}=A_0X_{n}+\t A_0U_n+\big[\b A_0 X_{n}+A_0\Pi_h\si(t_{n})\big]\D_{n+1}W\\
\ns\ds\qq\qq\qq \q n=0,1,\cds,N-1\,,\\
\ns\ds X_0=\Pi_hx\,.
\end{array}
\rt.
\ee
In \rf{feedback-5}, $P_\cd$ is a spatio-temporal approximation  of $\cP(\cd)$, and in this section we choose
$P_\cd=\bar \cP_\cd$. 
$\eta_\cd$ is
a finite element method-based spatio-temporal discretization scheme  of \rf{varphi}, which is of the form 
\bel{w-dif-eta}
\setlength\abovedisplayskip{3pt}
\setlength\belowdisplayskip{3pt}
\lt\{\!\!\!
\begin{array}{ll}
\ds \eta_n\!=\!A_0\eta_{n+1}\!+\!\t A_0\big[\!-\!P_{n+1}\eta_{n+1}\!+\! \b  P_{n+1}\Pi_h\si(t_{n+1})\big]
\q  n\!=\!0,1,\cds,N-1\,,\\
\ns\ds \eta_N=0\,.
\end{array}
\rt.
\ee

\ss

In Section \ref{ch-open}, we make a spatial discretization scheme of Problem {\bf (SLQ)} and obtain Problem {\bf (SLQ)$_h$}.
Similar to Section \ref{se-openclosed}, Problem {\bf (SLQ)$_h$} is uniquely solvable, and admits a closed-loop optimal control
\bel{feedback-h}
\setlength\abovedisplayskip{3pt}
\setlength\belowdisplayskip{3pt}
U^*_h(t)=-\cP_h(t)X^*_h(t)-\eta_h(t) \qq t\in[0,T]\,,
\ee
where $\cP_h(\cd)$ solves \rf{dis-Riccati1-h2} and $\eta_h(\cd)$ solves
\bel{eta-h}
\setlength\abovedisplayskip{3pt}
\setlength\belowdisplayskip{3pt}
\lt\{\!\!\!
\begin{array}{ll}
\ds\eta_h'(t)=-\D_h \eta_h(t)+\cP_h(t)\eta_h(t)-\b \cP_h(t)\Pi_h\si(t) \qq t\in [0,T]\,,\\
\ns\ds\eta_h(T)=0\,.
\end{array}
\rt.
\ee

\ss

The following result is on the regularity of $\eta_h(\cd)$ in \rf{eta-h}.
\bl{w311l1}
Suppose that $\si(\cd)\in C\big([0,T]; \dbH_0^1\big)$, and $I_\t$ is a uniform partition of $[0,T]$ with mesh size $\t\in(0,1)$. 
Then  there
exists a constant $\cC$ independent of $h$ such that
\begin{subequations}\label{w311e2}
    \begin{empheq}[left={\empheqlbrace\,}]{align}
      & \sup_{t\in[0,T]}\|\eta_h(t)\|_{\dot\dbH^1_h}^2+\int_0^T\|\eta_h(t)\|_{\dot\dbH^2_h}^2\rd t\leq \cC\int_0^T\|\si(t)\|^2\rd t\,,\label{w311e2a}\\
      &\max_{0\leq k\leq N-1}\sup_{t\in[t_k,t_{k+1})}\|\eta_h(t)-\eta_h(t_k)\|\leq  \cC\t \|\si(\cd)\|_{C([0,T]; \dbH_0^1)} \,.\label{w311e2b}
\end{empheq}
\end{subequations}
\el


\begin{proof}
{\bf (1) Verification of \rf{w311e2a}.}  
By testing \rf{eta-h} with $\D_h \eta_n(t)$ and applying \rf{w1012e1a} in Lemma \ref{w1012l1}, \rf{w1029e1b} in Lemma \ref{w229l4}, we can arrive at
\begin{equation*}
\setlength\abovedisplayskip{3pt}
\setlength\belowdisplayskip{3pt}
\bal
\frac 1 2 \|\nb \eta_h(t)\|^2+\int_t^T \|\D_h \eta_h(s)\|^2\rd s
&=\int_t^T\big(\cP_h(s)\eta_h(s)-\b\cP_h(s)\Pi_h\si(s), \D_h \eta_h(s) \big)_{\dbL^2}\rd s\\
&\leq \frac 1 2 \int_t^T \|\D_h \eta_h(s)\|^2\rd s+\cC \int_t^T \|\eta_h(s)\|^2+\|\si(s)\|^2 \rd s\\
&\leq \frac 1 2 \int_t^T \|\D_h \eta_h(s)\|^2\rd s+\cC \int_t^T \|\si(s)\|^2 \rd s\,,
\eal
\end{equation*}
which settles assertion \rf{w311e2a}.

%

\ss

{\bf (2) Verification of \rf{w311e2b}.} In the same vein as  in the proof of  \rf{w1013e5b}, we can deduce that 
\begin{equation*}
\setlength\abovedisplayskip{3pt}
\setlength\belowdisplayskip{3pt}
\bal
\|\eta_h(t)-\eta_h(t_k)\|
 &\leq \int_{t_k}^t\big\|E_h(s-t_k)\cP_h(s)\big[\eta_h(s)+\b\Pi_h\si(s) \big] \big\|\rd s\\
& \q+\int_{t}^T \big\|\cP_h(s)\big[E_h(s-t_k)-E_h(s-t)\big]\big[\eta_h(s)-\b\Pi_h\si(s) \big] \big\|\rd s\\
& \leq \cC\t \sup_{t\in[0,T]}\big[\|\eta_h(t)\|+\|\si(t)\|\big]\\
&\q+\cC \int_t^T\frac {t-t_k}{\sqrt{s-t}}\sup_{s\in[0,T]}\big[\|\eta_h(s)\|_{\dot\dbH^1_h}+\|\Pi_h\si(s)\|_{\dot\dbH^1_h}\big]\rd s\\
& \leq \cC\t \|\si(\cd)\|_{C([0,T]; \dbH_0^1)}\,,
\eal
\end{equation*}
where $\cC$ is independent of $k$.
That completes the proof.
\end{proof}

The following is on improved stability bounds for the optimal pair of Problem {\bf(SLQ)$_h$}.
\bl{reg-x}
Let
$\big(X^*_h(\cd),U^*_h(\cd)\big)$ be the optimal pair of Problem {\bf (SLQ)$_h$}.
Then, $X^*_h(\cd),U^*_h(\cd)\in L^2_\dbF\big(\O; C([0,T];\dbH_0^1)\big)\cap L^2_\dbF(0,T;\dot \dbH^2_h)$ and
there exists a constant $\cC$ 
such that
\begin{eqnarray}
&& \me\Big[\sup_{t\in[0,T]}\|X^*_h(t)\|_{\dot \dbH ^\g_h}^2+\sup_{t\in[0,T]}\|U^*_h(t)\|_{\dot \dbH ^\g_h}^2 +\int_0^T \big[ \|X^*_h(s)\|_{\dot \dbH ^{\g+1}_h}^2+\|U^*_h(s)\|_{\dot \dbH ^{\g+1}_h}^2\big] \rd s\Big] \notag\\
 & &\qq\qq\qq \leq \cC\Big[\|x\|_{\dot \dbH ^\g}^2+\int_0^T\|\si(t)\|_{\dot \dbH ^\g}^2\rd t\Big] \qq\g=0,1,2\,. \label{w228e1}
\end{eqnarray}
Furthermore, if $\b=0$,  then
 for  a given 
uniform partition $I_\t$ in time with mesh size 
$\t\in(0,\t_0]\subset(0,1)$, there exists  $\cC$ independent of $\t$ such that,
\bel{reg}
\bal
\me\Big[\Big\| \int_0^{T} \big[X^*_h(t)-X^*_h\big(\nu(t)\big) \big]\rd t \Big\|^2\Big]
  \leq \cC \t^2 \Big[\|x\|_{\dbH_0^1}^2+\sup_{t\in[0,T]} \|\si(t)\|^2+ \int_0^{T}\|\si(t)\|_{\dbH_0^1\cap\dbH^2}^2\rd t \Big] \,. 
\eal
\ee
\el

\begin{proof}
{\bf (1) Verification of \rf{w228e1}.}  
Partial results have been derived in \rf{w1029e2a} and \rf{w1029e2b}. We can settle the remaining by
relying on the spectral Galerkin method (see {\em e.g.}~Step (2) in the proof of Theorem \ref{Riccati-rate2}, \cite[Section 7.1]{Evans98}, \cite{Wang20}), $U^*_h(\cd)$'s
feedback representation \rf{feedback-h}, Lemma \ref{w1012l1}, and the BDG inequality.


\ss
{\bf (2) Verification of \rf{reg}.}  
The proof is similar to that in Lemma \ref{w207l5}.
By equation on $X^*_h(\cd)$ (SDE \rf{spde-h} with $U_h(\cd)=U^*_h(\cd)$), we have
\begin{eqnarray*}
&&\me\Big[\Big\| \sum_{n=0}^{N-1}\int_{t_n}^{t_{n+1}}\big[ X^*_h(t)- X^*_h(t_n) \big]\rd t \Big\|^2\Big]
=\me\Big[\Big\| \int_0^{T}\big[ X^*_h(t)- X^*_h\big(\nu(t)\big) \big]\rd t \Big\|^2\Big]\\
&&\qq\leq 5\bigg\{\me\Big[\Big\|  \int_0^{T} \big[E_h(t)-E_h\big(\nu(t)\big) \big] \Pi_hx \rd t \Big\|^2\Big]\\
&&\qq\q+\me\Big[\Big\| \int_0^{T}\int_0^{\nu(t)}  \big[E_h(t-s)-E_h\big(\nu(t)-s\big) \big] U^*_h(s) \rd s \rd t \Big\|^2\Big]\\
&&\qq\q+\me\Big[\Big\|  \int_0^{T}\int_{\nu(t)}^t E_h(t-s) U^*_h(s)\rd s \rd t \Big\|^2\Big]\\
&&\qq\q+\me\Big[\Big\|  \int_0^{T}\int_{\nu(t)}^t E_h(t-s) \Pi_h\si(s)\rd W(s) \rd t \Big\|^2\Big]\\
&&\qq\q+\me\Big[\Big\| \int_0^{T}\int_0^{\nu(t)} \big[E_h(t-s)-E_h\big(\nu(t)-s\big) \big]  \Pi_h\si(s)\rd W(s) \rd t \Big\|^2\Big] \bigg\}\\
&&\qq=:5\sum_{i=1}^5 J_i\,.
\end{eqnarray*}
Applying Lemma \ref{w228l1} and \rf{w228e1}, we conclude that
\begin{equation*}
\setlength\abovedisplayskip{3pt}
\setlength\belowdisplayskip{3pt}
\bal
J_1\leq \Big[\int_{0}^{t_1}2\|x\|\rd t+ \cC \int_{t_1}^T \t \nu(t)^{-1/2}\|x\|_{\dbH_0^1} \rd t\Big]^2
\leq \cC\t^2\|x\|_{\dbH_0^1}^2\,,
\eal
\end{equation*}
\begin{equation*}
\setlength\abovedisplayskip{3pt}
\setlength\belowdisplayskip{3pt}
\bal
J_2&\leq \me \Big[\int_0^T\int_0^{\nu(t)}\big\|\big[E_h\big(t-\nu(t)\big)-\mathds{1}_h\big]E_h\big(\nu(t)-s\big)U^*_h(s)\big\|\rd s\rd t \Big]^2\\
&\leq \cC\, \me \Big[\int_0^T\int_0^{\nu(t)}\t \big(\nu(t)-s\big)^{-1/2}\sup_{s\in[0,T]}\|U^*_h(s)\big\|_{\dbH_0^1}\rd s\rd t \Big]^2\\
&\leq \cC \t^2
  \Big[\|x\|_{\dbH_0^1}^2+\int_0^T\|\si(t)\|_{\dbH_0^1}^2\rd t\Big]\,,
\eal
\end{equation*}
and
\begin{equation*}
\setlength\abovedisplayskip{3pt}
\setlength\belowdisplayskip{3pt}
\bal
J_3 
\leq \cC\t^2\me\Big[\sup_{t\in[0,T]}\|U^*_h(t)\|^2\Big]
\leq \cC \t^2  \Big[\|x\|^2+\int_0^T\|\si(t)\|^2\rd t \Big]\,.
\eal
\end{equation*}
For $J_4$, by the mutual independence of $\big\{\int_{t_k}^{t_{k+1}}\int_{t_k}^t E_h(t-\th)\Pi_h \si(\th)\rd W(\th) \rd t  \big\}_{k=0}^{N-1}$, and  the It\^o isometry,
we have
\begin{equation*}
\setlength\abovedisplayskip{3pt}
\setlength\belowdisplayskip{3pt}
\bal
J_4\leq  \t \sum_{n=0}^{N-1}\int_{t_n}^{t_{n+1}}\me\Big[\int_{t_n}^t \big\| E_h(t-s)\Pi_h \si(s)\big\|^2\rd s \Big]\rd t
\leq  \cC \t^2\sup_{t\in[0,T]} \|\si(t)\|^2\,.
\eal
\end{equation*}
For $J_5$, the It\^o isometry and  Lemmata \ref{w228l1}, \ref{w1022e1} imply
\begin{equation*}
\setlength\abovedisplayskip{3pt}
\setlength\belowdisplayskip{3pt}
\bal
J_5
&\leq \cC  \int_0^{T}\me\Big[\int_0^{\nu(t)} \big\| \big[E_h(t-s)-E_h(t_k-s) \big] \Pi_h\si(s)\big\|^2\rd s \Big]\rd t\\
&\leq \cC \int_0^T \int_0^{\nu(t)} \t^2\|\Pi_h\si(s)\|_{\dot\dbH^2_h}^2\rd s \rd t\\
&\leq \cC \t^2  \int_0^{T}\|\si(t)\|_{\dbH_0^1\cap\dbH^2}^2\rd t\,.
\eal
\end{equation*}

A combination of these estimates for  $J_1$ through $J_5$ then completes the proof.
\end{proof}

For any $l=0,1,\cds,N-1$,
similar to $\dbX(t_l,T)\,, \dbU(t_l,T)$ defined in \rf{w1015e4}, we now introduce `discrete spaces'
\begin{equation*}
\setlength\abovedisplayskip{3pt}
\setlength\belowdisplayskip{3pt}
\bal
 \dbX_\dbA &\deq \Big\{X_\cd\equiv \{X_n\}_{n=0}^{N-1} \,\Big| \, X_{n}\in L^2_{\mf_{t_n}}(\O;\dbV_h)\q   \forall\, n=0,1,\cds, \, N-1\,,\\
 &\qq\qq\qq\qq \mbox{and } \t \sum_{n=0}^{N-1}\me\big[\|X_n\|^2\big] <\infty \Big\}\, ,\\
\eal
\end{equation*} 
\begin{equation*}
\setlength\abovedisplayskip{3pt}
\setlength\belowdisplayskip{3pt}
\bal
  \dbU_\dbA &\deq \Big\{U_\cd\equiv \{U_n\}_{n=0}^{N-1} \,\Big| \, U_{n}\in L^2_{\mf_{t_n}}(\O;\dbV_h)\q  \forall\, n=0,1,\cds, \, N-1\,,\\
 &\qq\qq\qq\qq \mbox{and } \t \sum_{n=0}^{N-1}\me\big[\|U_n\|^2\big] <\infty \Big\}\,,
\eal
\end{equation*}
%
which we endow with the corresponding  norms
\begin{equation*}
\setlength\abovedisplayskip{3pt}
\setlength\belowdisplayskip{3pt}
\bal
\|X_\cd\|_{\dbX_\dbA}\deq \Big(\t \sum_{n=0}^{N-1}\me\big[\|X_n\|^2\big] \Big)^{1/2} \q
\mbox{and} \q
\|U_\cd\|_{\dbU_\dbA}\deq \Big(\t \sum_{n=0}^{N-1}\me\big[\|U_n\|^2\big]\Big)^{1/2}\,.
\eal
\end{equation*}

\ss

Similar to Lemma \ref{w207l2}, we may use
 \rf{w-dif-eta}, \rf{dis-state} and proceed by induction to have the following representations for $\eta_\cd$ and $X_\cd$.
\bl{w211l1}
Suppose that $\si(\cd)\in C([0,T],\dbL^2)$.
Then
$\eta_\cd\in \dbU_\dbA$, which is of the form
\bel{w311e1}
\setlength\abovedisplayskip{3pt}
\setlength\belowdisplayskip{3pt}
\eta_n=\t\sum_{k=n+1}^N A_0^{k-n}\big[- P_{k}\eta_{k}+ \b  P_{k}\Pi_h\si(t_{k})\big] \qq \forall\, n=0,1,\cds,N-1\,.
\ee
For any $U_\cd\in \dbU_\dbA$, then $X_\cd\in \dbX_\dbA$, and for any $n=0,1,\cds,N$,
\bel{w303e1}
\setlength\abovedisplayskip{3pt}
\setlength\belowdisplayskip{3pt}
X_n=A_0^{n}\Pi_hx+\t\sum_{k=0}^{n-1}A_0^{n-k} U_k+\sum_{k=0}^{n-1}A_0^{n-k}\big[\Pi_h\si(t_k)+ \b X_k\big] \D_{k+1}W\,. 
\ee
\el

\begin{proof}
{\bf (1)} Assertions
\rf{w311e1} and \rf{w303e1} can be derived inductively with the help of \rf{w-dif-eta} and \rf{dis-state}.

\ss

{\bf (2)}
Thanks to \rf{w1013e3a} in Lemma \ref{w1013l2},
we know that 
$\| P_\cd\|_{\cL(\dbL^2|_{\dbV_h})}$ is bounded. Then \rf{w311e1}, together with discrete Gronwall's inequality leads to
\begin{equation*}
\setlength\abovedisplayskip{3pt}
\setlength\belowdisplayskip{3pt}
\bal
\max_{0\leq n\leq N}\|\eta_n\|\leq \cC\sup_{t\in[0,T]}\|\si(t)\|\,,
\eal
\end{equation*}
by which we have
\begin{equation*}
\setlength\abovedisplayskip{3pt}
\setlength\belowdisplayskip{3pt}
\bal
\|\eta_\cd\|_{\dbU_\dbA}= \Big(\t \sum_{n=0}^{N-1}\me\big[\|\eta_n\|^2\big]\Big)^{1/2}
\leq \cC\sup_{t\in[0,T]}\|\si(t)\|\,,
\eal
\end{equation*}
 or equivalently $\eta_\cd\in \dbU_\dbA$. 

\ss

{\bf (3)}
Assertion $X_\cd\in\dbX_\dbA$ can be derived by
combining \rf{w303e1} with the It\^o isometry, 
and application of discrete Gronwall's inequality.
\end{proof}

We are now able to state the main result of this section.
\bt{SLQ-pair-rate}
Suppose that assumption {\rm \bf (A)} holds.
Let $\big(X^*_h(\cd),U^*_h(\cd)\big)$ be the optimal pair of Problem {\bf (SLQ)$_h$} and
$(X_\cd, U_\cd)$ be its approximation given by \rf{feedback-5}--\rf{w-dif-eta}.
Then there exists a constant $\cC$ independent of $h$ and $\t$ such that
\bel{slq-rate}
\setlength\abovedisplayskip{3pt}
\setlength\belowdisplayskip{3pt}
\bal
\max_{0\leq n\leq N}\me\big[\|X_h^*(t_n)\!-\!X_n\|^2\big]
 \!+\!\max_{0\leq n\leq N-1}\me\big[\|U_h^*(t_n)\!-\!U_n\|^2\big]
\!\leq\! \cC\big[\t^2\!+\!|\b| \t\big]\,.
\eal
\ee
\et

\begin{proof}

Firstly,  by  \rf{w303e1}, the It\^o isometry, feedback form \rf{feedback-5}, discrete Gronwall's 
inequality and the fact that $\eta_\cd\in\dbU_\dbA$, we can deduce that $X_\cd\in \dbX_\dbA$. Then we conclude that $U_\cd\in \dbU_\dbA$.

\ss

The remaining part is on assertion \rf{slq-rate}. 
Firstly we prove the error estimate \rf{slq-rate} for $X$-part.
By \rf{spde-h}, \rf{w303e1} and state feedback representations \rf{feedback-h}, \rf{feedback-5}, we have
\bel{w210e5}
\setlength\abovedisplayskip{3pt}
\setlength\belowdisplayskip{3pt}
\bal
&X^*_h(t_n)- X_n\\
&\q=\big[E_h(t_n)\Pi_h-A_0^n\Pi_h \big] x  
-\sum_{k=0}^{n-1}\int_{t_k}^{t_{k+1}}\Big\{E_h(t_n-t)\big[\cP_h(t)-\cP_h(t_{k+1})\big]X^*_h(t) \\
&\qq\q+\big[E_h(t_n-t)-A_0^{n-k} \big]  \big[\cP_h(t_{k+1})X^*_h(t)+\eta_h(t)\big]\\
&\qq\q +A_0^{n-k} \big[\cP_h(t_{k+1})- P_{k+1} \big]X^*_h(t)
+A_0^{n-k}  P_{k+1} \big[X^*_h(t)-X^*_h(t_k)\big]   \\
&\qq\q +A_0^{n-k} \big[\eta_h (t)-\eta_h (t_k)\big]
+A_0^{n-k} P_{k+1} \big[X^*_h(t_k)- X_k\big] 
+A_0^{n-k} \big[\eta_h (t_k)-\eta_k\big]
\Big\}\rd t  \\
&\qq+\sum_{k=0}^{n-1}\int_{t_k}^{t_{k+1}}\Big[ E_h(t_n-t)\Pi_h\si(t)-A_0^{n-k}\Pi_h\si(t_k) \Big]\rd W(t)\\
&\qq+\b \sum_{k=0}^{n-1}\int_{t_k}^{t_{k+1}}\Big[ E_h(t_n-t)X^*_h(t)-A_0^{n-k}\Pi_h X_k \Big]\rd W(t)  \\
&\q=: I_0+\sum_{i=1}^7  Leb_i+Ito_1+\b\times Ito_2\,.
\eal
\ee
Note that Lebesgue integrals on the right-hand side of \rf{w210e5} come from continuous and discrete feedback laws. 
Since feedback laws consist of $\cP_h(\cd), X^*_h(\cd), \eta_h(\cd)$ and their approximations $ P_\cd, X_\cd, \eta_\cd$, 
as well as
semigroup $E_h(\cd)$ and its approximation $E_{h,\t}(\cd)$ are involved in the Lebesgue integrals, we use seven terms to 
distinguish them and each of the Lebesgue integrals addresses specific errors in the scheme:
the terms $Leb_1, Leb_3$ capture the main error effects due to the discretization of 
the Riccati equation,
while the term $Leb_2$ addresses discretization of $E_h(\cd)$;
the terms $Leb_4, Leb_6$  describe discretization effects of the optimal state $X^*_h(\cd)$,
while the terms $Leb_5, Leb_7$ account for discretization effects introduced by \rf{w-dif-eta}.
Finally, the It\^o integral $Ito_2$ is only active in the presence of multiplicative noise when $\b\neq 0$.

\ss

Now we estimate every term on the right-hand sides of \rf{w210e5}.
By \rf{w1115e2a} in Lemma \ref{w207l1},
\bel{w228e7}
\setlength\abovedisplayskip{3pt}
\setlength\belowdisplayskip{3pt}
\me\big[\|I_0\|^2\big]\leq \cC \t^2\|x\|_{\dbH_0^1\cap\dbH^2}^2\,.
\ee

For $Leb_1$ which is on the regularity of $\cP_h(\cd)$, by $\mu(\cd)$ defined in \rf{w827e1} and \rf{Riccati-sol}, we can derive
\begin{equation*}
\setlength\abovedisplayskip{3pt}
\setlength\belowdisplayskip{3pt}
\bal
&\me\big[\|Leb_1\|^2\big]\leq \cC\, \me\Big[\Big\| \int_0^{t_n}  E_h(t_n-t)\big[\cP_h(t)-\cP_h\big(\mu(t)\big)\big]X^*_h(t)\rd t \Big\|^2\Big]\\
&\q\leq \cC\, \me\bigg[
\Big\| \int_0^{t_n}  E_h(t_n-t)  \big[ E_h\big(2(\mu(t)-t)\big)-\mathds{1}_h\big] E_h\big(2\big(T-\mu(t)\big)\big) X^*_h(t) \rd t \Big\|^2\\
&\qq+\Big\| \int_0^{t_n} E_h(t_n-t)   \int_{t}^{\mu(t)} E_h(s-t)\big(\b^2\cP_h(s)+\mathds{1}_h- \cP_h^2(s) \big)\\
&\qq\qq\times E_h(s-t) X^*_h(t) \rd s  \rd t \Big\|^2\\
&\qq+ \Big\| \int_0^{t_n}E_h(t_n-t)    \int_{\mu(t)}^T \big[E_h(s-t)-E_h\big(s-\mu(t)\big)\big]\\
&\qq\qq\times  \big(\b^2\cP_h(s)+\mathds{1}_h-\cP_h^2(s) \big)E_h(s-t) X^*_h(t)\rd s \rd t \Big\|^2 \\
&\qq+\Big\| \int_0^{t_n}E_h(t_n-t)     \int_{\mu(t)}^T E_h\big(s-\mu(t)\big) \big(\b^2\cP_h(s)+\mathds{1}_h- \cP_h^2(s) \big)\\
&\qq\qq\times \big[E_h(s-t)-E_h\big(s-\mu(t)\big)\big] X^*_h(t)\rd s  \rd t\Big\|^2 \bigg]\\
&\q=:\sum_{i=1}^4 Leb_{1,i} \,.
\eal
\end{equation*}
For $Leb_{1,1}$, by Lemma \ref{w207l1} and  \rf{w228e1} in Lemma \ref{reg-x}, it is easy to see
\begin{equation*}
\setlength\abovedisplayskip{3pt}
\setlength\belowdisplayskip{3pt}
\bal
Leb_{1,1}\leq \cC\t^2 \me\Big[\int_0^{t_n}\|X^*_h(t)\|_{\dot\dbH^2_h}^2\rd t\Big]\leq \cC\t^2 \Big[\|x\|_{\dbH_0^1}^2+\int_0^T\|\si(t)\|_{\dbH_0^1}^2\rd t\Big]\,.
\eal
\end{equation*}
By \rf{w228e1} in Lemma \ref{reg-x},
it is easy to check that
\begin{equation*}
\setlength\abovedisplayskip{3pt}
\setlength\belowdisplayskip{3pt}
\bal
Leb_{1,2}\leq \cC\t^2 \sup_{t\in[0,T]} \me\big[\|X^*_h(t)\|^2\big]\leq  \cC\t^2 \Big[\|x\|^2+\int_0^T\|\si(t)\|^2\rd t\Big]\,.
\eal
\end{equation*}
For terms $Leb_{1,3}$ and $Leb_{1,4}$, by applying spectral decomposition, we know that 
$E_h(t)\cP_h(s)=\cP_h(s)E_h(t)$ for 
$t,s\in[0,T]$; then similar to estimate for $Leb_{1,1}$, we have
\begin{equation*}
\setlength\abovedisplayskip{3pt}
\setlength\belowdisplayskip{3pt}
\bal
Leb_{1,3}+Leb_{1,4}
\leq  \cC\t^2 \me\Big[\int_0^{t_n}\|X^*_h(t)\|_{\dot\dbH^2_h}^2\rd t\Big]
\leq \cC\t^2 \Big[\|x\|_{\dbH_0^1}^2+\int_0^T\|\si(t)\|_{\dbH_0^1}^2\rd t\Big]\,.
\eal
\end{equation*}
Now, combining with estimates of $Leb_{1,1}$ through $Leb_{1,4}$, we arrive at
\bel{w228e8}
\setlength\abovedisplayskip{3pt}
\setlength\belowdisplayskip{3pt}
\me\big[\|Leb_1\|^2\big]\leq  \cC\t^2 \Big[\|x\|_{\dbH_0^1}^2+\int_0^T\|\si(t)\|_{\dbH_0^1}^2\rd t\Big]\,.
\ee
For $Leb_2$,
Lemma \ref{w207l1},  \rf{w311e2a} in Lemma \ref{w311l1} and \rf{w228e1} in Lemma \ref{reg-x} lead to
\begin{equation}\label{w112e1}
\setlength\abovedisplayskip{3pt}
\setlength\belowdisplayskip{3pt}
\bal
\me\big[\!\|Leb_2\|^2\!\big]
&\!\leq \!\cC\, \me\Big[\Big\|\! \int_0^{t_n} \!\cP_h\big(\mu(t)\big) G_{\t}(t_n\!-\!t)X^*_h(t)\!+\!G_{\t}(t_n\!-\!t)\eta_h(t)\big]\! \rd t \Big\|^2 \Big]\\
&\leq\cC\t^2 \me\Big[\int_0^{t_n}\|X^*_h(t)\|_{\dot\dbH^2_h}^2+\|\eta_h(t)\|_{\dot\dbH^2_h}^2\rd t\Big]\\
&\leq \cC\t^2 \Big[\|x\|_{\dbH_0^1}^2+\int_0^T\|\si(t)\|_{\dbH_0^1}^2\rd t\Big]\,.
\eal
\end{equation}
For $Leb_3$,  by Theorem \ref{Riccati-rate2} 
and  \rf{w228e1} in Lemma \ref{reg-x}, it follows that
\begin{equation}\label{w228e10}
\setlength\abovedisplayskip{3pt}
\setlength\belowdisplayskip{3pt}
\bal
\me\big[\|Leb_3\|^2\big]
&\leq \cC\, \me\Big[\int_0^{t_n} \big\| \cP_h\big(\mu(t)\big)\Pi_h- P_{\pi(t)}\Pi_h \big\|_{\cL(\dbH_0^1;\dbL^2)} \sup_{t\in[0,T]} \| X^*(t) \|_{\dbH_0^1}\rd t\Big]^2 \\
&\leq \cC\t^2 \Big[\|x\|_{\dbH_0^1}^2+\int_0^T\|\si(t)\|_{\dbH_0^1}^2\rd t\Big]\,.
\eal
\end{equation}
For $Leb_4$, by \rf{w1029e2d} (for general $\b$) or $L^2$-regularity \rf{reg}, we can deduce that
\bel{w228e6}
\setlength\abovedisplayskip{3pt}
\setlength\belowdisplayskip{3pt}
\bal
\me\big[\| Leb_4\|^2\big]
\leq 
\lt\{\!\!
\begin{array}{ll}
\ds \!\cC\t \big[\|x\|_{\dbH_0^1}^2\!+\!\|\si(\cd)\|^2_{C([0,T];\dbL^2) \cap L^2(0,T;\dbH_0^1)}\big] \qq\qq& \b\neq 0\,,\\
\nm\ds  \! \cC \t^2  \big[\|x\|_{\dbH_0^1}^2\!+\!\|\si(\cd)\|^2_{C([0,T];\dbL^2) \cap L^2(0,T;\dbH_0^1\cap\dbH^2)} \big]\,\,\,\,\,   \qq& \b=0\,.\\
\end{array}
\rt.
\eal
\ee
\rf{w311e2b} yields that
\bel{w311e3}
\setlength\abovedisplayskip{3pt}
\setlength\belowdisplayskip{3pt}
\me\big[\| Leb_5\|^2 \big] \leq  \cC\t^2 \|\si(\cd)\|^2_{C([0,T];\dbH_0^1)} \,.
\ee
For $Leb_6$, based on the fact that $\|P_\cd\|_{\cL(\dbL^2|_{\dbV_h})}$ is uniformly bounded, then it is easy to see that
\bel{w228e11}
\setlength\abovedisplayskip{3pt}
\setlength\belowdisplayskip{3pt}
\me\big[\| Leb_6\|^2\big]\leq \cC\t \sum_{k=0}^{n-1}\me\big[\|X^*_h(t_k)-X_k\|^2\big]\,.
\ee
For $Leb_7$, we follow the procedure as  in the estimates for $Leb_1$ through $Leb_6$ to get
\begin{equation*}
\setlength\abovedisplayskip{3pt}
\setlength\belowdisplayskip{3pt}
\bal
\|\eta_h(t_n)-\eta_n\|\leq \cC\t \|\si(\cd)\|_{C([0,T];\dbH_0^1)}
+\sum_{k=n+1}^N\|\eta_h(t_k)-\eta_k\|\,,
\eal
\end{equation*}
which, and discrete Gronwall's inequality, lead to 
\bel{w808e1}
\setlength\abovedisplayskip{3pt}
\setlength\belowdisplayskip{3pt}
\max_{0\leq n\leq N}\|\eta_h(t_n)-\eta_n\|\leq \cC\t \|\si(\cd)\|_{C([0,T];\dbH_0^1)}\,,
\ee
subsequently,
\bel{w311e4}
\setlength\abovedisplayskip{3pt}
\setlength\belowdisplayskip{3pt}
\me\big[\| Leb_7\|^2\big] \leq   \cC\t^2   \|\si(\cd)\|^2_{C([0,T];\dbH_0^1)} \,.
\ee

\ss
For $Ito_1$, we use the It\^o isometry, the
 triangular inequality and Lemma \ref{w207l1} to estimate
\bel{w1015e6}
\setlength\abovedisplayskip{3pt}
\setlength\belowdisplayskip{3pt}
\bal
\me\big[\|Ito_1\|^2\big]
&\leq \cC\Big[ \int_0^{t_n}\big\|E_{h,\t}(t_n-t)\Pi_h\big[ \si\big(\nu(t)\big)-\si(t)\big]\big\|^2\rd t 
+\int_0^{t_n}\|G_{\t}(t_n-t)\si(t) \|^2\rd t\Big]\\
&\leq \cC\int_0^{t_n}\big\|\si\big(\nu(t)\big)-\si(t)\big\|^2\rd t
+ \cC\int_0^{t_n} \t^2\|\Pi_h\si(t)\|_{\dot \dbH ^2_h}^2\rd t  \\
&\leq \cC\t^2\Big[L_{\si,1}^2 +\int_0^T\|\si(t)\|_{\dbH_0^1\cap\dbH^2}^2\rd t\Big]\,.
\eal
\ee

For $Ito_2$, by virtue of $\nu(\cd), \pi(\cd)$ defined in \rf{w827e1} and the It\^o isometry, we have
\begin{equation*}
\setlength\abovedisplayskip{3pt}
\setlength\belowdisplayskip{3pt}
\bal
\me\big[\|Ito_2\|^2\big]
&\leq \cC\me\Big[\int_0^{t_n}\big\|E_{h,\t}(t_n-t)\big[ X^*_h\big(\nu(t)\big)-X^*_h(t)\big]\big\|^2\rd t 
 +\int_0^{t_n}\|G_{\t}(t_n-t) X^*_h(t)\|^2\rd t\\
&\qq +\int_0^{t_n} \big\|E_{h,\t}(t_n-t)\big[ X_{\pi(t)-1}-X^*_h\big(\nu(t)\big)\big]\big\|^2 \rd t  \Big]\\
&=: \sum_{i=1}^3 Ito_{2,i}\,.
\eal
\end{equation*}
By \rf{w1029e2d}, it follows that
\begin{equation*}
\setlength\abovedisplayskip{3pt}
\setlength\belowdisplayskip{3pt}
\bal
Ito_{2,1}\leq
 \cC\t \big[\|x\|_{\dbH_0^1}^2+\|\si(\cd)\|^2_{C([0,T];\dbL^2) \cap L^2(0,T;\dbH_0^1)} \big]\,.
\eal
\end{equation*}
By \rf{w228e1} and Lemma \ref{w207l1} with  $\g=1$, we have
\begin{equation*}
\setlength\abovedisplayskip{3pt}
\setlength\belowdisplayskip{3pt}
\bal
Ito_{2,2}
 \leq \cC\t \int_0^{t_n}\me\big[\|X^*_h(t)\|_{\dbH_0^1}^2\big] \rd t  
 \leq \cC \t \Big[\|x\|^2+\int_0^T\|\si(t)\|^2\rd t\Big]\,.
\eal
\end{equation*}
It is easy to see that 
\begin{equation*}
\setlength\abovedisplayskip{3pt}
\setlength\belowdisplayskip{3pt}
\bal
Ito_{2,3}\leq
 \cC\t \sum_{k=0}^{n-1} \me\big[\|X^*_h(t_k)-X_k\|^2 \big]\,.
\eal
\end{equation*}
Hence, for $Ito_2$, we conclude that
\bel{w228e4}
\setlength\abovedisplayskip{3pt}
\setlength\belowdisplayskip{3pt}
\bal
\me\big[\|Ito_2\|^2\big]
\leq  \cC\t \big[\|x\|_{\dbH_0^1}^2+\|\si(\cd)\|^2_{C([0,T];\dbL^2) \cap L^2(0,T;\dbH_0^1)} \big]
+\cC\t \sum_{k=0}^{n-1} \me\big[\|X^*_h(t_k)-X_k\|^2 \big]\,.
\eal
\ee

Finally,
combining with \rf{w210e5}--\rf{w228e4}, we have
\begin{equation*}
\setlength\abovedisplayskip{3pt}
\setlength\belowdisplayskip{3pt}
\bal
 \|X^*_h(t_n)-X_n\|^2_{L^2_{\mf_{t_n}}(\O;\dbL^2)}
 \leq \cC\t^2+\cC\b^2\t+\cC\sum_{k=0}^{n-1}\t \|X^*_h(t_k)-X_k\|^2_{L^2_{\mf_{t_k}}(\O;\dbL^2)}\,.
\eal
\end{equation*}
Then relying on discrete Gronwall's inequality, we can derive the desired convergence rate for $X$-part.

\ss
Finally, we apply feedback law to prove the convergence rate for $U$-part. Relying on \rf{feedback-5}, \rf{feedback-h},
we can see that
\begin{equation*}
\setlength\abovedisplayskip{3pt}
\setlength\belowdisplayskip{3pt}
\bal
U^*_h(t_n)-U_n
&=-\big[\cP_h(t_n)-\cP_h(t_{n+1})\big]X^*_h(t_n)
-\big[\cP_h(t_{n+1})\Pi_h- P_{n+1}\Pi_h\big]X^*_h(t_n)\\
&\q- P_{n+1}\Pi_h\big[X^*_h(t_n)-X_n\big]
-\big[\eta_h(t_n)-\eta_n\big]\\
&=:\sum_{i=1}^4 J_i\,.
\eal
\end{equation*}
With the similar procedure as  in the estimate for $Leb_1$, by applying  \rf{w228e1}, we have
\begin{equation*}
\setlength\abovedisplayskip{3pt}
\setlength\belowdisplayskip{3pt}
\bal
\me\big[\|J_1\|^2\big]
\leq \cC\t^2 \Big[\|x\|_{\dbH_0^1\cap\dbH^2}^2+\int_0^T\|\si(t)\|_{\dbH_0^1\cap\dbH^2}^2\rd t\Big]\,.
\eal
\end{equation*}
Remark \ref{w1015r1} and \rf{w228e1} lead to
\begin{equation*}
\setlength\abovedisplayskip{3pt}
\setlength\belowdisplayskip{3pt}
\bal
\me\big[\|J_2\|^2\big]
\leq \cC\t^2 \me\big[\|X^*_h(t_n)\|_{\dot \dbH ^2_h}^2\big]
\leq \cC\t^2 \Big[\|x\|_{\dbH_0^1\cap\dbH^2}^2+\int_0^T\|\si(t)\|_{\dbH_0^1\cap\dbH^2}^2\rd t\Big]\,.
\eal
\end{equation*}
Convergence rate \rf{slq-rate} for the $X$-part and the fact  that $\| P_\cd\|_{\cL(\dbL^2|_{\dbV_h})}$ is uniformly bounded  imply that
\begin{equation*}
\setlength\abovedisplayskip{3pt}
\setlength\belowdisplayskip{3pt}
\bal
\me\big[\|J_3\|^2\big]\leq \cC\, \me\big[\|X^*_h(t_n)-X_n\|^2\big]
\leq \cC[\t^2+|\b| \t \big]\,.
\eal
\end{equation*}
\rf{w808e1} yields that
\begin{equation*}
\setlength\abovedisplayskip{3pt}
\setlength\belowdisplayskip{3pt}
\bal
\me\big[\|J_4\|^2\big]
\leq \cC\t^2\,.
\eal
\end{equation*}
Now, combining with estimates of $J_1$ through $J_4$,  we derive the rate \rf{slq-rate} for $U$-part.
That completes the proof.
\end{proof}


\br{w1023r1}
Relying on Theorem \ref{Riccati-rate-h} and following the procedure in the proof of Theorem \ref{SLQ-pair-rate}, we can prove that
\begin{equation*}
\setlength\abovedisplayskip{3pt}
\setlength\belowdisplayskip{3pt}
\bal
\sup_{t\in[0,T]}\me\big[\|X^*(t)-X^*_h(t)\|^2+\|U^*(t)-U^*_h(t)\|^2\big] 
\leq \cC h^4\ln{\frac 1 h}\,.
\eal
\end{equation*}

\er

\br{w1106r2}
 Compared with the open-loop method, the closed-loop method avoids computing conditional expectations
and iterating derived by the gradient descent method (see Algorithm \ref{alg1}); hence it is easier to compute.
%


The result in Theorem \ref{SLQ-pair-rate} shows an improved rate of convergence in case the driving noise in SPDE 
\rf{spde} is additive; we do not have a corresponding improvement for the `open-loop approach' in Section \ref{ch-open}.

\er

{\small

}

\end{document}